\def\namedlabel#1#2{\begingroup
    #2%
    \def\@currentlabel{#2}%
    \label{#1}\endgroup
}
\theoremstyle{plain}
\newtheorem{theorem}{Theorem}[section]
\newtheorem{corollary}[theorem]{Corollary}
\newtheorem{lemma}[theorem]{Lemma}
\theoremstyle{definition}
\newtheorem{remark}[theorem]{Remark}
\newtheorem{example}[theorem]{Example}
\newtheorem{definition}[theorem]{Definition}
\newtheorem*{notation}{Notation}
\numberwithin{equation}{section}
\renewcommand\labelenumi{\textup{\alph{enumi})}}
\renewcommand\theenumi\labelenumi
\makeatletter\renewcommand{\p@enumii}{}\makeatother 
\renewcommand{\leq}{\leqslant}
\renewcommand{\geq}{\geqslant}
\newcommand{\et}{\quad\text{and}\quad}
\newcommand{\loc}{\mathrm{loc}}
\newcommand{\bee}{\mathrm{b}}
\newcommand{\ttau}[1]{\Lambda(#1)}
\newcommand{\ittau}{\rho}
\DeclareMathOperator{\supp}{supp}
\DeclareMathOperator{\spec}{spec}
\DeclareMathOperator{\dist}{dist}
\newcommand{\cC}{\mathcal{C}}
\newcommand{\real}{\mathds{R}}
\newcommand{\Pp}{\mathds{P}}
\newcommand{\Ee}{\mathds{E}}
\newcommand{\I}{\mathds{1}}
\newcommand{\pr}{\mathds{P}}
\newcommand{\ex}{\mathds{E}}
\newcommand{\dsphere}{ \mathds{S}^{d-1}}
\newcommand{\nat}{\mathds{N}}
\newcommand{\re}{\mathop{\mathrm{Re}}}
\newcommand{\A}[2]{A_{#1}^{#2}}
\newcommand{\casymp}[1]{\stackrel{#1}{\asymp}}
\newcommand{\Ci}{C'}
\newcommand{\Cii}{C''}
\newcommand{\Ciii}{C^*}
\newcommand{\tauAninfty}{\tau_{n-2}^{\infty}}
\newcommand{\tauAkinfty}{\tau_{k-2}^{\infty}}
\newcommand{\sigmaAkk}{\sigma_{k-1}^{k}}
\newcommand{\sigmaApp}{\sigma_{p-1}^{p}}
\newcommand{\tauAnOinfty}{\tau_{n_0-2}^{\infty}}
\newcommand{\tauAnOOinfty}{\tau_{n_0-1}^{\infty}}
\begin{document}

\title[Heat kernel estimates for non-local Schr\"odinger operators]%
{Progressive intrinsic ultracontractivity and heat kernel estimates for non-local Schr\"odinger operators}

\date{\today}

\author[K.~Kaleta]{Kamil Kaleta}
\address[K.~Kaleta]{Faculty of Pure and Applied Mathematics\\ Wroc{\l}aw University of Science and Technology\\ ul. Wybrze{\.z}e Wyspia{\'n}skiego 27, 50-370 Wroc{\l}aw, Poland}
\email{kamil.kaleta@pwr.edu.pl}

\thanks{K.~Kaleta gratefully acknowledges support through the Alexander von Humboldt Foundation (Germany) and by the National Science Centre (Poland) grant no.\ 2015/18/E/ST1/00239. Part of this research has been carried out during K.~Kaleta's stay as Humboldt fellow at TU Dresden.}

\author[R.L.~Schilling]{Ren\'e L.\ Schilling}
\address[R.L.~Schilling]{TU Dresden\\ Fakult\"{a}t Mathematik\\ Institut f\"{u}r Mathematische Stochastik\\ 01062 Dresden, Germany}
\email{rene.schilling@tu-dresden.de}

\begin{abstract}
    We study the long-time asymptotic behaviour of semigroups generated by non-local Schr\"odinger operators of the form $H = -L+V$; the free operator $L$ is the generator of a symmetric L\'evy process in $\mathds R^d$, $d > 1$ (with non-degenerate jump measure) and $V$ is a sufficiently regular confining potential. We establish sharp two-sided estimates of the corresponding heat kernels for large times and identify a new general regularity property, which we call progressive intrinsic ultracontractivity, to describe the large-time evolution of the corresponding Schr\"odinger semigroup. We discuss various examples and applications of these estimates, for instance we characterize the heat trace and heat content. Our examples cover a wide range of processes and we have to assume only mild restrictions on the growth, resp.\ decay, of the potential and the jump intensity of the free process. Our approach is based on a combination of probabilistic and analytic methods; our examples include fractional and quasi-relativistic Schr\"odinger operators.
\end{abstract}

\subjclass[2010]{\emph{Primary:} 47D08, 60G51. \emph{Secondary:} 47D03, 47G20.}

\keywords{Symmetric L\'evy process; nonlocal Schr\"odinger operator; Feynman--Kac semigroup; progressive intrinsic ultracontractivity; ground state eigenfunction; heat kernel; density.}

\maketitle

\section{Introduction, assumptions and statement of main results}\label{sec1}

Over the past decade, there has been an increasing interest in non-local models involving Schr\"odinger operators associated with generators of L\'evy processes with non-degenerate jump measures. Recent investigations include heat kernel and heat trace estimates~\cite{bib:AVB,bib:BYY,bib:BGJP,bib:W}, gradient estimates of harmonic functions~\cite{bib:K2013}, properties of radial solutions, ground states, eigenfunctions and eigenvalues, and spectral bounds~\cite{bib:BL,bib:CMS,FLS2015,FLS2008,bib:JW,bib:Kal2012,KL2015,bib:KM,bib:LM,bib:T}, intrinsic hyper- and ultracontractivity properties of Schr\"odinger semigroups~\cite{bib:ChW2015,bib:ChW2016,bib:KaKu,bib:KL15,bib:KKL2018} as well as applications in quantum field theory~\cite{BSSch2, BSSch1, bib:HH, HIL}.

Typically, the operator is of the form $H = -L + V$; throughout we assume that the potential $V$ is locally bounded and $L$ is the generator of a symmetric L\'evy process. The corresponding semigroup $\left\{e^{tL}: t \geq 0 \right\}$ is a semigroup of convolution operators in $L^2=L^2(\real^d, dx)$ mapping $L^2$ into $L^{\infty}$ (if $t$ is large enough). Since $e^{tL}$ is positivity preserving this mapping property is equivalent to either of the following statements: (i) $e^{tL} : L^2 \to L^\infty$ is continuous for large $t$ (see e.g.~\cite[Corollary 1.3]{JSch2006}) or (ii) $X_t$ has a bounded probability density for large $t$. The symmetry of the L\'evy process is equivalent to the symmetry of the semigroup operators $e^{-tL}$, $t\geq 0$, and the self-adjointness of the generator $L$; $L$, $\left\{e^{tL}: t \geq 0 \right\}$ and the corresponding L\'evy process are called \emph{free} generator, semigroup and process. The Schr\"odinger operator $H$ generates a semigroup of symmetric operators $\left\{e^{-tH}: t \geq 0 \right\}$ on $L^2$ such that $e^{-tH}:L^2 \to L^{\infty}$ are bounded for large values of $t$. If the free L\'evy process has a sufficiently regular transition density, then $U_t$ is an integral operator with kernel $u_t(x,y)$, i.e.\ $U_tf(x) = \int u_t(x,y) f(y)\,dy$ \cite[Chapter 2.B]{bib:DC}. For a confining potential $V$, i.e.\ $\lim_{|x|\to\infty} V(x)=\infty$, each $U_t = e^{-tH}$, $t > 0$, is a compact operator in $L^2$ --- see e.g.\ \cite[Lemmas 1 and 9]{bib:KaKu} for a general argument --- and the spectra of $H$ and $U_t$ are purely discrete. We denote by $\lambda_0 = \inf\spec(H)\in\real$ the ground state eigenvalue and by $\varphi_0\in L^2$ the corresponding ground state eigenfunction. In particular, it makes sense to study the spectral regularity --- the heat trace or the heat content, and the Hilbert-Schmidt property --- and large-time smoothness properties of the semigroup $\left\{U_t: t \geq 0 \right\}$ such as intrinsic hyper- and ultracontractivity \cite{bib:D}. Related to that, it is also a natural question to ask for the behaviour of $U_tf$ and $u_t(x,y)$ as $t\to\infty$.

In the literature \cite{bib:B,bib:DS,bib:KL12} the (asymptotic) intrinsic ultracontractivity condition (a)IUC is used to describe the large time behaviour of $u_t(x,y)$. These are conditions on $U_t$ which can be equivalently stated in the following form
\begin{gather}
\tag{IUC}
    \forall t_0 > 0 \ \exists C=C(t_0) \geq 1 \ \forall t \geq t_0\ \forall x,y\in\real^d \,:  u_t(x,y) \casymp{C} e^{-\lambda_0 t}\varphi_0(x)\varphi_0(y), \\
\tag{aIUC}
    \exists t_0 > 0 \ \exists C=C(t_0) \geq 1 \ \forall t \geq t_0\ \forall x,y\in\real^d \,: u_t(x,y) \casymp{C} e^{-\lambda_0 t}\varphi_0(x)\varphi_0(y).
\end{gather}
(``$\casymp{C}$'' denotes a two-sided comparison with the constants $0 < C^{-1} \leq 1 \leq C < \infty$.)

In the present paper we are mainly interested in the case where $U_t = e^{-tH}$ fails to be (asymptotically) IUC and we want to study the asymptotics of $u_t(x,y)$ as $t\to\infty$ in the general case. Our main result are sharp two-sided large-time estimates for the kernel $u_t(x,y)$. Let us first state the result and then discuss the assumptions  \eqref{A1}--\eqref{A3} needed therein.
\begin{theorem} \label{th:main_th}
    Let $L$ be the generator of a symmetric L\'evy process with L\'evy measure $\nu(dx) = \nu(x)\,dx$ and diffusion matrix $A=(a_{ij})_{i,j=1\dots, n}$, and let $V$ be a confining potential. Denote by $H = -L+V$ the Schr\"{o}dinger operator and assume \eqref{A1}--\eqref{A3} with $t_{\bee}>0$, $R_0>0$ and the profile functions $f(|x|)$ and $g(|x|)$ which control $\nu(x)$ and $V(x)$, respectively. Write $\lambda_0$ and $\varphi_0$ for the ground-state eigenvalue and eigenfunction, and $u_t(x,y)$ for the density of the operator $U_t = e^{-tH}$. There exist constants $C \geq 1$ and $R > R_0$ such that for every $t > 30 t_{\bee}$ the following assertions hold.
    \begin{enumerate}
    \item\label{th:main_th-a}
        If $|x|, |y| \leq R$, then
        \begin{align*}
            \frac{1}{C} e^{-\lambda_0 t}
            \leq u_t(x,y)
            \leq C e^{-\lambda_0 t}.
        \end{align*}

    \item\label{th:main_th-b}
        If $|x| > R$ and $|y| \leq R$, then
        \begin{align*}
            \frac{1}{C} e^{-\lambda_0 t} \frac{\nu(x)}{V(x)}
            \leq u_t(x,y)
            \leq C e^{-\lambda_0 t} \frac{\nu(x)}{V(x)};
        \end{align*}
        by symmetry, if $|x| \leq R$ and $|y| > R$, then
        \begin{align*}
            \frac{1}{C} e^{-\lambda_0 t} \frac{\nu(y)}{V(y)}
            \leq u_t(x,y)
            \leq C e^{-\lambda_0 t} \frac{\nu(y)}{V(y)}.
        \end{align*}

    \item\label{th:main_th-c}
        If $|x|, |y| > R$, then
        \begin{gather*}
            \frac{1}{C}\frac{F(\mathsf{K}t,x,y) \vee e^{-\lambda_0 t} \nu(x) \nu(y)}{V(x) V(y)}
            \leq u_t(x,y)
            \leq C\frac{F\left(\frac{t}{\mathsf{K}},x,y\right)\vee e^{-\lambda_0 t}\nu(x) \nu(y)}{V(x) V(y)},
        \end{gather*}
        where $\mathsf{K} = 4 C_6 C_7^2$ --- the constants $C_6,C_7$ are from \eqref{A3} --- and
        \begin{align*} 
            F(\tau,x,y)
            := \int_{R-1 < |z| < |x| \vee |y|} \left(f(|x-z|) \wedge 1\right) \left(f(|z-y|) \wedge 1\right)  e^{- \tau g(|z|)}\,dz.
        \end{align*}
    \end{enumerate}
\end{theorem}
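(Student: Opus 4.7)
My plan is to build all three estimates on the Feynman–Kac representation
\[
    u_t(x,y) = \int u_{s_1}(x,z_1)\, u_{t-s_1-s_2}(z_1,z_2)\, u_{s_2}(z_2,y)\, dz_1\, dz_2,
\]
combined with the ground-state behaviour of $\varphi_0$. The first step is a compact-set estimate: on a sufficiently large ball $B(0,R)$ one has $\varphi_0$ bounded above and below by strictly positive constants, and one proves an \emph{interior} intrinsic ultracontractivity $u_t(x,y) \asymp e^{-\lambda_0 t}\varphi_0(x)\varphi_0(y)$ for $|x|,|y|\le R$, $t>30t_{\bee}$. This part (a) follows from the spectral expansion combined with a uniform estimate on $u_{t_{\bee}}(\cdot,\cdot)$ on compacts and the known far-field behaviour $\varphi_0(x) \asymp \nu(x)/V(x)$ for $|x|>R$, which is the input from \eqref{A1}--\eqref{A3} that powers everything later.

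Parts (b) and (c) are then reduced to (a) by a time-splitting $t = t_{\bee} + (t-2t_{\bee}) + t_{\bee}$. For (b), with $|x|>R,\ |y|\le R$, I would plug $z_2 = y$ and push the outer integration over $z_1$: the middle factor is controlled by (a) applied to points in the bounded region, so the dominant contribution comes from the first jump $x \to z_1$, which, by the Ikeda–Watanabe formula and the Lévy-system lemma, picks up $\nu(x-z_1)$. Using \eqref{A1} to replace $\nu(x-z_1)$ by $\nu(x)$ up to constants, and \eqref{A3} to replace the survival weight by $1/V(x)$, yields the stated bound $e^{-\lambda_0 t}\nu(x)/V(x)$.

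The core of the proof is part (c). When both $|x|,|y| > R$, the semigroup kernel is the sum of two competing mechanisms. The \emph{single-jump} mechanism: one large jump $x \to y$ with the rest of the mass accounted for by the survival costs at the endpoints, contributing (up to $e^{-\lambda_0 t}$) the term $\nu(x)\nu(y)/V(x)V(y)$ — this is obtained by applying Ikeda–Watanabe at both ends together with the short-time survival estimate from \eqref{A3} giving factors $1/V(x)$ and $1/V(y)$. The \emph{two-jump / parking} mechanism: the path jumps $x \to z_1$, stays near $z_1 \approx z_2$ throughout the bulk of $[0,t]$ paying the potential cost $\exp(-\tau g(|z_1|))$, then jumps $z_2 \to y$. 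For the upper bound I decompose the integral over $(z_1,z_2)$ in the Feynman–Kac formula according to whether $z_1,z_2$ lie in $B(0,R-1)$ or outside, apply (a) on the compact part, and use \eqref{A3} to bound the Feynman–Kac survival factor on $[0,t]$ restricted to paths staying near $z$ by $C\exp(-(t/\mathsf{K})g(|z|))$, which after integrating out $(z_1,z_2)$ yields the integral $F(t/\mathsf{K},x,y)$. The lower bound uses a path-selecting argument: consider the event that the process makes its first jump out of $B(x,R_0)$ within time $t_{\bee}$ into a shell containing $z_1$, stays near $z_1$ for the bulk time, and makes its last jump into $B(y,R_0)$ in the final $t_{\bee}$ of the interval; the probability of this event, using the lower bounds in \eqref{A1} and the lower Feynman–Kac bound in \eqref{A3}, is at least $F(\mathsf{K}t,x,y)/V(x)V(y)$.

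The main obstacle is matching the constants in the exponential weights of the Feynman–Kac functional so that the same $F$ (with time-scales $\mathsf{K}t$ and $t/\mathsf{K}$) appears in both directions. This is purely a consequence of optimising the survival-probability bounds from \eqref{A3} — the parameters $C_6,C_7$ control the slack between the upper and lower estimates of $\mathbb{E}^z[\exp(-\int_0^s V(X_r)\,dr);\, \sup_{r\le s}|X_r - z| \le R_0]$, and taking $\mathsf{K} = 4C_6C_7^2$ absorbs both the "prefactor to exponent" conversion and the possible loss from localising $z_1$ to $z_2$ in the parking step. Once these exponential comparisons are set up carefully, the remaining work is bookkeeping: checking that the contributions from paths spending time in the unbounded far field (outside $|z| \leq |x|\vee|y|$) are swallowed by the single-jump term, and that the compact interior contribution is dominated by $e^{-\lambda_0 t}\nu(x)\nu(y)/V(x)V(y)$.
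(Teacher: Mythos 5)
Your proposal correctly identifies the two competing mechanisms in part~\ref{th:main_th-c} (a direct jump $x\to y$ versus a ``jump in, park near $z$, jump out'' strategy) and the lower bounds you sketch — Ikeda--Watanabe at both ends plus the survival estimate \eqref{eq:killed_exp}-type bound for the killed process — are essentially the arguments of Lemmas~\ref{lem:low1} and~\ref{lem:low2}. The gap is in the upper bound of part~\ref{th:main_th-c}. You bound the Feynman--Kac functional only over paths that ``stay near $z$'' during the bulk of $[0,t]$ and over a single splitting of $(z_1,z_2)$ into $B(0,R-1)$ versus its complement. This does not exhaust the path space: a path starting at $x$ can make arbitrarily many large intermediate jumps, drifting inward through many annuli (or outward and back) before reaching a neighbourhood of $y$, and each such itinerary contributes to $u_t(x,y)$. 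Controlling the sum over all these itineraries is the actual content of the proof; it is done in Section~\ref{sec3:decomposition} by decomposing paths according to the number $l$ of annuli $\A{k-1}{k}$ visited on the inward movement, proving by induction (Lemmas~\ref{lem:lem1} and~\ref{lem:lem2}) that the contribution of the event $S(\A{n-2}{\infty},\A{k-1}{k},l,t)$ decays like $2^{-l}$, and summing the geometric series. The summability hinges on two ingredients your sketch never invokes: the direct jump property \textup{(\ref{A1}.d)}, which guarantees $\int f(|x-z|)f(|z-y|)\,dz \lesssim f(|x-y|)$ so that two consecutive large jumps cost no less than one direct jump, and the boundary Harnack inequality (Lemma~\ref{lem:bhi}), which converts the exit expectation into an integral against $\nu$. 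Without \textup{(\ref{A1}.d)} the multi-jump contributions can dominate and the claimed upper bound is false in general, so this is a missing idea rather than omitted bookkeeping.

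Two smaller points. First, the ``spectral expansion'' does not by itself give the two-sided interior comparison in part~\ref{th:main_th-a} for all $t>30t_{\bee}$, since the higher eigenfunctions are not controlled pointwise; the paper instead combines the Chapman--Kolmogorov identity $u_t=\int u_{t-5t_{\bee}}(\cdot,z)u_{5t_{\bee}}(z,\cdot)\,dz$ with the crude upper and lower bounds of Lemmas~\ref{lem:lem4} and~\ref{lem:low2} and the eigenfunction estimates \eqref{eq:eig_1}--\eqref{eq:eig_2}. Second, even granting the upper bound with the weight $e^{-\frac{t}{2C_6}g(|z|-2)}$ and a prefactor $e^{(C_5+C_{\ref{new}})t}$, one still has to absorb that exponentially growing prefactor into the exponential decay of $g$; this is where the condition \eqref{eq:n_0_cond_strongest} on the radius $R=n_0+3$ enters (Lemma~\ref{lem:sharp1} and the proof of Theorem~\ref{th:th1}.\ref{th:th1-c}), and it is not captured by saying that $\mathsf{K}=4C_6C_7^2$ ``absorbs'' the slack.
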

Under the additional condition $\inf_{x \in \real^d} V(x) >0$, the cases~\ref{th:main_th-a} and~\ref{th:main_th-b} can be combined in a single estimate: \emph{if $|x| \leq R$ or $|y| \leq R$, then
\begin{align*}
    u_t(x,y)
    \casymp{C} e^{-\lambda_0 t} \left(1 \, \wedge \, \frac{\nu(x)}{V(x)}\right) \left(1 \, \wedge \, \frac{\nu(y)}{V(y)}\right).
\end{align*}}

\bigskip
Let us now discuss the assumptions and the set-up of Theorem~\ref{th:main_th}. Recall that a L\'evy process on $\real^d$ is a stochastic process $(X_t)_{t\geq 0}$ with values in $\real^d$, independent and stationary increments, and c\`adl\`ag (right-continuous, finite left limits) paths. It is well-known, cf.~\cite{bib:J,JSch} or~\cite{BSchW}, that a L\'evy process is a Markov process whose transition semigroup is a semigroup of convolution operators
\begin{gather*}
    P_tu(x) = \Ee u(X_t + x) = u*\tilde\mu_t(x),
    \quad
    \tilde\mu_t(dy) = \Pp(-X_t\in dy)
\end{gather*}
which is a strongly continuous contraction semigroup on $L^2 = L^2(\real^d, dx)$.  Using the Fourier transform we can describe $P_t$ as a Fourier multiplication operator
\begin{gather*}
    P_t u(x) = \mathcal{F}^{-1}\left(e^{-t\psi} \mathcal{F} u\right)(x)
\end{gather*}
with symbol (multiplier) $e^{-t\psi(\xi)}$. The semigroup $\big\{P_t:t \geq 0\big\}$ is symmetric in $L^2$ if, and only if, $X_t$ is a symmetric L\'evy process (i.e.\ $\Pp(X_t \in dy) = \Pp(-X_t\in dy)$, $t\geq 0$) which is equivalent to $e^{-t\psi}$ or $\psi$ being real. All real characteristic exponents are given  by the L\'evy--Khintchine formula
\begin{gather} \label{eq:Lchexp}
    \psi(\xi) = \xi \cdot A \xi + \int_{\real^d \setminus \left\{0\right\}} \left(1-\cos(\xi \cdot z)\right) \nu(dz), \quad \xi \in \real^d.
\end{gather}
where $A=(a_{ij})_{1\leq i,j \leq d}$ is a symmetric non-negative definite matrix, and $\nu$ is a symmetric L\'evy measure, i.e.\ a Radon measure on $\real^d \setminus \left\{0\right\}$ satisfying $\nu(E)= \nu(-E)$ and $\int_{\real^d\setminus\{0\}} (1 \wedge |z|^2) \nu(dz) < \infty$. The matrix $A$ describes the diffusion part of $(X_t)_{t\geq 0}$ while $\nu$ is the jump measure. Throughout this paper we assume that the jump activity is infinite and $\nu$ is absolutely continuous with respect to Lebesgue measure, i.e.\
\begin{align} \label{eq:nuinf}
    \nu(\real^d \setminus \left\{0\right\}) = \infty
    \quad\text{and}\quad
    \nu(dx)=\nu(x)\,dx.
\end{align}

The generator $L$ is a non-local self-adjoint pseudo-differential operator given by
\begin{align} \label{def:gen}
    \mathcal{F}[L u](\xi) = - \psi(\xi) \mathcal{F} u(\xi), \quad \xi \in \real^d, \quad u \in \mathcal D(L):=\left\{v \in L^2(\real^d): \psi \mathcal{F} v \in L^2(\real^d) \right\},
\end{align}
Prominent examples of non-local operators (and related jump processes) are \emph{fractional Laplacians} $L=-(-\Delta)^{\alpha/2}$, $\alpha \in (0,2)$ (\emph{isotropic $\alpha$-stable processes}) and \emph{quasi-relativistic operators} $L=-(-\Delta+m^{2/\alpha})^{\alpha/2} + m$, $\alpha \in (0,2)$, $m>0$ (\emph{isotropic relativistic $\alpha$-stable processes}) which play an important role in mathematical physics. These and further examples are discussed in Section~\ref{sec5}.

Under \eqref{eq:nuinf} the process $(X_t)_{t \geq 0}$ is a strong Feller process, i.e.\ $P_t$ maps bounded measurable functions into continuous functions; equivalently, this means that its one-dimensional distributions are absolutely continuous with respect to Lebesgue measure, i.e.\ there exists a transition density $p_t(x,y) = p_t(y-x)$ such that $\pr^0(X_t \in E) = \int_E p_t(x)\,dx$ for every Borel set $E \subset \real^d$, see e.g.~\cite[Th.~27.7]{bib:Sat}. Further details on the existence and regularity of transition densities can be found in~\cite{bib:KSch}.

We need the following additional regularity assumptions \eqref{A1}--\eqref{A2} for the density $\nu(x)$ of the L\'evy measure and the transition density $p_t(x)$.
\begin{itemize}
\item[\bfseries(\namedlabel{A1}{A1})] \textbf{L\'evy density.}
    There exists a profile function $f:(0,\infty) \to (0,\infty)$ such that
    \begin{enumerate}
    \item
    there is a constant $C_1 \geq 1$ such that $C_1^{-1} f(|x|) \leq \nu(x) \leq C_1 f(|x|)$ for all $x \in \real^d \setminus \left\{0\right\}$;
    \item
    $f$ is decreasing and $\lim_{r\to \infty}f(r) = 0$;
    \item
    there is a constant $C_2 \geq 1$ such that $f(r) \leq C_2 f(r+1)$ for all $r \geq 1$;
    \item
    $f$ has the \emph{direct jump property}: there exists a constant $C_3 > 0$ such that
    \begin{gather*}
        \int_{\substack{|x-y| > 1\\|y|> 1}} f(|x-y|) f(|y|)\,dy \leq C_3 f(|x|), \quad |x| \geq 1.
    \end{gather*}
    \end{enumerate}
\end{itemize}
Some parts of \eqref{A1} are redundant but we prefer to keep it that way for clarity and reference purposes. For instance, under (\ref{A1}.b) the condition (\ref{A1}.d) implies (\ref{A1}.c). Similarly, in (\ref{A1}.b), $\lim_{r\to\infty} f(r) = 0$ readily follows from the monotonicity of $f$ and (\ref{A1}.a).

The convolution property (\ref{A1}.d) is fundamental for our investigations. It has a very suggestive probabilistic interpretation: \emph{the probability to move from $0$ to $x$ in ``two large jumps in a row'' is smaller than with a ``single direct jump''.}
For this reason we call this condition the \emph{direct jump property}.\footnote{Previously \cite{KL2015,bib:KKL2018} this condition has also been called \emph{jump paring condition} but we prefer the present name as it captures the probabilistic meaning in a more concise way.}
\begin{itemize}
\item[\bfseries(\namedlabel{A2}{A2})] \textbf{Transition density of the free process.}
    The function $(t,x) \mapsto p_t(x)$ is continuous on $(0,\infty) \times \real^d$ and there exists some $t_{\bee} >0$ such that the following conditions hold.
    \begin{enumerate}
    \item
    There are constants $C_4, C_5 > 0$ such that
    \begin{gather*}
        p_t(x) \leq C_4 \left(\left[e^{C_5 t} f(|x|)\right]\wedge 1\right),
        \quad x \in \real^d \setminus \left\{0\right\},\; t \geq t_{\bee};
    \end{gather*}
    \item
    For every $r \in (0,1]$ we have
    \begin{gather*}
        \sup_{t \in (0,t_{\bee}]} \sup_{r \leq |x| \leq 2} p_t(x) < \infty.
    \end{gather*}
    \end{enumerate}
\end{itemize}
An easy-to-check sufficient condition for the time-space continuity of the density $p_t(x)$ is
\begin{gather*}
    e^{-t\psi(\xi)}\in L^1(\real^d,d\xi) \quad \text{for all\ \ } t > 0,
\end{gather*}
see Lemma \ref{lem:density} in Section \ref{sec2} and the discussion following that lemma. Notice that this condition trivially holds as soon as $\psi$ has a nondegenerate Gaussian part, i.e.\ $\det A \neq 0$ in \eqref{eq:Lchexp}. The other assumptions in \eqref{A2} govern the asymptotic behaviour of the transition density $p_t(x)$ for the free operator $L$ and they should be seen as the minimal regularity requirement for the density of the free semigroup. The upper bound on $p_t(x)$ in (\ref{A2}.a) is known for a wide range of operators $L$ whose L\'evy measures satisfy \eqref{A1}, cf.~\cite{bib:BGR, bib:KS2013, bib:KS2017, Knop, KnopKul}. Similarly, the condition (\ref{A2}.b) is a small time off-diagonal boundedness property which holds for a large class of semigroups, see e.g.~\cite[Th.~5.6 and Rem.~5.7]{bib:GS2017}. Under (\ref{A2}.a) we know that $\sup_{x \in \real^d} p_{t_{\bee}}(x) = p_{t_{\bee}}(0) < \infty$ --- this extends to all $t \geq t_{\bee}$ --- and the function $p_t(\cdot)$ is smooth for all $t > t_{\bee}$; this is a consequence of the fact that $p_t$ is the convolution of $p_{t-t_{\bee}} \in L^1(\real^d)$ and $p_{t_{\bee}} \in L^{\infty}(\real^d)$.

We will now introduce the class of potentials which we consider in this paper.
\begin{itemize}
\item[\bfseries(\namedlabel{A3}{A3})] \textbf{Confining potential.}
    Let $V \in L^{\infty}_{\loc}(\real^d)$ be such that $\lim_{|x|\to\infty} V(x) = \infty$ and assume that there exist constants $C_6 \geq 1$ and $R_0 >0$, and a profile function $g:[0,\infty) \to (0,\infty)$ such that
    \begin{enumerate}
    \item
    $g|_{[0,R_0)} \equiv 1$ and $C_6^{-1} g(|x|) \leq V(x) \leq C_6 g(|x|)$, $|x| \geq R_0$;
    \item%
        $g$ is increasing on $[R_0,\infty)$;
    \item%
        there exists a constant $C_7 \geq 1$ such that $g(r+1) \leq C_7 g(r), \quad r \geq R_0$.
    \end{enumerate}
\end{itemize}
The uniform growth condition (\ref{A3}.c) excludes profiles growing like $\exp\left(r^2\right)$ or $\exp\left(e^r\right)$, but exponentially and slower growing potentials --- for example growth orders $\log \log r$, $\log(r)^{\beta}$, $r^{\beta}$ and $e^{\beta r}$, with $\beta>0$ --- are admissible.

Under \eqref{A1}--\eqref{A3} $H=-L+V$ is well defined, bounded below, and self-adjoint in $L^2=L^2(\real^d,dx)$. Our standard reference for Schr\"{o}dinger operators is the monograph \cite{bib:DC} by Demuth and van Casteren. The corresponding Schr\"odinger semigroup $\left\{e^{-tH}: t \geq 0 \right\}$ has the following probabilistic \emph{Feynman--Kac representation}
\begin{gather*}
    e^{-tH} f(x)
    = U_t f(x)
    := \ex^x\left[e^{-\int_0^t V(X_s)\,ds} f(X_t) \right],
    \quad f \in L^2(\real^d),\; t>0,
\end{gather*}
which allows us to use methods from probability theory. Under \eqref{A3} the semigroup operators $U_t$, $t>0$, are compact and the spectrum of $H$ consists of eigenvalues of finite multiplicity without accumulation points. The ground state eigenvalue $\lambda_0 := \inf \spec(H)$ is simple and the corresponding --- unique (when normalized) and positive --- $L^2$-eigenfunction is denoted by $\varphi_0$. The operators $U_t$ have integral kernels, i.e.
\begin{gather*}
    U_t f(x)
    = \int_{\real^d} u_t(x,y) f(y)\,dy,
    \quad f \in L^2(\real^d),\; t>0,
\end{gather*}
and the kernels $u_t(\cdot,\cdot)$, $t>0$, are continuous, positive and symmetric functions on $\real^d \times \real^d$. We call the kernel $u_t(x,y)$ the \emph{Schr\"{o}dinger heat kernel}. Because of (\ref{A2}.a), $u_t(x,y)$ is a bounded function for all $t \geq t_{\bee}$. Further properties of $u_t(x,y)$ will be discussed in detail in the next section.

The conditions \eqref{A1}--\eqref{A3} are needed to prove Theorem~\ref{th:main_th}. If we make a further structural assumption on the profile function $g$, we can improve the results of Theorem~\ref{th:main_th}, splitting the estimates in two distinct scenarios: the \emph{aIUC regime} (including the \emph{IUC regime}) and the \emph{non-aIUC regime}, see Remark~\ref{rem:regimes}.
\begin{itemize}
\item[\bfseries(\namedlabel{A4}{A4})]
    $V$ is a potential satisfying \eqref{A3} with the profile $g$ and $R_0 >0$ such that $f(R_0) < 1$ and
    \begin{align} \label{eq:h_log}
        g(r) = h\left(|\log f(r)|\right), \quad r \geq R_0,
    \end{align}
    for some increasing function $h:[|\log f(R_0)|,\infty) \to (0,\infty)$ such that $h(s)/s$ is monotone.
\end{itemize}
Examples of such profiles $h$ can be found among functions which are regularly varying at infinity, see \cite{bib:BGT}. Let us remark that \eqref{A4} is, when we compare it with existing results on the asymptotic behaviour of aIUC Schr\"odinger semigroups~\cite{bib:KL15, bib:KKL2018}, a very natural condition.

The large time estimates of the heat kernel $u_t(x,y)$ in the the aIUC vs.\ the non-aIUC regime are substantially different. This is due to the intricate asymptotic behaviour of the function $F(\tau,x,y)$. In the non-aIUC regime the following result holds true (Corollary~\ref{cor:cor_prog}): \emph{For every confining potential -- no matter how slowly $V$ grows at infinity -- there is an increasing function $r:(0,\infty) \to (0,\infty]$ such that $\lim_{t\to\infty}r(t) = \infty$} (cf.~Lemma~\ref{lem:non-aIUC}) \emph{and such that the following estimate holds: There is a constant $C \geq 1$ such that for sufficiently large values of $t$ we have}
\begin{gather*}
    \frac{1}{C} e^{-\lambda_0 t} \frac{f(|x|)}{g(|x|)} \frac{f(|y|)}{g(|y|)}
    \leq u_t(x,y)
    \leq C e^{-\lambda_0 t} \frac{f(|x|)}{g(|x|)}\frac{f(|y|)}{g(|y|)},
    \quad |x|,|y| > R, \;  |x| \wedge |y| < r(t).
\end{gather*}
These estimates are equivalent to saying that there is a constant $\widetilde{C} \geq 1 $ such that for sufficiently large values of $t$ we have
\begin{gather*}
    \frac{1}{\widetilde{C}} e^{-\lambda_0 t} \varphi_0(x) \varphi_0(y)
    \leq u_t(x,y)
    \leq \widetilde{C} e^{-\lambda_0 t} \varphi_0(x) \varphi_0(y),
    \quad |x| \wedge |y| < r(t).
\end{gather*}
The estimates for $u_t(x,y)$ are essentially different if $|x|,|y| > r(t)$.  This means that the regularity of a non-aIUC Schr\"odinger semigroup improves as soon as the time parameter $t$ increases; note that the constants $C, \widetilde{C}$ do not depend on $t$. We believe that this surprising property has not been observed before. In analogy to the \emph{asymptotic} IUC property, we propose to call this property \emph{progressive intrinsic ultracontractivity}, \emph{pIUC}, for short. It seems that pIUC is a regularity property for compact semigroup, in general, and merits a formal definition.

\begin{definition}
\label{def:piuc}
    Let $\left\{U_t:t \geq 0\right\}$ be a semigroup of compact operators on $L^2(\real^d)$ with integral kernels $u_t(x,y)$, ground state eigenvalue $\lambda_0$ and ground state eigenfunction $\varphi_0$. The semigroup $\left\{U_t:t \geq 0\right\}$ is said to be
    \begin{enumerate}
    \item\label{piuc-a}
    \emph{intrinsically ultracontractive} (IUC) if for every $t>0$ there exists a constant $C>0$ such that
    \begin{gather*}
        u_t(x,y) \casymp{C} e^{-\lambda_0 t} \varphi_0(x) \varphi_0(y),
        \quad  x, y \in \real^d, \;t >0.
    \end{gather*}

    \item\label{piuc-b}	
    \emph{asymptotically intrinsically ultracontractive} (aIUC) if there exist some $t_0>0$ and a constant $C>0$ such that
    \begin{gather*}
        u_t(x,y) \casymp{C} e^{-\lambda_0 t} \varphi_0(x) \varphi_0(y),
        \quad x, y \in \real^d, \;t \geq t_0.
    \end{gather*}

    \item\label{piuc-c}
    \emph{progressively intrinsically ultracontractive} \textup{(pIUC)} if there exist some $t_0>0$, an increasing function $r:[t_0,\infty) \to (0,\infty]$ such that $r(t) \to \infty$, and a constant $C>0$ such that
    \begin{gather*}
        u_t(x,y) \casymp{C} e^{-\lambda_0 t} \varphi_0(x) \varphi_0(y),
        \quad |x| \wedge |y| < r(t), \;t \geq t_0.
    \end{gather*}
    \end{enumerate}
\end{definition}
Note that IUC always implies aIUC, and aIUC always implies pIUC (with threshold function $r \equiv \infty)$.

Our paper is organized in the following way. Section~\ref{sec2} contains some basic probabilistic potential theory which is needed in the subsequent sections. In~\ref{sec3:djp} we discuss the properties of the profile function $f$ from \eqref{A1}, in particular we provide some sufficient conditions such that the direct jump property (\ref{A1}.d) holds. Section~\ref{sec3:decomposition} is about the basic decomposition of the trajectories of the free process: we use this to obtain estimates of the localized (in space) Feynman--Kac representation. These bounds are essential for the upper and lower estimates in Sections~\ref{sec4:upper} and~\ref{sec4:lower}. They are combined to give sharp two-sided estimates for $u_t(x,y)$, see Section~\ref{sec4:sharp}. Theorem~\ref{th:th1} is an extended version of our main Theorem~\ref{th:main_th}. Based on these estimates we discuss several applications: we study the decay properties of the functions $U_t \I_{\real^d}(x)$ (Section~\ref{sec4:Ut}),
we show that the present estimates are potent enough to recover known results on aIUC (Section~\ref{sec4:aIUC}), and finally we show (Section~\ref{sec4:spectre}) that --- for $t\gg 1$ --- the notions of ``operator with finite heat content'', ``trace-class operator'' and ``Hilbert--Schmidt operator'' coincide in this setting, and they are equivalent to the condition that $\int_{|x|>R} e^{-t V(x)} \,dx < \infty$, for some $R, t>0$. Our second main result is presented as Corollary~\ref{cor:cor_prog} in Section~\ref{sec5:piuc}. This is about improved heat-kernel estimates in the pIUC regime. This is only possible if we know about the dependence of the growth at infinity of $f$ (resp.\ the jump density $\nu$) and $g$ (resp.\ the potential $V$). Here we need the additional assumption \eqref{A4}, see Section~\ref{sec5:log}. The last two sections contain examples: Section~\ref{sec5:doubling} is about doubling L\'evy measures with $f$ of polynomial type $f(r) = r^{-d-\alpha}(e\vee r)^{-\gamma}$ while Section~\ref{sec5:exp} considers exponentially decaying L\'evy measures with $f$ of the form $f(r) = r^{-\gamma} e^{-\kappa r}$, $r\geq 1$.

\begin{notation}
    Throughout the paper lower case letters $c,c_1,c_2\dots$ denote generic constants; within a proof we indicate changes to constants by increasing their running index. Upper case letters $C_1,\dots,C_7$ (they appear in the assumptions \eqref{A1}--\eqref{A3}) and $C_{n.m}$ ($n.m$ refers to the Theorem, Lemma etc.\ where $C_{n.m}$ appears for the first time) denote important constants. This is for cross-referencing and to help keeping track of the dependence of constants in our calculations. The constant $\tilde C_2$ is from \eqref{eq:loc_comp_f1} on page~\pageref{eq:loc_comp_f1}; it serves as an alternative of the constant $C_2$ in \eqref{A1} and it determines the growth of the functions of class $\cC$ defined in \eqref{eq:W_prop}.

    Our basic assumptions \eqref{A1}, \eqref{A2}, \eqref{A3} and \eqref{A4} can be found on pages~\pageref{A1}--\pageref{A4}. The constant $t_\bee$ is from \eqref{A2} and $R_0$ is from \eqref{A3};

    Two-sided estimates between functions are sometimes indicated by
    \begin{gather*}
        f(x)\asymp g(x),\; x\in A
        \iff
        \exists C\geq 0 \;\forall x\in A \::\: C^{-1}f(x)\leq g(x)\leq C f(x).
    \end{gather*}
    The notation $f(x)\casymp{C}g(x)$ is used to highlight the comparison constant $C$.

    By $\lambda_0$ and $\varphi_0$ we denote the ground-state eigenvalue and eigenfunction, see page~\pageref{def:piuc}. The Fourier transform and its inverse are defined as
    \begin{gather*}
        \mathcal{F}u(\xi) = (2\pi)^{-d}\int_{\real^d} e^{-i\xi x} u(x)\,dx
        \et
        \mathcal{F}^{-1}v(x) = \int_{\real^d} e^{ix\xi} v(\xi)\,d\xi.
    \end{gather*}
    As usual, we write  $\ex(Y; A,B) := \int_{A\cap B} Y\,d\Pp$, $|B|$ denotes the Lebesgue measure of a Borel set $B\subset\real^d$, $a\wedge b$ and $a\vee b$ are the minimum and maximum of $a$ and $b$.
\end{notation}

\section{Preliminaries}\label{sec2}

We begin with a sufficient condition for the joint continuity of the functions $p_t(x)$ on the sets $(t_0,\infty) \times \real^d$, $t_0>0$.
\begin{lemma}\label{lem:density}
    Let $(X_t)_{t\geq 0}$ be a L\'evy process with values in $\real^d$ and characteristic exponent $\psi:\real^d\to \mathds C$. If there exists some $t_0>0$ such that $e^{-t\psi(\xi)}\in L^1(\real^d,d\xi)$ for all $t\geq t_0$, then $X_t$ admits for $t\geq t_0$ a probability density $p_t(x)$ such that $(t,x)\mapsto p_t(x)$ is continuous for all $(t,x)\in (t_0,\infty)\times\real^d$.
\end{lemma}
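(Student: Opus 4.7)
The plan is to construct the density for each $t\ge t_0$ by Fourier inversion, and then upgrade pointwise continuity in $x$ to joint continuity in $(t,x)$ via a dominated convergence argument.

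I would first invoke the L\'evy--Khintchine representation to recall that $\re\psi(\xi)\ge 0$, so $|e^{-t\psi(\xi)}|=e^{-t\re\psi(\xi)}\le 1$, and that $\xi\mapsto e^{-t\psi(\xi)}$ is the characteristic function of $X_t$ (starting at the origin). Combined with the hypothesis that $e^{-t\psi}\in L^1(\real^d,d\xi)$ for all $t\ge t_0$, the Fourier inversion theorem for characteristic functions produces a bounded, continuous probability density
\begin{gather*}
p_t(x) = (2\pi)^{-d}\int_{\real^d} e^{-i\xi\cdot x}\,e^{-t\psi(\xi)}\,d\xi,\quad t\ge t_0,
\end{gather*}
for $X_t$. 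The only non-cosmetic point here is to check that this inverse Fourier transform genuinely coincides with the law of $X_t$, which one verifies by pairing against Schwartz test functions and invoking the uniqueness theorem for characteristic functions.

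To obtain joint continuity on $(t_0,\infty)\times\real^d$, I would fix an arbitrary $(t_\ast,x_\ast)$ with $t_\ast>t_0$ and pick an auxiliary parameter $t_1\in(t_0,t_\ast)$. The key pointwise bound is
\begin{gather*}
\bigl|e^{-i\xi\cdot x}\,e^{-t\psi(\xi)}\bigr| = e^{-t\re\psi(\xi)} \le e^{-t_1\re\psi(\xi)} = \bigl|e^{-t_1\psi(\xi)}\bigr|\in L^1(\real^d,d\xi),
\end{gather*}
valid for every $t\ge t_1$ and every $x\in\real^d$. Since the integrand is jointly continuous in $(t,x,\xi)$ and uniformly dominated by an $L^1$ majorant independent of $(t,x)$ on the neighbourhood $[t_1,\infty)\times\real^d$ of $(t_\ast,x_\ast)$, the dominated convergence theorem yields $p_{t_n}(x_n)\to p_{t_\ast}(x_\ast)$ along any convergent sequence $(t_n,x_n)\to(t_\ast,x_\ast)$.

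There is no serious obstacle in this proof: the whole argument is a textbook application of Fourier inversion together with dominated convergence, and the hypothesis that $e^{-t\psi}\in L^1$ is exactly what both steps need -- existence of a continuous density at each level $t$ and a time-uniform integrable majorant that enables the continuity argument. The only place where one has to pause is the identification of the candidate inverse Fourier transform with the actual density of $X_t$, but this is routine once the Fourier inversion theorem is in place.
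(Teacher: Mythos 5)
Your proof is correct and follows essentially the same route as the paper: Fourier inversion to obtain $p_t(x)=(2\pi)^{-d}\int e^{-i\xi\cdot x}e^{-t\psi(\xi)}\,d\xi$, followed by dominated convergence using the integrable majorant $e^{-s\re\psi(\xi)}$ for a level $s$ at or below the target time. The only difference is cosmetic: the paper runs an explicit $\epsilon$--$\delta$ argument splitting $|p_t(x)-p_u(y)|$ into spatial and temporal increments, whereas you apply dominated convergence directly along sequences $(t_n,x_n)\to(t_*,x_*)$ with the majorant taken at an intermediate level $t_1\in(t_0,t_*)$, which is a perfectly valid and slightly more streamlined packaging of the same idea.
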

\begin{proof}
    Fix $\epsilon>0$. Since $\xi\mapsto e^{-t_0\psi(\xi)}$ is integrable and $|e^{-t_0\psi}| = e^{-t_0\re\psi}$, we can use the dominated convergence theorem to ensure that for some $\delta>0$
    \begin{gather*}
        2(2\pi)^{-d} \int_{\real^d} e^{-t_0\re\psi(\xi)} \left|\sin\tfrac 12 \xi\cdot(x-y)\right| d\xi
        < \epsilon \text{\ \ for all $x,y\in\real^d$, $|x-y|<\delta$},\\
        (2\pi)^{-d} \int_{\real^d} e^{-t_0\re\psi(\xi)} \left| e^{-(t-t_0)\re\psi(\xi)} - e^{-(u-t_0)\re\psi(\xi)}\right| d\xi
        < \epsilon \text{\ \ for all $t,u > 0$, $|t-u|<\delta$}.
    \end{gather*}
    Using Fourier inversion we get
    \begin{gather*}
        p_t(x)
        = (2\pi)^{-d}\int_{\real^d} e^{-t\psi(\xi)} e^{-ix\cdot\xi}\,d\xi,\quad t\geq t_0,\; x\in\real^d,
    \end{gather*}
    which shows the existence of a transition density. For $x,y\in\real^d$ and $t>t_0$ we get, on the one hand
    \begin{align*}
        \left|p_t(x)-p_t(y)\right|
        &\leq (2\pi)^{-d} \int_{\real^d} e^{-t\re\psi(\xi)} \left|e^{-ix\cdot\xi}-e^{-iy\cdot\xi}\right| d\xi\\
        &= 2(2\pi)^{-d} \int_{\real^d} e^{-t\re\psi(\xi)} \left|\sin \tfrac 12 (x-y)\cdot\xi\right| d\xi\\
        &\leq 2(2\pi)^{-d} \int_{\real^d} e^{-t_0\re\psi(\xi)} \left|\sin \tfrac 12 (x-y)\cdot\xi\right| d\xi;
    \end{align*}
    on the other hand, we have for $y\in\real^d$ and $t,u>t_0$
    \begin{align*}
        \left|p_t(y)-p_u(y)\right|
        \leq (2\pi)^{-d} \int_{\real^d} e^{-t_0\re\psi(\xi)} \left|e^{-(t-t_0)\re\psi(\xi)} - e^{-(u-t_0)\re\psi(\xi)}\right| d\xi.
    \end{align*}
    If $x,y\in\real^d$ and $t,u>t_0$ satisfy $|x-y|+|t-u|<\delta$, we finally get
    \begin{gather*}
        \left|p_t(x) - p_u(y)\right|
        \leq \sup_{t\geq t_0} \left|p_t(x) - p_t(y)\right| + \sup_{y\in\real^d} \left|p_t(y) - p_u(y)\right|
        < 2\epsilon,
    \end{gather*}
    proving joint continuity of $(t,x)\mapsto p_t(x)$ on $(t_0,\infty)\times\real^d$.
\end{proof}

\begin{remark}\label{rem:density}
    The assumption $e^{-t\psi(\xi)}\in L^1(d\xi)$ for $t \geq t_0$ already appears in \cite{bib:KSch}. A sufficient condition for this assumption is the Hartman-Wintner condition
    \begin{gather}
    \tag{$\mathrm{HW}_{1/t_0}$}
    \liminf_{|\xi|\to\infty} \frac{\re\psi(\xi)}{\log(1+|\xi|)} > \frac d{t_0}
    \end{gather}
    which stipulates that $\psi$ grows at infinity at least logarithmically.

    Note that the (one-sided) one-dimensional Gamma process has the exponent $\psi(\xi) = \frac 12\log(1+\xi^2) + i\arctan\xi$ and the transition density $p_t(x) = \Gamma(t)^{-1} x^{t-1} e^{-x}$, $x>0$. Clearly, $p_t(x)$ fails to be continuous on $(0,1)\times\real$, i.e.\ logarithmic growth of $\psi$ seems to be a rather sharp condition for the joint continuity of $(t,x)\mapsto p_t(x)$. Using this, one can also give an example of a symmetric L\'evy process on $\real$ satisfying our basic assumption \eqref{A1} but with a density which fails to be time-space continuous for small values of $t$. A similar picture is true for the one-dimensional symmetric Gamma process, whose transition density is given by $q_t(x) = \int_0^{\infty} \frac{1}{\sqrt{4\pi s}} e^{-|x|^2/(4s)}\, p_t(s)\, ds$. For details and further references see
    \cite[Example 2.3]{bib:KSch}.

    If we assume $\mathrm{HW}_\infty$, i.e.\ $t_0=0$, the density $p_t(x)$ is already smooth in the variable $x$, cf.\ \cite[Theorem~2.1]{bib:KSch}, indicating that a Hartman--Wintner condition cannot be optimal.
\end{remark}

We now collect a few basic properties of the Schr\"{o}dinger semigroup $\left\{U_t: t \geq 0\right\}$ and facts from potential theory for the free process $(X_t)_{t \geq 0}$. We begin with some fundamental properties of the Schr\"odinger heat kernel $u_t(x,y)$ which will be needed in the sequel.

\begin{lemma}\label{new}
    Let $H = -L+V$ be the Schr\"{o}dinger operator with confining potential $V$ such that \eqref{A1}--\eqref{A3} hold. Denote by $u_t(x,y)$ the density of the operator $U_t = e^{-tH}$.
\begin{enumerate}
\item\label{new-a}
    For every $x,y \in \real^d$ and $t>0$ we have
    \begin{gather*}
        u_t(x,y) = \lim_{s \uparrow t} \ex^x\left[e^{-\int_0^s V(X_r)\,dr} \, p_{t-s}(y-X_s)\right].
    \end{gather*}

\item\label{new-b}
    For fixed $t>0$, $u_t(\cdot,\cdot)$ is a continuous and symmetric function on $\real^d \times \real^d$.

\item\label{new-c}
    For every $x,y \in \real^d$ and $t>0$ we have
    \begin{align}\label{eq:u_by_p}
        0<u_t(x,y) \leq e^{C_{\ref{new}} t}\, p_t(y-x),
    \end{align}
    where $C_{\ref{new}}:= - (\inf_{x \in \real^d} V(x) \wedge 0)$; in particular, all semigroup operators $U_t$, $t>0$, are positivity improving.

\item\label{new-d}
    For every $t \geq t_{\bee}$ we have $\sup_{x,y \in \real^d} u_t(x,y) < \infty$. In particular, $U_t:L^2(\real^d) \to L^{\infty}(\real^d)$ is a bounded operator for all $t \geq t_{\bee}$, that is the semigroup $\left\{U_t: t \geq 0\right\}$ is ultracontractive for $t \geq t_{\bee}$.
\end{enumerate}
\end{lemma}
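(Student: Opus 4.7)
My plan is to establish the four parts in the order \ref{new-a}, \ref{new-c}, \ref{new-d}, \ref{new-b}, with the Markov property of the free L\'evy process and the Feynman--Kac representation $U_t f(x) = \ex^x[e^{-\int_0^t V(X_r)\,dr} f(X_t)]$ providing the common backbone. For part \ref{new-a}, fix $t>0$ and define $k_s(x,y) := \ex^x[e^{-\int_0^s V(X_r)\,dr} p_{t-s}(y - X_s)]$ for $s \in (0,t)$. For any bounded measurable $f$, Fubini yields
$$\int k_s(x,y) f(y)\,dy = \ex^x\!\left[e^{-\int_0^s V(X_r)\,dr} P_{t-s}f(X_s)\right],$$
which, by the tower property and the Markov property at time $s$, equals $\ex^x[e^{-\int_0^s V(X_r)\,dr} f(X_t)]$. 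Since $V$ is bounded below, the integrand is dominated uniformly in $s\in(0,t)$, and dominated convergence as $s\uparrow t$ gives $\int k_s(x,y) f(y)\,dy \to U_t f(x) = \int u_t(x,y) f(y)\,dy$. To upgrade this weak statement to pointwise convergence at each $(x,y)$, I would combine it with the companion Markov identity $u_t(x,y) = \ex^x[e^{-\int_0^s V(X_r)\,dr}\, u_{t-s}(X_s, y)]$ and use the bound from \ref{new-c} below to sandwich $u_t(x,y)$ between $\liminf_{s\uparrow t} k_s(x,y)$ and $\limsup_{s\uparrow t} k_s(x,y)$.

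For part \ref{new-c}, I would start from the representation in \ref{new-a}: since $-V \leq C_{\ref{new}}$ along paths, the Chapman--Kolmogorov identity for the free process gives
$$k_s(x,y) \leq e^{C_{\ref{new}} s}\, \ex^x[p_{t-s}(y-X_s)] = e^{C_{\ref{new}} s}\! \int p_s(z-x)\, p_{t-s}(y-z)\,dz = e^{C_{\ref{new}} s}\, p_t(y-x),$$
and passing to the limit $s\uparrow t$ yields the claimed upper bound. Strict positivity follows from a localization argument: on the event $\{\sup_{r \leq s}|X_r - x| \leq 1\}$, local boundedness of $V$ keeps $e^{-\int_0^s V(X_r)\,dr}$ bounded away from zero, while $p_{t-s}(y - X_s) > 0$ on a subevent of positive probability by continuity and the strict positivity of the free density under \eqref{A1}--\eqref{A2}; this gives $k_s(x,y) > 0$ and hence $u_t(x,y)>0$, and positivity-improvement of $U_t$ then follows from the kernel representation. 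Part \ref{new-d} is an immediate corollary: for $t \geq t_{\bee}$, \ref{new-c} combined with (\ref{A2}.a) yields $u_t(x,y) \leq C_4\, e^{C_{\ref{new}} t}$ uniformly in $x,y$; Cauchy--Schwarz then gives $|U_tf(x)| \leq \|u_t(x,\cdot)\|_2 \|f\|_2$ with $\|u_t(x,\cdot)\|_2^2 \leq \|u_t\|_\infty \int u_t(x,y)\,dy \leq C_4\, e^{2C_{\ref{new}} t}$, since $\int u_t(x,y)\,dy \leq e^{C_{\ref{new}} t}$ by comparison with $p_t$.

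The remaining part \ref{new-b} is where I expect the main technical difficulty. Symmetry $u_t(x,y) = u_t(y,x)$ reduces to self-adjointness of $U_t$ on $L^2$, which in turn follows from symmetry of the free process together with real-valuedness of $V$. For joint continuity I would use the semigroup factorization $u_{2s}(x,y) = \int u_s(x,z)\, u_s(z,y)\,dz$ with $2s \leq t$, where the integrand is dominated by $e^{2 C_{\ref{new}} s}\, p_s(z-x)\, p_s(z-y)$ thanks to \ref{new-c}, and then invoke dominated convergence combined with the continuity of $p_s$ from \eqref{A2} (or Lemma~\ref{lem:density}). For small $t < 2t_{\bee}$ one iterates this decomposition or works directly with the limit formula from \ref{new-a}, where the uniform control of $e^{-\int_0^s V(X_r)\,dr}$ on paths staying in a bounded set, together with continuity of $(s,z)\mapsto p_s(z)$, allow one to pass to the limit. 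Justifying this last step uniformly in $t$, in particular taming the short-time singularity of $p_{t-s}$ as $s\uparrow t$, is the point on which I would expect to spend the most care.
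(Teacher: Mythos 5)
The paper does not prove this lemma at all: it states that the result is standard and refers to Demuth--van Casteren \cite{bib:DC} and Chung--Zhao \cite{bib:CZ}. So there is nothing to compare against line by line; your sketch has to be judged on its own. Parts \ref{new-c} and \ref{new-d} are fine as you present them (the Chapman--Kolmogorov computation for the upper bound, strict positivity of $p_\tau$ via Lemma~\ref{lem:pt_below} plus a convolution step near the origin, and the Cauchy--Schwarz argument for ultracontractivity), and the order (a)$\to$(c)$\to$(d)$\to$(b) is the natural one.

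Two points are genuinely incomplete. First, in \ref{new-b} your continuity argument is circular: to pass to the limit in $u_{2s}(x_n,y_n)=\int u_s(x_n,z)u_s(z,y_n)\,dz$ by dominated convergence you need $u_s(\cdot,z)$ to be continuous, which is exactly what you are trying to prove. The domination by $e^{2C_{\ref{new}}s}p_s(z-x)p_s(z-y)$ controls the tails but does not give pointwise convergence of the integrand. The standard repair is to prove that the limit in \ref{new-a} is locally uniform in $(x,y)$ (and in $s$ near $t$), so that continuity is inherited from the joint continuity of $(\tau,z)\mapsto p_\tau(z)$ in \eqref{A2}; your factorization then only serves to reduce to times $\geq t_{\bee}$ where $p$ is bounded. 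Second, and relatedly, your ``sandwich'' upgrade of the weak identity $\int k_s(x,y)f(y)\,dy\to U_tf(x)$ to pointwise convergence $k_s(x,y)\to u_t(x,y)$ is not actually an argument: before \ref{new-a} is established, $u_t(x,y)$ is only defined as an $L^2$-kernel (i.e.\ almost everywhere), so there is no pointwise object to sandwich. In \cite{bib:DC} the kernel is \emph{constructed} as $\lim_{s\uparrow t}k_s(x,y)$ --- one shows $s\mapsto k_s(x,y)$ is Cauchy as $s\uparrow t$, locally uniformly, using $|e^{-\int_s^{s'}V(X_r)\,dr}-1|$ estimates on paths confined to compacts --- and then verifies that this continuous limit is a version of the $L^2$-kernel via your weak identity. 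If you reorganize the proof along those lines, both gaps close simultaneously; as written, \ref{new-a} and \ref{new-b} are not yet proved.
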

Lemma~\ref{new} is a standard result and we refer for its proof and further details on Feynman--Kac semigropus to the monographs~\cite{bib:DC, bib:CZ}. It shows that the kernel $u_t(x,y)$ inherits its basic regularity properties from the transition densities $p_t(y-x)$ of the free L\'evy process.

Assumption \eqref{A3} guarantees that all semigroup operators $U_t$, $t>0$, are compact operators on $L^2(\real^d)$. In particular, there is a ground state, i.e.\ $\lambda_0 := \inf \spec(H)$ is an eigenvalue with multiplicity one and there exists a unique eigenfunction $H \varphi_0 = \lambda_0 \varphi_0$ --- hence $U_t \varphi_0 = e^{-\lambda_0 t} \varphi_0$, $t>0$ --- where $\varphi_0 \in L^2(\real^d)$ and $\left\|\varphi_0\right\|_2=1$; Lemma~\ref{new}.\ref{new-d} ensures that $\varphi_0 \in L^{\infty}(\real^d)$. Moreover, it is known that $\left\{U_t: t \geq 0\right\}$ has the strong Feller property, i.e.\ $U_t (L^{\infty}(\real^d)) \subset C_b(\real^d)$ for $t>0$, which implies that $\varphi_0$ has a version in $C_b(\real^d)$ and $U_t \varphi_0(x) = e^{-\lambda_0 t} \varphi_0(x)$ has a pointwise meaning. By Lemma~\ref{new}.\ref{new-c}, we even have $\varphi_0(x) > 0$ for all $x \in \real^d$.

We denote by $p_D(t,x,y)$ the transition density of the free process $(X_t)_{t \geq 0}$ killed upon exiting a bounded, open set $D \subset \real^d$; it is given by the Dynkin-Hunt formula
\begin{align}\label{eq:HuntF}
    p_D(t,x,y)
    = p_t(y-x) - \ex^x\left[p_{t-\tau_D}(y-X_{\tau_D});\; \tau_D < t\right],
    \quad x,y \in D,
\end{align}
where
\begin{gather*}
    \tau_D = \inf\left\{t \geq 0: X_t \notin D\right\}
\end{gather*}
is the first exit time of the process $X$ from the set $D$; as usual, we set $p_D(t,x,y) = 0$ if $x \notin D$ or $y \notin D$. Hence,
\begin{align} \label{eq:killed_sem}
    \ex^x\left[f(X_t);\; t < \tau_D\right] = \int_D f(y) p_D(t,x,y)  \,dy,  \quad x \in D, \; t>0,
\end{align}
for every bounded or nonnegative Borel function $f$ on $D$.  The Green function of the process $X$ in $D$ is given by $G_D(x,y)= \int_0^{\infty} p_D(t,x,y) \,dt$. If $D=B_r(0)$, $r>0$, then we denote by $\mu_0(r)$ and $\psi_{0,r} \in L^2(B_r(0))$ the ground state eigenvalue and eigenfunction of the process killed upon leaving $B_r(0)$. It is known that $\mu_0(r)>0$ is the smallest positive number and $\psi_{0,r}$ is the unique $L^2$-function with $\left\|\psi_{0,r}\right\|_2=1$ such that
\begin{gather*}
    \int_{B_r(0)} p_{B_r(0)}(t,x,y) \psi_{0,r}(y)\,dy
    = e^{-\mu_0(r) t} \psi_{0,r}(x), \quad t>0,\; x \in B_r(0).
\end{gather*}
This equality entails that $\psi_{0,r}$ is bounded on $B_r(0)$, continuous around $0$ (see e.g.\ \cite[proof of Th.~3.4: Claim 1]{bib:SchW}) and $\psi_{0,r}(0) > 0$.

The kernel $\nu(z-X_{s-}(\omega))\,dz\,ds$ is the L\'evy system for $(X_t)_{t \geq 0}$; it is uniquely characterized by the identity
\begin{align} \label{eq:LSF}
    \ex^x \sum_{\substack{s \in (0,t]\\|\Delta X_s| \neq 0}} f(s,X_{s-},X_s)
    = \ex^x \int_0^t \int_{\real^d} f(s,X_{s-},z) \nu(z-X_{s-}) \,dz\,ds,
\end{align}
where $f:[0,\infty) \times \real^d \times \real^d \to [0,\infty)$. If we use $f(s,y,z) = \I_I(s) \I_E(y) \I_F(z)$, where $I$ is a bounded interval, and $E \subset D$, $F \subset D^c$ are Borel subsets of $\real^d$ with $\dist(E, F) >0$, the functional
\begin{gather*}
    M_t := \sum_{s \in (0,t]} f(s,X_{s-},X_s) - \int_0^t \int_{\real^d} f(s,X_{s-},z) \nu(z-X_{s-}) \,dz\,ds
\end{gather*}
is a uniformly integrable martingale, see e.g. \cite[Chapter II.1d, II.2a, II.4c]{bib:JS}. By a stopping argument we get
\begin{gather}\label{eq:IkWa}
    \pr^x(\tau_D \in dt, X_{\tau_D - } \in dy, X_{\tau_D} \in dz)
    = p_D(t,x,y) \,dt \, \I_{\{|z-y|>0\}}(y,z)\,\nu(z-y)\,dy \,dz,
\end{gather}
on $(0,\infty) \times D \times (\overline D)^c$. This is usually called \emph{Ikeda-Watanabe formula}, see~\cite[Th.~1]{bib:IW} for the original version. We will use this formula mainly in the following setting: for every $\vartheta>0$ and every bounded or non-negative Borel function $h$ on $\real^d$ such that $\dist(\supp h, D) >0$, one has
\begin{align}\label{eq:IWF}
    \ex^x\left[e^{-\vartheta \tau_D} h(X_{\tau_D})\right]
    = \int_D \int_0^{\infty} e^{-\vartheta t}\, p_D(t,x,y) \,dt \int_{D^c} h(z) \nu(z-y) \,dz\,dy,
    \quad x \in D.
\end{align}

We will also need the concept of $(X,\vartheta)$-harmonic functions, $\vartheta >0$. A Borel function $f$ on $\real^d$ is called \emph{$(X,\vartheta)$-harmonic} in an open set $D \subset \real^d$ if
\begin{align}\label{def:harm}
    f(x) = \ex^x\left[e^{-\vartheta \tau_U} f(X_{\tau_U});\; \tau_U < \infty\right], \quad x \in U,
\end{align}
for every open (possibly unbounded) set $U$ such that $\overline{U}\subset D$; $f$ is called \emph{regular $(X,\vartheta)$-harmonic} in $D$, if \eqref{def:harm} holds for $U=D$.
We will always assume that the expected value in \eqref{def:harm} is absolutely convergent. By the strong Markov property every regular $(X,\vartheta)$-harmonic function in $D$ is $(X,\vartheta)$-harmonic in $D$.

The next lemma provides a uniform estimate for $(X,\vartheta)$-harmonic functions which is often called \emph{boundary Harnack inequality}, see~\cite[Lem.~3.2(b) and Th.~3.5]{bib:BKK}.
\begin{lemma}\label{lem:bhi}
    Assume \textup{(\ref{A1}.a,b,c)} and \eqref{A2}. There exists a universal constant $C>0$ such that
    \begin{gather*}
        h(y) \leq \frac{C}{\vartheta} \int_{B_{1/2}^c(x)} h(z) \nu(x-z) \,dz, \quad y \in D \cap B_{1/4}(x)
    \end{gather*}
    holds for all $x \in \real^d$, all open sets $D \subset B_1(x)$ and any nonnegative, regular $(X,\vartheta)$-harmonic function $h$ in $D$ such that $h$ vanishes in $B_1(x) \setminus D$.
\end{lemma}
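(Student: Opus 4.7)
My strategy is to combine the Ikeda--Watanabe exit formula~\eqref{eq:IWF} with a pointwise comparison of the L\'evy density $\nu$, in the spirit of the boundary-Harnack approach of \cite{bib:BKK}. Since $h$ is regular $(X,\vartheta)$-harmonic in $D$, vanishes on $B_1(x)\setminus D$, and $D\subset B_1(x)$, formula \eqref{eq:IWF} gives for $y\in D$
\begin{align*}
h(y)=\int_{B_1(x)^c} h(z)\,K_D^\vartheta(y,z)\,dz,
\qquad K_D^\vartheta(y,z):=\int_D G_D^\vartheta(y,w)\,\nu(z-w)\,dw,
\end{align*}
with $G_D^\vartheta(y,w):=\int_0^\infty e^{-\vartheta t}\,p_D(t,y,w)\,dt$. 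Because $h\geq 0$ and $B_1(x)^c\subset B_{1/2}(x)^c$, the lemma follows once we establish the pointwise bound
\begin{align*}
K_D^\vartheta(y,z)\leq \tfrac{C}{\vartheta}\,\nu(x-z),\qquad y\in D\cap B_{1/4}(x),\ z\in B_1(x)^c. \tag{$\ast$}
\end{align*}
Tonelli's theorem yields $\int_D G_D^\vartheta(y,w)\,dw=\vartheta^{-1}\ex^y[1-e^{-\vartheta \tau_D}]\leq \vartheta^{-1}$, so $(\ast)$ reduces to a uniform upper bound on the ratio $\nu(z-w)/\nu(x-z)$ for $w\in D$.

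The deep-tail regime $|z-x|\geq 2$ is easy: for $w\in D\subset B_1(x)$ we have $|z-w|\geq |z-x|-1\geq 1$, whence the monotonicity of $f$ together with (\ref{A1}.a) and a single application of (\ref{A1}.c) give
\begin{align*}
\nu(z-w)\leq C_1 f(|z-w|)\leq C_1 f(|z-x|-1)\leq C_1C_2 f(|z-x|)\leq C_1^2 C_2\,\nu(x-z),
\end{align*}
so that $(\ast)$ is immediate in this regime.

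The main obstacle is the annular regime $1\leq |z-x|<2$, where $|z-w|$ may be arbitrarily small and no pointwise comparison of $\nu$ is available. Here I would exploit that $\nu(x-z)$ is bounded below on the annulus by $C_1^{-1}f(2)>0$, which reduces $(\ast)$ to the uniform bound $K_D^\vartheta(y,z)\leq C/\vartheta$. I plan to obtain this by following the argument of \cite[Lemma~3.2(b) and Theorem~3.5]{bib:BKK}: the small-time off-diagonal density estimate~(\ref{A2}.b) is used to control the convolution $\int_D G_D^\vartheta(y,w)\,\nu(z-w)\,dw$ over $w$ close to $z$ (and hence close to $\partial B_1(x)$, so at distance at least $1/2$ from $y\in B_{1/4}(x)$), while the interior Harnack transfer moves estimates for the resolvent from $y$ to a fixed reference point. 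This annular step is the technical heart of the argument; combining it with the elementary deep-tail estimate above establishes $(\ast)$ and completes the proof.
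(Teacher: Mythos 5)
Your deep-tail estimate (for $|z-x|\geq 2$) is fine: there $|z-w|\geq |z-x|-1\geq 1$ for $w\in D$, the comparison $\nu(z-w)\leq C_1^2C_2\,\nu(x-z)$ holds, and $\int_D G_D^\vartheta(y,w)\,dw=\int_0^\infty e^{-\vartheta t}\pr^y(\tau_D>t)\,dt\leq \vartheta^{-1}$ closes that case. The problem is the annular regime, and it is not merely a technicality you can defer to \cite{bib:BKK}: the intermediate claim you reduce to, namely the uniform bound $K_D^\vartheta(y,z)\leq C/\vartheta$ for all $z$ with $1\leq|z-x|<2$, is \emph{false}. Already for $D=B_1(x)$ and the isotropic $\alpha$-stable process, $K_{B_1(x)}(y,z)$ is the classical Poisson kernel of the ball, which blows up like $\dist\bigl(z,B_1(x)\bigr)^{-\alpha/2}$ as $z$ approaches $\partial B_1(x)$ from outside; the exponential killing does not change this boundary behaviour in $z$. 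Your suggested repair via (\ref{A2}.b) cannot work either: even though $G_D^\vartheta(y,w)$ is bounded for $w$ near $z$ (since such $w$ satisfy $|y-w|\geq 1/2$), the factor you would have to integrate is $\int_{|v|<\delta}\nu(v)\,dv$, which is infinite under \eqref{eq:nuinf}; one genuinely needs the decay of $G_D^\vartheta(y,\cdot)$ at $\partial D$, which is not available from \eqref{A1}--\eqref{A2}. This is precisely why the lemma integrates $h\,\nu(x-\cdot)$ over $B_{1/2}^c(x)$ rather than $B_1^c(x)$: the contribution of exits landing near $\partial B_1(x)$ must be absorbed using the values of $h$ on the intermediate annulus $B_1(x)\setminus B_{1/2}(x)$ (via the iteration/Harnack machinery of \cite{bib:BKK}), not by a pointwise comparison of Poisson kernels. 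A pointwise bound of the form $(\ast)$ with $\nu(x-z)$ on the right simply does not hold up to the boundary of the reference ball.

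For comparison, the paper does not attempt a self-contained kernel estimate at all: it invokes \cite[Lem.~3.2(b), Th.~3.5, Ex.~3.9]{bib:BKK} as a black box and only verifies the hypotheses (A)--(D) of that paper, the sole nontrivial one being (D), i.e.\ $\sup_{x,y\in B,\,|x-y|>r}G_B(x,y)<\infty$, which follows from (\ref{A2}.a,b) together with the finiteness of $\ex^0\tau_{B_{2R}(0)}$. A secondary point: your opening identity $h(y)=\int_{B_1(x)^c}h(z)K_D^\vartheta(y,z)\,dz$ uses the Ikeda--Watanabe formula, which accounts only for jump exits; since the paper allows a nondegenerate Gaussian part $A$, the process may exit $D$ continuously onto $\partial D$, and you would need to argue that this contribution vanishes (it does where $\partial D\subset B_1(x)$, but not a priori on $\partial D\cap\partial B_1(x)$).
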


\begin{proof}
This result follows from a combination of Lemma 3.2, Theorem 3.5 and the discussion in Example 3.9 in~\cite{bib:BKK}. We only need to check the assumptions (A)--(D) in that paper. Since $X$ is a symmetric L\'evy process, (A)--(C) always hold. In order to justify (D), let $B = B_R(0)$, $0<r < R \leq 1$ and $x,y \in B$ be such that $|x-y|>r$. We have
\begin{align*}
    G_B(x,y)
    = \int_0^{\infty} p_B(t,x,y) \,dt
    &= \int_0^{t_{\bee}} p_B(t,x,y) \,dt + \int_0^{\infty} p_B(t+t_{\bee},x,y) \,dt \\
    &\leq \int_0^{t_{\bee}} p_t(y-x) \,dt + \int_0^{\infty} \int_B p_B(t_{\bee},x,z)p_B(t,z,y) \,dz \,dt \\
    &\leq t_{\bee} \sup_{t \in (0,t_{\bee}]} \sup_{r<|x| \leq 2} p_t(x) + \sup_{x \in \real^d} p_{t_{\bee}}(x) \int_0^{\infty} \pr^y(\tau_B>t) \,dt.
\end{align*}
Observe that
\begin{gather*}
    \int_0^{\infty} \pr^y(\tau_B > t) \,dt
    = \ex^y \tau_B
    \leq \ex^y \tau_{B_{2R}(y)}
    = \ex^0 \tau_{B_{2R}(0)}, \quad y \in B.
\end{gather*}
By~\cite[Rem. 4.8]{bib:Sch} the last mean exit time is finite. Thus, under \eqref{A2}, we have
\begin{gather*}
    \sup_{x, y \in B, |x-y| > r} G_{B}(x,y) < \infty,
\end{gather*}
for every $0<r<R$. This proves (D) from~\cite{bib:BKK}.
\end{proof}

Finally, we will need the following technical estimate.
\begin{lemma}\label{lem:pt_below}
    Assume \textup{(\ref{A1}.a,b,c)}. For every $t>0$ there exists a constant $C_{\ref{lem:pt_below}}>0$ such that $p_t(x) \geq C_{\ref{lem:pt_below}} \nu(x)$ for every $|x| \geq 1$.
\end{lemma}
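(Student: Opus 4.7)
The plan is to use the Ikeda--Watanabe formula \eqref{eq:IkWa} to isolate the contribution to $p_t(x)$ coming from a single ``big jump'' from a neighborhood of $0$ directly to a neighborhood of $x$.

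Fix $t>0$ and set $D := B_{1/2}(0)$. For $|x|\geq 1$ we have $x \in (\overline{D})^c$, so the Hunt formula \eqref{eq:HuntF} reduces to $p_t(x) = \ex^0\left[p_{t-\tau_D}(x-X_{\tau_D});\, \tau_D < t\right]$, and \eqref{eq:IkWa} combined with the strong Markov property at $\tau_D$ gives
\begin{gather*}
    p_t(x) = \int_0^t \int_D \int_{(\overline{D})^c} p_D(s,0,u)\, \nu(z-u)\, p_{t-s}(x-z)\,dz\,du\,ds.
\end{gather*}
I now restrict the $z$-integration to $z\in B_{1/4}(x)$, which lies in $(\overline{D})^c$ because $|x|\geq 1$. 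For $u\in D$ and $z\in B_{1/4}(x)$, the triangle inequality yields $|z-u|\in \left[|x|-3/4,\,|x|+3/4\right]$. Since $|x|\geq 1$, combining monotonicity \textup{(\ref{A1}.b)} with the growth condition \textup{(\ref{A1}.c)} gives $f(|z-u|)\geq f(|x|+1)\geq C_2^{-1}f(|x|)$, and then \textup{(\ref{A1}.a)} implies $\nu(z-u)\geq (C_1^2 C_2)^{-1} \nu(x)$. Substituting this lower bound, making the change of variable $w=x-z$, and using $\int_D p_D(s,0,u)\,du = \pr^0(\tau_D>s)$ leads to
\begin{gather*}
    p_t(x) \geq (C_1^2 C_2)^{-1}\,\nu(x) \int_0^t \pr^0(\tau_D>s)\, \pr^0\big(X_{t-s}\in B_{1/4}(0)\big)\,ds.
\end{gather*}

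It remains to verify that this time integral is strictly positive for every fixed $t>0$. Stochastic continuity of $(X_r)_{r\geq 0}$ at $r=0$ provides some $\delta\in(0,t]$ with $\pr^0(X_r\in B_{1/4}(0))\geq 1/2$ for $r\in[0,\delta]$. For the exit probability, $\pr^0(\tau_D>s)>0$ for every $s>0$ is a standard fact for Lévy processes started in the interior of a bounded open set; if a direct argument is required, one bootstraps from $\pr^0(\tau_D>\epsilon_0)>1/2$ for some small $\epsilon_0$ (again by stochastic continuity) via the Markov property at $\epsilon_0$ and translation-invariance (for $|y|\leq 1/4$ one has $B_{1/4}(0)\subset D-y$, which yields a uniform lower bound on restarted exit probabilities that can be iterated). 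Integrating the product over $s\in[t-\delta, t]$ gives a positive lower bound of the form $\tfrac{\delta}{2}\,\pr^0(\tau_D>t)$, and multiplying by $(C_1^2 C_2)^{-1}\,\nu(x)$ produces the required $C_{\ref{lem:pt_below}}\,\nu(x)$.

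The main technical obstacle is verifying the positivity of $\pr^0(\tau_D>t)$ for all $t>0$; the rest is routine Ikeda--Watanabe bookkeeping combined with the triangle-inequality-plus-growth estimate on $\nu(z-u)$.
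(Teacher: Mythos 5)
Your argument is correct in substance, but it is a genuinely different route from the paper's. The paper factorises the convolution semigroup as $p_t = e^{-t|\nu_{1/2}|}(p_{1,t}*\gamma_t) + (p_{1,t}*p_{2,t}*\gamma_t)$ (small jumps, Gaussian part, and the compound Poisson part built from $\nu\I_{B_{1/2}^c(0)}$), keeps only the single-jump term $t e^{-t|\nu_{1/2}|}\nu_{1/2}$ in the exponential series for $p_{2,t}$, and then only needs the trivial positivity of $\int_{|y|<1/2}\int p_{1,t}(y-z)\,\gamma_t(dz)\,dy$. You extract the same ``one direct big jump'' contribution probabilistically via the Ikeda--Watanabe formula. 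Both approaches use (\ref{A1}.a,b,c) in the same way to compare $\nu(z-u)$ with $\nu(x)$, and your bookkeeping there is correct. What the paper's route buys is that it avoids any statement about exit times; what your route costs is precisely the ingredient you flag, namely $\pr^0(\tau_{B_{1/2}(0)}>s)>0$ for all $s>0$. That fact is true and standard (and is in effect available in the paper through \eqref{eq:killed_exp}, which gives the quantitative bound $\pr^0(t<\tau_{B_{1/2}(0)})\geq c\,e^{-t\mu_0(1/2)}$ from the ground state of the killed semigroup), so I do not regard it as a fatal gap; but your parenthetical bootstrap as written does not quite close: after restarting at time $\epsilon_0$ from a point $y\in B_{1/4}(0)$, the inclusion $B_{1/4}(0)\subset D-y$ controls staying inside $D$, yet the iteration also needs the process to be back in $B_{1/4}(0)$ at the end of each step, which does not follow from what you wrote. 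Either invoke \eqref{eq:killed_exp}, or use the standard decomposition into a compound Poisson part (which is zero on $[0,t]$ with probability $e^{-t\nu(B_\rho^c(0))}$) plus a bounded-jump martingale part controlled by Doob/Chebyshev on short intervals. Two further minor points: your first display should be an inequality ``$\geq$'' rather than an equality, since \eqref{eq:IkWa} only accounts for exits from $D$ by a jump into $(\overline D)^c$ and omits possible continuous exits (relevant when $A\neq 0$); and the resulting bound holds a priori for a.e.\ $x$ (as an identity of measures), which suffices for all the uses of the lemma in the paper but is worth a remark if you want the pointwise statement for a specific version of $p_t$.
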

\begin{proof}
    Fix $t>0$ and denote by $\mu_{1,t}(dx)$ and $\mu_{2,t}(dx)$ the measures determined by their characteristic functions (inverse Fourier transforms)
    \begin{align*}
    \mathcal{F}^{-1}\mu_{1,t}(\xi)
    &= \exp\left(-t \int_{\real^d \setminus \left\{0\right\}} \left(1-\cos(\xi \cdot y)\right) \nu^{\circ}_{1/2}(y)dy\right),
    \quad \xi \in \real^d,
    \intertext{and}
    \mathcal{F}^{-1}\mu_{2,t}(\xi)
    &=\exp\left(-t \int \left(1-\cos(\xi \cdot y)\right)\nu_{1/2}(y)dy\right),
    \quad \xi \in \real^d,
    \end{align*}
    with
    \begin{gather*}
        \nu^{\circ}_{1/2}(x) := \nu(x) \I_{B_{1/2}(0)}(x)
        \et
        \nu_{1/2}(x) := \nu(x) \I_{\real^d \setminus B_{1/2}(0)}(x).
    \end{gather*}
    Because of \eqref{A1} both measures are non-degenerate. Recall that
    \begin{align} \label{eq:poisson}
        \mu_{2,t}(dx)
        = \exp\left[t(\nu_{1/2}-|\nu_{1/2}|\delta_0)\right](dx)
        = e^{-t|\nu_{1/2}|}\delta_{0}(dx) + p_{2,t}(x)\,dx
    \end{align}
    with
    \begin{gather*}
        p_{2,t}(x) := e^{-t|\nu_{1/2}|} \sum_{n=1}^\infty \frac{t^n\nu_{1/2}^{n*}(x)}{n!};
    \end{gather*}
    $\nu_{1/2}^{n*}(x)$ denotes the density of the $n$-fold convolution $\nu_{1/2}^{n*}(dx)$. Moreover, due to~\cite[Th.~27.7]{bib:Sat} the measure $\mu_{1,t}(dx)$ is absolutely continuous with respect to Lebesgue measure. We denote the corresponding density by $p_{1,t}(x)$.

    Let $A\in\real^{n\times n}$, be a symmetric, positive semi-definite matrix and denote by $\gamma_t(dx)$ the Gaussian measure with characteristic function
    \begin{gather*}
        \mathcal{F}^{-1}\gamma_t(\xi)
        = \exp\left(- t \, (\xi \cdot A \xi) \right),
        \quad \xi \in \real^d.
    \end{gather*}
    Due to \eqref{eq:Lchexp} and \eqref{eq:poisson}, the density $p_t$ is of the form
    \begin{gather*}
        p_t(x) = e^{-t|\nu_{1/2}|} (p_{1,t}*\gamma_t)(x) + (p_{1,t}*p_{2,t}*\gamma_t)(x).
    \end{gather*}
    Assume that $|x| \geq 1$. By the above representation and (\ref{A1}.b,c) we get
    \begin{align*}
    p_t(x)
    &\geq \int_{\real^d} p_{2,t}(x-y) \int_{\real^d} p_{1,t}(y-z) \,\gamma_t(dz)\,dy \\
    &\geq t e^{-t|\nu_{1/2}|} \int_{\real^d} \nu_{1/2}(x-y) \int_{\real^d} p_{1,t}(y-z) \,\gamma_t(dz)\, dy \\
    &\geq c_1 \int_{|y|<1/2} \nu(x-y) \int_{\real^d} p_{1,t}(y-z) \,\gamma_t(dz)\,dy \\
    &\geq c_2 \nu(x) \int_{|y|<1/2} \int_{\real^d} p_{1,t}(y-z) \,\gamma_t(dz)\,dy.
    \end{align*}
    Since $\int_{|y|<1/2} \int_{\real^d} p_{1,t}(y-z) \,\gamma_t(dz)\,dy > 0$, the claimed bound follows.
\end{proof}

\section{Structure and estimates of large jumps of the process}\label{sec3}

\subsection{Properties of the profile function \boldmath$f$\unboldmath}\label{sec3:djp}
Sometimes it is convenient to replace the profile function $f(r)$, for large values of $r$, by its truncation
\begin{gather*}
    f_1 = f \wedge 1.
\end{gather*}
In this section we are going to show that $f_1$ still enjoys the basic assumptions (\ref{A1}.b,c,d).

If $f$ satisfies (\ref{A1}.b,c), then so does $f_1$: it is again a decreasing function and there exists a constant $\widetilde C_2 \geq C_2$ such that the following uniform growth condition holds
\begin{align} \label{eq:loc_comp_f1}
    f_1(r) \leq \widetilde C_2 f_1(r+1), \quad r >0.
\end{align}
Note that, under (\ref{A1}.a,b), the conditions (\ref{A1}.c) and \eqref{eq:loc_comp_f1} are equivalent.
\begin{lemma} \label{lem:equivalent-djp}
    Let $f$ be as in \textup{(\ref{A1})}. Condition \textup{(\ref{A1}.d)} can be replaced by the following equivalent condition: there exists a uniform constant $C_{\ref{lem:equivalent-djp}} > 0$ such that
    \begin{align} \label{eq:conv_f1}
        \int_{\real^d} f_1(|x-z|) f_1(|z|) \,dz \leq C_{\ref{lem:equivalent-djp}} \, f_1(|x|), \quad x \in \real^d.
    \end{align}
    In particular, \eqref{eq:conv_f1} implies
    \begin{align} \label{eq:conv_f1_pw}
        f_1(|x-w|) f_1(|w|) \leq \frac{(\widetilde C_2)^2 C_{\ref{lem:equivalent-djp}}}{|B_1(0)|} \, f_1(|x|), \quad x,w \in \real^d,
    \end{align}
    and there exists a constant $\widetilde C_{\ref{lem:equivalent-djp}} > 0$ such that
    \begin{align} \label{eq:prod_by_diff}
        f(|x|) f(|y|) \leq \widetilde C_{\ref{lem:equivalent-djp}} f_1(|x-y|), \quad |x|,|y| > 1.
    \end{align}
\end{lemma}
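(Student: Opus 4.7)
My first step would be to establish a simple comparison between $f$ and $f_1$ away from the origin. Since $f$ is decreasing with $\lim_{r\to\infty} f(r) = 0$, the set $\{r : f(r) > 1\}$ is bounded and $f$ is bounded on $[1,\infty)$ by $f(1)$, so on $\{r \geq 1\}$ we have $f(r) \leq M f_1(r)$ with $M := \max(1, f(1))$. Combined with the trivial inequality $f_1 \leq f$, this will let me freely swap between $f$ and $f_1$ at the price of a multiplicative constant whenever the arguments stay bounded away from the origin. I would also record that $f_1 \in L^1(\real^d)$: on $\{|z| \leq 1\}$ it is bounded by $1$, and on $\{|z| > 1\}$ the L\'evy integrability of $\nu \asymp f$ coming from \textup{(A1.a)} and $\int (1\wedge|z|^2)\nu(z)\,dz < \infty$ gives integrability of $f$.

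For the equivalence in Part~1, the direction \eqref{eq:conv_f1} $\Rightarrow$ \textup{(A1.d)} is immediate: on the region $\{|y|>1,\,|x-y|>1\}$ replace $f$ by $M f_1$, extend the integral to all of $\real^d$, apply \eqref{eq:conv_f1}, and use $f_1(|x|) \leq f(|x|)$. For \textup{(A1.d)} $\Rightarrow$ \eqref{eq:conv_f1} I would split according to whether $|x| \geq 2$ or $|x| < 2$. In the former case, decompose $\real^d = \{|y| \leq 1\} \cup \{|x-y| \leq 1\} \cup \big(\{|y|>1\} \cap \{|x-y|>1\}\big)$. On $\{|y| \leq 1\}$ the triangle inequality gives $|x-y| \geq |x|-1 \geq 1$, so monotonicity and \eqref{eq:loc_comp_f1} yield $f_1(|x-y|) \leq f_1(|x|-1) \leq \widetilde{C}_2 f_1(|x|)$; integrating this against $f_1(|y|)\leq 1$ over the unit ball gives a contribution of $\widetilde{C}_2|B_1(0)| f_1(|x|)$. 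The region $\{|x-y|\leq 1\}$ is handled symmetrically, and on $\{|y|>1,\,|x-y|>1\}$ assumption \textup{(A1.d)} together with the first-step comparison gives $\leq C_3 M f_1(|x|)$. For $|x| < 2$ the decomposition degenerates and I would use $(f_1 * f_1)(x) \leq \|f_1\|_1$ together with the fact that $f_1(|x|) \geq f_1(2) > 0$ on this bounded set to absorb the resulting constant into a multiple of $f_1(|x|)$.

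For the pointwise estimate \eqref{eq:conv_f1_pw}, my plan is to use \eqref{eq:loc_comp_f1} on a fixed-radius ball: for every $z \in B_1(w)$ one has $|z| \leq |w| + 1$ and $|x-z| \leq |x-w| + 1$, so by monotonicity and \eqref{eq:loc_comp_f1},
\begin{gather*}
    f_1(|z|) \geq \widetilde{C}_2^{-1} f_1(|w|)
    \et
    f_1(|x-z|) \geq \widetilde{C}_2^{-1} f_1(|x-w|).
\end{gather*}
Integrating over $z \in B_1(w)$ and applying \eqref{eq:conv_f1} to the left-hand side then gives
\begin{gather*}
    \widetilde{C}_2^{-2} |B_1(0)|\, f_1(|x-w|) f_1(|w|)
    \leq (f_1 * f_1)(x)
    \leq C_{\ref{lem:equivalent-djp}} f_1(|x|),
\end{gather*}
which rearranges to \eqref{eq:conv_f1_pw}. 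Finally, for \eqref{eq:prod_by_diff} I would substitute $x \mapsto x-y$ and $w \mapsto -y$ in \eqref{eq:conv_f1_pw} to obtain $f_1(|x|) f_1(|y|) \leq C f_1(|x-y|)$, and then apply the comparison $f \leq M f_1$ on $\{|z|\geq 1\}$ from the first step to both factors on the left. The main subtle point of the whole argument is the small-$|x|$ regime in the backward direction of Part~1: there the three-region decomposition collapses and (A1.d) cannot be invoked directly, so one must fall back on global integrability of $f_1$ to control the convolution by a constant and then absorb that constant into $f_1(|x|)$ via the lower bound $f_1(2) > 0$.
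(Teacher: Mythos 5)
Your proposal is correct and follows essentially the same route as the paper: the comparison $f\leq (1\vee f(1))\,f_1$ on $\{r\geq 1\}$ is the paper's \eqref{eq:f-f_1}, the backward implication uses the same three-region decomposition plus the global-integrability fallback for small $|x|$, and \eqref{eq:conv_f1_pw} is obtained by the identical averaging over a unit ball with \eqref{eq:loc_comp_f1}. The only differences are cosmetic (cutting at $|x|=2$ rather than $|x|=1$, and handling $\{|y|\leq 1\}$ by direct monotonicity instead of the paper's change of variables).
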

\begin{proof}
Let us first establish the equivalence of \eqref{eq:conv_f1} and (\ref{A1}.d). Assume that \eqref{eq:conv_f1} holds. Since $f$ is decreasing, we have
\begin{align} \label{eq:f-f_1}
    c f(|z|) \leq f_1(|z|) \leq f(|z|), \quad |z| \geq 1,
\end{align}
where $c:= 1/(1 \vee f(1))$. Therefore, \eqref{eq:conv_f1} implies
\begin{align*}
    c^2 \int_{\substack{|x-y|>1 \\ |y|>1}} f(|x-y|)f(|y|)\,dy
    \leq \int_{\real^d} f_1(|x-z|) f_1(|z|) \, dz
    \leq C_{\ref{lem:equivalent-djp}}  f_1(|x|) \leq f(|x|), \quad |x| \geq 1,
\end{align*}
and (\ref{A1}.d) follows.

In order to see the opposite implication, we note that (\ref{A1}.a) implies $\int_{|y|>1} f(|y|)\,dy < \infty$, hence $\int_{\real^d} f_1(|y|)\,dy < \infty$. If $|x| \leq 1$, then
\begin{gather*}
    \int_{\real^d} f_1(|x-z|) f_1(|z|) \,dz
    \leq \frac{\int_{\real^d} f_1(|y|)\,dy}{f_1(1)} f_1(1)
    \leq c_1 f_1(|x|).
\end{gather*}
Moreover, \eqref{eq:loc_comp_f1} shows for $|x| >1$
\begin{gather*}
    \int_{|z-x|\leq 1} f_1(|x-z|) f_1(|z|) \,dz
    = \int_{|z|\leq 1} f_1(|z|) f_1(|x-z|) \,dz
    \leq c_2 f_1(|x|), \quad x \in \real^d.
\end{gather*}
Finally, combining \eqref{eq:f-f_1} and (\ref{A1}.d) we see that there exists a constant $c_3>0$ such that
\begin{gather*}
    \int_{\substack{|z-x|> 1 \\ |z|>1}} f_1(|x-z|) f_1(|z|) \,dz
    \leq c_3 f_1(|x|), \quad |x|>1,
\end{gather*}
and \eqref{eq:conv_f1} follows.

The inequality \eqref{eq:conv_f1_pw} is a direct consequence of \eqref{eq:conv_f1} and \eqref{eq:loc_comp_f1}:
\begin{align*}
    \frac{|B_1(w)|}{(\widetilde C_2)^2} f_1(|x-w|) f_1(|w|)
    &\leq \int_{B_1(w)} f_1(|x-z|) f_1(|z|) \,dz \\
    &\leq \int_{\real^d} f_1(|x-z|) f_1(|z|) \,dz
    \leq C_{\ref{lem:equivalent-djp}} \, f_1(|x|), \quad x, w \in \real^d.
\end{align*}
Finally, \eqref{eq:prod_by_diff} follows from \eqref{eq:conv_f1_pw} and \eqref{eq:f-f_1}.
\end{proof}

The next lemma gives simple sufficient conditions under which the convolution condition (\ref{A1}.d) holds. Recall that a decreasing function $f$ satisfies the \emph{doubling property}, if there exists a constant $C \geq 1$ such that $f(r) \leq C f(2r)$ for all $r > 0$.\footnote{Since the doubling property is only used in connection with the direct jump property (\ref{A1}.d) it is, in fact, enough to require the doubling property for large $r$, e.g.\ $r>\frac 12$.}
\begin{lemma} \label{lem:sufficient-djp}
    Let $f:(0,\infty) \to (0,\infty)$ be a decreasing function. Under each of the following conditions, $f$ satisfies \textup{(\ref{A1}.d)}.
    \begin{enumerate}
    \item\label{sufficient-djp-a}
        \textbf{\upshape Doubling profiles:} $f$ has the doubling property and $ f|_{(1,\infty)} \in L^1((1,\infty),r^{d-1}\,dr)$.

    \item\label{sufficient-djp-b}
        \textbf{\upshape Tempered profiles:} $f(r) = e^{-mr} h(r)$ for some $m >0$ and $h:(0,\infty) \to (0,\infty)$ is a decreasing function with doubling property and $h|_{(1,\infty)} \in L^1((1,\infty),r^{d-1}\,dr)$.

    \item\label{sufficient-djp-c}
        \textbf{\upshape Log-convex profiles:} $f$ is log-convex \footnote{By log-convexity, $f'$ exists Lebesgue almost everywhere.} on $(1,\infty)$
        \begin{align} \label{eq:int-cond}
            \sup_{|x| \geq 1} \int_{|y|>1} e^{-\frac{f'(|y|)}{f(|y|)} \frac{x \cdot y }{|x|} } f(|y|)\,dy
            = \int_{|y|>1} e^{-\frac{f'(|y|)}{f(|y|)} y_1} f(|y|)\,dy
            < \infty.
        \end{align}
    \end{enumerate}
\end{lemma}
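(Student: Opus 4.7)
The goal is to verify the direct jump property $\int_{|y|>1,\,|x-y|>1} f(|x-y|)f(|y|)\,dy \leq C f(|x|)$ for all $|x|\geq 1$ in each case. For \textbf{(a)}, I would split the domain into the three regions $\{1<|y|\leq |x|/2\}$, $\{|x|/2<|y|<2|x|\}$, $\{|y|\geq 2|x|\}$ and apply the doubling property together with monotonicity in each. On the first region $|x-y|\geq|x|/2$, so doubling gives $f(|x-y|)\leq c\,f(|x|)$, leaving $\int_{|y|>1}f(|y|)\,dy<\infty$. On the middle region $f(|y|)\leq f(|x|/2)\leq c\,f(|x|)$, leaving $\int_{|z|>1}f(|z|)\,dz$. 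On the third region $|x-y|\geq |y|/2$, so $f(|x-y|)\leq c\,f(|y|)$, and then one factor $f(|y|)\leq f(|x|)$ by monotonicity. For \textbf{(b)}, writing $f(r)=e^{-mr}h(r)$, the triangle inequality $|y|+|x-y|\geq|x|$ gives $e^{-m|x-y|}e^{-m|y|}\leq e^{-m|x|}$, so the exponential factors out and the remaining $h$-integral is bounded by case (a) applied to $h$.

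The main obstacle is \textbf{(c)}. Set $\phi:=\log f$, which is convex and decreasing on $(1,\infty)$. Applying the tangent-line inequality $\phi(s)\geq\phi(t)+\phi'(t)(s-t)$ at $t=|x-y|$ and $s=|x|$ gives
\[
    f(|x-y|)\leq f(|x|)\,\exp\bigl(-\phi'(|x-y|)(|x|-|x-y|)\bigr).
\]
I would then establish the geometric bound $|x|-|x-y|\leq y_1$, where $y_1:=x\cdot y/|x|$. Squaring the identity $|x-y|^2=(|x|-y_1)^2+(|y|^2-y_1^2)$ and using $|y|^2\geq y_1^2$ gives $|x-y|\geq\bigl||x|-y_1\bigr|$; the case $y_1>|x|$ needs a small separate check giving $|x|-|x-y|\leq 2|x|-y_1\leq y_1$. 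Since $-\phi'(|x-y|)\geq 0$, this upgrades to $-\phi'(|x-y|)(|x|-|x-y|)\leq -\phi'(|x-y|)\,y_1^+$, with $y_1^+:=y_1\vee 0$.

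Next I would use the symmetry of the integrand $f(|y|)f(|x-y|)$ under $y\leftrightarrow x-y$ to restrict the integral to the half-region $\{|y|\leq|x-y|\}$ (picking up a factor of $2$); this half-region is exactly $\{y_1\leq |x|/2\}$ when $x=|x|e_1$. In this half-region, monotonicity of $\phi'$ gives $-\phi'(|x-y|)\leq-\phi'(|y|)$, so
\[
    f(|x-y|)\leq f(|x|)\,e^{-\phi'(|y|)\,y_1^+}.
\]
Multiplying by $f(|y|)$ and integrating yields
\[
    \int_{|y|>1,\,|x-y|>1}f(|x-y|)f(|y|)\,dy \leq 2f(|x|)\int_{|y|>1} e^{-\phi'(|y|)\,y_1^+}f(|y|)\,dy.
\]
To finish I would split the last integral by sign of $y_1$: on $\{y_1\geq 0\}$ it matches exactly the integral in hypothesis (c), while on $\{y_1<0\}$ it collapses to $\int_{|y|>1,\,y_1<0}f(|y|)\,dy$, which is itself finite because on $\{y_1\geq 0\}$ the integrand in (c) dominates $f(|y|)$ and the symmetry $y\mapsto -y$ transfers integrability to $\{y_1<0\}$. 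The real subtlety in (c) lies in the choice of tangent point (namely $|x-y|$, not $|x|$ or $|y|$) and in recognising that the linear factor $|x|-|x-y|$ is controlled not by $|y|$ but by its projection $y_1$ onto the direction of $x$; this is precisely why (c) features $e^{-\phi'(|y|)y_1}$ rather than the cruder $e^{-\phi'(|y|)|y|}$, the latter being far too strong (it would exclude, e.g., exponentially decaying $f$ with polynomial corrections of order $|y|^{-\gamma}$ for $\gamma$ near $d$).
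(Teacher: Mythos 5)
Your proposal is correct and follows essentially the same route as the paper: part~\ref{sufficient-djp-a} via doubling plus monotonicity (the paper symmetrizes to $|y|<|x-y|$ instead of your three-region split, but this is cosmetic), part~\ref{sufficient-djp-b} by factoring out the exponential with the triangle inequality and reducing to~\ref{sufficient-djp-a}, and part~\ref{sufficient-djp-c} via the tangent-line inequality for $\log f$ at the point $|x-y|$, the monotonicity of $f'/f$ on the half-region $|y|\leq|x-y|$, and the projection bound $|x|-|x-y|\leq x\cdot y/|x|$. The only organizational difference in~\ref{sufficient-djp-c} is that you absorb the outer region into the truncation $y_1^+$ and split the final integral by the sign of $y_1$, whereas the paper peels off the set $\{|x-y|>|x|\}$ and the case $|x|\leq 2$ beforehand; both correctly reduce to \eqref{eq:int-cond} together with the integrability of $f(|\cdot|)$ that \eqref{eq:int-cond} already implies.
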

\begin{proof}
\ref{sufficient-djp-a}
Since $y$ and $x-y$ play symmetric roles in the integral in (\ref{A1}.d), we have
\begin{align*}
    \int_{\substack{|x-y|>1 \\ |y|>1}} f(|x-y|)f(|y|)\,dy
    &= 2 \int_{\substack{|x-y|>1, |y|>1 \\ |y| < |x-y|}} f(|x-y|)f(|y|)\,dy.
\intertext{From $|x-y| > |y| = |y-x+x| \geq |x| - |x-y|$, we get $|x-y| > \frac 12|x|$. The monotonicity and doubling property of $f$ show for all $|x| \geq 1$}
    \int_{\substack{|x-y|>1 \\ |y|>1}} f(|x-y|)f(|y|)\,dy
    &\leq f(\tfrac 12|x|) \int_{|y|>1} f(|y|)\,dy \leq C  f(|x|) \int_{|y|>1} f(|y|)\,dy.
\end{align*}
This gives (\ref{A1}.d).

\medskip\noindent\ref{sufficient-djp-b}
Using $|x| \leq |x-y|+|y|$, $x,y \in \real^d$, we get
\begin{align*}
    \int_{\substack{|x-y|>1 \\ |y|>1}} f(|x-y|)f(|y|)\,dy
    &= \int_{\substack{|x-y|>1 \\ |y|>1}} e^{-m(|x-y|+|y|)} h(|x-y|)h(|y|)\,dy \\
    &\leq e^{-m|x|} \int_{\substack{|x-y|>1 \\ |y|>1}} h(|x-y|)h(|y|)\,dy.
\end{align*}
Now we can use part~\ref{sufficient-djp-a} for the last integral involving $h$, and (\ref{A1}.d) follows.

\medskip\noindent\ref{sufficient-djp-c}
The condition \eqref{eq:int-cond} guarantees that $C := \int_{|y|>1} f(|y|)\,dy < \infty$. Indeed, since $f'/f \leq 0$, $f(|\cdot|)$ is integrable on the set $\left\{y: |y|>1, y_1 \geq 0\right\}$, hence on $\{|y| > 1\}$ because of rotational symmetry.

Write
\begin{align*}
    \int_{\substack{|x-y|>1 \\ |y|>1}} f(|x-y|)f(|y|)\,dy
    &\leq \int_{\substack{1<|x-y|<|x| \\ 1<|y|<|x|}} f(|x-y|)f(|y|)\,dy + 2 \int_{\substack{|x-y|>|x| \\ |y|>1}} f(|x-y|)f(|y|)\,dy \\
    &=: \mathrm{I}+\mathrm{II}.
\end{align*}
By the monotonicity of $f$, we have $\mathrm{II} \leq 2 f(|x|) \int_{|y|>1} f(|y|)\,dy \leq 2C f(|y|)$. In order to estimate $\mathrm{I}$, we consider two cases: $1\leq |x|\leq 2$ and $|x|>2$.

\medskip\noindent
\emph{Case 1: $1 \leq |x| \leq 2$}. We have
\begin{gather*}
    \mathrm{I}
    \leq f(1) \int_{|y|>1} f(|y|)\,dy
    \leq \frac{f(1)}{f(2)} f(|x|) \int_{|y|>1} f(|y|)\,dy
    \leq \frac{f(1)}{f(2)} C f(|x|).
\end{gather*}

\smallskip\noindent
\emph{Case 2: $|x| > 2$}. We can use again the symmetry of $y$ and $x-y$ in the integrand to see
\begin{align*}
    \mathrm{I}
    &= 2 f(|x|) \int_{\substack{1<|x-y|<|x|,\: 1<|y|<|x| \\ |y| < |x-y|}} \frac{f(|x-y|)}{f(|x|)}f(|y|)\,dy \\
    &= 2 f(|x|) \int_{\substack{1<|x-y|<|x|,\: 1<|y|<|x| \\ |y| < |x-y|}} e^{-(\log f(|x|) -\log f(|x-y|))} f(|y|)\,dy.
\end{align*}
Using the log-convexity of $f$ on $(1,\infty)$, we get for $|x|>2$
\begin{gather*}
    \log f(|x|) -\log f(|x-y|)
    \geq \frac{f'(|x-y|)}{f(|x-y|)}(|x|-|x-y|) ,
\end{gather*}
almost everywhere on the domain of the above integration. Since  $f'/f$ is increasing and negative, and since $0<|x|-|x-y| \leq (x \cdot y)/|x|$ holds on the domain of integration, we obtain
\begin{gather*}
    \log f(|x|) -\log f(|x-y|)
    \geq \frac{f'(|y|)}{f(|y|)} \frac{x \cdot y}{|x|}
\end{gather*}
almost everywhere. This gives the estimate
\begin{gather*}
    I
    \leq 2 f(|x|)  \int_{\substack{1<|x-y|<|x|,\: 1<|y|<|x| \\ |y| < |x-y|}} e^{-\frac{f'(|y|)}{f(|y|)} \frac{x \cdot y }{|x|} } f(|y|)\,dy
    \leq 2 f(|x|) \int_{|y|>1} e^{-\frac{f'(|y|)}{f(|y|)} \frac{x \cdot y }{|x|} } f(|y|)\,dy
\end{gather*}
for all $|x| > 2$. Thus, \eqref{eq:int-cond} implies (\ref{A1}.d).
\end{proof}

\subsection{Decomposition of the paths of the process and related estimates}\label{sec3:decomposition}

In view of the Feynman--Kac representation of the semigroup $\big\{U_t:t \geq 0\big\}$ and Lemma~\ref{new}.\ref{new-a}, we can estimate $u_t(x,y)$ if we can control the behaviour of the sample path of the process. For this, we decompose the paths using exit times from and entrance times into certain annuli. Such decompositions appeared for the first time in \cite{BK} and they were also used in \cite{bib:KS,bib:KL15}. Let $k, n_0 \in \nat$, $k \geq n_0 \geq 2$, and define
\begin{align*}
    \A{k-1}{k}
    &:=
    \begin{cases}
        \left\{x \in \real^d: k-1 <|x|\leq k \right\} &\text{if\ \ } k \geq n_0 +2,\\
        \left\{x \in \real^d: |x|\leq n_0+1 \right\} &\text{if\ \ } k = n_0+1, \\
        \left\{x \in \real^d: |x|\leq n_0 \right\} &\text{if\ \ } k = n_0,
    \end{cases}
\intertext{and}
    \A{k-2}{\infty}
    &:=
    \begin{cases}
        \left\{x \in \real^d: |x| > k -2  \right\} &\text{if\ \  } k \geq n_0 +2,\\
        \real^d &\text{if\ \ } k \in \left\{n_0, n_0+1 \right\}.
    \end{cases}
\end{align*}
The exact value of $n_0$ will be chosen later on. We need two families of stopping times
\begin{gather*}
    \sigmaAkk := \inf \left\{t \geq 0: X_t \in \A{k-1}{k}\right\}
    \quad\text{and}\quad
    \tauAkinfty := \inf \left\{t \geq 0: X_t \notin \A{k-2}{\infty}\vphantom{\mathrm{I}}\right\};
\end{gather*}
as $\A{n_0-2}{\infty} = \A{n_0-1}{\infty} = \real^d$, we have $\tauAnOinfty = \tauAnOOinfty = \infty$.

With these stopping times we can count the number of annuli which are visited by the process $X$ on its way from $\A{n-2}{\infty}$ to $\A{k-1}{k}$ for $n-2 \geq k$ moving `inward', i.e.\ when the modulus $|X_s|$ reaches a new minimum --- see Fig.~\ref{fig-22}. More precisely, for $n-2 \geq k \geq n_0$ and $t > 0$, we set
\begin{align*}
    S(\A{n-2}{\infty}, \A{k-1}{k}; 1; t)
    &:= \left\{X_{\tauAninfty} \in \A{k-1}{k}, \sigmaAkk < t \right\}, \\
    S(\A{n-2}{\infty}, \A{k-1}{k}; l; t)
    &:= \bigcup_{p=k+2}^{n-2} S(\A{n-2}{\infty}, \A{p-1}{p}; l-1; t) \cap S(\A{p-2}{\infty}, \A{k-1}{k}; 1; t), \quad l >1.
\end{align*}
The first set is the event that the process moves to the annulus $\A{k-1}{k}$ before time $t$ upon exiting $\A{n-2}{\infty}$. The second event is defined recursively: $S(\A{n-2}{\infty}, \A{p-1}{p}; l-1; t) \cap S(\A{p-2}{\infty}, \A{k-1}{k}; 1; t)$ describes those paths which move, before time $t$, from $\A{n-2}{\infty}$ to the annulus $\A{p-1}{p}$, visiting on their way exactly $l-1$ annuli in-between $\A{n-2}{\infty}$ and $\A{p-1}{p}$ (including the final destination). In the end, but still before time $t$, the process moves directly from $\A{\infty}{p-2}\supset \A{p-1}{p}$ to $\A{k-1}{k}$.

\begin{figure}\centering
    \includegraphics[width = \textwidth]{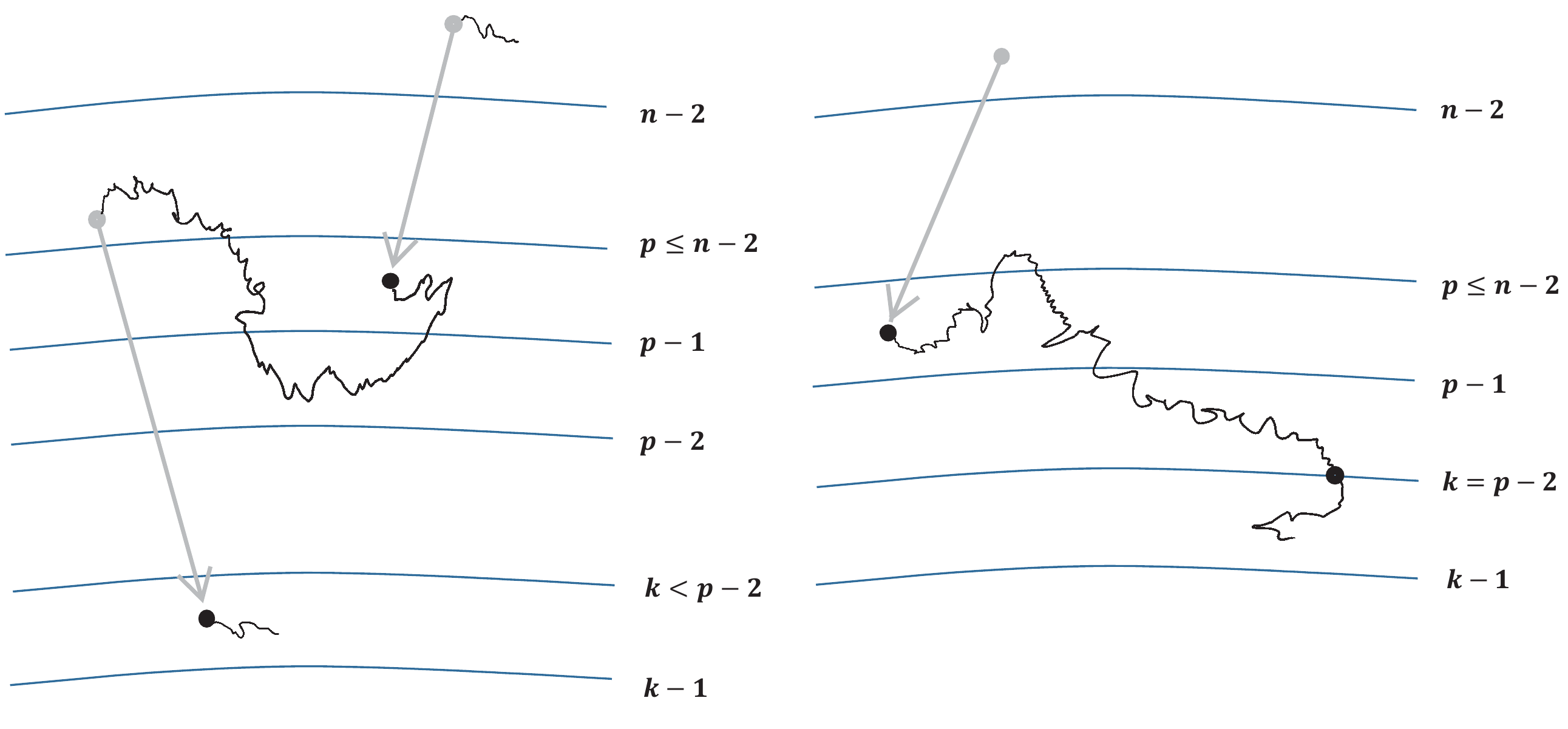}
    \caption{A typical path from the set $S(\A{n-2}{\infty}, \A{k-1}{k}; 2; t)$ with jump entry into $\A{k-1}{k}$ (picture on the left) and continuous entry (picture on the right).}\label{fig-11}
\end{figure}

\begin{figure}[ht]\centering
\includegraphics[width = .5\textwidth]{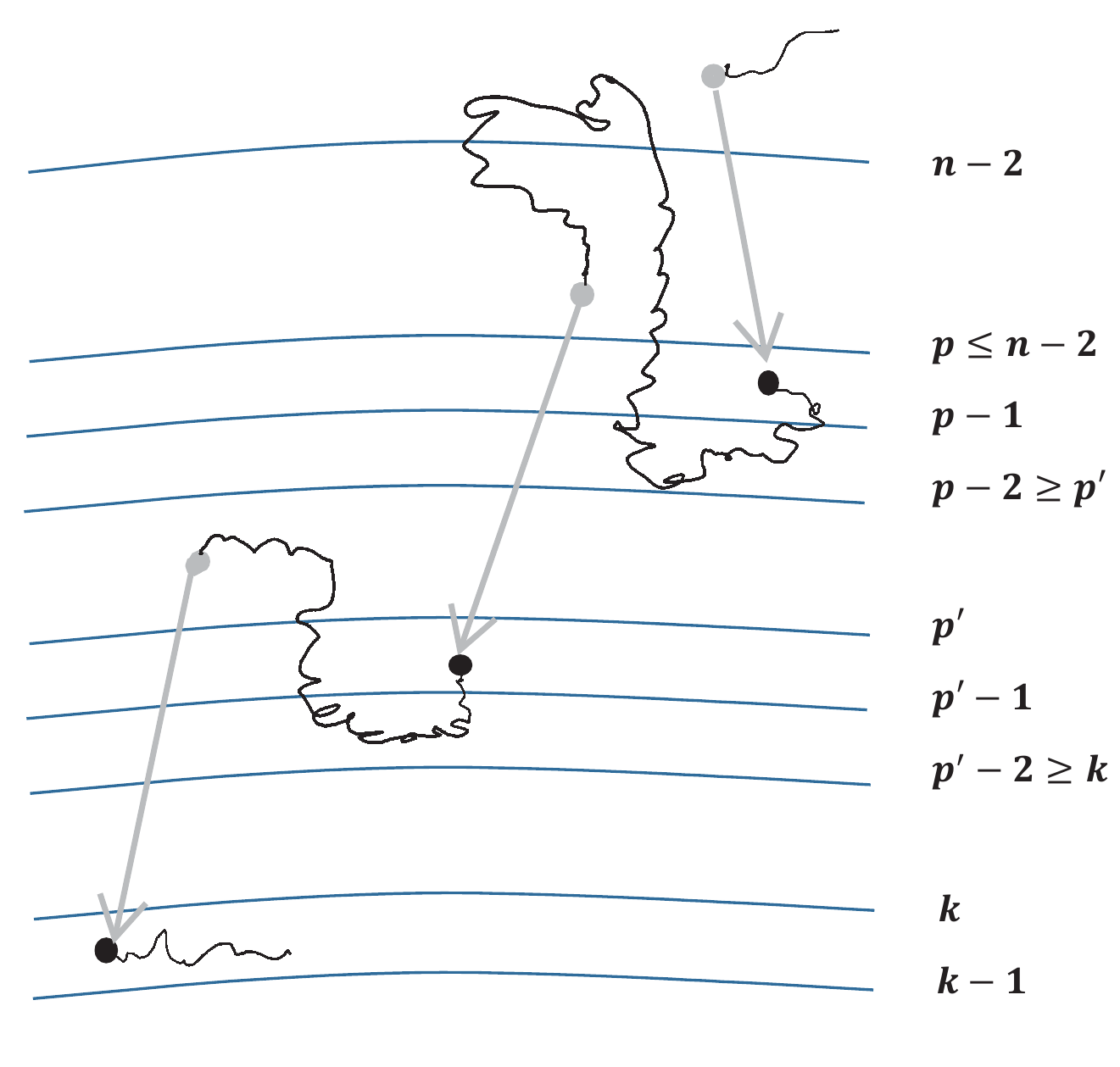}
    \caption{A typical path contained in the set $S(\A{n-2}{\infty}, \A{k-1}{k}; 3; t)$ comprising sample paths moving from $\A{n-2}{\infty}$ to $\A{k-1}{k}$ via $\A{p-1}{p}$ and $\A{p'-1}{p'}$ (for simplicity, we show only jump entries into new annuli). Notice that the number $l=3$ only counts first visits to annuli in a `downward' or `inward' movement where the modulus $|X_s|$ reaches new minima, i.e.\ only the visits $n-2\geq p > p-2 \geq p' > p'-2 \geq k$ are counted. Those annuli, which are visited (even for the first time) when going `outward', are not counted.} \label{fig-22}
\end{figure}

We will need the following class of functions which is defined with the constant $\widetilde C_2$ from \eqref{eq:loc_comp_f1}.
\begin{align} \label{eq:W_prop}
    \cC
    := \left\{W:\real^d \xrightarrow[]{\text{measurable}}\real \mid \forall x,y\in\real^d, |x-y|\leq 1 \::\: W(x) \leq \widetilde C_2 W(y) \right\}.
\end{align}
Because of the choice of $\widetilde C_2$, all functions of the form $W_z(\cdot):= f_1(|\cdot - z|)$, $z \in \real^d$, are in $\cC$.

For our heat kernel estimates we need precise estimates of the following expectations:
\begin{gather*}
    \ex^x\left[e^{-\int_0^{\sigmaAkk}V(X_s)\,ds} W(X_{\sigmaAkk}); \; S(\A{n-2}{\infty},\A{k-1}{k},l,t)\right]
\end{gather*}
for an arbitrary function $W \in \cC$. For $W \equiv 1$ such estimates have been established in~\cite[Lemmas~3.5, 3.6]{bib:KL15}; in the present paper we have to consider highly anisotropic functions $W$, which adds some complications. Since the argument from~\cite{bib:KL15} does not work in such generality, we have to find a suitable modification.
\begin{lemma}\label{lem:lem1}
    Assume \eqref{A1}--\eqref{A2} and let $W \in \cC$ and $n, k \in \nat$ be such that $n-2 \geq k \geq n_0$. There is a constant $C_{\ref{lem:lem1}} >0$ and $\vartheta_0 \geq 1$ such that for every $t > 0$, for all $x \in \A{n-1}{ \infty}$ and $\vartheta >\vartheta_0$ we have
    \begin{gather*}
        \ex^x\left[e^{-\vartheta \tauAninfty} W(X_{\tauAninfty}); \; \tauAninfty < t, X_{\tauAninfty} \in \A{k-1}{k} \right]
        \leq \frac{C_{\ref{lem:lem1}}}{\vartheta} \int_{\A{k-1}{k}} f(|x-z|) W(z)\,dz;
    \end{gather*}
    $C_{\ref{lem:lem1}}$ depends on $W$ only through the growth constant $\widetilde C_{2}$ appearing in \eqref{eq:loc_comp_f1} and \eqref{eq:W_prop}.
\end{lemma}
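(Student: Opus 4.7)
The plan is to apply the Ikeda--Watanabe formula \eqref{eq:IWF} to rewrite the expectation as a Green-function integral involving the L\'evy kernel, and then bound the resulting convolution using the direct jump property (\ref{A1}.d) together with the decay of the free resolvent coming from (\ref{A2}.a). Set $D := \A{n-2}{\infty}$, $A := \A{k-1}{k}$, $\tau := \tauAninfty$, and $G_D^\vartheta(x,y) := \int_0^\infty e^{-\vartheta t} p_D(t,x,y)\,dt$. For $k \leq n-3$ the separation $\dist(A, D) \geq 1$ is positive, so \eqref{eq:IWF} applied with $h := W\I_A$ (and dropping the constraint $\{\tau < t\}$ for an upper bound) yields
\[
\ex^x[e^{-\vartheta\tau} W(X_\tau); X_\tau \in A] \leq \int_A W(z) \int_D G_D^\vartheta(x,y)\,\nu(z-y)\,dy\,dz.
\]
The borderline case $k = n-2$ reduces to the same identity via \eqref{eq:IkWa}, since $A \cap \partial D$ is Lebesgue-null. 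Bounding $G_D^\vartheta \leq G^\vartheta$ and using (\ref{A1}.a) together with $|y-z| \geq 1$ (forced by $y \in D$, $z \in A$, $k \leq n-3$), the inner integral is controlled by $C_1 (G^\vartheta * f_{\geq 1})(z-x)$, where $G^\vartheta(w) := \int_0^\infty e^{-\vartheta t} p_t(w)\,dt$ is the translation-invariant free resolvent kernel and $f_{\geq 1}(u) := f(|u|)\I_{|u|\geq 1} \in L^1(\real^d)$. Thus it suffices to establish the pointwise convolution bound
\[
(G^\vartheta * f_{\geq 1})(w) \leq \frac{c}{\vartheta}\,f(|w|), \quad |w| > 1,\ \vartheta > \vartheta_0.
\]

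Two ingredients drive this convolution bound. First, the pointwise free-resolvent decay $G^\vartheta(u) \leq (c_1/\vartheta)\,f(|u|)$ for $|u| \geq 1$ and $\vartheta > \vartheta_0$: for $t \geq t_\bee$ this follows from (\ref{A2}.a) by integrating $e^{-(\vartheta - C_5)t}$, once $\vartheta_0 > C_5$; for $t \in (0, t_\bee)$ one invokes a Pruitt-type small-time tail bound $p_t(u) \leq C t\,\nu(u)$ valid for $|u|\geq 1$, obtainable via the compound-Poisson representation already used in the proof of Lemma~\ref{lem:pt_below}. Second, the direct jump property (\ref{A1}.d) rephrased as $(f_{\geq 1} * f_{\geq 1})(w) \leq c_2\,f(|w|)$ for $|w|\geq 1$. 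Splitting $(G^\vartheta * f_{\geq 1})(w) = \int_{|u|\leq 1} + \int_{|u|>1}$: on the near piece, $f_{\geq 1}(w-u) \leq f(|w|-1) \leq \widetilde C_2 f(|w|)$ by \eqref{eq:loc_comp_f1} and $\int G^\vartheta(u)\,du \leq 1/\vartheta$ yield a contribution $\leq (\widetilde C_2/\vartheta) f(|w|)$; on the far piece, combining the resolvent decay with (\ref{A1}.d) yields $\leq (c_1 c_2/\vartheta) f(|w|)$. Substituting the resulting bound into the Ikeda--Watanabe expression and integrating the $W(z)$ weight produces the claim, with $C_{\ref{lem:lem1}}$ depending on $W$ only through the local-growth constant $\widetilde C_2$ used implicitly in the near-piece estimate via \eqref{eq:loc_comp_f1}.

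The main technical obstacle is the pointwise free-resolvent estimate for $|u|\geq 1$ --- specifically, controlling the small-time contribution $\int_0^{t_\bee} e^{-\vartheta t} p_t(u)\,dt$ uniformly in such $u$, since (\ref{A2}.a) is silent on $t < t_\bee$. One remedies this through the compound-Poisson / L\'evy--It\^o decomposition employed in the proof of Lemma~\ref{lem:pt_below}, which yields a $t$-linear tail bound $p_t(u) \leq C t\,\nu(u)$ for $|u|$ bounded below. The threshold $\vartheta_0$ must then be chosen to exceed $C_5$ and large enough that the small-time piece remains of order $f(|u|)/\vartheta$ rather than of order $f(|u|)/\vartheta^2$, which is why the statement restricts $\vartheta > \vartheta_0$.
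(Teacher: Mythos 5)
Your proposal has two genuine gaps, and the first one is fatal to the case that carries the real content of the lemma.

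\textbf{The case $k=n-2$ cannot be dispatched by Ikeda--Watanabe.} When $k=n-2$ the target annulus $\A{k-1}{k}=\A{n-3}{n-2}$ is adjacent to $\partial\A{n-2}{\infty}$, so $\dist(\A{k-1}{k},\A{n-2}{\infty})=0$. Your remark that ``$A\cap\partial D$ is Lebesgue-null'' does not save the argument: first, the paper explicitly allows a nondegenerate Gaussian part, so the process may exit $D$ \emph{continuously} onto $\partial D$ with positive probability, a contribution that formula \eqref{eq:IkWa} (stated on $(0,\infty)\times D\times(\overline D)^c$) simply does not represent; second, even for the jump exits, the process can enter $\A{n-3}{n-2}$ by an arbitrarily small jump, so the kernel $\nu(z-y)$ in the Ikeda--Watanabe integral is evaluated at $|z-y|$ arbitrarily close to $0$, where it is not comparable to $f_1$ and your convolution estimate (which needs $|y-z|\geq 1$) breaks down. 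This is precisely why the paper treats $k=n-2$ by an entirely different route: a decomposition $u=v_x+h_x$ into a near part (handled by the growth condition \eqref{eq:W_prop} and monotonicity of $f_1$) and a far part that is regular $(X,\vartheta)$-harmonic in $\A{n-2}{\infty}\cap B_1(x)$, to which the boundary Harnack inequality of Lemma~\ref{lem:bhi} applies; the resulting integral inequality is then iterated (a Neumann-series argument powered by the direct jump property \eqref{eq:conv_f1}) and summed for $\vartheta>\vartheta_0$. Without something of this kind your proof does not cover $k=n-2$, which is the case actually needed for the single-annulus steps in Lemma~\ref{lem:lem2}.

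\textbf{The small-time resolvent bound is unjustified.} For $k\leq n-3$ your strategy is sound in outline, but it hinges on the pointwise bound $p_t(u)\leq C\,t\,\nu(u)$ for $t<t_\bee$ and $|u|\geq 1$, which you assert is ``obtainable via the compound-Poisson representation already used in the proof of Lemma~\ref{lem:pt_below}.'' That lemma only proves the \emph{lower} bound $p_t(x)\geq C\nu(x)$; the upper bound is a genuinely different statement requiring control of the small-jump-plus-Gaussian factor $p_{1,t}*\gamma_t$ at distance $\geq 1$ and of all iterated convolutions $\nu_{1/2}^{n*}$, none of which is provided by \eqref{A1}--\eqref{A2} as stated. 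The paper avoids small times altogether by a cleaner trick: since $\dist(\A{k-1}{k},\A{n-2}{\infty})\geq 1$, one replaces $\nu(z-y)\leq C_{\ref{lem:pt_below}}^{-1}p_{t_\bee}(y-z)$ using Lemma~\ref{lem:pt_below}, applies Chapman--Kolmogorov to get $\int p_t(x-y)p_{t_\bee}(y-z)\,dy\leq p_{t+t_\bee}(x-z)$, and then the time argument is $\geq t_\bee$ so (\ref{A2}.a) applies directly, yielding the $1/(\vartheta-C_5)$ factor. You should adopt this substitution rather than attempting a small-time heat kernel upper bound.
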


\begin{remark}
    On the set $\left\{\tauAninfty < t, X_{\tauAninfty} \in \A{k-1}{k}\right\}$ we have $\tauAninfty = \sigmaAkk$.
\end{remark}

\begin{proof}[Proof of Lemma~\ref{lem:lem1}.]
Define for $\vartheta >0$
\begin{gather*}
    u(y)
    := \ex^y\left[e^{-\vartheta \tauAninfty} W(X_{\tauAninfty}); \; \tauAninfty < \infty, X_{\tauAninfty} \in \A{k-1}{k} \right],
    \quad y \in \A{n-2}{\infty}.
\end{gather*}
We will consider two cases: $k=n-2$ and $k<n-2$.

\medskip\noindent\emph{Case 1:} $k=n-2$.
For every fixed $x \in \A{n-2}{\infty}$ we set
\begin{align*}
    v_x(y)
    &:= \ex^y\left[ e^{-\vartheta \tauAninfty} W(X_{\tauAninfty}); \; \tauAninfty < \infty, X_{\tauAninfty} \in \A{k-1}{k} \cap B_1(x) \right],
    \quad y \in \A{n-2}{\infty},
\intertext{and}
    h_x(y)
    &:=
    \begin{cases}
      \ex^y\left[e^{-\vartheta \tauAninfty} W(X_{\tauAninfty}); \; \tauAninfty < \infty, X_{\tauAninfty} \in \A{k-1}{k} \setminus B_1(x) \right],
        &y \in \A{n-2}{\infty}, \\[\medskipamount]
	   W(y) \I_{\A{k-1}{k}\setminus B_1(x)} (y),
        &y \notin  \A{n-2}{\infty}.
    \end{cases}
\end{align*}
Clearly, we have the following decomposition
\begin{gather*}
u(y) = v_x(y) + h_x(y), \quad y \in \A{n-2}{\infty},
\end{gather*}
and we will estimate $v_x$ and $h_x$ separately.

\begin{figure}\centering
\includegraphics[width = .6\textwidth]{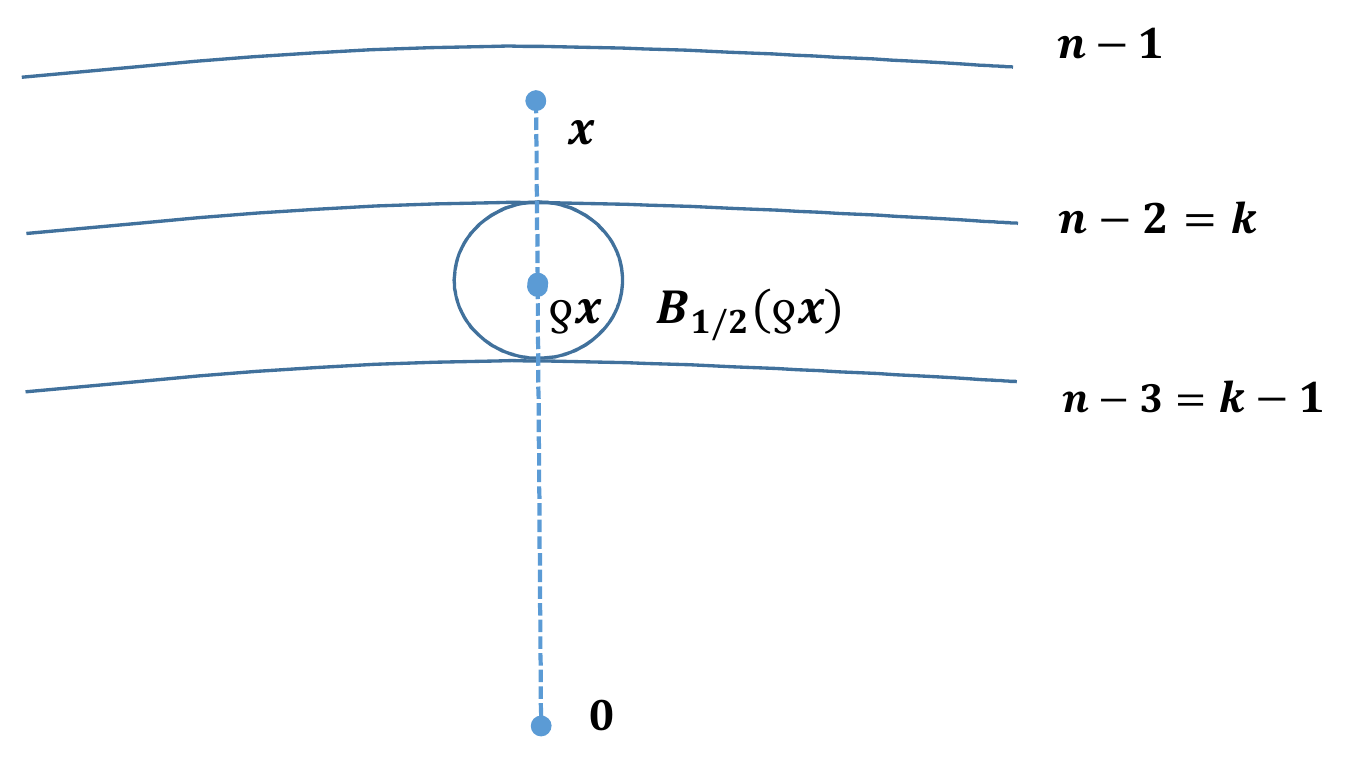}
    \caption{Construction of the ball $B_{1/2}(\varrho x)$.} \label{fig-33}
\end{figure}

\paragraph*{\itshape Estimation of $v_x$:}
    If $x \in \A{n-1}{\infty}$, then $v_x \equiv 0$ since $B_1(x) \cap \A{k-1}{k} = B_1(x) \cap  \A{n-3}{n-2} = \emptyset$. On the other hand, for $x \in \A{n-2}{\infty} \setminus \A{n-1}{\infty}$, we get from \eqref{eq:W_prop} and the definition of $v_x$ that $v_x(y) \leq \widetilde C_2 W(x)$. Pick $\varrho \in (0,1)$ such that $B_{1/2}(\varrho x) \subset {\A{k-1}{k}}$ and $\sup_{z \in B_{1/2}(\varrho x)} |x-z| \leq 2$ and let $c_1:=|B_{1/2}(0)|$, see Fig.~\ref{fig-33}. The monotonicity of $f_1$ and \eqref{eq:W_prop} give
    \begin{align*}
        W(x)
        = \frac{f_1(2) W(x) |B_{1/2}(\varrho x)|}{c_1f_1(2)}
        &= \frac{1}{c_1f_1(2)} \int_{B_{1/2}(\varrho x)} f_1(2) W(x)\, dz\\
        &\leq \frac{(\widetilde C_2)^2}{c_1f_1(2)} \int_{B_{1/2}(\varrho x)} f_1(|x-z|) W(z)\, dz.
    \end{align*}
    Therefore,
    \begin{gather*}
        v_x(y)
        \leq \frac{(\widetilde C_2)^3}{c_1f_1(2)} \int_{\A{k-1}{k}} f_1(|x-z|) W(z) \,dz,
        \quad x \in \A{n-2}{\infty} \setminus \A{n-1}{\infty}.
    \end{gather*}

\paragraph*{\itshape Estimation of $h_x$:}
    Set $D_{n,x} = \A{n-2}{\infty} \cap B_1(x)$ and observe that $\tauAninfty \geq \tau := \tau_{D_{n,x}}$.  By the strong Markov property, $h_x(y) = \ex^y\left[e^{-\vartheta \tau} h_x(X_{\tau})\right]$ for every $y \in D_{n,x}$, i.e.\ $h_x$ is regular $(X,\vartheta)$-harmonic in $D_{n,x}$. On the other hand, $h_x(y) = 0$ for every $y \in B_1(x) \setminus D_{n,x}$. By \eqref{eq:f-f_1}, Lemma~\ref{lem:bhi} and \eqref{A1}, we have
    \begin{align*}
        h_x(x)
        &\leq \frac{c_2}{\vartheta} \int_{B_{1/2}^c(x)} h_x(z) f(|x-z|)\,dz\\
        &\leq \frac{c_3}{\vartheta} \int_{B_{1/2}^c(x)} h_x(z) f_1(|x-z|)\,dz\\
        &\leq \frac{c_3}{\vartheta}\int_{\A{k-1}{k}} f_1(|x-z|) W(z)\,dz + \frac{c_3}{\vartheta} \int_{\A{n-2}{\infty}} h_x(z) f_1(|x-z|)\,dz,
    \end{align*}
    (recall that $k=n-2$) with absolute constants $c_2$ and $c_3$.

Combining the estimates from above, we get
\begin{align}
\label{eq:iter1}
    u(x)
    &= h_x(x) \leq \frac{c_3}{\vartheta}\int_{\A{k-1}{k}} f_1(|x-z|) W(z)\,dz + \frac{c_3}{\vartheta} \int_{\A{n-2}{\infty}} u(z) f_1(|x-z|)\,dz, \quad x \in \A{n-1}{\infty},
\intertext{and}
\notag
    u(x)
    &=  v_x(x) + h_x(x)\\
\label{eq:iter2}
    &\leq c_4 \int_{\A{k-1}{k}} f_1(|x-z|) W(z)\,dz + \frac{c_3}{\vartheta} \int_{\A{n-2}{\infty}} u(z) f_1(|x-z|)\,dz,
    \quad x \in \A{n-2}{\infty},
\end{align}
where $c_4 = c_3\vartheta^{-1} + (\widetilde C_2)^3 (c_1f_1(2))^{-1}$.

Using induction in $p \in \nat_0$, we will prove for $p\in\nat_0$ and $x \in \A{n-2}{\infty}$ the estimate
\begin{align} \label{eq:iter3}
    u(x)
    \leq c_4 \sum_{i=0}^{p} \left(\frac{c_3C_{\ref{lem:equivalent-djp}}}{\vartheta}\right)^i \int_{\A{k-1}{k}} f_1(|x-z|) W(z)\,dz
    + \left\|W\right\|_{\infty}\left(\frac{c_3  \left\|f_1\right\|_1}{\vartheta}\right)^{p+1}.
\end{align}
For $p=0$ the inequality \eqref{eq:iter3} follows from \eqref{eq:iter2} and the definition of the function $u$. Suppose now that \eqref{eq:iter3} holds true for some $p \geq 0$. Inserting it into \eqref{eq:iter2}, we get
\begin{align*} 
    u(x)
    &\leq c_4 \int_{\A{k-1}{k}} f_1(|x-z|) W(z)\,dz \\
    &\qquad\mbox{}+ \frac{c_3c_4}{\vartheta}  \sum_{i=0}^{p} \left(\frac{c_3C_{\ref{lem:equivalent-djp}}}{\vartheta}\right)^i   \underbrace{\int_{\A{n-2}{\infty}} \left(\int_{\A{k-1}{k}} f_1(|z-w|) W(w)\,dw\right) f_1(|x-z|)\,dz}_{=:\mathrm{J}(x)} \\
    &\qquad\mbox{}+ \left\|W\right\|_{\infty}\left(\frac{c_3  \left\|f_1\right\|_1}{\vartheta}\right)^{p+2},
\end{align*}
for every $x \in \A{n-2}{\infty}$. By Tonelli and \eqref{eq:conv_f1} we have
\begin{gather*}
    \mathrm{J}(x)
    = \int_{\A{k-1}{k}} \left(\int_{\A{n-2}{\infty}} f_1(|x-z|)f_1(|z-w|)\,dz \right) W(w)\,dw
    \leq C_{\ref{lem:equivalent-djp}} \int_{\A{k-1}{k}} f_1(|x-w|) W(w)\,dw.
\end{gather*}
Returning to the previous estimate, we get
\begin{align*}
    u(x)
    &\leq c_4 \left[1+\frac{c_3C_{\ref{lem:equivalent-djp}}}{\vartheta} \sum_{i=0}^{p}
     \left(\frac{c_3C_{\ref{lem:equivalent-djp}}}{\vartheta}\right)^i\right]
     \int_{\A{k-1}{k}} f_1(|x-z|) W(z)\,dz + \left\|W\right\|_{\infty} \left(\frac{c_3  \left\|f_1\right\|_1}{\vartheta}\right)^{p+2} \\
    &= c_4 \sum_{i=0}^{p+1} \left(\frac{c_3C_{\ref{lem:equivalent-djp}}}{\vartheta}\right)^i \; \int_{\A{k-1}{k}} f_1(|x-z|) W(z)\,dz + \left\|W\right\|_{\infty}\left(\frac{c_3  \left\|f_1\right\|_1}{\vartheta}\right)^{p+2}.
\end{align*}
This is exactly \eqref{eq:iter3} with $p \rightsquigarrow p+1$.

Taking $\vartheta > \max\left\{2c_3C_{\ref{lem:equivalent-djp}},\, c_3 \left\|f_1\right\|_1\right\}$ and letting $p \to \infty$, we obtain
\begin{align}\label{eq:iter4}
    u(x)
    \leq \frac{c_4 \vartheta}{\vartheta - c_3C_{\ref{lem:equivalent-djp}}} \int_{\A{k-1}{k}} f_1(|x-z|) W(z)\,dz
    \leq 2c_4 \int_{\A{k-1}{k}} f_1(|x-z|) W(z)\,dz,
    \quad x \in \A{n-2}{\infty}.
\end{align}
Inserting \eqref{eq:iter4} into \eqref{eq:iter1} and a further application of Tonelli's theorem and \eqref{eq:conv_f1} (as above) yields
\begin{gather*}
    u(x)
    \leq \frac{C_{\ref{lem:lem1}}}{\vartheta} \int_{\A{k-1}{k}} f_1(|x-z|) W(z)\,dz
    \leq \frac{C_{\ref{lem:lem1}}}{\vartheta} \int_{\A{k-1}{k}} f(|x-z|) W(z)\,dz ,
    \quad x \in \A{n-1}{\infty},
\end{gather*}
for every $\vartheta > 2c_3C_{\ref{lem:equivalent-djp}} + c_3 \left\|f_1\right\|_1$ with the constant $C_{\ref{lem:lem1}}= c_3 (1+2c_4 C_{\ref{lem:equivalent-djp}})$. This completes the proof in the case $k=n-2$.

\medskip\noindent
\emph{Case 2:} $k<n-2$. From the Ikeda-Watanabe formula \eqref{eq:IWF} we get
\begin{gather*}
    u(x)
    = \int_{\A{n-2}{\infty}} \int_0^{\infty} e^{-\vartheta t}\, p_{\A{n-2}{\infty}}(t,x,y) \,dt \int_{\A{k-1}{k}} W(z) \nu(z-y) \,dz\,dy,
    \quad x \in \A{n-2}{\infty}.
\end{gather*}
It follows from Lemma~\ref{lem:pt_below} that
\begin{align} \label{eq:low_dens}
    C_{\ref{lem:pt_below}} \nu(w)
    \leq p_{t_{\bee}}(w), \quad |w| \geq 1.
\end{align}
Observe that $p_{\A{n-2}{\infty}}(t,x,y)\leq p_t(x-y)$.
Since $\dist(\A{k-1}{k}, \A{n-2}{\infty}) \geq 1$, the estimate \eqref{eq:low_dens}, Tonelli's theorem and the Chapman--Kolmogorov equation give
\begin{align*}
    u(x)
    &\leq C_{\ref{lem:pt_below}}^{-1}\int_{\A{k-1}{k}}\int_0^{\infty} e^{-\vartheta t}\left(\int_{\A{n-2}{\infty}} p_t(x-y)p_{t_{\bee}}(y-z)\,dy \right) dt\,W(z)\,dz\\
    &\leq C_{\ref{lem:pt_below}}^{-1}\int_{\A{k-1}{k}}\int_{0}^{\infty} e^{-\vartheta t}\, p_{t+t_{\bee}}(x-z) \,dt \, W(z) \,dz.
\end{align*}
Finally, using (\ref{A2}.a) and \eqref{A1} we get
\begin{gather*}
    u(x)
    \leq c_5 \int_{0}^{\infty} e^{-(\vartheta - C_5) t} \,dt \; \int_{\A{k-1}{k}} f(|x-z|) W(z) \,dz,
    \quad x \in \A{n-1}{\infty}.
\end{gather*}
Setting $\vartheta_0: = 2c_3C_{\ref{lem:equivalent-djp}} + c_3 \left\|f_1\right\|_1 + 2C_5$ and $C_{\ref{lem:lem1}} := c_3 (1+2c_4 C_{\ref{lem:equivalent-djp}}) + 2c_5$ finishes the proof.
\end{proof}

\begin{lemma}\label{lem:lem2}
    Assume \eqref{A1}, \eqref{A2} and \textup{(\ref{A3}.a,b)} with radius $R_0>0$, and let $W \in \cC$, cf.\ \eqref{eq:W_prop}. Suppose that $n_0 \in \nat$ is so large that
    \begin{gather*}
        n_0 \geq R_0 + 2
    \et
        g(n_0-2) \geq 2C_6 \left[\vartheta_0 + 2C_3C_{\ref{lem:lem1}}(1+f(1))\right],
    \end{gather*}
    where $\vartheta_0$ and $C_{\ref{lem:lem1}}$ are from Lemma~\ref{lem:lem1}. For $n,k,l \in \nat$, $n-1 < |x| \leq n$, $n_0 \leq k \leq n-2$, and $t >0$, the following estimate holds with $C_{\ref{lem:lem2}} := 2 C_6 C_{\ref{lem:lem1}}$
    \begin{gather*}
        \ex^x\left[e^{-\int_0^{\sigmaAkk}V(X_s)\,ds} \, W(X_{\sigmaAkk}); \; S(\A{n-2}{\infty},\A{k-1}{k},l,t)\right]
        \leq \frac{C_{\ref{lem:lem2}}}{2^l g(n-2)} \int_{\A{k-1}{k}} f(|x-z|) W(z)\,dz.
    \end{gather*}
\end{lemma}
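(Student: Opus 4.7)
The plan is to proceed by induction on $l\in\nat$, in the spirit of~\cite{bib:KL15} but adapted to the $\cC$-weighted setting provided by Lemma~\ref{lem:lem1}.

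\textbf{Base case $l=1$.} On the event $S(\A{n-2}{\infty},\A{k-1}{k};1;t)=\{\tauAninfty<t,\,X_{\tauAninfty}\in\A{k-1}{k}\}$ one has $\sigmaAkk=\tauAninfty$, and the trajectory stays inside $\A{n-2}{\infty}$ up to time $\tauAninfty$. By (\ref{A3}.a,b), $V(X_s)\geq g(n-2)/C_6$ on this time interval, hence
\begin{gather*}
    e^{-\int_0^{\tauAninfty}V(X_s)\,ds}\leq e^{-\vartheta\tauAninfty},\qquad \vartheta:=g(n-2)/C_6.
\end{gather*}
The condition $g(n_0-2)\geq 2C_6\vartheta_0$ forces $\vartheta\geq\vartheta_0$, so Lemma~\ref{lem:lem1} applies directly and produces the bound $\frac{C_{\ref{lem:lem1}} C_6}{g(n-2)}\int_{\A{k-1}{k}} f(|x-z|)W(z)\,dz=\frac{C_{\ref{lem:lem2}}}{2\,g(n-2)}\int_{\A{k-1}{k}} f(|x-z|)W(z)\,dz$.

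\textbf{Inductive step $l\to l+1$.} Partitioning $S(\A{n-2}{\infty},\A{k-1}{k};l+1;t)$ via the last annulus $\A{p-1}{p}$ visited before $\A{k-1}{k}$ (with $k+2\leq p\leq n-2$) and applying the strong Markov property at $\sigmaApp$ gives
\begin{gather*}
    \text{LHS}
    =\sum_{p=k+2}^{n-2}\ex^x\!\left[e^{-\int_0^{\sigmaApp} V(X_s)\,ds}\,\Phi_p(X_{\sigmaApp});\,S(\A{n-2}{\infty},\A{p-1}{p};l;t)\right],
\end{gather*}
with $\Phi_p(y):=\ex^y[e^{-\int_0^{\sigmaAkk}V(X_s)\,ds}W(X_{\sigmaAkk});\,S(\A{p-2}{\infty},\A{k-1}{k};1;t)]$. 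Applying the base case with $n$ replaced by $p$ (note $y\in\A{p-1}{p}\subset\A{p-1}{\infty}$), $\Phi_p(y)\leq \frac{C_6 C_{\ref{lem:lem1}}}{g(p-2)}\int_{\A{k-1}{k}}f(|y-z|)W(z)\,dz$. Defining $V_p(y):=\int_{\A{k-1}{k}}f_1(|y-z|)W(z)\,dz$ (the truncation $f_1=f\wedge 1$ makes $V_p$ bounded and puts it in class $\cC$ via \eqref{eq:loc_comp_f1}), and observing that on the relevant range $|y-z|\geq 1$ one has $f(|y-z|)\leq c_{\star}f_1(|y-z|)$ with a constant $c_{\star}$ from (\ref{A1}.a,c), the induction hypothesis applied with weight $V_p$ and the same growth constant $\widetilde C_2$ yields
\begin{gather*}
    \text{LHS}
    \leq \frac{C_{\ref{lem:lem2}}}{2^l\,g(n-2)}\cdot C_6 C_{\ref{lem:lem1}}\,c_\star\sum_{p=k+2}^{n-2}\frac{1}{g(p-2)}\int_{\A{p-1}{p}}\!\!f(|x-w|)\int_{\A{k-1}{k}}\!\!f(|w-z|)W(z)\,dz\,dw.
\end{gather*}

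\textbf{Collapsing the convolution.} Since $g$ is increasing, $g(p-2)\geq g(n_0-2)$ for every $p\geq n_0$. The annuli $\A{p-1}{p}$, $p=k+2,\ldots,n-2$, are pairwise disjoint and every $w$ in their union satisfies $|w-x|>1$ and $|w-z|>1$ (using $x\in\A{n-1}{\infty}$, $z\in\A{k-1}{k}$, $k\leq n-2$); hence the direct jump property (\ref{A1}.d) applies, after separating off a bounded "diagonal" contribution (which accounts for the factor $1+f(1)$ in the hypothesis on $n_0$), and yields $\sum_p\int_{\A{p-1}{p}}f(|x-w|)f(|w-z|)\,dw\leq 2C_3(1+f(1))f(|x-z|)$ by Fubini. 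Plugging this in produces
\begin{gather*}
    \text{LHS}
    \leq \frac{C_{\ref{lem:lem2}}}{2^l g(n-2)}\cdot\frac{2C_3 C_6 C_{\ref{lem:lem1}}(1+f(1))\,c_\star}{g(n_0-2)}\int_{\A{k-1}{k}}f(|x-z|)W(z)\,dz,
\end{gather*}
and the hypothesis $g(n_0-2)\geq 2C_6[\vartheta_0+2C_3 C_{\ref{lem:lem1}}(1+f(1))]$ is precisely what is needed to make the second factor $\leq 1/2$, which closes the induction.

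\textbf{Main obstacle.} The delicate point is the passage from the weight $W$ to the "propagated" weight $V_p$ in the inductive step: one must check that $V_p$ (after a suitable truncation with $f_1$) belongs to $\cC$ with a growth constant compatible with $\widetilde C_2$, so that the constant $C_{\ref{lem:lem1}}$ furnished by Lemma~\ref{lem:lem1} does not degrade along the iteration, and simultaneously keep the pointwise comparison between $f$ and $f_1$ (valid on the off-diagonal region $|y-z|\geq 1$) under uniform control. Once this compatibility is secured, the calibrated choice of $n_0$ exactly absorbs all accumulated constants and converts the direct jump estimate (\ref{A1}.d) into the geometric factor $2^{-l}$.
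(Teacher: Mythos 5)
Your proof follows essentially the same route as the paper's: induction on $l$, base case obtained by bounding $e^{-\int_0^{\tauAninfty}V(X_s)\,ds}\leq e^{-\vartheta\tauAninfty}$ with $\vartheta=g(n-2)/C_6$ and invoking Lemma~\ref{lem:lem1}, then in the inductive step the strong Markov property at $\sigmaApp$, the base case for the final jump, propagation of the weight through the truncation $f_1$, and the direct jump property \textup{(\ref{A1}.d)} to collapse the convolution into the geometric factor $2^{-l}$. The only genuinely different (and equally valid) detail is that the paper applies Fubini first and feeds the fixed-$z$ weights $W_z(w)=f_1(|w-z|)\in\cC$ into the induction hypothesis, whereas you feed in the integrated weight $V_p(y)=\int_{\A{k-1}{k}}f_1(|y-z|)W(z)\,dz$; since the defining inequality of $\cC$ is pointwise and stable under such superpositions, both work with the same constant $\widetilde C_2$. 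One bookkeeping slip: the conversion $f\leq(1+f(1))f_1$ should be performed only once (when replacing the base-case bound by a $\cC$-weight), and \textup{(\ref{A1}.d)}, translated by $z$, applies directly with constant $C_3$ on the union of the annuli (every $w$ there satisfies $|x-w|>1$ and $|w-z|>1$), so the factor to be absorbed is $2C_3C_6C_{\ref{lem:lem1}}(1+f(1))/g(n_0-2)\leq 1/2$ exactly as the hypothesis on $n_0$ provides; your extra factor $2(1+f(1))$ is spurious and, if real, would require a strictly stronger assumption on $n_0$ than the one stated.
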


\begin{proof}
We use induction in $l \in \nat$.

Let $l=1$. By definition, $S(\A{n-2}{\infty},\A{k-1}{k},1,t) = \left\{X_{\tauAninfty} \in \A{k-1}{k}, \sigmaAkk < t \right\}$ and $\sigmaAkk = \tauAninfty$ on this set. From (\ref{A3}.b) and Lemma~\ref{lem:lem1} with $\vartheta = g(n-2)/C_6$ we get
\begin{align*}
    &\ex^x \left[e^{-\int_0^{\sigmaAkk}V(X_s)\,ds} \, W(X_{\sigmaAkk}); \; S(\A{n-2}{\infty}, \A{k-1}{k},1,t) \right] \\
    &\qquad \leq \ex^x\left[ e^{- (g(n-2)/C_6) \tauAninfty} \, W(X_{\tauAninfty}); \; X_{\tauAninfty} \in \A{k-1}{k}, \sigmaAkk < t \right]\\
    &\qquad\leq \frac{2C_6C_{\ref{lem:lem1}}}{2 g(n-2)} \int_{\A{k-1}{k}} f(|x-z|) W(z)\,dz.
\end{align*}
This means that the claimed bound holds for $l=1$, all $n,k,x,t$ as detailed in the statement of the lemma and all functions $W$ from the class $\cC$ (recall that $\cC$ includes all functions of the form $W_y(x)=f_1(|x-y|)$, $y \in \real^d$).

Now assume that the induction assumption holds for $1,2,...,l-1$ with $l \geq 2$. Using the decomposition of paths introduced at the beginning of Section~\ref{sec3:decomposition}, we may write
\begin{align*}
    \ex^x&\left[e^{-\int_0^{\sigmaAkk}V(X_s)\,ds} W(X_{\sigmaAkk}); \; S(\A{n-2}{\infty},\A{k-1}{k},l,t) \right] \\
    &\leq \sum_{p=k+2}^{n-2} \ex^x\left[e^{-\int_0^{\sigmaAkk}V(X_s)\,ds} W(X_{\sigmaAkk}); \; S(\A{n-2}{\infty},\A{p-1}{p},l-1,t) \cap S(\A{p-2}{\infty},\A{k-1}{k},1,t) \right].
\end{align*}
Since the process visits first $\A{p-1}{p}$ and then $\A{k-1}{k}$, we have $\sigmaAkk > \sigmaApp$. By the strong Markov property, the above expression becomes
\begin{align*}
    &\sum_{p=k+2}^{n-2} \ex^x\Bigg[ e^{-\int_0^{\sigmaApp}V(X_s)\,ds}\times\\
    &\times\ex^{X_{ \sigmaApp }}\!\! \left[e^{-\int_0^{\sigmaAkk}V(X_s)\,ds} W(X_{\sigmaAkk}); \; S(\A{p-2}{\infty},\A{k-1}{k},1,t-r) \right] \Bigg|_{r=\sigmaApp}\!\!\!; \; S(\A{n-2}{\infty},\A{p-1}{p},l-1,t)\Bigg].
\end{align*}
Using the induction hypothesis with $l=1$ for the inner expectation, we see that the above sum is less than
\begin{gather*}
    \sum_{p=k+2}^{n-2} \ex^x\bigg[e^{-\int_0^{\sigmaApp}V(X_s)\,ds} \frac{C_{\ref{lem:lem2}}}{2 g(p-2)} \int_{\A{k-1}{k}} f(|X_{\sigmaApp}-z|) W(z)\,dz; \, S(\A{n-2}{\infty},\A{p-1}{p},l-1,t)\bigg],
\intertext{which is, by Fubini's theorem, equal to}
    \int_{\A{k-1}{k}} \sum_{p=k+2}^{n-2} \frac{C_{\ref{lem:lem2}}}{2 g(p-2)} \ex^x\bigg[e^{-\int_0^{\sigmaApp}V(X_s)\,ds} f(|X_{\sigmaApp}-z|); \; S(\A{n-2}{\infty},\A{p-1}{p},l-1,t)\bigg]W(z)\,dz.
\end{gather*}
Now we estimate the expectation under the sum. Since $\dist(\A{p-1}{p},\A{k-1}{k}) \geq 1$, we have $f(|X_{\sigmaApp}-z|) \leq (1+f(1)) f_1(|X_{\sigmaApp}-z|)$, cf.~\eqref{eq:f-f_1}. Using the induction hypothesis for the functions $W_z(w) = f_1(|w-z|)$, $z \in \A{k-1}{k}$, we get
\begin{align*}
    \ex^x\bigg[e^{-\int_0^{\sigmaApp}V(X_s)\,ds} &f(|X_{\sigmaApp}-z|); \; S(\A{n-2}{\infty},\A{p-1}{p},l-1,t) \bigg] \\
    &\leq \frac{C_{\ref{lem:lem2}}(1+f(1))}{2^{l-1} g(n-2)} \int_{\A{p-1}{p}} f(|x-w|) f_1(|w-z|)\,dw.
\end{align*}
Plugging this estimate into the above expression, we finally get that the initial expectation is bounded by
\begin{align*}
    \frac{C_{\ref{lem:lem2}}^2(1+f(1))}{2^l g(n_0-2)g(n-2)} \int_{\A{k-1}{k}} \left(\int_{k+2<|w|<n-2} f(|x-w|) f_1(|w-z|)\,dw\right) W(z)\,dz.
\end{align*}
Using that $f_1\leq f$, the convolution condition \textup{(\ref{A1}.d)} and the fact that $2C_3C_6C_{\ref{lem:lem1}} (1+f(1)) \leq g(n_0-2)$, the last expression is bounded by
\begin{gather*}
    \frac{C_{\ref{lem:lem2}}}{2^l g(n-2)} \int_{\A{k-1}{k}} f(|x-z|) W(z)\,dz,
\end{gather*}
and we are done.
\end{proof}

In the sequel we will often use the following estimate which is based on the decomposition of paths introduced above. For every $x,y \in \real^d$ and $t > t_0 >0$ we have
\begin{gather}\label{eq:est_decomp}
    u_t(x,y)
    = \ex^x\left[e^{-\int_0^{t-t_0} V(X_s)\,ds}\, u_{t_0}(X_{t-t_0},y)\right]
    \leq \mathrm{I} + \sum_{k=n_0}^{n-2} \sum_{l=1}^{\infty} \mathrm{I}_{k,l},
\intertext{where}
\notag
    \mathrm{I} := \ex^x\left[e^{-\int_0^{t-t_0} V(X_s)\,ds}\, u_{t_0}(X_{t-t_0},y); \; t-t_0 < \tauAninfty\right],\\
\notag
    \mathrm{I}_{k,l}:=\ex^x\left[e^{-\int_0^{t-t_0} V(X_s)\,ds}\, u_{t_0}(X_{t-t_0},y); \; S(\A{n-2}{\infty}, \A{k-1}{k}, l,t-t_0), t-t_0 < \tau_{\A{k-2}{\infty}}\right]
\end{gather}
The estimates from Lemma~\ref{lem:lem2} will be essential for proving sharp bounds for the terms $\mathrm{I}_{k,l}$.

\section{General estimates of the Schr\"odinger heat kernel}\label{sec4}

Recall that the Schr\"{o}dinger semigroup is given through the Feynman-Kac formula
\begin{gather*}
    e^{-tH} f(x)
    = U_t f(x)
    = \ex^x\left[e^{-\int_0^t V(X_s)\,ds} f(X_t) \right]
    = \int_{\real^d} u_t(x,y) f(y)\,dy,
    \quad f \in L^2(\real^d),\; t>0.
\end{gather*}
In the next two sections we prove upper and lower estimates for $u_t(x,y)$ under rather general assumptions on the potential. More precisely, we will only assume that the potential $V$ satisfies the assumption \textup{(\ref{A3}.a,b)} and we do not require the growth property \textup{(\ref{A3}.c)}.

\subsection{The upper bound}\label{sec4:upper}

\begin{lemma}\label{lem:lem3}
    Assume \eqref{A1}, \eqref{A2} and \textup{(\ref{A3}.a,b)} with $t_{\bee}>0$ and $R_0>0$. Let $n_0 \in \nat$ be as in Lemma~\ref{lem:lem2}. There exists a constant $C>0$ such that for every $|x| > n_0+3$, $y \in \real^d$ and $t > t_{\bee}$ we have
    \begin{align*}
        u_t(x,y)
        &\leq \frac{Ce^{(C_5+C_{\ref{new}})t}}{g(|x|-2)} \bigg(e^{-\frac{t-t_{\bee}}{C_6} g(|x|-2)}g(|x|-2) f_1(|x-y|)  \\
        &\qquad\qquad\mbox{}+\int_{n_0+2 < |z| < |x|-1} f(|x-z|) f_1(|z-y|)  e^{-\frac{t-t_{\bee}}{2C_6} g(|z|-2)}\,dz
        + f(|x|) f_1(|y|) \bigg).
    \end{align*}
\end{lemma}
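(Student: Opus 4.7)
I propose to apply the trajectory decomposition \eqref{eq:est_decomp} with $t_0 = t_\bee$. Set $T := t-t_\bee$ and pick $n \in \nat$ with $n-1 < |x| \leq n$, so $n \geq n_0+4$ since $|x| > n_0+3$. This yields $u_t(x,y) \leq \mathrm{I} + \sum_{k=n_0}^{n-2}\sum_{l\geq 1} \mathrm{I}_{k,l}$. The three summands in the statement will arise respectively from $\mathrm{I}$, from $\mathrm{I}_{k,l}$ with $k\geq n_0+3$ (paths dipping into an off-centre annulus), and from the central indices $k\in\{n_0,n_0+1,n_0+2\}$.

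For $\mathrm{I}$: on $\{T<\tauAninfty\}$ the path satisfies $|X_s|>n-2$, so \textup{(\ref{A3}.a)} and monotonicity of $g$ give $V(X_s)\geq g(n-2)/C_6 \geq g(|x|-2)/C_6$, yielding the Khasminskii factor $e^{-Tg(|x|-2)/C_6}$. Lemma~\ref{new}.\ref{new-c} gives $u_{t_\bee}(\cdot,y)\leq e^{C_{\ref{new}}t_\bee}p_{t_\bee}(\cdot-y)$, and the unrestricted semigroup identity $\ex^x[p_{t_\bee}(X_T-y)]=p_t(x-y)$ combined with \textup{(\ref{A2}.a)} bounds this by $C_4 e^{C_5 t}f_1(|x-y|)$. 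Collecting constants gives the first stated term.

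For $\mathrm{I}_{k,l}$ with $k\in\{n_0+3,\dots,n-2\}$ I use the strong Markov property at $\sigmaAkk$. The post-$\sigmaAkk$ piece takes place in $\A{k-2}{\infty}$ (by the condition $T<\tauAkinfty$ in $\mathrm{I}_{k,l}$), where $V\geq g(k-2)/C_6$. Combining Lemma~\ref{new}.\ref{new-c}, \textup{(\ref{A2}.a)}, and the semigroup identity $\ex^z[p_{t_\bee}(X_s-y)]=p_{s+t_\bee}(z-y)$, and then invoking $g(k-2)/C_6-C_5\geq g(k-2)/(2C_6)$ (which holds because the standing assumption on $n_0$ gives $g(n_0-2)\geq 2C_6\vartheta_0\geq 4C_6 C_5$, since $\vartheta_0\geq 2C_5$), the continuation expectation is at most $C e^{(C_5+C_{\ref{new}})t_\bee}e^{-(T-\sigmaAkk)g(k-2)/(2C_6)}f_1(|X_{\sigmaAkk}-y|)$. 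I then split $e^{-(T-\sigmaAkk)g(k-2)/(2C_6)}=e^{-Tg(k-2)/(2C_6)}\cdot e^{\sigmaAkk g(k-2)/(2C_6)}$, pull the first factor out of the expectation, and observe that the remaining exponent $-\int_0^{\sigmaAkk}(V(X_s)-g(k-2)/(2C_6))\,ds$ corresponds to the modified potential $V':=V-g(k-2)/(2C_6)$. Crucially, on the event $S(\A{n-2}{\infty},\A{k-1}{k},l,T)$ the path remains in $\A{k-2}{\infty}$ throughout $[0,\sigmaAkk]$ (because all intermediate "new-minimum" annuli $\A{p_i-1}{p_i}$ with $p_i>k$ lie in $\A{p_i-2}{\infty}\subset\A{k-2}{\infty}$), so $V'\geq g(|x|)/(2C_6)$ along the path. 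The proof of Lemma~\ref{lem:lem2} then applies with $V$ replaced by $V'$ and $\vartheta$ halved, the admissibility following from $g(n-2)/(2C_6)\geq g(n_0-2)/(2C_6)\geq\vartheta_0$. Summing $\sum_l 2^{-l}=1$, bounding $e^{-Tg(k-2)/(2C_6)}\leq e^{-Tg(|z|-2)/(2C_6)}$ pointwise for $z\in\A{k-1}{k}$ (using $|z|-2\leq k-2$ and monotonicity of $g$), and amalgamating the annuli for $k\in\{n_0+3,\dots,n-2\}$ into the region $\{n_0+2<|z|\leq n-2\}\subset\{n_0+2<|z|<|x|-1\}$ delivers the integral term. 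For the central indices $k\in\{n_0,n_0+1,n_0+2\}$ no meaningful survival restriction is available, so I instead bound the continuation trivially by $u_{T-\sigmaAkk+t_\bee}(X_{\sigmaAkk},y)\leq C_4 e^{(C_5+C_{\ref{new}})t}f_1(|X_{\sigmaAkk}-y|)$ and apply Lemma~\ref{lem:lem2} directly. Since $\A{k-1}{k}\subset\overline{B_{n_0+2}(0)}$ is bounded, iterating \textup{(\ref{A1}.c)} gives $f(|x-z|)\leq C_2^{n_0+2}f(|x|)$ on $\A{k-1}{k}$, while a case split on $|y|$ yields $\int_{|z|\leq n_0+2}f_1(|z-y|)\,dz\leq c\,f_1(|y|)$; this produces the third term. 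Finally, $g(n-2)^{-1}\leq g(|x|-2)^{-1}$ brings every piece into the prescribed prefactor.

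The main obstacle I foresee lies in the $k\geq n_0+3$ case: one must justify that the proof of Lemma~\ref{lem:lem2} carries over with the modified potential $V'=V-g(k-2)/(2C_6)$. The two delicate points are (i) the trajectory remains in $\A{k-2}{\infty}$ during $[0,\sigmaAkk]$ on the event $S(\A{n-2}{\infty},\A{k-1}{k},l,T)$, which follows from the recursive definition of $S$ via the intermediate visits to annuli $\A{p_i-1}{p_i}$ with $p_i>k$; and (ii) the iteration of Lemma~\ref{lem:lem1} inside Lemma~\ref{lem:lem2} uses only a lower bound on the potential restricted to $\A{k-2}{\infty}$, so halving $\vartheta$ (with slack provided by the assumption $g(n_0-2)\geq 2C_6[\vartheta_0+2C_3C_{\ref{lem:lem1}}(1+f(1))]$) leaves the iterative argument intact.
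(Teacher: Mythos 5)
Your proposal is correct and follows essentially the same route as the paper: the decomposition \eqref{eq:est_decomp} with $t_0=t_{\bee}$, the Chapman--Kolmogorov/(\ref{A2}.a) bound for $\mathrm{I}$, Lemma~\ref{lem:lem2} applied with an effectively halved potential for the off-centre annuli, and the trivial continuation bound plus Lemma~\ref{lem:lem2} with $W_y=f_1(|\cdot-y|)$ for the central indices. The only (harmless) deviation is how you harvest the factor $e^{-\frac{t-t_{\bee}}{2C_6}g(k-2)}$: the paper simply splits $e^{-\int_0^{t-t_{\bee}}V}$ into two halves and uses the survival event $\{t-t_{\bee}<\tauAkinfty\}$ over the whole time interval, whereas you extract the decay from the post-$\sigmaAkk$ continuation and compensate on $[0,\sigmaAkk]$ by shifting the potential to $V-g(k-2)/(2C_6)$ --- both reduce to the same application of Lemma~\ref{lem:lem2} with halved $\vartheta$, for which the slack built into the choice of $n_0$ suffices exactly as you observe.
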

\begin{proof}
Let $|x| > n_0+3$, $y \in \real^d$ and $t \geq t_{\bee}$. Pick $n \in \nat$, $n\geq n_0+4$ such that $n-1 < |x| \leq n$. Because of \eqref{eq:est_decomp} we have to estimate $\mathrm{I}$ and $\mathrm{I}_{k,l}$.

By \textup{(\ref{A3}.b)}, \eqref{eq:u_by_p}, the Chapman--Kolmogorov equation for $p_t$ and (\ref{A2}.a), we get
\begin{align*}
    \mathrm{I}
    &\leq e^{-\frac{t-t_{\bee}}{C_6} g(n-2)} e^{C_{\ref{new}}t_{\bee}} \ex^x[p_{t_{\bee}}(X_{t-t_{\bee}}-y)] \\
    &\leq e^{C_{\ref{new}}t_{\bee}} e^{-\frac{t-t_{\bee}}{C_6} g(|x|-2)}\, p_{t}(x-y) \\
    &\leq C_4 e^{C_{\ref{new}}t_{\bee}} e^{C_5 t} e^{-\frac{t-t_{\bee}}{C_6} g(|x|-2)} f_1(|x-y|).
\end{align*}

Now we turn to $\mathrm{I}_{k,l}$. First, assume that $k \geq n_0+2$. To keep notation simple, we set $r:=t-t_{\bee}$. By the fact that $\sigmaAkk < r$ on $S(\A{n-2}{\infty}, \A{k-1}{k}, l,r)$, and \textup{(\ref{A3}.b)}, \eqref{eq:u_by_p} we have
\begin{align*}
    \mathrm{I}_{k,l}
    &\leq \ex^x\left[e^{-\int_0^r V(X_s) \,ds}\, u_{t_{\bee}}(X_{r},y); \; S(\A{n-2}{\infty}, \A{k-1}{k}, l,r), r < \tau_{\A{k-2}{\infty}}\right]
    \\
    &= \ex^x\left[ e^{-\frac 12\int_0^r V(X_s) \,ds} e^{-\frac 12\int_0^r V(X_s)\,ds}\, u_{t_{\bee}}(X_{r},y); \; S(\A{n-2}{\infty}, \A{k-1}{k}, l,r), r < \tau_{\A{k-2}{\infty}}\right]
    \\
    &\leq e^{C_{\ref{new}}t_{\bee}} e^{- \frac{r}{2C_6} g(k-2)} \ex^x\left[e^{-\int_0^{\sigmaAkk} (V(X_s)/2)\,ds }\, p_{t_{\bee}}(X_r-y); \; S(\A{n-2}{\infty}, \A{k-1}{k}, l,r)\right].
\end{align*}
By the strong Markov property, the Chapman--Kolmogorov equation for $p_t$, \eqref{A2} and \eqref{A3},
\begin{align*}
    &\mathrm{I}_{k,l}\\
    &\leq e^{C_{\ref{new}}t_{\bee}} e^{-\frac{r}{2C_6} g(k-2)} \ex^x\left[e^{-\frac 12\int_0^{\sigmaAkk} V(X_s)\,ds} \, \ex^{X_{\sigmaAkk}}[p_{t_{\bee}}(X_{r-v}-y)]_{v = \sigmaAkk};
    \; S(\A{n-2}{\infty}, \A{k-1}{k}, l,r)\right] \\
    &= e^{C_{\ref{new}}t_{\bee}} e^{-\frac{r}{2C_6} g(k-2)} \ex^x\left[e^{-\frac 12\int_0^{\sigmaAkk} V(X_s)\,ds }\, p_{t-\sigmaAkk}(X_{\sigmaAkk}-y);
    \; S(\A{n-2}{\infty}, \A{k-1}{k}, l,r)\right] \\
    &\leq C_4 e^{C_{\ref{new}}t_{\bee}} e^{C_5 t}e^{-\frac{r}{2C_6} g(k-2)} \ex^x\left[ e^{-\frac 12\int_0^{\sigmaAkk} V(X_s)\,ds }f_1(|X_{\sigmaAkk}-y|); \; S(\A{n-2}{\infty}, \A{k-1}{k}, l,r)\right] \\
    &\leq \frac{C_4 C_{\ref{lem:lem2}} e^{C_{\ref{new}}t_{\bee}} e^{C_5 t}}{2^l g(n-2)} \int_{\A{k-1}{k}} f(|x-z|) f_1(|z-y|) e^{-\frac{r}{2C_6} g(|z|-2)}\,dz.
\end{align*}
In the last estimate we use Lemma~\ref{lem:lem2} with the function $W = f_1$ (which is in the class $\cC$ defined in \eqref{eq:W_prop}) as well as the monotonicity (\ref{A3}.b) of $g$.

We still have to estimate $\mathrm{I}_{k,l}$ for $k=n_0, n_0+1$. Recall that $\A{n_0-1}{\infty} = \A{n_0-2}{\infty} = \real^d$ and $\tau_{\A{n_0-1}{\infty}} = \tau_{\A{n_0-2}{\infty}} = \infty$. This is the most critical situation and we cannot use the above arguments. By the strong Markov property and (the analogue of) the Chapman--Kolmogorov equations for the kernel $u_t$, we have
\begin{align}
    \mathrm{I}_{k,l}
    &\notag= \ex^x\left[e^{-\int_0^r V(X_s) \,ds}\, u_{t_{\bee}}(X_{r},y); \; S(\A{n-2}{\infty}, \A{k-1}{k}, l,r)\right] \\
    &\notag= \ex^x\left[e^{-\int_0^{\sigmaAkk} V(X_s) \,ds} \,\ex^{X_{\sigmaAkk}}\left[e^{-\int_0^{r-v} V(X_s) \,ds}\, u_{t_{\bee}}(X_{r-v},y)\right]_{v=\sigmaAkk}; \; S(\A{n-2}{\infty}, \A{k-1}{k}, l,r)\right] \\
    &\label{eq:lem3-aux}= \ex^x\left[e^{-\int_0^{\sigmaAkk} V(X_s) \,ds}\, u_{t-\sigmaAkk}(X_{\sigmaAkk},y); \; S(\A{n-2}{\infty}, \A{k-1}{k}, l,r)\right].
\end{align}
Applying \eqref{eq:u_by_p} and \eqref{A2} as before, we get that the above expectation is not greater than
\begin{align*}
    C_4 e^{C_{\ref{new}}t} \ex^x\left[e^{-\int_0^{\sigmaAkk} V(X_s)\,ds }\left(\left[e^{C_5 t} f(|X_{\sigmaAkk}-y|)\right] \wedge 1\right); \; S(\A{n-2}{\infty}, \A{k-1}{k}, l,r)\right].
\end{align*}
Observe that
\begin{gather*}
   \left[e^{C_5 t} f(|w|)\right] \wedge 1
    \leq e^{C_5 t} f_1(|w|),
    \quad w \in \real^d,
\end{gather*}
and recall that for $y \in \real^d$ the function  $W_{y}(w) = f_1(|w-y|)$ is also in the class $\cC$ defined in \eqref{eq:W_prop}. From Lemma~\ref{lem:lem2} we see
\begin{align*}
    \mathrm{I}_{k,l}
    &\leq C_4 e^{(C_5+C_{\ref{new}})t} \ex^x\left[e^{-\int_0^{\sigmaAkk} V(X_s)\,ds } W_{y}(X_{\sigmaAkk}); \; S(\A{n-2}{\infty}, \A{k-1}{k}, l,r)\right] \\
    &\leq \frac{C_4 C_{\ref{lem:lem2}}e^{(C_5+C_{\ref{new}})t}}{2^l g(n-2)} \int_{\A{k-1}{k}} f(|x-z|) f_1(|z-y|)\, dz
\end{align*}
and with (\ref{A1}.b) and (\ref{A1}.c) we conclude that
\begin{gather*}
    \mathrm{I}_{k,l} \leq c_3 \frac{e^{(C_5+C_{\ref{new}})t}}{2^l g(n-2)} f(|x|) f_1(|y|),
\end{gather*}
for $k = n_0, n_0+1$. Combining all inequalities, we finally obtain
\begin{align*}
    u_t(x,y)
    &\leq \frac{e^{(C_5+C_{\ref{new}})t}}{g(|x|-2)} \bigg(c_1 e^{-\frac{t-t_{\bee}}{C_6} g(|x|-2)}g(|x|-2) f_1(|x-y|)  \\
    &\qquad\mbox{}+c_2 \int_{n_0+2 < |z| < |x|-1} f(|x-z|) f_1(|z-y|)  e^{-\frac{t-t_{\bee}}{2C_6} g(|z|-2)}\,dz
    +c_3 f(|x|)f_1(|y|)\bigg).
\qedhere
\end{align*}
\end{proof}

\begin{lemma}\label{lem:lem4}
Assume \eqref{A1}, \eqref{A2} and \textup{(\ref{A3}.a,b)} with $t_{\bee}>0$ and $R_0>0$. For $n_0 \in \nat$ as in Lemma~\ref{lem:lem3} \textup{(}and Lemma~\ref{lem:lem2}\textup{)} there exists a constant $C>0$ such that for $t > 4t_{\bee}$ the following assertions hold.
\begin{enumerate}
\item\label{lem:lem4-a}
    If $|x|, |y| \leq n_0+3$, then
    \begin{align*}
        u_t(x,y) \leq C e^{(C_5+C_{\ref{new}}) t}.
    \end{align*}

\item\label{lem:lem4-b}
    If $|x| > n_0+3$ and $|y| \leq n_0+3$, then
    \begin{align*}
        u_t(x,y) \leq C e^{(C_5+C_{\ref{new}}) t} \frac{f(|x|)}{g(|x|-2)};
    \end{align*}
    if $|x| \leq n_0+3$ and $|y| > n_0+3$, then, by symmetry,
    \begin{align*}
        u_t(x,y) \leq C e^{(C_5+C_{\ref{new}}) t} \frac{f(|y|)}{g(|y|-2)}.
    \end{align*}

\item\label{lem:lem4-c}
    If $|x|, |y| > n_0+3$, then
    \begin{align*}
        u_t(x,y)
        &\leq C e^{(C_5 + C_{\ref{new}})t} \Bigg(e^{- \frac{t-3t_{\bee}}{C_6} g(|x|-2)} \, \frac{f_1(|x-y|)}{g(|y|-2)}\\
        &\quad\mbox{}+\frac{1 }{g(|x|-2) g(|y|-2)} \int_{n_0+2 < |z| < |x|-1} f(|x-z|) f_1(|z-y|)  e^{-\frac{t-3t_{\bee}}{2C_6} g(|z|-2)}\,dz \\
        &\quad\mbox{}+\frac{1}{g(|x|-2)g(|y|-2)} f(|x|) f(|y|)\Bigg) .
    \end{align*}
    In particular, the following symmetrized estimate holds
    \begin{align*}
        u_t(x,y)
        &\leq C e^{(C_5 + C_{\ref{new}})t} \Bigg[ \Bigg(\frac{e^{- \frac{t-3t_{\bee}}{2C_6} g(|x|-3) }}{g(|y|-2)}
        \wedge \frac{e^{- \frac{t-3t_{\bee}}{2C_6} g(|y|-3)}}{g(|x|-2)} \Bigg) f_1(|x-y|) \\
        &\quad\mbox{}+\frac{1}{g(|x|-2) g(|y|-2)} \!\!\int\limits_{n_0+2 \leq |z| \leq (|x|-1) \vee (|y|-1)}\!\!\!\!\!\!\!\!\!\!\!\!\!\!\! f_1(|x-z|)f_1(|z-y|) e^{-\frac{t-3t_{\bee}}{2C_6} g(|z|-2)}\,dz \\
        &\quad\mbox{}+\frac{1}{g(|x|-2)g(|y|-2)} f(|x|) f(|y|) \Bigg].
    \end{align*}
\end{enumerate}
\end{lemma}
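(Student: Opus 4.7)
My plan is to prove the three parts in order, using Lemma~\ref{lem:lem3} as the main workhorse, and to handle part~\ref{lem:lem4-c} via a double application of that lemma in order to extract the additional factor $1/g(|y|-2)$ on the $y$-side.

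For part~\ref{lem:lem4-a} I would simply combine the pointwise bound $u_t(x,y)\leq e^{C_{\ref{new}} t}p_t(y-x)$ from Lemma~\ref{new}.\ref{new-c} with the estimate $p_t(y-x)\leq C_4 e^{C_5 t}f_1(|y-x|)\leq C_4 e^{C_5 t}$ from (\ref{A2}.a), which applies since $t>4t_{\bee}>t_{\bee}$. For part~\ref{lem:lem4-b} I would apply Lemma~\ref{lem:lem3} directly and estimate each of its three summands by a constant multiple of $f(|x|)$. For the first summand the elementary inequality $se^{-\tau s}\leq 1/(e\tau)$ bounds the factor $g(|x|-2)e^{-\frac{t-t_{\bee}}{C_6}g(|x|-2)}$, while $|x-y|\geq|x|-n_0-3$ together with $n_0+3$ iterations of (\ref{A1}.c) gives $f_1(|x-y|)\leq C_2^{n_0+3}f(|x|)$. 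For the second summand, the same iteration yields $f_1(|z-y|)\leq C_2^{n_0+3}f(|z|)$ for $|z|\geq n_0+4$, reducing the bulk of the integral to the direct-jump form controlled by (\ref{A1}.d); the thin annulus $n_0+2<|z|<n_0+4$ is handled separately using $|x-z|\geq|x|-n_0-4$ and again (\ref{A1}.c). The third summand is trivially $\leq f(|x|)$ since $f_1\leq 1$.

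The heart of the proof is part~\ref{lem:lem4-c}. Starting from the Feynman--Kac identity
\begin{gather*}
    u_t(x,y)=\ex^x\!\left[e^{-\int_0^{t-3t_{\bee}}V(X_s)\,ds}\,u_{3t_{\bee}}(X_{t-3t_{\bee}},y)\right],
\end{gather*}
I would use the symmetry $u_{3t_{\bee}}(z,y)=u_{3t_{\bee}}(y,z)$ and apply Lemma~\ref{lem:lem3} to the inner kernel (legitimate since $|y|>n_0+3$ and $3t_{\bee}>t_{\bee}$) to obtain, uniformly in $z\in\real^d$,
\begin{gather*}
    u_{3t_{\bee}}(z,y)\leq\frac{C'e^{3t_{\bee}(C_5+C_{\ref{new}})}}{g(|y|-2)}\bigl(B_1(y,z)+B_2(y,z)+B_3(y,z)\bigr),
\end{gather*}
where $B_1(y,z)=e^{-\frac{2t_{\bee}}{C_6}g(|y|-2)}g(|y|-2)f_1(|y-z|)$, $B_2$ is the convolution integral from Lemma~\ref{lem:lem3}, and $B_3(y,z)=f(|y|)f_1(|z|)$. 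Substituting back and re-running the path decomposition of Section~\ref{sec3:decomposition} separately on each $B_i$ --- exactly as in the proof of Lemma~\ref{lem:lem3}, invoking Lemma~\ref{lem:lem2} with the weights $W_w(\cdot)=f_1(|\cdot-w|)\in\cC$ --- the $B_1$ contribution reproduces the first claimed summand (the factor $g(|y|-2)e^{-\frac{2t_{\bee}}{C_6}g(|y|-2)}$ absorbed into the constant via $se^{-\tau s}\leq 1/(e\tau)$), the $B_2$ contribution reproduces the second summand after a Fubini exchange (the inner exponential $e^{-\frac{t_{\bee}}{C_6}g(|w|-2)}$ is controlled by $1$ while the outer decomposition supplies the $(t-3t_{\bee})/(2C_6)$ decay in the integrand, and the remaining double convolution in $w$ collapses to $f_1(|y-z|)$ via \eqref{eq:conv_f1_pw}), and the $B_3$ contribution reproduces the third summand because $\ex^x[e^{-\int V}f_1(|X_{t-3t_{\bee}}|)]$ can itself be handled by a further path decomposition, yielding $f(|x|)/g(|x|-2)$ (essentially as in the analysis of $\mathrm I$). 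Finally, the symmetrized bound follows by applying the non-symmetric one both to $u_t(x,y)$ and to $u_t(y,x)=u_t(x,y)$ and taking the minimum; the monotonicity of $g$ (specifically $2g(|x|-2)\geq g(|x|-3)$) absorbs the replacements $g(|x|-2)\rightsquigarrow g(|x|-3)$ and $1/C_6\rightsquigarrow 1/(2C_6)$, and the replacement of $f$ by $f_1$ in the symmetrized Term~2 is absorbed using (\ref{A1}.c) up to a constant.

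The main obstacle is the bookkeeping in part~\ref{lem:lem4-c}: the double application produces nine cross-terms in principle (three outer decomposition pieces times three $B_i$), and matching them with exactly the three claimed summands while keeping the exponents precisely $(t-3t_{\bee})/C_6$ and $(t-3t_{\bee})/(2C_6)$ requires the forced choice $t_0=3t_{\bee}$ for the split time (so that $t_0>t_{\bee}$ for Lemma~\ref{lem:lem3} to apply and $t-t_0>t_{\bee}$ still holds from $t>4t_{\bee}$), together with systematic use of (\ref{A1}.c), (\ref{A1}.d), and the monotonicity of $g$ to collapse subdominant factors.
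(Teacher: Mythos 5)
Your parts~\ref{lem:lem4-a} and~\ref{lem:lem4-b} coincide with the paper's proof. For part~\ref{lem:lem4-c} the strategy is also the paper's --- factorize $u_t$ by Chapman--Kolmogorov at a small intermediate time, bound the short-time factor $u_{t_0}(\cdot,y)$ via Lemma~\ref{lem:lem3} and symmetry to harvest the $1/g(|y|-2)$, then apply Lemma~\ref{lem:lem3} again to the long-time factor and collapse the convolutions with \eqref{eq:conv_f1} --- but your execution differs in two respects. First, the paper does \emph{not} re-run the path decomposition on each of the three inner terms $B_1,B_2,B_3$: it first collapses the entire short-time bound to the single estimate $u_{2t_{\bee}}(z,y)\le c\,f_1(|z-y|)/g(|y|-2)$ (the $B_2$ term via \textup{(\ref{A1}.d)}, the $B_3$ term via \eqref{eq:prod_by_diff}, essentially as you indicate), and only then integrates the pointwise bound of Lemma~\ref{lem:lem3} for $u_{t-2t_{\bee}}(x,\cdot)$ against $f_1(|\cdot-y|)\,dz$. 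This yields exactly three terms and avoids your nine-cross-term bookkeeping; your plan is workable, but note that for each fixed $B_i$ the outer decomposition again produces all three types of terms (not only the $i$-th claimed summand), so every cross-term still has to be matched against one of the three summands in the statement.

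Second, and this is the one concrete error: the split time $t_0=3t_{\bee}$ is not the ``forced'' choice and does not produce the stated exponents. Applying Lemma~\ref{lem:lem3} to the factor at time $t-3t_{\bee}$ yields the exponents $\frac{(t-3t_{\bee})-t_{\bee}}{C_6}\,g=\frac{t-4t_{\bee}}{C_6}\,g$ and $\frac{t-4t_{\bee}}{2C_6}\,g$, i.e.\ a strictly weaker bound than claimed; the discrepancy $e^{\frac{t_{\bee}}{C_6}g(|x|-2)}$ is unbounded in $x$ and cannot be absorbed into the constant $C$. The correct choice is $t_0=2t_{\bee}$, as in the paper: then $t_0>t_{\bee}$ and $t-t_0>2t_{\bee}>t_{\bee}$ still hold for $t>4t_{\bee}$, and the exponents come out as $(t-3t_{\bee})/C_6$ and $(t-3t_{\bee})/(2C_6)$ exactly. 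With this correction your argument goes through; the concluding symmetrization (taking the minimum of the two one-sided bounds and using the monotonicity of $g$ together with \eqref{eq:f-f_1}) is as in the paper.
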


\begin{proof}
\ref{lem:lem4-a}
This follows directly from \eqref{eq:u_by_p} and \eqref{A2}.

\medskip\noindent
\ref{lem:lem4-b}
By symmetry it is enough to consider the case $|x| > n_0+3$ and $|y| \leq n_0+3$. Since $|y| \leq n_0+3$, the estimate in Lemma~\ref{lem:lem3} and (\ref{A3}.a,b) show for every $t > 4t_{\bee}$
\begin{align*}
    u_t(x,y)
    &\leq \frac{e^{(C_5+C_{\ref{new}})t}}{g(|x|-2)}\Bigg(c_1 f(|x|)+ c_2 \int\limits_{n_0+2 < |z| < |x|-1}\!\!\!\! f(|x-z|) f(|z|) \,dz + c_3  f(|x|)\Bigg)
\end{align*}
and, by \textup{(\ref{A1}.d)}, we conclude that
\begin{gather*}
    u_t(x,y)  \leq \frac{c_4 e^{(C_5+C_{\ref{new}})t}}{g(|x|-2)} f(|x|).
\end{gather*}

\medskip\noindent
\ref{lem:lem4-c}
Let $|x|, |y| > n_0+3$ and $t > 4t_{\bee}$. In view of \textup{(\ref{A1}.d)}, \textup{(\ref{A3}.a,b)}, \eqref{eq:prod_by_diff} and \eqref{eq:f-f_1}, we get with Lemma~\ref{lem:lem3} that
\begin{align} \label{eq:bd_u_t0}
    u_{2t_{\bee}}(z,y)
    = u_{2t_{\bee}}(y,z)
    \leq c_5 \frac{f_1(|z-y|)}{g(|y|-2)}, \quad z \in \real^d.
\end{align}
The symmetry of the kernel $u_{2t_{\bee}}(z,y)$, \eqref{eq:bd_u_t0} and a further application of the estimate from Lemma~\ref{lem:lem3} to $u_{t-2t_{\bee}}(x,z)$ (with $t-2t_{\bee} \geq 2t_{\bee} > t_{\bee}$), yield
\begin{align*}
    &u_t(x,y)  = \int_{\real^d} u_{t-2t_{\bee}}(x,z) u_{2t_{\bee}}(y,z)\,dz \\
    &\leq c_7 e^{(C_5+C_{\ref{new}}) t} e^{-\frac{t-3t_{\bee}}{C_6} g(|x|-2)} \frac{1}{g(|y|-2)} \int_{\real^d}f_1(|x-z|) f_1(|z-y|)\,dz  \\
    &\quad\mbox{}+ \frac{c_7 e^{(C_5+C_{\ref{new}}) t}}{g(|x|-2)g(|y|-2)} \int\limits_{\real^d}
    \int\limits_{n_0+2 < |w| < |x|-1} f(|x-w|) f_1(|w-z|)  e^{-\frac{t-3t_{\bee}}{2C_6} g(|w|-2)}\,dw \: f_1(|z-y|) \,dz\\
    &\quad\mbox{}+ \frac{c_7e^{(C_5+C_{\ref{new}})t}}{g(|x|-2)g(|y|-2)} f(|x|) \int_{\real^d} f_1(|z|)f_1(|z-y|) \,dz .
\end{align*}
By Fubini's theorem and \eqref{eq:conv_f1}, we finally arrive at
\begin{align*}
    u_t(x,y)
    &\leq c_8 e^{(C_5 + C_{\ref{new}})t} \Bigg(e^{-\frac{t-3t_{\bee}}{C_6} g(|x|-2) } \frac{f_1(|x-y|)}{g(|y|-2)}\\
    &\quad\mbox{}+\frac{1 }{g(|x|-2) g(|y|-2)} \int_{n_0+2 < |z| < |x|-1} f(|x-z|) f_1(|z-y|)  e^{-\frac{t-3t_{\bee}}{2C_6} g(|z|-2)}\,dz \\
    &\quad\mbox{}+\frac{1}{g(|x|-2)g(|y|-2)} f(|x|) f(|y|) \Bigg).
    \end{align*}
This is the first claimed bound. The second one easily follows from the first by symmetry.
\end{proof}

\subsection{The lower bound}\label{sec4:lower}

We begin with an auxiliary result. Recall that $\mu_0(r) > 0$ and $\psi_{0,r}$ are the ground state eigenvalue and eigenfunction for the process killed on leaving a ball $B_r(0)$, $r >0$.

\begin{lemma} \label{lem:low1}
Assume \textup{(\ref{A1}.a,b,c)}, \eqref{A2} and \textup{(\ref{A3}.a,b)} with $t_{\bee} >0$ and $R_0 >0$. For every $n_0 \geq R_0$ there exist the constants $C, \widetilde C >0$ such that we have for $t > 2t_{\bee}$
\begin{gather*}
    u_t(x,y) \geq C e^{- t \mu_0(\frac 12)} f_1(|x-y|)
    \times
    \begin{cases}
        \frac{e^{- C_6 t g(|x|+2)}}{g(|y|+1/2)} \vee \frac{e^{-C_6 t g(|y|+2)}}{g(|x|+1/2)} &\text{if\ \ }|x| > n_0+2, |y| > n_0+2,
        \\[\medskipamount]
    	\frac{e^{- \widetilde Ct}}{g(|x|+1/2)}  &\text{if\ \ } |x| > n_0+2, |y| \leq n_0+2,
        \\[\medskipamount]
    	\frac{e^{- \widetilde Ct}}{g(|y|+1/2)}  &\text{if\ \ } |x| \leq n_0+2, |y| > n_0+2,
        \\[\medskipamount]
    	e^{- \widetilde Ct}  &\text{if\ \ } |x| \leq n_0+2, |y| \leq n_0+2.
    \end{cases}
\end{gather*}
\end{lemma}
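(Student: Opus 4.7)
The plan is to construct a specific trajectory type in the Feynman--Kac representation of $u_t(x,y)$ that already produces all four factors on the right-hand side: starting from $x$, the process stays inside the ball $B_{1/2}(x)$ throughout an initial segment $[0,s]$, performs exactly one direct jump at time $s$ into $B_{1/2}(y)$, and remains in $B_{1/2}(y)$ for the final segment $[s,t]$. Writing $\tilde u^D_s(a,b)$ for the kernel of the Feynman--Kac semigroup with the process killed on exiting $D$ (so that $\tilde u^D_s(a,b)\leq u_s(a,b)$ and $\tilde u^D_s$ admits the Feynman--Kac-weighted analogue of \eqref{eq:IWF}), the strong Markov property at the jump time together with the L\'evy-system identity \eqref{eq:LSF}--\eqref{eq:IkWa} produces, assuming $|x-y|\geq 1$ so that $B_{1/2}(x)$ and $B_{1/2}(y)$ are disjoint,
\begin{gather*}
    u_t(x,y) \geq \int_{t_\bee}^{t-t_\bee}\!\int_{B_{1/4}(x)}\!\int_{B_{1/4}(y)}\! \tilde u^{B_{1/2}(x)}_s(x,z)\,\nu(w-z)\,\tilde u^{B_{1/2}(y)}_{t-s}(w,y)\,dw\,dz\,ds.
\end{gather*}
The complementary case $|x-y|<1$ is treated separately and more directly by placing both $x$ and $y$ inside a single ball $B_2(x)$ and using $\tilde u^{B_2(x)}_t(x,y) \geq e^{-tC_6 g(|x|+2)} p_{B_2(x)}(t,x,y)$; since $\mu_0(2)\leq\mu_0(1/2)$, this route produces a lower bound of the stated form with $f_1(|x-y|)\leq 1$ absorbed into the constant.

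Each killed Feynman--Kac factor in the displayed inequality is bounded from below through the elementary inequality $\tilde u^D_s(a,b) \geq e^{-s\,\sup_D V}\,p_D(s,a,b)$; combined with (\ref{A3}.a) this gives $\sup_{B_{1/2}(x)} V \leq C_6\, g(|x|+1/2)$. By translation invariance of the free process, $p_{B_{1/2}(x)}(s,x,z)=p_{B_{1/2}(0)}(s,0,z-x)$, so the spectral expansion of the killed semigroup together with the continuity and strict positivity of $\psi_{0,1/2}$ on the sub-ball $B_{1/4}(0)$ yields a uniform bound $p_{B_{1/2}(x)}(s,x,z) \geq c_1\,e^{-s\mu_0(1/2)}$ for $z\in B_{1/4}(x)$ and $s\geq t_\bee$, and analogously when centred at $y$. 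For the jump density, the restrictions $z\in B_{1/4}(x)$ and $w\in B_{1/4}(y)$ force $|w-z|\leq |x-y|+1/2$, so iterating (\ref{A1}.a,b,c) gives $\nu(w-z)\geq c_2\,f_1(|x-y|)$.

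Substituting these three ingredients reduces the estimate to the elementary exponential integral
\begin{gather*}
    \int_{t_\bee}^{t-t_\bee} e^{-s\,C_6 g(|x|+1/2)}\,e^{-(t-s)\,C_6 g(|y|+1/2)}\,ds \geq c_3\,\frac{e^{-t\,C_6 g(|x|+1/2)}}{g(|y|+1/2)},
\end{gather*}
whose dominant contribution comes from a window of length $\sim 1/g(|y|+1/2)$ near the endpoint $s=t-t_\bee$ when $g(|y|)\geq g(|x|)$. The symmetric construction with the roles of $x$ and $y$ swapped gives the same bound with $x$ and $y$ interchanged, and the $\vee$ in the statement reflects the freedom to choose the larger of the two; the inflation $|x|+1/2 \rightsquigarrow |x|+2$ is absorbed by iterating (\ref{A3}.c). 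The three remaining cases, where one or both of $|x|,|y|$ is at most $n_0+2$, follow from exactly the same construction: on the bounded region $V$ is dominated by a constant $\widetilde C$ depending only on $n_0$, so the corresponding $g$-factor in the exponential is replaced by that constant, and the $s$-integral contributes either a factor $1/g$ of the unbounded endpoint or simply a positive constant in the all-bounded case.

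The main obstacle I anticipate is the uniform lower bound on the killed free heat kernel $p_{B_{1/2}(0)}(s,\cdot,\cdot)$ at times as small as $s=t_\bee$: the spectral argument is transparent only once $s\,(\mu_1(1/2)-\mu_0(1/2))$ is large, so for times close to $t_\bee$ one has to combine Lemma~\ref{lem:pt_below} with the Dynkin--Hunt formula \eqref{eq:HuntF}, subtracting the probability of an early exit from the ball with the help of the off-diagonal control in (\ref{A2}.b).
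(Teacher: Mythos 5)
Your single-jump decomposition is sound as a lower bound, but the time integral at the heart of the argument does not deliver what you claim, and the failure is structural rather than computational. Because both killed factors $\tilde u^{B_{1/2}(x)}_s(x,z)$ and $\tilde u^{B_{1/2}(y)}_{t-s}(w,y)$ need their time arguments bounded below by $t_{\bee}$ before you can invoke any kernel lower bound, your construction forces the path to spend at least time $t_{\bee}$ inside $B_{1/2}(y)$, at a Feynman--Kac cost of at least $e^{-t_{\bee}C_6 g(|y|+1/2)}$. Concretely, with $a=C_6g(|x|+\frac12)$ and $b=C_6g(|y|+\frac12)$,
\begin{gather*}
    \int_{t_{\bee}}^{t-t_{\bee}} e^{-sa-(t-s)b}\,ds \;=\; e^{-ta}\int_{t_{\bee}}^{t-t_{\bee}} e^{-u(b-a)}\,du \;\leq\; \frac{e^{-(t-t_{\bee})a}\,e^{-t_{\bee}b}}{b-a},
\end{gather*}
so the asserted bound $c_3\,e^{-ta}/b$ is false whenever $b\gg a$: the unavoidable extra factor $e^{-t_{\bee}C_6 g(|y|+1/2)}$ decays in $|y|$ and is exactly what ruins the first branch $e^{-C_6tg(|x|+2)}/g(|y|+1/2)$ of the $\vee$ in the regime ($|x|$ moderate, $|y|$ large) where that branch is the relevant one. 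The point is that the segment contributing the reciprocal $1/g$ must be allowed to be arbitrarily short. The paper achieves this by producing each branch of the $\vee$ separately using the symmetry $u_t(x,y)=u_t(y,x)$: it applies the Ikeda--Watanabe formula at the \emph{first exit time} of a small ball around the starting point, so the short segment is the \emph{initial} one and its duration is integrated from $0$, yielding $\int_0^{t-T_{\bee}}e^{-s(C_6g+\mu_0(\frac12))}\,ds\geq c/g$ with no minimal-duration penalty; the long terminal segment near the other point then carries the full-time factor $e^{-C_6tg(\cdot+2)}$, which is precisely what sits in the numerator of that branch (this terminal piece is handled by the near-diagonal estimate, your Case $|x-y|<1$, applied to $u_{t-s}(w,y)$ with $|w-y|<\frac12$). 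Your symmetric "jump in the middle" picture cannot be repaired by choosing the window location more cleverly.

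A secondary gap, which you partly flagged yourself: you need the pointwise bound $p_{B_{1/2}(0)}(s,0,z)\geq c_1 e^{-s\mu_0(\frac12)}$ uniformly for $z\in B_{1/4}(0)$ and all $s\geq t_{\bee}$. This is an intrinsic-ultracontractivity-type statement for the killed free semigroup and does not follow from \eqref{A1}--\eqref{A2} in any obvious way; the spectral expansion only gives it for large $s$, and your proposed patch via Dynkin--Hunt and (\ref{A2}.b) controls the subtracted term from above but does not produce a matching positive lower bound near $s=t_{\bee}$. The paper sidesteps this entirely: on $[0,t-T_{\bee}]$ it only uses the \emph{integrated} survival probability $\pr^0(t-T_{\bee}<\tau_{B_{1/2}(0)})\geq c\,e^{-(t-T_{\bee})\mu_0(\frac12)}$ (immediate from the eigenfunction identity), and on the final fixed-length window it bounds $\inf_{z\in B_{1/2}(x)}u_{T_{\bee}}(z,y)$ directly through $p_{B_2(y)}(T_{\bee},z,y)\geq\inf_{w\in B_{7/4}(0)}p_{B_2(0)}(T_{\bee},0,w)$, which follows from translation invariance and continuity of the killed kernel at the single time $T_{\bee}$. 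I recommend restructuring along these lines.
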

\begin{proof}
Set $T_{\bee} := \frac 32 t_{\bee}$. We distinguish between two cases: $|x-y|<\frac 54$ and $|x-y|\geq \frac 54$.

\noindent
\emph{Case 1:} Assume that $t>T_{\bee}$ and $x, y \in \real^d$ satisfy $|x-y|<\frac 54$. By (the analogue of) the Chapman--Kolmogorov equations for $u_t$,
\begin{align*}
    u_t(x,y)
    &= \ex^x\left[e^{-\int_0^{t-T_{\bee}} V(X_s)\,ds}\, u_{T_{\bee}}(X_{t-T_{\bee}},y)\right] \\
    &\geq \ex^x\left[e^{-\int_0^{t-T_{\bee}} V(X_s)\,ds}\, u_{T_{\bee}}(X_{t-T_{\bee}},y); \; t-T_{\bee} < \tau_{B_{1/2}(0)}\right] \\
    &\geq e^{-(t-T_{\bee}) \sup_{z \in B_{1/2}(x)} V(z)} \pr^0\left(t-T_{\bee} < \tau_{B_{1/2}(0)}\right) \inf_{z \in B_{1/2}(x)} u_{T_{\bee}}(z,y).
\end{align*}
Moreover,
\begin{gather*}
    e^{-(t-T_{\bee})\mu_0(\frac 12)} \psi_{0,\frac12}(0)
    = \int_{B_{\frac 12}(0)} p_{B_{\frac 12}(0)}(t-T_{\bee},0,y) \psi_{0,\frac12}(y)\,dy
    \leq \big\|\psi_{0,\frac12}\big\|_{\infty}\pr^0\left(t-T_{\bee} < \tau_{B_{\frac 12}(0)}\right)
\end{gather*}
and, since $\psi_{0,1/2}(0)>0$, we have for some $c_1>0$
\begin{align} \label{eq:killed_exp}
    \pr^0\left(t-T_{\bee} < \tau_{B_{\frac 12}(0)}\right)
    \geq c_1 e^{-(t-T_{\bee}) \mu_0(\frac 12)}, \quad t > T_{\bee}.
\end{align}

Suppose first that $|x|,|y| > n_0+2$. By \eqref{eq:killed_exp}, \textup{(\ref{A3}.a,b)} and our assumption $|x-y| < \frac 54$, we get
\begin{gather} \label{eq:u_t_lower_est}
    u_t(x,y)
    \geq c_1 e^{-t \mu_0(\frac 12)} e^{-C_6(t-T_{\bee}) g(|y|+2)} \inf_{z \in B_{\frac 12}(x)} u_{T_{\bee}}(z,y).
\end{gather}
So it is enough to estimate the infimum. Let $z$ be such that $|x-z| < \frac 12$. Since $|y-z| < \frac 74$, we derive from \eqref{eq:killed_sem} that for all $s \in (0,T_{\bee})$
\begin{align} \label{eq:killed_aux_est}
    \ex^z\left[p_{B_2(y)}(T_{\bee}-s,X_s,y);\; s <  \tau_{B_2(y)}\right]
    &= \int_{B_2(y)} p_{B_2(y)}(s,z,w) p_{B_2(y)}(T_{\bee}-s,w,y) \, dw \\
    &\notag= p_{B_2(y)}(T_{\bee},z,y).
\end{align}
If we take $s < \frac 12 t_{\bee}$, we have $T_{\bee}-s > t_{\bee}$ and $p_{B_2(y)}(T_{\bee}-s,x,y) \leq p(T_{\bee}-s,x,y)$ is bounded because of (\ref{A2}.a). Using \cite[Proof of Th.\ 3.4, Claim 1]{bib:SchW} we get from the above equality that the left-hand side, hence $z\mapsto p_{B_2(y)}(T_{\bee},z,y)$ is continuous.

Moreover, by \eqref{eq:killed_sem}, the symmetry and the spatial homogeneity of the process $(X_t)_{t \geq 0}$, we get for every nonnegative function $h$ supported in $B_{7/4}(y)$
\begin{align*}
    \int_{B_2(y)} h(z)p_{B_2(y)}(T_{\bee},z,y)\,dz
    &= \int_{B_2(y)} h(z)p_{B_2(y)}(T_{\bee},y,z)\, dz \\
    &= \ex^y\left[h(X_{T_{\bee}});\; T_{\bee} <  \tau_{B_2(y)}\right]\\
	&= \ex^0\left[h(X_{T_{\bee}}-y);\; T_{\bee} <  \tau_{B_2(0)}\right] \\
	&= \int_{B_2(0)} h(z-y) p_{B_2(0)}(T_{\bee},0,z)\,dz\\
    &\geq \inf_{w \in B_{7/4}(0)} p_{B_2(0)}(T_{\bee},0,w) \int_{B_2(y)} h(z)\,dz .
\end{align*}
Inserting for $h$ a sequence of type delta centered at $y$, and using the continuity (in the variable $z$) of the kernel of the killed process, we get
\begin{align} \label{eq:killed_aux_est_bis}
    p_{B_2(y)}(T_{\bee},z,y) \geq \inf_{w \in B_{7/4}(0)} p_{B_2(0)}(T_{\bee},0,w).
\end{align}
By Lemma~\ref{new}.\ref{new-a}, \eqref{eq:killed_aux_est}, \textup{(\ref{A3}.a,b)} and \eqref{eq:killed_aux_est_bis}, we have
\begin{align*}
    u_{T_{\bee}}(z,y)
    &= \lim_{s \uparrow T_{\bee}} \ex^z\left[e^{-\int_0^s V(X_u)\,du}\, p_{T_{\bee}-s}(y-X_s)\right] \\
		&\geq \liminf_{s \uparrow T_{\bee}} \ex^z\left[e^{-\int_0^s V(X_u)\,du}\, p_{T_{\bee}-s}(y-X_s);\; s <  \tau_{B_2(y)}\right]\\
    &\geq e^{-T_{\bee} \sup_{z \in B_{2}(y)} V(z)} \liminf_{s \uparrow T_{\bee}}\ex^z\left[p_{B_2(y)}(T_{\bee}-s,X_s,y);\; s <  \tau_{B_2(y)}\right] \\
	&\geq e^{-C_6 T_{\bee} g(|y|+2)}\, p_{B_2(y)}(T_{\bee},z,y)\\
    &\geq e^{- C_6 T_{\bee} g(|y|+2)} \inf_{w \in B_{7/4}(0)} p_{B_2(0)}(T_{\bee},0,w),
\end{align*}
and returning to \eqref{eq:u_t_lower_est}, we conclude that
\begin{gather*}
    u_t(x,y)
    \geq c_2 e^{-t \mu_0(\frac 12)} e^{-C_6 t g(|y|+2)}.
\end{gather*}
Set $\widetilde C:= \sup_{z \in B(0,n_0+6)} V_{+}(z)$ and observe that if $|x| \leq n_0+2$ or $|y| \leq n_0+2$ --- we still assume $|x-y| < \frac 54$ --, we get with a similar argument
\begin{gather*}
    u_t(x,y)
    \geq c_1 e^{-t \mu_0(\frac 12) -(t-T_{\bee}) \widetilde C} \inf_{z \in B_{1/2}(x)} u_{T_{\bee}}(z,y)
\intertext{and}
    \inf_{z \in B_{1/2}(x)} u_{T_{\bee}}(z,y) \geq c_2 e^{-\widetilde CT_{\bee}}.
\end{gather*}
Together with the symmetry of the kernel $u_t(x,y)$, this gives
\begin{gather}\label{eq:low1}
    u_t(x,y)
    \geq c_3 e^{- t \mu_0(\frac 12)}
    \begin{cases}
        e^{-C_6 t g(|x| \wedge|y|+2)}, &\text{if $|x| > n_0+2$ and $|y| > n_0+2$},\\[\medskipamount]
		e^{- \widetilde C t},          &\text{if $|x| \leq n_0+2$ or $|y| \leq n_0+2$},
    \end{cases}
\end{gather}
as long as $|x-y|<\frac 54$ and $t > T_{\bee}$.

\medskip\noindent
\emph{Case 2}: Assume that $t>2t_{\bee}$ $x,y \in \real^d$ satisfy $|x-y| \geq \frac 54$. A further application of (the analogue of) the Chapman--Kolmogorov equations for the kernel $u_t$ and the strong Markov property yields
\begin{align*}
    u_t(x,y)
    &\geq \ex^x\left[e^{-\int_0^{t-T_{\bee}} V(X_s)\,ds}\, u_{T_{\bee}}(X_{t-T_{\bee}},y); \; t-T_{\bee} > \tau_{B_{1/2}(x)}\right] \\
    &=\ex^x\left[e^{-\int_0^{\tau_{B_{1/2}(x)}} V(X_s)\,ds}\, u_{t-\tau_{B_{1/2}(x)}}(X_{\tau_{B_{1/2}(x)}},y); \; t-T_{\bee} > \tau_{B_{1/2}(x)}\right].
\end{align*}
By the Ikeda-Watanabe formula \eqref{eq:IkWa}, the last expectation is greater than or equal to
\begin{gather*}
    \int_0^{t-T_{\bee}} \int_{B_{1/2}(x)} e^{- s \sup_{\xi \in B_{1/2}(x)} V(\xi)} \, p_{B_{1/2}(x)}(s,x,z)
    \Bigg( \int_{\substack{|y-w|<1/2 \\ |w| \geq |y|}} \nu(z-w) u_{t-s}(w,y) \,dw \Bigg)\, dz \,ds
\end{gather*}
and, because of \eqref{A1} and $|x-y| \geq \frac 54$, this expression can be estimated from below by
\begin{gather*}
    c_4 f(|x-y|) \int_0^{t-T_{\bee}} e^{- s \sup_{\xi \in B_{1/2}(x)} V(\xi)} \, \pr^0\left(s < \tau_{B_{1/2}(0)}\right)
    \Bigg( \int_{\substack{|y-w|<1/2 \\ |w| \geq|y|}} u_{t-s}(w,y) \,dw \Bigg)\, ds.
\end{gather*}

Suppose first that $|x|, |y| > n_0+2$. The restriction $|w| \geq|y|$ in the domain of integration above guarantees that $|w| > n_0+2$. By \textup{(\ref{A3}.a,b)}, \eqref{eq:killed_exp}, \eqref{eq:low1} and the fact that $t -T_{\bee} \geq \frac 12t_{\bee}$,  we finally get
\begin{align*}
    u_t(x,y)
    &\geq c_5 e^{-t\mu_0(\frac 12)} f(|x-y|) \left(\int_0^{t-T_{\bee}}  e^{- C_6 s g(|x|+1/2) - s\mu_0(\frac 12)} \,ds \right) e^{-C_6 t g(|y|+2)} \\
    &\geq c_6 e^{-t\mu_0(\frac 12)} \frac{  e^{-C_6 t g(|y|+2)}}{g(|x|+1/2)} f_1(|x-y|).
\end{align*}
For the proof of the first inequality above we use the fact that $|\left\{w: |y-w|<1/2, |w| \geq|y|\right\}| \geq \frac 12|B_{1/2}(y)|= \frac 12|B_{1/2}(0)|$.

If $|x| > n_0+2$ and $|y| \leq n_0+2$, a similar reasoning shows
\begin{align*}
    u_t(x,y)
    &\geq c_5  f(|x-y|) \left(\int_0^{t-T_{\bee}}  e^{- C_6 s g(|x|+1/2) - s\mu_0(\frac 12)} \,ds \right) e^{-t\mu_0(\frac 12)}e^{-\widetilde C t}  \\
    &\geq c_6 e^{-t\mu_0(\frac 12)} \frac{  e^{-\widetilde C t}}{g(|x|+1/2)} f_1(|x-y|).
\end{align*}
Also for $|x|, |y| \leq n_0+2$, we obtain
\begin{align*}
    u_t(x,y)
    &\geq c_5  f(|x-y|) \left(\int_0^{t-T_{\bee}}  e^{- \widetilde C s - s\mu_0(\frac 12)} \,ds \right) e^{-t\mu_0(\frac 12)}e^{-\widetilde C t}  \\
    &\geq c_7 e^{-t(\mu_0(\frac 12) + \widetilde C)} f_1(|x-y|).
\end{align*}
Because of the symmetry of $u_t(x,y)$, this gives the required bound for $|x-y| \geq \frac 54$.
\end{proof}

\begin{lemma}\label{lem:low2}
    Assume \textup{(\ref{A1}.a,b,c)}, \eqref{A2} and \textup{(\ref{A3}.a,b)} with $t_{\bee} >0$ and $R_0 >0$. For every $n_0 \geq R_0$ there exist constants $C, \widetilde C >0$ such that for any $t > 4t_{\bee}$ the following estimates hold.
\begin{enumerate}
\item\label{lem:low2-a}
    If $|x|, |y| \leq n_0+2$, then
    \begin{align*}
        u_t(x,y) \geq Ce^{-t(\mu_0(\frac 12)+\widetilde C)}.
    \end{align*}

\item\label{lem:low2-b}
    If $|x| > n_0+2$ and $|y| \leq n_0+2$ , then
    \begin{align*}
        u_t(x,y) \geq C e^{-t(\mu_0(\frac 12)+\widetilde C)} \frac{f(|x|)}{g(|x|+\frac 12)};
    \end{align*}
    if $|x| \leq n_0+2$ and $|y| > n_0+2$, then, by symmetry,
    \begin{align*}
        u_t(x,y) \geq C e^{-t(\mu_0(\frac 12)+\widetilde C)} \frac{f(|y|)}{g(|y|+\frac 12)}.
    \end{align*}

\item\label{lem:low2-c}
    If $|x|, |y| > n_0+2$, then
    \begin{align*}
        u_t(x,y)
        \geq C\frac{e^{-t\mu_0(\frac 12)}}{g(|y|+\frac 12)g(|x|+\frac 12)}
        \int_{\substack{|x-z|>1\\|y-z| > 1\\|z| \geq n_0+2}} f(|x-z|) f(|z-y|)e^{-C_6 tg(|z|+2)}\,dz.
    \end{align*}
\end{enumerate}
\end{lemma}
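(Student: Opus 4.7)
The plan is to reduce all three cases to Lemma~\ref{lem:low1}. Parts~\ref{lem:low2-a} and~\ref{lem:low2-b} follow essentially verbatim: for~\ref{lem:low2-a}, since $|x-y|\leq 2(n_0+2)$ is bounded, the factor $f_1(|x-y|)$ appearing in Lemma~\ref{lem:low1} is bounded below by the positive number $f_1(2(n_0+2))$, which I absorb into the constant. For~\ref{lem:low2-b}, I combine $|x-y|\leq |x|+(n_0+2)$ with the monotonicity of $f_1$ and the iterated version of \eqref{eq:loc_comp_f1} to get $f_1(|x-y|)\geq \widetilde C_2^{-(n_0+2)}\,f_1(|x|)$, and then observe that for $|x|>n_0+2$ one has $f_1(|x|)=f(|x|)\wedge 1\geq f(|x|)/(f(n_0+2)\vee 1)$ by monotonicity of $f$. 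This produces the $f(|x|)$ in the numerator of the claimed estimate, with the $1/g(|x|+\tfrac12)$ and the factor $e^{-\widetilde Ct}$ already supplied by Lemma~\ref{lem:low1}.

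The interesting case is~\ref{lem:low2-c}. I would start from the Chapman--Kolmogorov-type identity
\begin{gather*}
    u_t(x,y)=\int_{\real^d} u_{t/2}(x,z)\,u_{t/2}(z,y)\,dz,
\end{gather*}
which is legitimate and, since $t/2>2t_{\bee}$, allows me to apply Lemma~\ref{lem:low1} to each factor. I would restrict integration to $\{|z|\geq n_0+2,\;|x-z|>1,\;|z-y|>1\}$, so that the case ``$|x|,|y|>n_0+2$'' of Lemma~\ref{lem:low1} applies to $u_{t/2}(x,z)$ (with the roles $x,y\leftrightarrow x,z$) and to $u_{t/2}(z,y)$ (with $x,y\leftrightarrow z,y$). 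The key trick is the choice inside the $\vee$: for $u_{t/2}(x,z)$ I retain the second option $e^{-C_6(t/2)g(|z|+2)}/g(|x|+\tfrac12)$, and for $u_{t/2}(z,y)$ the first option $e^{-C_6(t/2)g(|z|+2)}/g(|y|+\tfrac12)$. Multiplied together, the two exponentials in $g(|z|+2)$ combine into the single factor $e^{-C_6 t g(|z|+2)}$ required by the statement, while the $z$-independent $g$-factors pull outside the integral as $1/[g(|x|+\tfrac12)g(|y|+\tfrac12)]$.

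The remaining step is to replace $f_1(|x-z|)\,f_1(|z-y|)$ by $f(|x-z|)\,f(|z-y|)$ up to a multiplicative constant---this is precisely why I restricted $z$ to $|x-z|,|z-y|>1$: for $r\geq 1$ the monotonicity of $f$ gives $f(r)\leq f(1)$, hence $f_1(r)=f(r)\wedge 1\geq f(r)/(f(1)\vee 1)$. I expect the main obstacle to be purely bookkeeping: coordinating the two choices in the maxima so that the prefactors $1/g(|x|+\tfrac12)$ and $1/g(|y|+\tfrac12)$ end up outside the integral while both $z$-dependent exponentials carry the same decay $g(|z|+2)$, and keeping track of the constants from the $f_1$-vs-$f$ comparison on $(1,\infty)$. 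The hypothesis $t>4t_{\bee}$ then arises naturally as the condition needed for $t/2>2t_{\bee}$ in the two simultaneous applications of Lemma~\ref{lem:low1}.
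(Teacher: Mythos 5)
Your proof is correct, and for part~\ref{lem:low2-c} it takes a genuinely different route from the paper. Parts~\ref{lem:low2-a} and~\ref{lem:low2-b} coincide with the paper's (the paper simply cites Lemma~\ref{lem:low1} for these; you supply the small missing conversions $f_1(|x-y|)\geq c$ resp.\ $f_1(|x-y|)\gtrsim f(|x|)$ via monotonicity, \eqref{eq:loc_comp_f1} and \eqref{eq:f-f_1}, which is exactly what is implicitly needed). For part~\ref{lem:low2-c}, however, the paper does not split the time interval at $t/2$: it runs the probabilistic machinery a second time --- it holds the process in $B_{1/2}(x)$ up to a time $s<t-2t_{\bee}$, applies the strong Markov property and the Ikeda--Watanabe formula \eqref{eq:IkWa} to extract one direct jump to a point $w$ with $|x-w|>1$, $|y-w|>1$, $|w|\geq n_0+2$, and then invokes Lemma~\ref{lem:low1} once for $u_{t-s}(w,y)$; the prefactor $1/g(|x|+\tfrac12)$ comes from the sojourn integral $\int_0^{t-2t_{\bee}}e^{-C_6 s\,g(|x|+1/2)}\,\pr^0(s<\tau_{B_{1/2}(0)})\,ds$ combined with \eqref{eq:killed_exp}, and the factor $e^{-C_6 t g(|w|+2)}$ from $e^{-C_6(t-s)g(|w|+2)}\geq e^{-C_6 t g(|w|+2)}$. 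Your argument instead uses the Chapman--Kolmogorov identity at the midpoint and applies Lemma~\ref{lem:low1} twice, choosing complementary branches of the two maxima so that the $z$-dependent exponentials multiply to $e^{-C_6 t g(|z|+2)}$ while the $z$-independent factors $1/g(|x|+\tfrac12)$ and $1/g(|y|+\tfrac12)$ leave the integral; the hypothesis $t>4t_{\bee}$ is exactly what makes $t/2>2t_{\bee}$ in both applications. Since a maximum dominates each of its terms, retaining one branch is legitimate, and the restriction $|x-z|,|z-y|>1$ justifies the $f_1$-to-$f$ comparison via \eqref{eq:f-f_1}. Your version is shorter and purely analytic once Lemma~\ref{lem:low1} is in hand (it avoids a second appeal to Ikeda--Watanabe); the paper's version buys nothing extra here, so both are fully acceptable.
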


\begin{proof}
Set $\widetilde C:= \sup_{z \in B(0,n_0+6)} V_{+}(z)$ and let $t >4t_{\bee}$. The estimates in~\ref{lem:low2-a} and \ref{lem:low2-b} have already been established in Lemma~\ref{lem:low1}. The remaining assertion~\ref{lem:low2-c} can be shown by the same method which was used in the second part of the proof of Lemma~\ref{lem:low1}.
By (the analogue of) the Chapman--Kolmogorov equations, the strong Markov properties, and the Ikeda-Watanabe formula we get
\begin{align*}
    &u_t(x,y)
    \geq \ex^x\left[e^{-\int_0^{t-2t_{\bee}} V(X_s)\,ds}\, u_{2t_{\bee}}(X_{t-2t_{\bee}},y);\; t-2t_{\bee} > \tau_{B_{1/2}(x)}\right] \\
    &\quad=\ex^x\left[e^{-\int_0^{\tau_{B_{1/2}(x)}} V(X_s)\,ds}\, u_{t-\tau_{B_{1/2}(x)}}(X_{\tau_{B_{1/2}(x)}},y);\; t-2t_{\bee} > \tau_{B_{1/2}(x)}\right]\\
    &\quad\geq \int_0^{t-2t_{\bee}} \int_{B_{1/2}(x)} e^{- s \sup_{\xi \in B_{1/2}(x)} V(\xi)} \, p_{B_{1/2}(x)}(s,x,z)
    \left( \int_{\substack{|x-w| > 1\\|y-w| > 1} } \nu(z-w) u_{t-s}(w,y) \,dw \right) dz \,ds.
\end{align*}
Since $z \in B_{1/2}(x)$ and $|x-w| > 1$, we can use (\ref{A1}.a,b) to see $\nu(z-w)\leq cf(|x-w|)$; this means that we can estimate the previous expression by
\begin{gather*}
    c_1 \int_0^{t-2t_{\bee}} e^{- s \sup_{\xi \in B_{1/2}(x)} V(\xi)} \, \pr^0(s < \tau_{B_{1/2}(0)}) \int_{\substack{|x-w| > 1\\|y-w| > 1}} f(|x-w|) u_{t-s}(w,y) \,dw  \,ds.
\end{gather*}
If $|x|, |y| > n_0+2$, we use \textup{(\ref{A3}.a,b)} to see that the last expression is greater than or equal than
\begin{gather*}
    c_1 \int_0^{t-2t_{\bee}} e^{- C_6 s g(|x|+1/2)} \, \pr^0(s < \tau_{B_{1/2}(0)}) \int_{\substack{|x-w| > 1\\|y-w| > 1\\|w| \geq n_0+2}} f(|x-w|) u_{t-s}(w,y) \,dw  \,ds.
\end{gather*}
From Lemma~\ref{lem:low1} we know
\begin{gather*}
    u_{t-s}(w,y)
    \geq c_2 e^{-(t-s)\mu_0(\frac 12)} \frac{e^{- C_6(t-s) g(|w|+2)}}{g(|y|+\frac 12)} f(|y-w|),
    \quad |y-w| > 1, \; t - s > 2t_{\bee}.
\end{gather*}
Together with \eqref{eq:killed_exp} this finally gives
\begin{align*}
    &u_t(x,y)\\
    &\geq c_3\frac{e^{-t\mu_0(\frac 12)}}{g(|y|+\frac 12)} \left(\int_0^{t-2t_{\bee}} e^{- C_6 s g(|x|+\frac 12))} \,ds \right)
    \int_{\substack{|x-w| > 1\\|y-w| > 1\\|w| \geq n_0+2}} f(|x-w|) f(|w-y|) e^{- C_6 t  g(|w|+2)} \,dw \\
    &\geq c_4 \frac{e^{-t\mu_0(\frac 12)}}{g(|x|+\frac 12)g(|y|+\frac 12)}
    \int_{\substack{|x-w| > 1 \\ |y-w| > 1 \\ |w| \geq n_0+2}} f(|x-w|) f(|w-y|) e^{- C_6 t  g(|w|+2)} \,dw.
\end{align*}
This completes the proof of~\ref{lem:low2-c}.
\end{proof}

\subsection{Sharp general two-sided estimates}\label{sec4:sharp}

We are now going to show that the estimates from the two previous sections are sharp in the spatial variable \emph{if we assume \textup{(\ref{A3}.c)}} in addition to \textup{(\ref{A3}.a,b)}. These estimates lead to a considerable improvement in $t$, too. Recall that $\lambda_0 :=\inf \spec(H)$ and $\varphi_0 \in L^2(\real^d)$ denote the ground state eigenvalue and eigenfunction. Under \eqref{A3} it follows from~\cite[Cor.~2.2]{bib:KL15} that for every $R>0$ there exist constants $C, C' >0$ such that
\begin{align}\label{eq:eig_1}
    C \frac{f(|z|)}{g(|z|)}
    \leq \varphi_0(z)
    \leq C' \frac{f(|z|)}{g(|z|)},
    \quad |z| \geq R.
\end{align}
Since we always deal with a strictly positive and continuous version of $\varphi_0$, there are (possibly different) constants $C,C' >0$ such that
\begin{align}\label{eq:eig_2}
    C
    \leq \varphi_0(z)
    \leq C',
    \quad |z| \leq R.
\end{align}

The following lemma will be used to improve the last term of the estimate in Lemma \ref{lem:lem4}.\ref{lem:lem4-c}.
\begin{lemma} \label{lem:sharp1}
    Assume \eqref{A1}--\eqref{A3} with $t_{\bee}>0$ and $R_0>0$. Let $n_0 \in \nat$ be as in Lemma~\ref{lem:lem3}. If $\lambda_0>0$, then we require, additionally, that $n_0$ is so large that
    \begin{align} \label{eq:n_0_cond_stronger}
        g(n_0-2) \geq 2C_6\lambda_0.
    \end{align}
    If there is a constant $C >0$ such that
    \begin{align} \label{eq:aux_upper_bound}
        u_t(x,y)
        \leq C e^{-\lambda_0 t} f_1(|y|),
        \quad t > 5t_{\bee}, \; |x| < n_0+2, \; y \in \real^d,
    \end{align}
    then there exists a constant $\widetilde C>0$ such that for every $t > 10t_{\bee}$ and all $|x|, |y| > n_0+3$ we have
    \begin{align*}
    u_t(x,y)
    &\leq \frac{\widetilde C e^{(C_5 + C_{\ref{new}})t}}{g(|x|)g(|y|)}e^{-\frac{t-10t_{\bee}}{C_6C_7^2} g(|x| \vee |y|)} f_1(|x-y|) \\
    &\quad\mbox{}+ \frac{\widetilde C e^{(C_5 + C_{\ref{new}}) t}}{g(|x|)g(|y|)} \int_{n_0+2 < |z| < (|x|-1) \vee (|y|-1)} f_1(|x-z|) f_1(|z-y|)  e^{-\frac{t-10t_{\bee}}{2C_6C_7^2} g(|z|)}\,dz \\
    &\quad\mbox{}+ \widetilde Ce^{-\lambda_0 t}\frac{f(|x|)f(|y|)}{g(|x|)g(|y|)} .
    \end{align*}
\end{lemma}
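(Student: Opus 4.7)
The overall strategy mirrors the derivation of Lemma~\ref{lem:lem4}.\ref{lem:lem4-c}: first sharpen Lemma~\ref{lem:lem3} by using the hypothesis \eqref{eq:aux_upper_bound} to gain an $e^{-\lambda_0 t}$ factor on the ``interior'' contribution (paths visiting the ball $\{|z|\leq n_0+1\}$), then symmetrize via Chapman--Kolmogorov. The threshold $10t_{\bee}$ arises because the first step applies the decomposition \eqref{eq:est_decomp} at $t_0=5t_{\bee}$ (to activate \eqref{eq:aux_upper_bound}), and the second step inserts a further splitting of length $5t_{\bee}$.

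\emph{Step 1.} I would prove a variant of Lemma~\ref{lem:lem3} in which the third summand $f(|x|)f_1(|y|)/g(|x|-2)$ carries the improved prefactor $e^{-\lambda_0 t}$ instead of $e^{(C_5+C_{\ref{new}})t}$. Following the proof of Lemma~\ref{lem:lem3} with $t_0=5t_{\bee}$, the bounds for $\mathrm{I}$ and for $\mathrm{I}_{k,l}$ with $k\geq n_0+2$ are unchanged. For $k\in\{n_0,n_0+1\}$, where the key improvement lives, use the identity \eqref{eq:lem3-aux}: since on $S(\A{n-2}{\infty},\A{k-1}{k},l,t-5t_{\bee})$ we have $|X_{\sigmaAkk}|\leq n_0+1<n_0+2$ and $t-\sigmaAkk>5t_{\bee}$, the hypothesis \eqref{eq:aux_upper_bound} yields
\[
\mathrm{I}_{k,l}\leq Ce^{-\lambda_0 t}f_1(|y|)\,\ex^x\!\left[e^{-\int_0^{\sigmaAkk}(V(X_s)-\lambda_0)\,ds};\,S(\A{n-2}{\infty},\A{k-1}{k},l,t-5t_{\bee})\right].
\]
On the relevant event the process stays in $\A{p-2}{\infty}$ for some $p\geq k+2\geq n_0+2$ prior to $\sigmaAkk$, so \textup{(\ref{A3}.a,b)} together with \eqref{eq:n_0_cond_stronger} give $V(X_s)\geq 2\lambda_0$, whence $V(X_s)-\lambda_0\geq V(X_s)/2$. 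Lemma~\ref{lem:lem2} applied to the potential $V/2$ with $W\equiv 1$, summed over $l\geq 1$ and $k\in\{n_0,n_0+1\}$, combined with the bound $\int_{\A{k-1}{k}}f(|x-z|)\,dz\leq c f(|x|)$ (from \textup{(\ref{A1}.a,b,c)}), produces the desired improved summand $Ce^{-\lambda_0 t}f(|x|)f_1(|y|)/g(|x|-2)$.

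\emph{Step 2.} Write $u_t(x,y)=\int u_{t-5t_{\bee}}(x,z)u_{5t_{\bee}}(z,y)\,dz$ and bound the second factor uniformly in $z\in\real^d$ by $Cf_1(|z-y|)/g(|y|-2)$, exactly as in the argument leading to \eqref{eq:bd_u_t0} (using Lemma~\ref{lem:lem3}, \textup{(\ref{A1}.d)}, \eqref{eq:prod_by_diff}, \eqref{eq:f-f_1}). Substitute the Step~1 bound for $u_{t-5t_{\bee}}(x,z)$ (which requires $t>10t_{\bee}$), apply Fubini and the convolution estimate \eqref{eq:conv_f1} to each of the three resulting terms, and use $f_1(|w|)\asymp f(|w|)$ for $|w|>n_0+3$ in the last term. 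This yields a three-term bound with denominator $g(|x|-2)g(|y|-2)$, leading exponential $e^{-(t-10t_{\bee})g(|x|-2)/C_6}$, and integration region $|z|<|x|-1$ in the middle integral. By the symmetry $u_t(x,y)=u_t(y,x)$ the analogous estimate holds with the roles of $x$ and $y$ exchanged; taking the minimum of the two bounds replaces $g(|x|-2)$ in the exponent by $g(|x|\vee|y|-2)$ and extends the integration range in the middle term to $|z|<(|x|-1)\vee(|y|-1)$. A final invocation of \textup{(\ref{A3}.c)}, via $g(r-2)\geq g(r)/C_7^2$, turns the exponential rates $1/C_6$ and $1/(2C_6)$ into $1/(C_6C_7^2)$ and $1/(2C_6C_7^2)$, and replaces $g(|x|-2)g(|y|-2)$ by $g(|x|)g(|y|)$ in the denominators, up to constants absorbed in $\widetilde C$.

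The principal obstacle is Step~1: one must verify that along the relevant segment of the trajectory $V-\lambda_0\geq V/2$ so that Lemma~\ref{lem:lem2} still applies after pulling $e^{-\lambda_0 t}$ outside. This is precisely the role of the strengthened condition \eqref{eq:n_0_cond_stronger} on $n_0$. The remainder of the argument is careful bookkeeping of constants and repeated use of the direct-jump convolution identity \eqref{eq:conv_f1}.
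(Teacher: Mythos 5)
Your proposal is correct and follows essentially the same route as the paper: the hypothesis \eqref{eq:aux_upper_bound} is inserted via \eqref{eq:lem3-aux} to improve only the terms $\mathrm{I}_{k,l}$ with $k\in\{n_0,n_0+1\}$ in the proof of Lemma~\ref{lem:lem3}, Lemma~\ref{lem:lem2} is then applied to the shifted potential with $W\equiv 1$, and the resulting one-sided bound is symmetrized through Chapman--Kolmogorov over a window of length $5t_{\bee}$ with \textup{(\ref{A3}.c)} converting $g(\cdot-2)$ into $g(\cdot)/C_7^2$. The only (immaterial) difference is that you bound $V-\lambda_0\geq V/2$ pointwise along the path and apply Lemma~\ref{lem:lem2} to $V/2$, whereas the paper treats $\widetilde V=V-\lambda_0$ directly as a potential satisfying \textup{(\ref{A3}.a,b)} with constant $2C_6$; both are the same use of \eqref{eq:n_0_cond_stronger}.
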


\begin{proof}
First we prove that there exists a constant $\widetilde C>0$ such that for every $|x| > n_0+3$, $y \in \real^d$ and $t > 5t_{\bee}$ we have
\begin{align}
    u_t(x,y)
    &\leq \widetilde C e^{(C_5+C_{\ref{new}}) t}e^{-\frac{t-5t_{\bee}}{C_6} g(|x|-2)} f_1(|x-y|)  \label{eq:aux_symm}\\
    &\notag\quad\mbox{}+ \frac{\widetilde Ce^{(C_5+C_{\ref{new}}) t}}{g(|x|-2)} \int_{n_0+2 < |z| < |x|-1} f(|x-z|)f_1(|z-y|)  e^{-\frac{t-5t_{\bee}}{2C_6} g(|z|-2)}\,dz\\
    &\notag\quad\mbox{}+ \widetilde C\frac{e^{-\lambda_0t}}{g(|x|-2)} f(|x|) f_1(|y|).
\end{align}
This can be shown with the argument used in Lemma~\ref{lem:lem3}. In fact, only the estimates of the last two terms $\mathrm{I}_{k,l}$ for $k=n_0, n_0+1$ in the proof of that lemma require a modification. From now on, let $k=n_0$ or $k=n_0+1$. Recall from \eqref{eq:lem3-aux} that
\begin{align*}
    \mathrm{I}_{k,l} = \ex^x\left[e^{-\int_0^{\sigmaAkk} V(X_s) \,ds}\, u_{t-\sigmaAkk}(X_{\sigmaAkk},y);\; S(\A{n-2}{\infty}, \A{k-1}{k}, l,t-5t_{\bee})\right].
\end{align*}
Since $\sigmaAkk < t-5t_{\bee}$ on the set $S(\A{n-2}{\infty}, \A{k-1}{k}, l,t-5t_{\bee})$ and $X_{\sigmaAkk} \in B_{n_0+2}(0)$, we have by \eqref{eq:aux_upper_bound} that
\begin{gather*}
    u_{t-\sigmaAkk}(X_{\sigmaAkk},y)
    \leq C e^{ \lambda_0 \sigmaAkk} e^{-\lambda_0 t} f_1(|y|);
\end{gather*}
Consequently,
\begin{align*}
    \mathrm{I}_{k,l}
    \leq C e^{-\lambda_0 t} f_1(|y|) \, \ex^x\left[e^{-\int_0^{\sigmaAkk} (V(X_s) - \lambda_0) \,ds};\;
    S(\A{n-2}{\infty}, \A{k-1}{k}, l,t-5t_{\bee})\right].
\end{align*}
The condition \eqref{eq:n_0_cond_stronger} ensures that the shifted potential $\widetilde V:= V - \lambda_0$ appearing above also satisfies \textup{(\ref{A3}.a,b)} with the radius $\widetilde R_0 = n_0$, the profile $\widetilde g$ such that $\widetilde g|_{[0,\widetilde R_0)} \equiv 1$, $\widetilde g|_{[\widetilde R_0,\infty)} = g$, and  the constant $\widetilde C_6 := 2C_6$. Applying Lemma~\ref{lem:lem2} with $W \equiv 1$ to the latter expectation, finally gives
\begin{gather*}
    \mathrm{I}_{k,l} \leq c_1 \frac{e^{-\lambda_0 t}}{2^l g(|x|-2)} f_1(|y|) \int_{|z| \leq n_0+1} f(|x-z|)\,dz.
\end{gather*}
With (\ref{A1}.c) we conclude that
\begin{gather*}
    \mathrm{I}_{k,l} \leq c_2 \frac{e^{-\lambda_0 t}}{2^l g(|x|-2)} f(|x|) f_1(|y|)
\end{gather*}
for $k=n_0, n_0+1$ and any $l \in \nat$. This gives the required last term of the estimate \eqref{eq:aux_symm}.

In order to get the bound in a symmetric form for every $|x|, |y| > n_0+3$ and $t > 10t_{\bee}$, it is now enough to write
$u_t(x,y)  = \int_{\real^d} u_{t-5t_{\bee}}(x,z) u_{5t_{\bee}}(y,z)\,dz$ and to repeat the symmetrization argument from the proof of Lemma~\ref{lem:lem4}.\ref{lem:lem4-c}.  Here we use \eqref{eq:aux_symm} to estimate $u_{t-5t_{\bee}}(x,z)$. The uniform growth condition \textup{(\ref{A3}.c)} is used to replace $1/(g(|x|-2) g(|y|-2))$ with $1/(g(|x|) g(|y|))$.
\end{proof}

We are now in a position to prove the main result of this section. Recall that $\mu_0(r) > 0$ and $\psi_{0,r}$ are the ground state eigenvalue and eigenfunction for the process killed on leaving a ball $B_r(0)$, $r >0$.
\begin{theorem}[Sharp two-sided bounds]\label{th:th1}
    Let $H = -L+V$ be the Schr\"{o}dinger operator with confining potential $V$ such that \eqref{A1}--\eqref{A3} hold with $t_{\bee}>0$ and $R_0>0$. Denote by $\lambda_0$ and $\varphi_0$ the ground-state eigenvalue and eigenfunction, and by $u_t(x,y)$ the density of the semigroup $U_t = e^{-tH}$. Let $n_0 \in \nat$ be as in Lemma \ref{lem:lem3} and \ref{lem:sharp1} and so large that
    \begin{align} \label{eq:n_0_cond_strongest}
        g(n_0-2) \geq 12(C_5+C_{\ref{new}})C_6C_7^2 + \frac{ \mu_0(\frac 12)}{C_6C_7^2}.
    \end{align}
    There exists a constant $C \geq 1$ such that for every $t > 30t_{\bee}$ we have the following estimates.
\begin{enumerate}
\item\label{th:th1-a}
    If $|x|, |y| \leq n_0+3$, then
    \begin{align*}
         \frac{1}{C} e^{-\lambda_0 t}
         \leq u_t(x,y)
         \leq C e^{-\lambda_0 t}.
    \end{align*}
\item\label{th:th1-b}
    If $|x| > n_0+3$ and $|y| \leq n_0+3$, then
    \begin{align*}
        \frac{1}{C} e^{-\lambda_0 t} \frac{f(|x|)}{g(|x|)}
        \leq u_t(x,y)
        \leq C e^{-\lambda_0 t} \frac{f(|x|)}{g(|x|)}.
    \end{align*}
    By symmetry, if $|x| \leq n_0+3$ and $|y| > n_0+3$,
    \begin{align*}
        \frac{1}{C} e^{-\lambda_0 t} \frac{f(|y|)}{g(|y|)}
        \leq u_t(x,y)
        \leq C e^{-\lambda_0 t} \frac{f(|y|)}{g(|y|)}.
    \end{align*}

\item\label{th:th1-c}
    If $|x|, |y| > n_0+3$, then
    \begin{gather*}
        \frac{1}{C}\frac{F(\mathsf{K}t,x,y) \vee e^{-\lambda_0 t} f(|x|) f(|y|)}{g(|x|) g(|y|)} \leq u_t(x,y)
        \leq C\frac{F\left(\frac{t}{\mathsf{K}},x,y\right)\vee e^{-\lambda_0 t} f(|x|) f(|y|)}{g(|x|) g(|y|)},
    \end{gather*}
    where $\mathsf{K} = 4 C_6 C_7^2$ and
    \begin{align} \label{def:def_F}
        F(\tau,x,y) := \int_{n_0+2 < |z| < |x| \vee |y|} f_1(|x-z|) f_1(|z-y|) e^{- \tau g(|z|)}\,dz.
    \end{align}
\end{enumerate}
\end{theorem}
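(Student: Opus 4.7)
The strategy rests on three pillars: (i) the crude bounds of Lemmas~\ref{lem:lem4} and~\ref{lem:low2} already have the correct spatial shape but a non-sharp exponential prefactor; (ii) spectral theory combined with Chapman--Kolmogorov locally upgrades the prefactor to $e^{-\lambda_0 t}$; and (iii) the condition~\eqref{eq:n_0_cond_strongest} provides enough slack in $g(n_0-2)$ to absorb all stray exponentials into the $g$-decay of $F(t/\mathsf{K},x,y)$.

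\emph{Local sharp estimates and verification of \eqref{eq:aux_upper_bound}.} I would first prove part~\ref{th:th1-a} by a Chapman--Kolmogorov iteration that exploits the eigenfunction equation. Fix $t_0>t_\bee$ and write $u_t(x,y) = \int u_{t_0}(x,z) u_{t-t_0}(z,y)\,dz$. For $|x|\leq n_0+3$, Lemma~\ref{lem:lem4}.\ref{lem:lem4-a},\ref{lem:lem4-b} together with the profile $\varphi_0\asymp f/g$ at infinity (from~\eqref{eq:eig_1},~\eqref{eq:eig_2}) yield the uniform comparison $u_{t_0}(x,z)\leq C\varphi_0(z)$ for all $z\in\real^d$. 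Inserting this into the Chapman--Kolmogorov identity and using the symmetric identity $\int u_{t-t_0}(z,y)\varphi_0(z)\,dz = e^{-\lambda_0(t-t_0)}\varphi_0(y)$ gives
\begin{gather*}
    u_t(x,y) \;\leq\; C\,e^{-\lambda_0(t-t_0)}\varphi_0(y) \;\leq\; C'\,e^{-\lambda_0 t}\,f_1(|y|);
\end{gather*}
restricting to $|y|\leq n_0+3$ produces the upper bound in~\ref{th:th1-a}, while the global inequality $\varphi_0\leq C f_1$ is precisely the hypothesis~\eqref{eq:aux_upper_bound} of Lemma~\ref{lem:sharp1}. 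The lower bound in~\ref{th:th1-a} is obtained by the same iteration, starting from the matching lower comparison $u_{t_0}(x,z)\geq c\,\varphi_0(z)$ which comes from Lemma~\ref{lem:low2}.\ref{lem:low2-a},\ref{lem:low2-b}.

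\emph{Upper bounds for parts~\ref{th:th1-b} and~\ref{th:th1-c}.} Lemma~\ref{lem:sharp1} now produces three contributions: a \emph{no-exit} term carrying $e^{(C_5+C_{\ref{new}})t}e^{-(t-10t_\bee)g(|x|\vee|y|)/(C_6C_7^2)}f_1(|x-y|)$, a \emph{convolution} term whose integrand has $e^{-(t-10t_\bee)g(|z|)/(2C_6C_7^2)}$, and the \emph{ground-state} term $e^{-\lambda_0 t}f(|x|)f(|y|)/(g(|x|)g(|y|))$. Because $g(|z|)\geq g(n_0-2)\geq 12(C_5+C_{\ref{new}})C_6C_7^2$ for $|z|>n_0+2$ (by~\eqref{eq:n_0_cond_strongest}), the hypothesis $t>30t_\bee$ absorbs the $e^{(C_5+C_{\ref{new}})t}$-factor and dominates the convolution term by $CF(t/\mathsf{K},x,y)/(g(|x|)g(|y|))$ with $\mathsf{K}=4C_6C_7^2$. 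To treat the no-exit term I would establish the elementary lower bound
\begin{gather*}
    F(t/\mathsf{K},x,y) \;\geq\; c\,f_1(|x-y|)\,e^{-\frac{t}{4C_6C_7}\,g(|x|\vee|y|)}
\end{gather*}
by restricting the $z$-integration in~\eqref{def:def_F} to a half-ball of radius $\tfrac12$ sitting just inside the sphere of radius $|x|\vee|y|$ and applying (\ref{A1}.c),~(\ref{A3}.c) a bounded number of times; the margin in~\eqref{eq:n_0_cond_strongest} then absorbs the remaining $e^{(C_5+C_{\ref{new}})t}$ and the $C_7$-gap between the two exponents. Part~\ref{th:th1-b} is the special case obtained by collapsing the convolution via the direct jump property~(\ref{A1}.d), exactly as in Lemma~\ref{lem:lem4}.\ref{lem:lem4-b}; here one uses the stronger pointwise bound $u_t(x,y)\leq C e^{-\lambda_0 t}\varphi_0(y)\leq C'e^{-\lambda_0 t}f(|y|)/g(|y|)$ which also came out of Step~1, together with the symmetry $u_t(x,y)=u_t(y,x)$.

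\emph{Lower bounds and main obstacle.} Lemma~\ref{lem:low2}.\ref{lem:low2-c} already has the right integral shape. Two applications of~(\ref{A3}.c) give $g(|z|+2)\leq C_7^2 g(|z|)$, while the extra summand $\mu_0(1/2)/(C_6C_7^2)$ in~\eqref{eq:n_0_cond_strongest} guarantees $\mu_0(1/2)\leq 3 C_6 C_7^2\, g(|z|)$ on $|z|>n_0+2$, so that $e^{-t\mu_0(1/2)}e^{-C_6 t g(|z|+2)}\geq e^{-\mathsf{K} t g(|z|)}$ and the $F(\mathsf{K}t,x,y)$-part of the lower bound drops out. The alternative lower bound $c\,e^{-\lambda_0 t}f(|x|)f(|y|)/(g(|x|)g(|y|))$ follows from the splitting $u_t(x,y)\geq \int u_{t/2}(x,z)u_{t/2}(z,y)\,dz$ with $z$ in a compact region, combined with the upgraded lower bound from Step~1. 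The tightest point of the whole argument is the majorisation of the no-exit term by $F(t/\mathsf{K},x,y)$: the exponent gained from $F$ scales like $g/(4C_6C_7)$ while the exponent lost in the no-exit term scales like $g/(C_6C_7^2)$, so the trade-off consumes essentially the full margin of~\eqref{eq:n_0_cond_strongest} and motivates both the explicit constant $\mathsf{K}=4C_6C_7^2$ and the hypothesis $t>30t_\bee$ (needed to make the $10t_\bee$-corrections of Lemma~\ref{lem:sharp1} negligible relative to $t$).
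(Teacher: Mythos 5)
Your proposal follows essentially the same route as the paper: parts~\ref{th:th1-a},\ref{th:th1-b} via Chapman--Kolmogorov through the crude bounds of Lemmas~\ref{lem:lem4} and~\ref{lem:low2} combined with the eigenfunction comparison \eqref{eq:eig_1}--\eqref{eq:eig_2}, verification of \eqref{eq:aux_upper_bound}, then Lemma~\ref{lem:sharp1} for the upper bound in~\ref{th:th1-c} with the no-exit term absorbed into $F(t/\mathsf{K},x,y)$ by restricting the $z$-integral to a small ball near the sphere, and Lemma~\ref{lem:low2}.\ref{lem:low2-c} plus the margin in \eqref{eq:n_0_cond_strongest} for the lower bound. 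Two loose ends: (i) your intermediate bound $F(t/\mathsf{K},x,y)\geq c\,f_1(|x-y|)\,e^{-tg(|x|\vee|y|)/(4C_6C_7)}$ should read $C_7^2$ in the exponent --- with $C_7$ to the first power the coefficient of $g$ in the comparison with the no-exit term can become positive for large $C_7$ and no choice of $n_0$ rescues it, whereas the restriction argument you describe directly yields the exponent $t g(|x|\vee|y|)/\mathsf{K}$; (ii) in the lower bound, Lemma~\ref{lem:low2}.\ref{lem:low2-c} integrates only over $\{|x-z|>1,\,|y-z|>1\}$ with $f$, while $F(\mathsf{K}t,x,y)$ includes the near-diagonal sets $\{|x-z|\leq 1\}$ and $\{|y-z|\leq 1\}$ with $f_1$, so the $F$-part does not ``drop out'' immediately: one still has to dominate those contributions, which the paper does by extracting from the shell $1<|x-z|\leq 2$ the bound $u_t(x,y)\gtrsim g(|x|)^{-1}g(|y|)^{-1}e^{-2C_6C_7^2tg(|x|\wedge|y|)}f_1(|x-y|)$ and then invoking \eqref{eq:conv_f1}. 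Both fixes are routine with the tools you already have in hand.
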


\begin{remark}\label{rem:th1}
Since $x\mapsto f(x)/g(x)$ is bounded from above and below (away from $0$) on compact intervals, it
is possible to combine \ref{th:th1-a} and \ref{th:th1-b} in a single estimate:
\begin{gather*}
    u_t(x,y)
    \asymp  e^{-\lambda_0 t} \left(1 \wedge \frac{f(|x|)}{g(|x|)}\right)\left(1 \wedge \frac{f(|y|)}{g(|y|)}\right)
    \quad\text{for all}\quad |x| \wedge |y| \leq n_0+3.
\end{gather*}
Moreover, due to \eqref{eq:eig_1}, \eqref{eq:eig_2}, the above two-sided estimates are equivalent to
\begin{gather*}
    u_t(x,y)
    \asymp e^{-\lambda_0 t} \varphi_0(x)\varphi_0(y),
    \quad\text{for all}\quad |x| \wedge |y| \leq n_0+3, \; t > 30t_{\bee}.
\end{gather*}
\end{remark}

\begin{proof}[Proof of Theorem \ref{th:th1}]
\ref{th:th1-a}\ \ Fix $t > 5t_{\bee}$ and let $|x|, |y| \leq n_0+3$. It follows from \eqref{eq:eig_1}, \eqref{eq:eig_2}, \textup{(\ref{A3}.c)} and the estimates in Lemma~\ref{lem:lem4}.\ref{lem:lem4-a},\ref{lem:lem4-b} applied to $u_{5t_{\bee}}(y,z)$ that
\begin{align*}
    &u_t(x,y)
    = \int_{|z| < n_0+3} u_{t-5t_{\bee}}(x,z) u_{5t_{\bee}}(z,y)\,dz + \int_{|z| \geq n_0+3} u_{t-5t_{\bee}}(x,z) u_{5t_{\bee}}(z,y)\,dz \\
    &\quad\leq \frac{\sup_{z,w \in B(0,n_0+3)} u_{5t_{\bee}}(z,w)}{\inf_{w \in B(0,n_0+3)} \varphi_0(w)}\int_{|z| < n_0+3} u_{t-5t_{\bee}}(x,z) \varphi_0(z)\,dz + c_1 \int_{|z| \geq n_0+3} u_{t-5t_{\bee}}(x,z) \varphi_0(z)\,dz \\
    &\quad\leq c_3 \int_{\real^d} u_{t-5t_{\bee}}(x,z) \varphi_0(z)\,dz\\
    &\quad \leq c_4 e^{-\lambda_0 t} \varphi_0(x).
\end{align*}
Since $|x| \leq n_0+3$, a further application of \eqref{eq:eig_2} gives the upper estimate. The lower estimate follows from similar arguments based on the lower estimates in Lemma~\ref{lem:low2}.\ref{lem:low2-a},\ref{lem:low2-b} and the two-sided bounds \eqref{eq:eig_1}, \eqref{eq:eig_2}.

\medskip
\ref{th:th1-b}\ \ Fix $t > 5t_{\bee}$. By symmetry, it is enough to assume that $|x| > n_0+3$ and $|y| \leq n_0+3$. Exactly the same argument as in part \ref{th:th1-a} shows
\begin{align*}
    c_5 e^{-\lambda_0 t} \varphi_0(x)
    \leq u_t(x,y)
    \leq c_6 e^{-\lambda_0 t} \varphi_0(x).
\end{align*}
Since now $|x| > n_0+3$, the claimed bound follows immediately from \eqref{eq:eig_1}, \eqref{eq:eig_2}.

\medskip
\ref{th:th1-c}\ \ Let $|x|, |y| > n_0+3$ and $t > 30t_{\bee}$. We begin with  the upper bound. From the already established parts~\ref{th:th1-a} and \ref{th:th1-b}, and (\ref{A3}.b) we have
\begin{gather*}
    u_t(x,y) \leq c_7 e^{-\lambda_0 t} f_1(|y|), \quad |x| < n_0+3,\; y \in \real^d,\; t > 5t_{\bee},
\end{gather*}
which is exactly \eqref{eq:aux_upper_bound}. Thus, we can use Lemma~\ref{lem:sharp1} and get
\begin{align*}
    u_t(x,y)
    &\leq \frac{c_8 e^{(C_5+C_{\ref{new}}) t}}{g(|x|)g(|y|)}e^{-\frac{t-10t_{\bee}}{C_6C_7^2} g(|x| \vee |y|)} f_1(|x-y|)  \\
    &\quad\mbox{}+ \frac{c_8 e^{(C_5+C_{\ref{new}}) t}}{g(|x|)g(|y|)} \int_{n_0+2 < |z| < |x| \vee |y|} f_1(|x-z|) f_1(|z-y|) e^{-\frac{t-10t_{\bee}}{2C_6C_7^2} g(|z|)}\,dz \\
	&\quad\mbox{}+ c_8 e^{-\lambda_0 t}\frac{f(|x|)f(|y|)}{g(|x|)g(|y|)}.
\end{align*}
Without loss of generality we may assume that $|y| \leq |x|$; the case $|y| > |x|$ follows from the fact that $x$ and $y$ play symmetric roles; set $y_0:= (|y| -1/2)|y|^{-1} y$. We have
\begin{align*}
    &\int_{n_0+2 < |z| < |x| \vee |y|} f_1(|x-z|)f_1(|z-y|) e^{-\frac{t-10t_{\bee}}{2C_6C_7^2} g(|z|)}\,dz \\
    &\quad\geq \int_{B_{1/2}(y_0)} f_1(|x-z|)f_1(|z-y|)  e^{-\frac{t-10t_{\bee}}{2C_6C_7^2} g(|z|)}\,dz \\
    &\quad\geq  c_9 e^{-\frac{t-10t_{\bee}}{2C_6C_7^2} g(|y|)} f_1(|x-y|)
\end{align*}
which gives the estimate
\begin{align*}
    \int_{n_0+2 < |z| < |x| \vee |y|} f_1(|x-z|)  f_1(|z-y|)  e^{-\frac{t-10t_{\bee}}{2C_6C_7^2} g(|z|)}\,dz
    &\geq  c_9 e^{-\frac{t-10t_{\bee}}{2C_6C_7^2} g(|x| \wedge |y|)} f_1(|x-y|)\\
    &\geq  c_9 e^{-\frac{t-10t_{\bee}}{C_6C_7^2} g(|x| \vee |y|)} f_1(|x-y|).
\end{align*}
From this we get at once
\begin{align*}
    u_t(x,y)
    &\leq \frac{c_{10} e^{(C_5+C_{\ref{new}}) t}}{g(|x|)g(|y|)} \int_{n_0+2 < |z| < |x| \vee |y|} f_1(|x-z|)f_1(|z-y|)  e^{-\frac{t-10t_{\bee}}{2C_6C_7^2} g(|z|)}\,dz \\
    &\quad\mbox{}+ c_8 e^{-\lambda_0 t}\frac{f(|x|)f(|y|)}{g(|x|)g(|y|)}.
\end{align*}
In order to complete the proof of the upper bound, it suffices to observe that for every $|z| \geq n_0+2$ and $t > 30t_{\bee}$ we have
\begin{align*}
    (C_5+C_{\ref{new}}) t - \frac{t-10t_{\bee}}{2C_6C_7^2} g(|z|)
    &\leq (C_5+C_{\ref{new}}) t - \frac{t}{3C_6C_7^2} g(|z|) + \frac{t}{4C_6C_7^2} g(|z|) -\frac{t}{4C_6C_7^2} g(|z|) \\
	&= \left(C_5+C_{\ref{new}}  - \frac{1}{12C_6C_7^2} g(|z|) \right) t  -\frac{t}{4C_6C_7^2} g(|z|)\\
    &\leq -\frac{t}{4C_6C_7^2} g(|z|);
\end{align*}
The last inequality requires \eqref{eq:n_0_cond_strongest}. This completes the proof of the upper estimate.

\medskip
Now we turn to the lower estimate. Let $|x|, |y| > n_0+3$ and $t > 30t_{\bee}$. Recall that we have by Lemma~\ref{lem:low2}.\ref{lem:low2-c} and assumption \textup{(\ref{A3}.c)}
\begin{gather*}
    u_t(x,y)
    \geq \frac{c_{11}}{g(|x|)g(|y|)} \int_{\substack{|x-z|>1 \\ |y-z| > 1 \\ |z| \geq n_0}} f(|x-z|) f(|z-y|)e^{-2C_6C_7^2 t g(|z|)} e^{\left(C_6C_7^2 g(|z|)-\mu_0(\frac 12)\right)t} \,dz.
\end{gather*}
From \eqref{eq:n_0_cond_strongest} we see that $C_6C_7^2 g(|z|)-\mu_0(\frac 12) \geq C_6C_7^2g(n_0)-\mu_0(\frac 12) \geq 0$ for $|z| \geq n_0$, and so
\begin{align} \label{eq:lower_init}
    u_t(x,y)
    \geq \frac{c_{11}}{g(|x|)g(|y|)} \int_{\substack{|x-z|>1 \\ |y-z| > 1 \\ |z| \geq n_0}} f(|x-z|) f(|z-y|)e^{-2C_6C_7^2 t g(|z|)}\,dz.
\end{align}
As before, we may assume that $|x| \leq |y|$; \eqref{eq:lower_init}, \textup{(\ref{A3}.b)}, \eqref{eq:f-f_1} and \eqref{eq:loc_comp_f1} yield
\begin{align} \label{eq:lower_member1}
    u_t(x,y)
    &\notag\geq \frac{c_{11}}{g(|x|)g(|y|)} \int_{\substack{n_0 \leq |z| \leq |x|-1 \\ |x-z|>1,\: |y-z| > 1}} f(|x-z|) f(|z-y|)e^{-2C_6C_7^2 t g(|z|)}\,dz\\
    &\notag\geq \frac{c_{11}}{g(|x|)g(|y|)} e^{-2C_6C_7^2 t g(|x|-1)} \int_{\substack{n_0 \leq |z| \leq |x|-1 \\ 2 \geq |x-z| > 1, \: |y-z| > 1}} f_1(|x-z|) f_1(|z-y|) \,dz\\
	&\geq \frac{c_{12}c_{13}}{g(|x|)g(|y|)} e^{-2C_6C_7^2 t g(|x|-1)} f_1(|x-y|),
\end{align}
where $c_{13}:=\inf_{|x|,|y| \geq n_0+3} \int_{\substack{n_0 \leq |z| \leq |x|-1 \\ 2 \geq |x-z| > 1, |y-z| > 1}} f_1(|x-z|) \,dz > 0$. Since, by \eqref{eq:conv_f1},
\begin{gather*}
    f_1(|x-y|)
    \geq c_{14} \int_{\substack{n_0+2 \leq |z| \leq |x| \vee |y| \\ |w-z| \leq 1}} f_1(|x-z|)f_1(|y-z|) \,dz,
    \quad w \in \real^d,
\end{gather*}
we finally get from \eqref{eq:lower_member1} and the monotonicity of $g$ that for $w=x$ or $w=y$
\begin{gather*}
    u_t(x,y)
    \geq \frac{c_{15}}{g(|x|)g(|y|)} \int_{\substack{n_0+2 \leq |z| \leq |x| \vee |y| \\ |w-z| \leq 1}} f_1(|x-z|)f_1(|y-z|) e^{-2C_6C_7^2 t g(|z|)} \,dz.
\end{gather*}
Together with the estimate
\begin{gather*}
    u_t(x,y)
    \geq \frac{c_{11}}{g(|x|)g(|y|)} \int_{\substack{n_0+2 \leq |z| \leq |x| \vee |y| \\ |x-z|>1, |y-z| > 1 }} f_1(|x-z|)f_1(|y-z|) e^{-2C_6C_7^2 t g(|z|)} \,dz
\end{gather*}
which comes from \eqref{eq:lower_init}, we obtain the bound
\begin{gather} \label{eq:lower-better}
    u_t(x,y) \geq \frac{c_{16}}{g(|x|)g(|y|)} F(\tfrac 12\mathsf{K}t,x,y) \geq \frac{c_{16}}{g(|x|)g(|y|)} F(\mathsf{K}t,x,y).
\end{gather}
It is now enough to show that
\begin{align} \label{eq:lower_member3}
    u_t(x,y)
    &\geq c_{17} e^{-\lambda_0 t} \frac{f(|x|)f(|y|)}{g(|x|)g(|y|)}.
\end{align}
From Lemma~\ref{lem:low2}.\ref{lem:low2-c}, \textup{(\ref{A1}.c)} and \textup{(\ref{A3}.c)}, we get for $x_0:=(R_0+3,0,...,0)$
\begin{gather} \label{eq:lower_6tb}
    u_{5t_{\bee}}(w,y)
    \geq  \frac{c_{18}}{g(|w|)g(|y|)} \int_{|x_0-z|<1} f(|w-z|) f(|z-y|) \,dz
    \geq c_{19} \frac{f(|w|)f(|y|)}{g(|w|)g(|y|)},
    \quad |w| > n_0+3.
\end{gather}
In the following calculation we estimate the first half of the integral using the lower estimate in Lemma~\ref{lem:low2}.\ref{lem:low2-b} --- in this Lemma $n_0$ was arbitrary, so we may increase it to $n_0+1$ --- \textup{(\ref{A3}.c)} and \eqref{eq:eig_2}. For the second half of the integral, we use \eqref{eq:lower_6tb} in combination with \eqref{eq:eig_1}:
\begin{align*}
    u_t(x,y)
    &\geq \int_{|w| \leq n_0+ 3} u_{t-5t_{\bee}}(x,w) u_{5t_{\bee}}(w,y)\,dw
        + \int_{|w| > n_0+3} u_{t-5t_{\bee}}(x,w) u_{5t_{\bee}}(w,y)\,dw \\
    &\geq c_{20} \frac{f(|y|)}{g(|y|)} \int_{|w| \leq n_0+3} u_{t-5t_{\bee}}(x,w) \varphi_0(w)\,dw
        + c_{21} \frac{f(|y|)}{g(|y|)} \int_{|w| > n_0+3} u_{t-5t_{\bee}}(x,w) \varphi_0(w)\,dw \\
    &\geq c_{22} \frac{f(|y|)}{g(|y|)} \int_{\real^d} u_{t-5t_{\bee}}(x,w) \varphi_0(w)\,dw \\
    &= c_{22} \frac{f(|y|)}{g(|y|)} \int_{\real^d} U_{t-5t_{\bee}}\varphi_0(x)\\
    &= c_{23} \frac{f(|y|)}{g(|y|)} e^{-\lambda_0 t} \varphi_0(x)\\
    &\geq c_{24} e^{-\lambda_0 t} \frac{f(|x|)f(|y|)}{g(|x|)g(|y|)} .
\end{align*}
In the last inequality we use \eqref{eq:eig_1} once again. This completes the proof of the lower bound in~\ref{th:th1-c}.
\end{proof}

\subsection{Sharp two-sided estimates of \boldmath$U_t\I(x)$\unboldmath}\label{sec4:Ut}

In this section we apply Theorem~\ref{th:th1} to obtain two-sided large-time estimates for the functions $U_t\I(x)$. Recall that $\left\{U_t:t \geq 0\right\}$ is the Schr\"{o}dinger semigroup with kernel $u_t(x,y)$.

\begin{theorem}\label{th:th2}
    Let $H = -L+V$ be the Schr\"{o}dinger operator with confining potential $V$ such that \eqref{A1}--\eqref{A3} hold with $t_{\bee}>0$ and $R_0>0$. Denote by $\lambda_0$ and $\varphi_0$ the ground-state eigenvalue and eigenfunction, and by $u_t(x,y)$ the density of the operator $U_t = e^{-tH}$. For $n_0 \in \nat$ large enough \textup{(}as in Theorem~\ref{th:th1}\textup{)} there exists a constant $C \geq 1$ such that for every $t > 30 t_{\bee}$ we have the following estimates.
\begin{enumerate}
\item\label{th:th2-a}
    If $|x| \leq n_0+3$, then
    \begin{align*}
         \frac{1}{C} e^{-\lambda_0 t} \leq U_t \I(x) \leq C e^{-\lambda_0 t}.
    \end{align*}

\item\label{th:th2-b}
    If $|x| > n_0+3$, then
    \begin{gather*}
        \frac{1}{C} \frac{G(\mathsf{K}t,x) \vee e^{-\lambda_0 t} f(|x|)}{g(|x|)}
        \leq U_t \I(x)
        \leq C \frac{G\left(\mathsf{K}^{-1}t,x\right) \vee e^{-\lambda_0 t} f(|x|)}{g(|x|)},
    \end{gather*}
    where $\mathsf{K} = 4 C_6 C_7^2$ and
    \begin{align} \label{def:def_G}
        G(\tau,x) := \int_{n_0+2 < |z| \leq |x| } f_1(|x-z|) e^{- \tau g(|z|)}\,dz.
    \end{align}
\end{enumerate}
\end{theorem}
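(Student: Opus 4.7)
The plan is to integrate the sharp two-sided heat kernel estimates of Theorem~\ref{th:th1} over $y\in \real^d$, splitting the integration into the bounded region $\{|y|\le n_0+3\}$ and its complement.

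For part (\ref{th:th2-a}), Theorem~\ref{th:th1}(\ref{th:main_th-a}) gives the bounded-$y$ contribution of order $e^{-\lambda_0 t}$, while Theorem~\ref{th:th1}(\ref{th:main_th-b}) yields the unbounded-$y$ contribution as $e^{-\lambda_0 t}\int_{|y|>n_0+3}f(|y|)/g(|y|)\,dy$, a positive finite multiple of $e^{-\lambda_0 t}$ (finiteness follows since $g\ge 1$ and $\int_{|y|>1}f(|y|)\,dy<\infty$ by (\ref{A1}) and Lemma~\ref{lem:equivalent-djp}). Adding the two contributions gives (\ref{th:th2-a}).

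For part (\ref{th:th2-b}), the contribution from $\{|y|\le n_0+3\}$ is, by Theorem~\ref{th:th1}(\ref{th:main_th-b}), exactly of order $e^{-\lambda_0 t}f(|x|)/g(|x|)$, producing the $e^{-\lambda_0 t}f(|x|)/g(|x|)$ summand in both the upper and lower bounds. For $\{|y|>n_0+3\}$ we invoke Theorem~\ref{th:th1}(\ref{th:main_th-c}). The $e^{-\lambda_0 t}f(|x|)f(|y|)/(g(|x|)g(|y|))$-piece integrates to the same $e^{-\lambda_0 t}f(|x|)/g(|x|)$ quantity, while the $F$-piece must be related to $G(\tau,x)/g(|x|)$. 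By Fubini,
\begin{gather*}
    \int_{|y|>n_0+3}\frac{F(\tau,x,y)}{g(|y|)}\,dy = \int_{|z|>n_0+2} f_1(|x-z|)\, e^{-\tau g(|z|)}\, J(z)\,dz,
\end{gather*}
with $J(z)=\int_{\{|y|>n_0+3,\,|z|<|x|\vee|y|\}} f_1(|z-y|)/g(|y|)\,dy$.

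For the upper bound ($\tau=t/\mathsf{K}$) one uses $J(z)\le \|f_1\|_1$ for $|z|\le |x|$ (since $g\ge 1$) and $J(z)\le \|f_1\|_1/g(|x|)$ for $|z|>|x|$ (since $g$ is increasing). The tail over $|z|>|x|$ contributes $\le c\,e^{-\tau g(|x|)}/g(|x|)$, which is absorbed into $G(\tau,x)$ via the elementary lower bound $G(\tau,x)\ge c\,e^{-\tau g(|x|)}$ obtained by restricting the $z$-integration to a unit ball around $x$; this yields $\int F(t/\mathsf{K})/g(|y|)\,dy\le C G(t/\mathsf{K},x)$. For the lower bound ($\tau=\mathsf{K}t$) one restricts $y\in B_{1/2}(z)$ and uses (\ref{A3}.c) to get $J(z)\ge c/g(|z|)$; combined with Fubini this gives $\int F(\mathsf{K}t,x,y)/g(|y|)\,dy\ge c\int_{n_0+3<|z|\le |x|}f_1(|x-z|)e^{-\mathsf{K}tg(|z|)}/g(|z|)\,dz$. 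The right-hand side is compared to $G(\mathsf{K}t,x)$ by splitting the $z$-integration: the small-$|z|$ portion is $\le Ce^{-\lambda_0 t}f(|x|)$ thanks to the condition \eqref{eq:n_0_cond_strongest} which ensures $\mathsf{K}g(n_0+2)\ge \lambda_0$, and hence is absorbed into the $e^{-\lambda_0 t}f(|x|)/g(|x|)$ summand already produced, while on the large-$|z|$ portion $g(|z|)$ is comparable to $g(|x|)$ up to bounded constants (via monotonicity and (\ref{A3}.c)), so that $1/g(|z|)$ is replaced by $c/g(|x|)$.

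The main obstacle is the lower bound matching: the Fubini argument produces an extra factor $1/g(|z|)$ that is absent from $G(\mathsf{K}t,x)$, so a regime analysis (based on whether the dominant contribution to $G(\mathsf{K}t,x)$ comes from a bounded neighborhood of $\{|z|\approx n_0+2\}$ or from the complement where $g(|z|)\asymp g(|x|)$) is needed; the bounded regime is absorbed into the $e^{-\lambda_0 t}f(|x|)$ term through \eqref{eq:n_0_cond_strongest}, and the complementary regime yields the desired $cG(\mathsf{K}t,x)/g(|x|)$ contribution.
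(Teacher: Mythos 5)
Your overall route (integrate the bounds of Theorem~\ref{th:th1} over $y$, Fubini on the $F$-term) is the paper's route, and your part~\ref{th:th2-a} and the upper bound in part~\ref{th:th2-b} are essentially the paper's argument. The gap is in the lower bound of part~\ref{th:th2-b}. After Fubini and $J(z)\geq c/g(|z|)$ you are left with $\int_{n_0+2<|z|\leq|x|} f_1(|x-z|)\,e^{-\mathsf{K}t g(|z|)}\,g(|z|)^{-1}\,dz$ and must compare it with $G(\mathsf{K}t,x)$. Your proposed fix fails on two counts. First, the claim that $g(|z|)\asymp g(|x|)$ on the ``large-$|z|$ portion'' is false: \textup{(\ref{A3}.c)} only gives comparability of $g$ over \emph{unit} increments, so for, say, $g(r)=e^{\beta r}$ the values $g(|z|)$ and $g(|x|)$ are wildly incomparable for $|z|\in[|x|/2,|x|]$, and the dominant contribution to $G$ need not come from a bounded neighbourhood of $|z|=|x|$. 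Second, even granting the replacement $1/g(|z|)\rightsquigarrow c/g(|x|)$, you would obtain $\int F(\mathsf{K}t,x,y)/g(|y|)\,dy\geq c\,G_{\mathrm{large}}/g(|x|)$ and hence $U_t\I(x)\geq c\,G(\mathsf{K}t,x)/g(|x|)^2$ — one factor $1/g(|x|)$ short of the claimed bound.

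The missing ingredient is the strengthened lower bound \eqref{eq:lower-better}, which the paper explicitly recalls at the start of its proof: the lower estimate in Theorem~\ref{th:th1}.\ref{th:th1-c} actually holds with $F(\tfrac12\mathsf{K}t,x,y)$, not just $F(\mathsf{K}t,x,y)$. With this extra slack in the exponent one absorbs the unwanted factor pointwise,
\begin{gather*}
    \frac{e^{-\frac12\mathsf{K}t\,g(|z|)}}{g(|z|)} \geq e^{-\frac12\mathsf{K}t\,g(|z|)}\,e^{-\frac12\mathsf{K}t\,g(|z|)} = e^{-\mathsf{K}t\,g(|z|)},
\end{gather*}
using $1/s\geq e^{-as}$ for $s\geq 1$, $a\geq 1$; this yields $\int F(\tfrac12\mathsf{K}t,x,y)/g(|y|)\,dy\geq c\,G(\mathsf{K}t,x)$ directly, with no regime splitting and no appeal to \eqref{eq:n_0_cond_strongest}. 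Without invoking \eqref{eq:lower-better} your argument can at best produce $G(2\mathsf{K}t,x)$ in place of $G(\mathsf{K}t,x)$, which is a strictly weaker statement.
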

\begin{proof}
    Since $U_t \I(x) = \int_{\real^d} u_t(x,y)\,dy$, $x \in \real^d$, $t >0$, all estimates follow from the estimates of the kernel $u_t(x,y)$, cf.\ Theorem~\ref{th:th1}. Recall that the lower bound of Theorem~\ref{th:th1}.\ref{th:th1-c} actually holds with $F(\frac 12 \mathsf{K}t,x,y)$, see \eqref{eq:lower-better}. We will use this fact in the following calculation. If $|x| > n_0+3$, the key step is to observe that by Tonelli's theorem
\begin{align*}
    \int_{|y| > n_0+3} \frac{F(\frac 12 \mathsf{K}t,x,y)}{g(|y|)} \,dy
    &= \int_{|y| > n_0+3 }\int_{n_0+2 < |z| < |x| \vee |y|}f_1(|x-z|)\frac{f_1(|z-y|)}{g(|y|)}  e^{- \frac 12 \mathsf{K}t g(|z|)}\,dz\,dy \\
    &\geq \int_{|z| > n_0+2} \int_{|y| > (n_0+3) \vee |z| } f_1(|x-z|)\frac{f_1(|z-y|)}{g(|y|)}  e^{- \frac 12 \mathsf{K}t g(|z|)}\,dy\,dz \\
    &\geq c_1 \int_{|z| > n_0+2} \int_{\substack{|y| > (n_0+3) \vee |z| \\ |y-z| <2}} f_1(|x-z|)\frac{e^{- \frac 12 \mathsf{K}t g(|z|)}}{g(|z|)}  \,dy\,dz
    \\
    &\geq c_2\int_{|z| > n_0+2} f_1(|x-z|) e^{- \mathsf{K} t g(|z|)}  \,dz
    \\
    &\geq c_2 G(\mathsf{K} t,x).
\end{align*}
Similarly, \eqref{def:def_F}, the monotonicity of $g$ and one more use of Tonelli's theorem, imply
\begin{gather*}
    \int_{|y| > n_0+3 } \frac{F(\mathsf{K}^{-1}t,x,y)}{g(|y|)} \,dy
    \leq \frac{\left\|f_1\right\|_1}{g(n_0+3)} \int_{|z| > n_0+ 2} f_1(|x-z|) e^{- \frac{t}{\mathsf{K}} g(|z|)}  \,dz
    \leq c_3 G(\mathsf{K}^{-1}t,x).
\end{gather*}
The last estimate is a consequence of the fact that for $|x|>n_0+3$ we have
\begin{gather*}
    \int_{|z| > n_0+2} f_1(|x-z|) e^{- \frac{t}{\mathsf{K}} g(|z|)}\,dz
    = G(\mathsf{K}^{-1}t,x) + \int_{|z| > |x|} f_1(|x-z|) e^{- \frac{t}{\mathsf{K}} g(|z|)}\,dz
\end{gather*}
and, by the monotonicity of $g$,
\begin{gather*}
    \int_{|z| > |x|} f_1(|x-z|) e^{- \frac{t}{\mathsf{K}} g(|z|)}  \,dz
    \leq c_4 e^{-\frac{t}{\mathsf{K}} g(|x|)}
    \leq c_5 \int_{n_0+2 < |z| < |x|} f_1(|x-z|) e^{- \frac{t}{\mathsf{K}} g(|z|)} \,dz.
\qedhere
\end{gather*}
\end{proof}

\subsection{Applications to asymptotic intrinsic ultracontractivity}\label{sec4:aIUC}

Under the assumption \eqref{A3} some of the aIUC results of~\cite[Corollary 3.3]{bib:KKL2018} (see also~\cite[Corollary 2.3 (2)]{bib:KL15}) can be recovered from our two-sided estimates of the kernels $u_t(x,y)$. We continue to use the functions $F(\tau,x,y)$ and $G(\tau,x)$ introduced in  \eqref{def:def_F} and \eqref{def:def_G}.

\begin{lemma}\label{lem:aiuc_est}
   Let $f:(0,\infty) \to (0,\infty)$ and $g:[0, \infty) \to (0,\infty)$ be profile functions as in \textup{(\ref{A1})} and \textup{(\ref{A3})}, and let $n_0 \geq R_0+2$. Suppose that there exist $C>0$ and $R_1 \geq R_0$ such that $C g(r) \geq |\log f(r)|$, $r \geq R_1$. We have the following estimates.
\begin{enumerate}
\item\label{lem:aiuc_est-a}
    There is a constant $\widetilde C \geq 1$ such that for every $\tau \geq 3C $ and $ |x|,|y| \geq n_0+3$ we have
    \begin{gather*}
        \frac{1}{\widetilde C} e^{-\tau g(n_0+3)} f(|x|)f(|y|) \leq F(\tau,x,y) \leq \widetilde C e^{-\frac{1}{3}\tau g(n_0+2)} f(|x|)f(|y|).
    \end{gather*}
\item\label{lem:aiuc_est-b}
     There is a constant $\widetilde C \geq 1$ such that for every $\tau \geq 2C $ and $ |x| \geq n_0+3$ we have
    \begin{gather*}
        \frac{1}{\widetilde C} e^{-\tau g(n_0+3)} f(|x|) \leq G(\tau,x) \leq \widetilde C e^{-\frac{1}{2}\tau g(n_0+2)} f(|x|).
    \end{gather*}
\end{enumerate}
\end{lemma}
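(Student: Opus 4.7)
The key device is to convert part of the weight $e^{-\tau g(|z|)}$ into powers of $f_1(|z|)$ via the hypothesis $Cg(r) \geq |\log f(r)|$ on $\{r\geq R_1\}$. After possibly enlarging $R_1$ so that also $f(R_1) \leq 1$ (which only strengthens the hypothesis), this yields $e^{-\tau g(r)} \leq f_1(r)^{\tau/C}$ for $r \geq R_1$. The prescribed thresholds $\tau \geq 3C$ in (a) and $\tau \geq 2C$ in (b) are exactly what is needed to produce, respectively, two and one copies of $f_1(|z|)$ which can then be absorbed by the direct jump property.

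For the upper bound in (a), I will factor $e^{-\tau g(|z|)} = e^{-\frac{\tau}{3} g(|z|)} e^{-\frac{2\tau}{3} g(|z|)}$. On $\{|z| > n_0+2\}$ the first factor is bounded above by $e^{-\frac{\tau}{3} g(n_0+2)}$ by the monotonicity of $g$, while on $\{|z| \geq R_1\}$ the second is bounded by $f_1(|z|)^{2\tau/(3C)} \leq f_1(|z|)^2$ for $\tau \geq 3C$. The remaining integrand $f_1(|x-z|)f_1(|z-y|)f_1(|z|)^2$ is then dealt with in two steps: first the pointwise bound \eqref{eq:conv_f1_pw} in the form $f_1(|z-y|)f_1(|z|) \leq c f_1(|y|)$, then the convolution bound \eqref{eq:conv_f1} giving $\int f_1(|x-z|) f_1(|z|)\,dz \leq c' f_1(|x|)$. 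Together these produce $\widetilde C e^{-\frac{\tau}{3}g(n_0+2)} f(|x|) f(|y|)$ after $f_1 \leq f$. The residual contribution from the shell $\{n_0+2 < |z| \leq R_1\}$, should it be nonempty, is estimated by iterating the growth condition \eqref{eq:loc_comp_f1} together with the monotonicity of $f_1$: on this bounded shell $f_1(|x-z|) \leq c_{R_1,n_0}\, f_1(|x|)$ and similarly for $y$, so its contribution is at most a constant times $f_1(|x|)f_1(|y|) e^{-\tau g(n_0+2)}$, which is absorbed into the main bound.

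For the lower bound in (a), I restrict the integration in $F(\tau,x,y)$ to the thin shell $\{n_0+\tfrac 52 < |z| \leq n_0+3\}$, which lies inside the admissible region because $|x|\vee|y| \geq n_0+3$. There $e^{-\tau g(|z|)} \geq e^{-\tau g(n_0+3)}$, and iterating \eqref{eq:loc_comp_f1} yields $f_1(|x-z|) \geq c(n_0)\, f_1(|x|) \geq c'(n_0)\, f(|x|)$; the last step uses that $f_1(r) \geq f(r)/\max(1,f(n_0+3))$, since either $f(r) \leq 1$ (so $f_1 = f$) or $f(r) \leq f(n_0+3)$ by monotonicity. Multiplying, inserting the analogous bound in $y$, and integrating over the fixed-volume shell yields the required constant times $e^{-\tau g(n_0+3)}f(|x|)f(|y|)$.

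Part (b) runs on the same blueprint with the split $e^{-\tau g(|z|)} = e^{-\frac{\tau}{2}g(|z|)} e^{-\frac{\tau}{2}g(|z|)}$: for $\tau \geq 2C$ and $|z| \geq R_1$ the second factor is bounded by $f_1(|z|)$, and \eqref{eq:conv_f1} directly gives $\int f_1(|x-z|) f_1(|z|)\,dz \leq c f_1(|x|) \leq c' f(|x|)$; the lower bound and the residual shell are handled verbatim as in (a). The main technical nuisance in both parts is the bookkeeping between $R_1$ and $n_0+2$, together with the distinction between $f$ and $f_1$; both are resolved by enlarging $R_1$ so the hypothesis covers the range where $f \leq 1$ and by systematically working with $f_1$ until the final step, where $f_1 \leq f$ is used to match the stated bounds.
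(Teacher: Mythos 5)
Your proof is correct and follows essentially the same route as the paper: the paper likewise splits the integral at $R_1$, bounds the inner shell $n_0+2<|z|<R_1$ trivially, and on $|z|\geq R_1$ converts $e^{-\frac{1}{3}\tau g(|z|)}\leq f_1(|z|)$ (resp.\ $e^{-\frac{1}{2}\tau g(|z|)}\leq f_1(|z|)$) to obtain the extra factors of $f_1(|z|)$, which are then absorbed via \eqref{eq:conv_f1_pw} and \eqref{eq:conv_f1}; the lower bound is obtained exactly as you do, by restricting to a thin shell just inside $|z|=n_0+3$.
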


\begin{proof}
The lower bound follows from
\begin{align*}
    F(\tau,x,y)
    &= \int_{n_0+2 < |z| < |x| \vee |y|} f_1(|x-z|) f_1(|z-y|)  e^{- \tau g(|z|)}\,dz \\
	&\geq \int_{n_0+2 < |z| < n_0+3} f_1(|x-z|) f_1(|z-y|)  e^{- \tau g(|z|)}\,dz \\
	&\geq c_1 e^{- \tau g(n_0+3)} f(|x|) f(|y|).
\end{align*}
In a similar way we get
\begin{align*}
    G(\tau,x)
    = \int_{n_0+2 < |z| < |x|} f_1(|x-z|) e^{- \tau g(|z|)}\,dz
	\geq c_2 e^{- \tau g(n_0+3)} f(|x|).
\end{align*}

Let us establish the upper bounds. We give only details for  $F(\tau,x,y)$, since $G(\tau,x)$ can be dealt with in a similar fashion. We set
\begin{gather*}
    F(\tau,x,y)
    = \left(\int_{\substack{n_0+2 < |z| < |x| \vee |y| \\ |z| < R_1}} + \int_{\substack{n_0+2 < |z| < |x| \vee |y| \\ |z| \geq R_1}} \right)
    f_1(|x-z|) f_1(|z-y|)  e^{- \tau g(|z|)}\,dz
\end{gather*}
and denote the two integrals by $\mathrm{I}$ and $\mathrm{II}$. Clearly, $\mathrm{I} \leq c_3 e^{-\tau g(n_0+2)} f(|x|) f(|y|)$.
By assumption, $e^{-\frac{1}{3}\tau g(r)} \leq f_1(r) $, for every $\tau \geq 3C$ and $r \geq R_1$. Hence,
\begin{gather*}
    \mathrm{II}
    \leq c_4 e^{-\frac{1}{3}\tau g(n_0+2)} \int_{\real^d}  f_1(|x-z|) f_1(|z|)
    f_1(|z-y|) f_1(|z|) \,dz.
\end{gather*}
From \eqref{eq:conv_f1_pw} and \eqref{eq:conv_f1} we easily get $\mathrm{II} \leq c_5 e^{-\frac{1}{3}\tau g(n_0+2)} f(|x|) f(|y|)$. This completes the proof.
\end{proof}

The next corollary contains equivalent conditions for the aIUC property of the semigroup $\left\{U_t:t \geq 0\right\}$. Due to \cite[Corollary 3.3]{bib:KKL2018} these are in fact also equivalent conditions for the intrinsic hypercontractivity. That means, in particular, that aIUC and intrinsic hypercontractivity coincide in this setting. Recall that every aIUC semigroup is automatically pIUC with the threshold function $r \equiv \infty$, cf.\ Definition~\ref{def:piuc}.
\begin{corollary}\label{cor:aIUC}
Assume \eqref{A1}--\eqref{A3} with $t_{\bee}>0$ and $R_0>0$. The following statements are equivalent.
\begin{enumerate}
\item\label{cor:aIUC-a}
    There exist $C > 0$ and $R_1>0$ such that $V(x) \geq C |\log\nu(x)|$ for $|x| \geq R_1$.
\item\label{cor:aIUC-b}
    There exist $\widetilde C > 0$ and $\widetilde R_1>0$ such that $g(r) \geq \widetilde C |\log f(r)|$ for $r \geq \widetilde R_1$.
\item\label{cor:aIUC-c}
    There exists some $t_0 > 0$ such that
   \begin{gather*}
        u_t(x,y)
        \asymp  e^{-\lambda_0 t} \left(1 \wedge \frac{f(|x|)}{g(|x|)}\right)\left(1 \wedge \frac{f(|y|)}{g(|y|)}\right),
        \quad x,y \in \real^d, \; t \geq t_0,
	\end{gather*}
	or, equivalently,
	\begin{gather*}
        u_t(x,y) \asymp e^{-\lambda_0 t} \varphi_0(x) \varphi_0(y),
        \quad x,y \in \real^d, \; t \geq t_0,
    \end{gather*}
    i.e.\ the semigroup $\left\{U_t: t \geq 0\right\}$ is asymptotically intrinsic ultracontractive \textup{(}aIUC\textup{)}.
\item\label{cor:aIUC-d}
	There exists some $t_0 > 0$ such that
   \begin{gather*}
        U_t \I(x)
        \asymp e^{-\lambda_0 t} \left(1 \wedge \frac{f(|x|)}{g(|x|)}\right),
        \quad x \in \real^d, \; t \geq t_0.
    \end{gather*}
    or, equivalently,
   \begin{gather*}
        U_t \I(x)
        \asymp e^{-\lambda_0 t} \varphi_0(x),
        \quad x \in \real^d, \; t \geq t_0.
    \end{gather*}
    i.e., the semigroup $\left\{U_t: t \geq 0\right\}$ is asymptotically ground state dominated.
\end{enumerate}
More precisely, if \ref{cor:aIUC-b} is true for some $\widetilde C>0$, then \ref{cor:aIUC-c} and \ref{cor:aIUC-d} hold with $t_0 = 30t_{\bee} + 3 \widetilde C\mathsf{K}$.
\end{corollary}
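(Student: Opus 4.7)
The plan is to run the logical loop $(a) \Leftrightarrow (b) \Rightarrow (c) \Rightarrow (d) \Rightarrow (b)$, with the quantitative bound $t_0 = 30 t_\bee + 3\widetilde C\mathsf{K}$ emerging from the implication $(b) \Rightarrow (c)$. The equivalence $(a) \Leftrightarrow (b)$ is immediate: by \textup{(\ref{A1}.a)} and \textup{(\ref{A3}.a)} one has $\nu(x) \asymp f(|x|)$ and $V(x) \asymp g(|x|)$, and any $O(1)$ perturbation inside a logarithmic comparison can be absorbed by adjusting the multiplicative constant outside a bounded region.

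The quantitative heart of the argument is $(b) \Rightarrow (c)$, obtained by feeding Lemma~\ref{lem:aiuc_est}\ref{lem:aiuc_est-a} into Theorem~\ref{th:th1}. When $|x| \wedge |y| \leq n_0+3$, parts \ref{th:th1-a}--\ref{th:th1-b} of Theorem~\ref{th:th1}, combined with Remark~\ref{rem:th1} and the eigenfunction estimates \eqref{eq:eig_1}--\eqref{eq:eig_2}, already yield the aIUC asymptotics without invoking $(b)$. The substance lies in the case $|x|,|y| > n_0+3$ of Theorem~\ref{th:th1}.\ref{th:th1-c}: the lower bound there already contains the term $e^{-\lambda_0 t} f(|x|)f(|y|)/(g(|x|) g(|y|))$, matching $e^{-\lambda_0 t}\varphi_0(x)\varphi_0(y)$ up to constants by \eqref{eq:eig_1}, so only the upper bound needs work. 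Here I would apply Lemma~\ref{lem:aiuc_est}\ref{lem:aiuc_est-a} with $\tau = t/\mathsf{K}$, which is legitimate once $t \geq 3\widetilde C\mathsf{K}$, to obtain $F(t/\mathsf{K},x,y) \leq c\, e^{-t g(n_0+2)/(3\mathsf{K})} f(|x|) f(|y|)$. Enlarging $n_0$ if necessary so that $g(n_0+2) \geq 3\mathsf{K}\lambda_0$ — this is compatible with the standing constraint \eqref{eq:n_0_cond_strongest} — absorbs the $F$--contribution into the other term of the maximum in Theorem~\ref{th:th1}.\ref{th:th1-c}, producing the desired upper bound. Combining with the threshold $t > 30 t_\bee$ gives the announced $t_0 = 30 t_\bee + 3\widetilde C\mathsf{K}$. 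The implication $(c) \Rightarrow (d)$ is then immediate by integration in $y$: $\varphi_0 \in L^1(\real^d)$ because it is bounded on compacts by \eqref{eq:eig_2} and controlled by $f(|\cdot|)$ at infinity via \eqref{eq:eig_1}, while $f \in L^1(\{|z|>1\})$ follows from \textup{(\ref{A1}.a)} and the L\'evy integrability condition. Alternatively, $(b) \Rightarrow (d)$ can be obtained by the same template applied directly to Theorem~\ref{th:th2} and Lemma~\ref{lem:aiuc_est}\ref{lem:aiuc_est-b}.

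The main obstacle is the converse $(d) \Rightarrow (b)$, which runs against the direction of our estimates; the key is that the bounds in Theorem~\ref{th:th2}.\ref{th:th2-b} are two-sided and sharp. Assuming $(d)$ at threshold $t_0$, the lower bound in Theorem~\ref{th:th2}.\ref{th:th2-b} forces
\begin{gather*}
    G(\mathsf{K}t_0, x) \leq c_1\, g(|x|)\, U_{t_0}\I(x) \leq c_2\, e^{-\lambda_0 t_0} f(|x|)
    \quad\text{for $|x|$ large.}
\end{gather*}
A complementary direct lower bound on $G$ is obtained by restricting the integral \eqref{def:def_G} to the half-ball $\{z : |x-z| \leq \tfrac 12,\; |z| \leq |x|\}$, on which $f_1(|x-z|) \geq f_1(\tfrac 12) > 0$ and, by monotonicity, $g(|z|) \leq g(|x|)$; this yields $G(\mathsf{K}t_0, x) \geq c_3\, e^{-\mathsf{K}t_0\, g(|x|)}$. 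Comparing the two inequalities and taking logarithms produces $\mathsf{K}t_0\, g(|x|) \geq \lambda_0 t_0 + |\log f(|x|)| - O(1)$, which for $|x|$ sufficiently large is precisely condition $(b)$ with constant of order $\mathsf{K}t_0$. This closes the loop and identifies the sharp logarithmic gap between $g$ and $|\log f|$ as the dividing line between aIUC and the merely pIUC regime that Theorem~\ref{th:main_th} covers in full generality.
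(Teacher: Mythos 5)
Your proposal is correct and follows essentially the same route as the paper: $(a)\Leftrightarrow(b)$ from the profile comparisons, $(b)\Rightarrow(c)$ by feeding Lemma~\ref{lem:aiuc_est}.\ref{lem:aiuc_est-a} into Theorem~\ref{th:th1} (with Remark~\ref{rem:th1} handling $|x|\wedge|y|\leq n_0+3$), $(c)\Rightarrow(d)$ by integration, and $(d)\Rightarrow(b)$ by comparing the lower bound of Theorem~\ref{th:th2}.\ref{th:th2-b} together with the elementary estimate $G(\mathsf{K}t_0,x)\geq c\,e^{-\mathsf{K}t_0 g(|x|)}$ against the assumed upper bound. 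Your extra care about enlarging $n_0$ so that $g(n_0+2)\geq 3\mathsf{K}\lambda_0$ when $\lambda_0>0$ is a harmless refinement the paper leaves implicit.
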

\begin{proof}
    The statements \ref{cor:aIUC-a} and \ref{cor:aIUC-b} are equivalent because of (\ref{A1}.a) and (\ref{A3}.a);
    \ref{cor:aIUC-b} implies \ref{cor:aIUC-c} because of the estimates in Theorem \ref{th:th1} (see also Remark~\ref{rem:th1}) and Lemma \ref{lem:aiuc_est}.\ref{lem:aiuc_est-a};
    \ref{cor:aIUC-c} implies \ref{cor:aIUC-d} by integration;
    \ref{cor:aIUC-b} follows from \ref{cor:aIUC-d} with the estimates from Theorem \ref{th:th2}. Indeed, with the upper bound in \ref{cor:aIUC-d}, the lower bound in Theorem \ref{th:th2}.\ref{th:th2-b}, the definition \eqref{def:def_G} of $G$ and (\ref{A3}.b), we get for $|x|$ large enough
    \begin{gather*}
        \frac{c_1 e^{-\mathsf{K}t_0 g(|x|)}}{ g(|x|)}
        \leq \frac{c_2}{ g(|x|)} G(\mathsf{K}t_0,x)
        \leq U_{t_0} \I(x)
        \leq c_3 e^{-\lambda_0 t_0} \frac{f(|x|)}{g(|x|)},
    \end{gather*}
    which implies \ref{cor:aIUC-b}. Alternatively, we can use the argument from the proof of \cite[Theorem 2.6 (2)]{bib:KL15} to see that \ref{cor:aIUC-d} gives \ref{cor:aIUC-b}.
\end{proof}

\subsection{Applications to spectral functions}\label{sec4:spectre}
A further application of our results is the study of spectral regularity of compact semigroups, e.g.\ the heat trace or the heat content. We are not aware of such results in the literature. Recall that $U_t$ is said to be a/have a
\begin{align}
\tag{TC}\label{tc}
    &\text{\emph{trace class operator} if\ \ } \int_{\real^d} u_t(x,x) \,dx < \infty;\\
\tag{HS}\label{hs}
    &\text{\emph{Hilbert-Schmidt operator} if\ \ } \int_{\real^d}\int_{\real^d} u_t^2(x,y) \,dx \,dy < \infty;\\
\tag{fHC}\label{fhc}
    &\text{\emph{finite heat content} if\ \ } \int_{\real^d}\int_{\real^d} u_t(x,y) \,dx \,dy < \infty.
\end{align}
By (the analogue of) the Chapman--Kolmogorov equations, if the integrals in \eqref{tc}, \eqref{hs} and \eqref{fhc} are finite for \emph{some} $t_0>0$, then they are finite for \emph{all} $t \geq t_0$.

Using our bounds on $u_t(x,y)$, we can give a necessary and sufficient condition for the spectral properties \eqref{tc}, \eqref{hs} and \eqref{fhc}. Recall that $\mathsf{K} = 4 C_6 C_7^2$.

\begin{corollary} \label{cor:spec_reg}
    Assume \eqref{A1}--\eqref{A3} with $t_{\bee}>0$ and $R_0>0$. For large times the properties \eqref{tc}, \eqref{hs} and \eqref{fhc} coincide and they are equivalent to the condition
    \begin{align} \label{eq:exp_int}
        \text{there exists $s > 0$ such that\ \ } \int_{|x|>R_0} e^{-sV(x)} \,dx < \infty.
    \end{align}
    More precisely, the following assertions hold.
    \begin{enumerate}
    \item\label{cor:spec_reg-a}
        If \eqref{eq:exp_int} is true with some $s > 0$, then the integrals in \eqref{tc}, \eqref{hs} and \eqref{fhc} are finite for all $t \geq  30t_{\bee}+\mathsf{K} s$.

    \item\label{cor:spec_reg-b}
        If the integral in \eqref{tc} or \eqref{fhc} is finite for some $t>0$, then \eqref{eq:exp_int} holds for all $s \geq  \mathsf{K}(30t_{\bee} + t)$.

    \item\label{cor:spec_reg-c}
        If the integral in \eqref{hs} is finite for some $t>0$, then \eqref{eq:exp_int} holds for all $s \geq 2\mathsf{K}(30t_{\bee}+t)$.
    \end{enumerate}
\end{corollary}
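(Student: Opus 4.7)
The plan is to reduce each of \eqref{tc}, \eqref{hs}, \eqref{fhc} to the integrability condition \eqref{eq:exp_int} by means of the sharp two-sided estimates of Theorems~\ref{th:th1} and~\ref{th:th2}. A preliminary observation, from Chapman--Kolmogorov and the symmetry of $u_t$, is the identity
\begin{gather*}
    \iint u_t(x,y)^2\,dx\,dy = \int u_{2t}(x,x)\,dx,
\end{gather*}
which shows that \eqref{hs} at time $t$ is equivalent to \eqref{tc} at time $2t$; hence \ref{cor:spec_reg-c} will follow from \ref{cor:spec_reg-b} applied with $t$ replaced by $2t$, and this explains the extra factor of $2$ in the threshold of \ref{cor:spec_reg-c}. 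Monotonicity of all three integrals in $t$ (once finite they stay finite for larger $t$) is also a direct consequence of Chapman--Kolmogorov together with the ultracontractivity of the semigroup for $t\geq t_\bee$.

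For sufficiency \ref{cor:spec_reg-a}, I would start from the upper bounds in Theorems \ref{th:th1}.\ref{th:th1-c} and \ref{th:th2}.\ref{th:th2-b}. Integrating $u_t(x,x)$, respectively $U_t\I(x)$, over $|x|>n_0+3$ produces the two integrals $\int F(t/\mathsf{K},x,x)/g(|x|)^2\,dx$ and $\int G(t/\mathsf{K},x)/g(|x|)\,dx$, together with easily controlled contributions carrying the prefactor $e^{-\lambda_0 t}$ and powers of $f$ (integrable, since $f$ is in $L^1$ away from the origin). Applying Fubini to the definitions of $F$ and $G$, combined with $\int_{\real^d} f_1(|x-z|)^k\,dx \leq \|f_1\|_1$ for $k=1,2$ and $g\geq 1$, bounds both by a constant multiple of $\int_{|z|>n_0+2} e^{-(t/\mathsf{K}) g(|z|)}\,dz$. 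By \textup{(\ref{A3}.a)} this is at most $\int e^{-(t/(C_6\mathsf{K}))V(z)}\,dz$; since $V$ is bounded below, the hypothesis \eqref{eq:exp_int} for a single $s>0$ yields it for all $s'\geq s$, so the integral is finite once $t$ exceeds the stated threshold. The (HS) case then comes from the (TC) case at time $2t$.

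For necessity \ref{cor:spec_reg-b} and \ref{cor:spec_reg-c}, I would use the lower bound in Theorem~\ref{th:th1}.\ref{th:th1-c} at $y=x$. Restricting the $z$-integration defining $F(\mathsf{K}t,x,x)$ to $B_1(x)\cap\{n_0+2<|z|<|x|\}$ and invoking the monotonicity of $g$ gives
\begin{gather*}
    u_t(x,x) \geq c\, g(|x|)^{-2}\, e^{-\mathsf{K}t\, g(|x|)}, \quad |x|>n_0+4,
\end{gather*}
and analogously $U_t\I(x)\geq c\, g(|x|)^{-1} e^{-\mathsf{K}t\, g(|x|)}$ from Theorem~\ref{th:th2}.\ref{th:th2-b}. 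Finiteness of \eqref{tc} or \eqref{fhc} therefore forces $\int g(|x|)^{-k} e^{-\mathsf{K}t g(|x|)}\,dx < \infty$ for the appropriate $k\in\{1,2\}$. Writing $g^{-k}=e^{-k\log g}$ and using $\log g = o(g)$ for $g\to\infty$ absorbs the polynomial factor into the exponent at the cost of an arbitrarily small enlargement, so that $\int e^{-(\mathsf{K}t+\varepsilon) g(|x|)}\,dx < \infty$; \textup{(\ref{A3}.a)} in the form $g\leq C_6 V$ then converts this into \eqref{eq:exp_int} for $s$ of the claimed size. Part \ref{cor:spec_reg-c} is then obtained by applying \ref{cor:spec_reg-b} at time $2t$.

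The main obstacle will be the careful bookkeeping of constants: the polynomial prefactors $g^{-1}$ and $g^{-2}$, together with the comparability constant $C_6$ between $g$ and $V$, have to be tracked through these conversions without disrupting the stated thresholds $30t_\bee+\mathsf{K}s$ and $\mathsf{K}(30t_\bee+t)$. Both are rendered harmless by the two facts that (i) $\log g = o(g)$ for $g\to\infty$, so that polynomial factors can be absorbed into the exponential at the cost of an arbitrarily small perturbation of the decay rate, and (ii) \eqref{eq:exp_int} is monotone in $s$ (as $V$ is bounded below), so any remaining multiplicative constant in the exponent simply shifts $s$ by a controlled amount.
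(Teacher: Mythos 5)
Your proposal is essentially correct and follows the route the authors clearly intend (the paper in fact prints no proof of Corollary~\ref{cor:spec_reg}, treating it as a direct consequence of Theorems~\ref{th:th1} and~\ref{th:th2}): the identity $\iint u_t^2 = \int u_{2t}(x,x)\,dx$ reduces \eqref{hs} to \eqref{tc} at time $2t$; Fubini applied to $F$ and $G$, together with $\int f_1(|x-z|)^k\,dx\leq\|f_1\|_1$, reduces the upper bounds to $\int e^{-(t/\mathsf{K})g(|z|)}\,dz$; and localizing the $z$-integral near $x$ gives the matching lower bounds $g(|x|)^{-2}e^{-\mathsf{K}tg(|x|)}$ resp.\ $g(|x|)^{-1}e^{-\mathsf{K}tg(|x|)}$. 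All of these steps are sound.

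The one point where your closing paragraph is too optimistic is the conversion between $g$ and $V$. Assumption \textup{(\ref{A3}.a)} only gives $C_6^{-1}g\leq V\leq C_6 g$, so $e^{-(t/\mathsf{K})g(|z|)}\leq e^{-(t/(C_6\mathsf{K}))V(z)}$ in the sufficiency direction and $\int e^{-sV}\leq\int e^{-(s/C_6)g}$ in the necessity direction; your argument therefore yields the thresholds $t\geq 30t_{\bee}+C_6\mathsf{K}s$ in~\ref{cor:spec_reg-a} and $s\geq C_6\mathsf{K}(30t_{\bee}+t)+\varepsilon$ in~\ref{cor:spec_reg-b}, not the stated ones. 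Your fact (i) ($\log g=o(g)$) does absorb the polynomial prefactors, but your fact (ii) (monotonicity of \eqref{eq:exp_int} in $s$) only helps in the sufficiency direction, where one is allowed to enlarge the exponent; in the necessity direction one must produce \eqref{eq:exp_int} for the \emph{smallest} admissible $s$, and monotonicity cannot remove the extra factor $C_6$. This does not affect the substance of the corollary --- the equivalence of \eqref{tc}, \eqref{hs}, \eqref{fhc} and \eqref{eq:exp_int} for large $t$ is fully proved by your argument --- but if the exact numerical thresholds are to be reproduced you would need either $C_6=1$ or a restatement with $\mathsf{K}$ replaced by $C_6\mathsf{K}$.
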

It is somewhat surprising that the validity of \eqref{tc}, \eqref{hs} and \eqref{fhc} only depends on the potential, but not on the free process. The free process seems only to determine the threshold $t_{\bee}$.

\section{Applications to specific classes of nonlocal Schr\"odinger operators}\label{sec5}

\subsection{Potentials as functions of \boldmath $|\log \nu|$. \unboldmath aIUC- and non-aIUC-regime}\label{sec5:log}
Up to now we have studied Schr\"{o}dinger operators whose L\'evy density $\nu$ and potential $V$ are controlled by profiles $f$ and $g$ which satisfy the assumptions \eqref{A1} and \eqref{A3} with some $R_0>0$. From now on we assume, in addition, \eqref{A4} which says that $f$ satisfies $f(R_0)<1$ and $f$ and $g$ are connected via
\begin{gather*}
    g(r) = h\left(|\log f(r)|\right), \quad r \geq R_0;
\end{gather*}
$h:[|\log f(R_0)|,\infty) \to (0,\infty)$ is an increasing function such that $h(s)/s$ is monotone (increasing or decreasing). We will always assume that monotone functions are right-continuous.

\begin{remark} \label{rem:regimes}
If \eqref{A4} holds, the Schr\"{o}dinger operators with profiles $f$ and $g$ are divided into two distinct classes:
\begin{enumerate}
\item \label{rem:regimes-a}
    \textbf{(aIUC-regime)}: \emph{there exist $C$ and $R \geq R_0$ such that $C g(r) \geq |\log f(r)|$, $r \geq R$.}

    This is equivalent to the property that $h(s)/s$ is, outside some compact set, bounded below by a strictly positive constant.
    Since $h(s)/s$ is monotone this is the same as to say that $\lim_{s\to\infty} h(s)/s \in (0,\infty]$. If necessary, we may increase the constant $C$ to ensure that $C g(r) \geq |\log f(r)|$, $r \geq R_0$; this is equivalent to $e^{-\tau g(r)} \leq f(r)$, for every $r \geq R_0$ and $\tau \geq \tau_0:=C$.

\item \label{rem:regimes-b}
    \textbf{(non-aIUC-regime)}: \emph{$\lim_{r\to\infty} g(r)/|\log f(r)| = 0$.}

    This is equivalent to $\lim_{s\to\infty} h(s)/s = 0$. Since $h(s)/s$ is monotone, the limit is actually an infimum. This class will be discussed in Lemma~\ref{lem:non-aIUC} below.
\end{enumerate}
\end{remark}

We will use Remark~\ref{rem:regimes} as definition of the aIUC- and non-aIUC-regimes. Since $h(s)/s$ is a monotone function, the two cases in Remark~\ref{rem:regimes} are indeed complementary and exhaustive classes.

The following fact will be crucial for our further investigations. It explains the relation between the profile functions $f$ and $g$ in the non-aIUC-regime.

\begin{lemma}\label{lem:non-aIUC}
    Let $f:(0,\infty) \to (0,\infty)$ be a decreasing function such that $\lim_{r\to \infty}f(r) = 0$, pick $R_0>0$ such that $f(R_0) < 1$ and let $g(r) = h\left(|\log f(r)|\right)$, $r \geq R_0$, with an increasing function $h:[|\log f(R_0)|,\infty) \to (0,\infty)$ such that $h(s)/s$ is decreasing and $\lim_{s\to\infty} h(s)/s = 0$. Define
    \begin{gather}\label{def:initial_time}
        \ttau{r} := \frac{|\log f(r)|}{ h(|\log f(r)|)}, \quad r \geq R_0,
    \intertext{and its right-continuous generalized inverse}\label{def:moving_boundary}
        \ittau(\tau) := \Lambda^{-1}(\tau) := \inf \left\{ r \geq R_0: \ttau{r} > \tau\right\}, \quad \tau \geq \ttau{R_0}.
    \end{gather}
    The function $\ittau(\cdot)$ is increasing, satisfies $\lim_{\tau\to\infty} \ittau(\tau) = \infty$, and for every $\tau > \ttau{R_0}$
    \begin{gather*}
        e^{-\tau g(r) } \leq  f(r), \quad r \in [R_0,\ittau(\tau)),
    \intertext{resp.,}
        e^{-\tau g(r) } \geq  f(r), \quad r \in [\ittau(\tau),\infty).
    \end{gather*}
    Moreover, the function $r \mapsto {e^{-\tau g(r) }}/{f(r)}$ is increasing on $[\ittau(\tau),\infty)$.
\end{lemma}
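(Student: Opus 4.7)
My proof plan would run as follows.

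The first step is to analyse the composition structure of $\ttau{\cdot}$: I would write $\ttau{r} = \phi(|\log f(r)|)$ with $\phi(s) := s/h(s)$. Since $h(s)/s$ is decreasing with limit $0$, the function $\phi$ is non-decreasing and tends to $+\infty$; combined with the fact that $r \mapsto |\log f(r)|$ is non-decreasing (as $f$ is right-continuous decreasing with $f(R_0) < 1$, so $|\log f(R_0)| > 0$) and tends to $\infty$ (as $f(r) \to 0$), I conclude that $\ttau{\cdot}$ is non-decreasing, right-continuous on $[R_0,\infty)$, and $\ttau{r} \to \infty$. Monotonicity and divergence to $\infty$ of $\ttau{\cdot}$ immediately yield the corresponding properties of its generalized inverse $\ittau$: it is non-decreasing, well-defined for every $\tau \geq \ttau{R_0}$, and $\ittau(\tau) \to \infty$ as $\tau \to \infty$.

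The core of the argument is the equivalence, valid for $r \geq R_0$ where $f(r) \leq 1$ so that $|\log f(r)| = -\log f(r)$,
\begin{gather*}
    e^{-\tau g(r)} \leq f(r)
    \iff \tau g(r) \geq |\log f(r)|
    \iff \tau \geq \ttau{r}.
\end{gather*}
By the very definition of $\ittau$, $\ttau{r} \leq \tau$ for every $r \in [R_0,\ittau(\tau))$, giving the first inequality of the lemma; conversely, right-continuity of $\ttau{\cdot}$ together with non-decreasingness gives $\ttau{r} \geq \tau$ for every $r \geq \ittau(\tau)$, which yields the second inequality. The main technical point is the boundary case $r = \ittau(\tau)$, and this is precisely what the right-continuity convention on monotone functions (declared at the start of the subsection) was put in place to handle; without it one would have to separate a ``plateau'' case of $\ttau{\cdot}$.

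Finally, the monotonicity of $r \mapsto e^{-\tau g(r)}/f(r)$ on $[\ittau(\tau),\infty)$ follows by rewriting
\begin{gather*}
    \frac{e^{-\tau g(r)}}{f(r)}
    = \exp\bigl(|\log f(r)| - \tau g(r)\bigr)
    = \exp\left(h(s)\left(\frac{s}{h(s)} - \tau\right)\right),
    \quad s := |\log f(r)|.
\end{gather*}
On $[\ittau(\tau),\infty)$ I have $s/h(s) = \ttau{r} \geq \tau$, so $s/h(s)-\tau$ is non-negative and non-decreasing in $s$ (as $\phi$ is non-decreasing), while $h(s)$ is non-negative and non-decreasing as well. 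The product of two non-negative non-decreasing functions is non-decreasing, and composing with the non-decreasing map $r \mapsto s(r)$ preserves monotonicity; the claim follows. I do not expect any serious obstacle beyond the bookkeeping of the boundary point $r = \ittau(\tau)$ already mentioned.
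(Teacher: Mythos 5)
Your proposal is correct and follows essentially the same route as the paper's proof: both reduce the two inequalities to the comparison of $\tau$ with $\ttau{r}$ via the identity $e^{-\tau g(r)}/f(r)=\exp\bigl(|\log f(r)|-\tau h(|\log f(r)|)\bigr)$, and both obtain the final monotonicity by writing the exponent as a product of two non-negative non-decreasing functions of $s=|\log f(r)|$ (you factor it as $h(s)(s/h(s)-\tau)$, the paper as $s(1-\tau h(s)/s)$, which is the same argument). Your explicit treatment of the boundary point $r=\ittau(\tau)$ via the right-continuity convention is a slightly more careful rendering of a step the paper leaves implicit.
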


\begin{proof}
Since $h(s)/s$ decreases to zero as $s \to \infty$ and $|\log f(r)|$ increases to $\infty$ as $r \to \infty$, we see that $\ittau(\tau)$ also increases to $\infty$ as $\tau \to \infty$. Moreover, we have
\begin{gather*}
    \frac{e^{-\tau g(r) }}{f(r)}
    = e^{|\log f(r)| - \tau h(|\log f(r)|)}
    = e^{|\log f(r)|\left(1 - \frac{ h(|\log f(r)|)}{|\log f(r)|} \tau\right)},
    \quad r \geq R_0, \; \tau > \ttau{R_0}.
\end{gather*}

From $h(|\log f(r)|)/|\log f(r)| \geq 1/\tau$ for $r \in [R_0,\ittau(\tau))$, we get  ${e^{-\tau g(r) }}/{f(r)}\leq 1$, and the first inequality holds.

Similarly, $h(|\log f(r)|)/|\log f(r)| \leq 1/\tau$ for $r \geq \ittau(\tau)$, which gives the second estimate.

The last assertion follows from the fact that $|\log f(r)|$ is an increasing function on $[R_0,\infty)$ and that the function $s \mapsto s\left(1-\tau (h(s)/s)\right)$ is positive and increasing on $[|\log f(\ittau(\tau))|, \infty)$.
\end{proof}

From now on we will choose $R_0$ and $n_0$, depending on $g$ and $f$ in such a way that all results from Section~\ref{sec3} and Section~\ref{sec4} become available. This means, in particular, that $R_0$ and $n_0$ are so large that
\begin{gather}\label{con-n0}
\left\{\begin{aligned}
    f(R_0) &< 1 && \text{($R_0$ from \eqref{A3}, \eqref{A4})},\\
    n_0 &\geq R_0+2 && \text{(cf.\ Lemma~\ref{lem:lem2})},\\
    g(n_0-2) &\geq 2C_6 \left[\vartheta_0 + 2C_3C_{\ref{lem:lem1}}(1+f(1))\right] && \text{(cf.\ Lemma~\ref{lem:lem2})},\\
    g(n_0-2) &\geq 2C_6\lambda_0 && \text{(cf.\ Lemma~\ref{lem:sharp1})},\\
    g(n_0-2) &\geq 12(C_5+C_{\ref{new}})C_6C_7^2 + \frac{ \mu_0(\frac 12)}{C_6C_7^2} && \text{(cf.\ Theorem~\ref{th:th1})}.
\end{aligned}\right.
\end{gather}
Throughout, we will also use the function $\ttau\cdot$ and its generalized inverse $\ittau(\cdot)$ introduced in Lemma~\ref{lem:non-aIUC}. Note that $\ttau{\ittau(\tau)}\geq\tau$ and $\ittau(\ttau r)\geq r$. This means, in particular, that
\begin{gather*}
    \tau  \geq 2\ittau(n_0+4)
    \implies \ttau{\tfrac 12\tau} \geq \ttau{\ittau(n_0+4)} \geq n_0+4 > n_0+3.
\end{gather*}

\subsection{Progressive intrinsic ultracontractivity}\label{sec5:piuc}
Our results indicate that it makes sense to consider a new type of intrinsic contractivity property for Schr\"odinger semigroups which is essentially weaker than aIUC. In this section we identify and discuss the concept of progressive intrinsic ultracontractivity (pIUC), cf.~Definition~\ref{def:piuc}.

Recall that our bounds for $u_t(x,y)$ and $U_t \I(x)$ are given in terms of the functions $F(\tau,x,y)$ and $G(\tau,x)$ defined in \eqref{def:def_F} and \eqref{def:def_G}.

\begin{lemma}\label{lem:progress_est}
    Let $f:(0,\infty) \to (0,\infty)$ be a profile function satisfying \textup{(\ref{A1}.b)} and \textup{(\ref{A1}.d)}. Moreover, let $g(r) = h\left(|\log f(r)|\right)$, $r \geq R_0$, with an increasing function $h:[|\log f(R_0)|,\infty) \to (0,\infty)$ such that $h(s)/s$ is decreasing and $\lim_{s\to\infty}h(s)/s = 0$.
    \begin{enumerate}
    \item\label{lem:progress_est-a}
        There are constants $\Ci,\Cii >0$ such that for every $\tau \geq 2\ttau{n_0+4}$ and $n_0+3 < |x| < \ittau(\tau/2)$
        \begin{gather*}
            \Ci  e^{-\tau g(n_0+3)} f(|x|) \leq G(\tau,x) \leq \Cii  e^{-\frac{1}{2}\tau g(n_0+2)} f(|x|).
        \end{gather*}
    \item\label{lem:progress_est-b}
        There are constants $\Ci, \Cii  >0$ such that for every $\tau \geq 3\ttau{n_0+4}$ and $|x|,|y| > n_0+3$, $|x| \wedge |y| < \ittau(\tau/3)$
        \begin{gather*}
            \Ci  e^{-\tau g(n_0+3)} f(|x|)f(|y|) \leq F(\tau,x,y) \leq \Cii  e^{-\frac{1}{3}\tau g(n_0+2)} f(|x|)f(|y|).
        \end{gather*}
    \end{enumerate}
\end{lemma}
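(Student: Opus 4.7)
I begin with the lower bounds in both parts, which follow the thin-shell template of Lemma~\ref{lem:aiuc_est}: I restrict the integrand to the slab $\{n_0+2<|z|<n_0+3\}$ (contained in $\{|z|<\ittau(\tau/3)\}$ once $\tau\geq 3\ttau{n_0+4}$). On this slab, $f_1(|x-z|)$ and $f_1(|z-y|)$ are bounded from below by constant multiples of $f(|x|)$ and $f(|y|)$ using monotonicity of $f$ together with \eqref{eq:loc_comp_f1}, and $e^{-\tau g(|z|)}\geq e^{-\tau g(n_0+3)}$ by monotonicity of $g$; this yields the claimed lower bounds.

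The upper bound for $G$ in~\ref{lem:progress_est-a} is direct: since $|z|\leq|x|<\ittau(\tau/2)$ throughout the integration, Lemma~\ref{lem:non-aIUC} gives $e^{-(\tau/2)g(|z|)}\leq f(|z|)$ uniformly on the domain. Splitting $e^{-\tau g(|z|)}=e^{-(\tau/2)g(|z|)}\cdot e^{-(\tau/2)g(|z|)}$ and bounding the first factor by $e^{-(\tau/2)g(n_0+2)}$ (monotonicity of $g$) and the second by $f(|z|)$ yields $e^{-\tau g(|z|)}\leq e^{-(\tau/2)g(n_0+2)}f(|z|)$; the convolution estimate \eqref{eq:conv_f1} (after passing from $f$ to $f_1$ via \eqref{eq:f-f_1}) then closes the argument.

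For the upper bound of $F$ in~\ref{lem:progress_est-b}, assume WLOG $|x|\leq|y|$, so $|x|<\ittau(\tau/3)$ while $|y|$ may be arbitrarily large. I would decompose $F=F_x+F_y+F_m^{<}+F_m^{\geq}$, where $F_x,F_y$ integrate over $B_1(x)$ and $B_1(y)\cap\{|z|<|y|\}$, and $F_m^{<},F_m^{\geq}$ over the complementary middle region split at $|z|=\ittau(\tau/3)$ (the latter being vacuous unless $|y|>\ittau(\tau/3)$). On $F_m^{<}$ the argument mirrors Lemma~\ref{lem:aiuc_est}: Lemma~\ref{lem:non-aIUC} gives $e^{-(\tau/3)g(|z|)}\leq f(|z|)$, hence $e^{-\tau g(|z|)}\leq e^{-(\tau/3)g(n_0+2)}f(|z|)^2$, and the triple product $\int f_1(|x-z|)f_1(|z-y|)f(|z|)^2\,dz$ is bounded by $Cf_1(|x|)f_1(|y|)$ via one pointwise application of \eqref{eq:conv_f1_pw} followed by one integrated application of \eqref{eq:conv_f1}.

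The crux of the proof is $F_m^{\geq}$, since on $\{|z|\geq\ittau(\tau/3)\}$ Lemma~\ref{lem:non-aIUC} is reversed. The key device is the boundary identity $f(\ittau(\tau/3))=e^{-(\tau/3)g(\ittau(\tau/3))}$, which follows from the definition of $\ittau$ together with continuity of $f$ and $g$ (implicit in the setup of Section~\ref{sec5:log}). Cubing and using monotonicity of $g$ yields, for $|z|\geq\ittau(\tau/3)$,
\[ e^{-\tau g(|z|)}\leq e^{-\tau g(\ittau(\tau/3))}=f(\ittau(\tau/3))^3\leq e^{-(\tau/3)g(n_0+2)}f(|x|)^2, \]
where the last inequality uses $f(\ittau(\tau/3))\leq f(|x|)$ (from $|x|<\ittau(\tau/3)$) and $f(\ittau(\tau/3))\leq e^{-(\tau/3)g(n_0+2)}$ (boundary identity combined with $\ittau(\tau/3)\geq n_0+4$ and monotonicity of $g$). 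Together with $\int f_1(|x-z|)f_1(|z-y|)\,dz\leq Cf_1(|x-y|)$ from \eqref{eq:conv_f1}, this gives $F_m^{\geq}\leq C'e^{-(\tau/3)g(n_0+2)}f(|x|)^2 f_1(|x-y|)$. The local pieces $F_x,F_y$ admit the same uniform estimate $e^{-\tau g(|z|)}\leq Ce^{-(\tau/3)g(n_0+2)}f(|x|)^2$ on $B_1(x),B_1(y)$ (splitting each ball into the sub-cases $|z|<\ittau(\tau/3)$ and $|z|\geq\ittau(\tau/3)$), combined with the local comparison $f_1(|z-y|)\leq\widetilde C_2 f_1(|x-y|)$ on $B_1(x)$ from \eqref{eq:loc_comp_f1}. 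The resulting excess factor of $f(|x|)$ in $f(|x|)^2 f_1(|x-y|)$ is absorbed by the key pointwise inequality
\[ f(|x|)\,f_1(|x-y|)\leq C\,f(|y|),\quad n_0+3\leq|x|\leq|y|, \]
which follows (for $|y|-|x|\geq 1$) from \eqref{eq:prod_by_diff} applied to $x'=(|x|,0,\ldots,0)$, $y'=-(|y|-|x|,0,\ldots,0)$ (so $|x'-y'|=|y|$) together with the triangle bound $f_1(|x-y|)\leq f(|y|-|x|)$, and (for $|y|-|x|<1$) from $f(|x|)\leq C_2 f(|y|)$ via \eqref{eq:loc_comp_f1}.
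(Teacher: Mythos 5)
Your lower bounds and your part~\ref{lem:progress_est-a} coincide with the paper's argument (thin shell $n_0+2<|z|<n_0+3$ for the lower bound; the split $e^{-\tau g(|z|)}\leq e^{-\frac12\tau g(n_0+2)}f(|z|)$ via Lemma~\ref{lem:non-aIUC} plus \eqref{eq:conv_f1} for the upper one). For part~\ref{lem:progress_est-b} you take a genuinely different, and more elaborate, route. The paper splits only once, at $|z|=|x|\wedge|y|$: on the inner piece it uses $e^{-\frac13\tau g(|z|)}\leq f(|z|)$ exactly as in your $F_m^{<}$, and on the outer piece $\{|x|\wedge|y|\leq|z|<|x|\vee|y|\}$ it simply pulls out $e^{-\tau g(|z|)}\leq e^{-\tau g(|x|\wedge|y|)}$, applies \eqref{eq:conv_f1} to get $f_1(|x-y|)$, and then converts $e^{-\tau g(|x|\wedge|y|)}\leq e^{-\frac13\tau g(n_0+2)}f(|x|\wedge|y|)^2$ and absorbs one factor with \eqref{eq:conv_f1_pw}. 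This makes your additional split at $|z|=\ittau(\tau/3)$ and the carving out of $B_1(x)$, $B_1(y)$ unnecessary; your closing inequality $f(|x|)f_1(|x-y|)\leq Cf(|y|)$ is in fact just \eqref{eq:conv_f1_pw} with $w=y-x$ (your detour through \eqref{eq:prod_by_diff} is correct but roundabout).

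One step needs repair. The ``boundary identity'' $f(\ittau(\tau/3))=e^{-\frac13\tau g(\ittau(\tau/3))}$ does not follow from the hypotheses: $\ittau$ is a generalized inverse, $f$ is merely decreasing (right-continuous), and Lemma~\ref{lem:non-aIUC} gives at $r=\ittau(\tau/3)$ only the inequality $e^{-\frac13\tau g(r)}\geq f(r)$, which points the wrong way for your chain $e^{-\tau g(\ittau(\tau/3))}=f(\ittau(\tau/3))^3\leq\dots$. The conclusion you need is nevertheless true and obtained more directly: on $\{|z|\geq\ittau(\tau/3)\}$ one has $|z|>|x|$, hence $g(|z|)\geq g(|x|)$ and
\begin{equation*}
 e^{-\tau g(|z|)}\leq e^{-\tau g(|x|)}= e^{-\frac13\tau g(|x|)}\,e^{-\frac23\tau g(|x|)}\leq e^{-\frac13\tau g(n_0+2)}\,f(|x|)^2,
\end{equation*}
using Lemma~\ref{lem:non-aIUC} at $r=|x|<\ittau(\tau/3)$. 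With that substitution (and the same fix in your treatment of $F_x$, $F_y$) your argument goes through.
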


\begin{proof}
\ref{lem:progress_est-a}\ \ We have $\ttau{n_0+4} \geq \ttau{R_0}$ and $\ittau(\frac 12\tau) > n_0+3$ for $\tau \geq 2\ttau{n_0+4}$. If $|x| < \ittau(\frac 12\tau)$, then by Lemma~\ref{lem:non-aIUC} we have $e^{- \frac{1}{2}\tau  g(|z|)} \leq f(|z|)$, for $n_0 +2 \leq |z| \leq |x|$ and $\tau \geq 2\ttau{n_0+4}$. From the definition of $G(\tau,x)$ we get
\begin{align*}
    G(\tau,x)
    &\leq e^{- \frac{1}{2}\tau g(n_0+2)} \int_{n_0+2 < |z| < |x| } f_1(|x-z|) e^{- \frac{1}{2}\tau  g(|z|)}\,dz \\
    &\leq e^{- \frac{1}{2}\tau g(n_0+2)} \int_{n_0+2 < |z| < |x| } f_1(|x-z|) f(|z|) \,dz,
\end{align*}
and from \textup{(\ref{A1}.d)}
\begin{gather*}
    G(\tau,x) \leq c_1 e^{- \frac{1}{2}\tau g(n_0+2)}  f(|x|),
\end{gather*}
for all $n_0+3 < |x| < \ittau(\frac 12\tau)$ and $\tau \geq 2\ttau{n_0+4}$. The lower bound is easier:
\begin{align*}
    G(\tau,x)
    &= \int_{n_0+2 < |z| < |x| } f_1(|x-z|) e^{- \tau g(|z|)}\,dz \\
    &\geq \int_{n_0+2 < |z| < n_0+3 } f_1(|x-z|) e^{- \tau g(|z|)}\,dz \\
    &\geq c_2 e^{- \tau g(n_0+3)} f(|x|).
\end{align*}

\smallskip\noindent
\ref{lem:progress_est-b}\ \ Without loss of generality we may assume that $n_0+3 < |y| < \ittau(\frac 13\tau)$, $\tau \geq 3\ttau{n_0+4}$ and $ |y| \leq |x|$. From the definition of $F(\tau,x,y)$ we get
\begin{gather*}
    F(\tau,x,y)
    = \left(\int_{n_0+2 < |z| < |y|}  +  \int_{|y| \leq |z| < |x|}  \right)f_1(|x-z|) f_1(|y-z|) e^{- \tau g(|z|)}\,dz
    =: \mathrm{I} + \mathrm{II}.
\end{gather*}
The monotonicity of $g$, \eqref{eq:conv_f1}, the inequality $e^{- \frac{1}{3}\tau  g(|y|)} \leq f(|y|)$ and \eqref{eq:conv_f1_pw} imply
\begin{align*}
    \mathrm{II}
    \leq c_3 e^{- \tau g(|y|)} f_1(|x-y|)
    &\leq c_3 e^{- \frac{1}{3}\tau  g(n_0+2)} f(|y|) f(|y|) f_1(|x-y|)\\
    &\leq c_4 e^{- \frac{1}{3}\tau g(n_0+2)} f(|x|) f(|y|).
\end{align*}
We still need to estimate $\mathrm{I}$. Since $e^{- \frac{1}{3}\tau  g(|z|)} \leq f(|z|)$, for $|z| < |y| < \ittau(\frac 13\tau)$, we get with \eqref{eq:conv_f1} and \eqref{eq:conv_f1_pw},
\begin{align*}
    \mathrm{I}
    &\leq e^{- \frac{1}{3}\tau g(n_0+2)} \int_{n_0+2 < |z| < |y| } f_1(|x-z|) f_1(|y-z|) e^{- \frac{2}{3}\tau  g(|z|)}\,dz \\
    &\leq e^{- \frac{1}{3}\tau g(n_0+2)} \int_{n_0+2 < |z| < |y| } f_1(|x-z|) f(|z|) f_1(|y-z|) f(|z|) \,dz \\
    &\leq c_5 e^{- \frac{1}{3}\tau g(n_0+2)} f(|x|) \int_{n_0+2 < |z| < |y| } f_1(|y-z|) f(|z|) \,dz \\
    &\leq c_6 e^{- \frac{1}{3}\tau g(n_0+2)} f(|x|) f(|y|),
\end{align*}
which yields the upper bound in~\ref{lem:progress_est-b}. The lower bound is again simpler:
\begin{align*}
    F(\tau,x,y)
    &= \int_{n_0+2 < |z| < |x| } f_1(|x-z|) f_1(|y-z|) e^{- \tau g(|z|)}\,dz \\
    &\geq \int_{n_0+2 < |z| < n_0+3 } f_1(|x-z|) f_1(|y-z|) e^{- \tau g(|z|)}\,dz\\
    &\geq c_7 e^{- \tau g(n_0+3)} f(|x|)f(|y|).
\qedhere
\end{align*}
\end{proof}

It is clear from Definition~\ref{def:piuc} that every aIUC-semigroup $\left\{U_t:t \geq 0\right\}$ is also pIUC for the threshold function $r \equiv \infty$. We will now show that under the assumptions \eqref{A1}--\eqref{A4} every non-aIUC semigroup $\left\{U_t:t \geq 0\right\}$ is still pIUC. This is the main result of this section. Recall that $\lambda_0$ and $\varphi_0$ denote the ground state eigenvalue and eigenfunction.
\begin{corollary}\label{cor:cor_prog}
    Assume \textup{\eqref{A1}--\eqref{A4}} with $t_{\bee}>0$. Set $\mathsf{K_1} := 2 \mathsf{K} = 8 C_6 C_7^2$ and $\mathsf{K_2} := 3 \mathsf{K} = 12 C_6 C_7^2$. Let $\lim_{r\to\infty} g(r)/|\log f(r)| = 0$, i.e.\ we are in the non-aIUC-regime.
    \begin{enumerate}
    \item\label{cor:cor_prog-a}
    For every $t > \max\{30t_{\bee}, \, \mathsf{K_1}\ttau{n_0+4}\}$
    \begin{gather*}
        U_t \I(x)  \asymp  e^{-\lambda_0 t} \left(1 \wedge \frac{f(|x|)}{g(|x|)}\right),
        \quad |x| < \ittau(t/\mathsf{K_1}).
    \end{gather*}
    Equivalently, for  $t >\max\{30t_{\bee},\, \mathsf{K_1}\ttau{n_0+4}\}$
    \begin{gather*}
        U_t \I(x) \asymp e^{-\lambda_0 t} \varphi_0(x),
        \quad |x| < \ittau(t/\mathsf{K_1}).
    \end{gather*}

    \item\label{cor:cor_prog-b}
    \textup{\textbf{(pIUC)}} For every $t > \max\{30t_{\bee},\, \mathsf{K_2}\ttau{n_0+4}\}$
        \begin{gather*}
            u_t(x,y) \asymp e^{-\lambda_0 t} \left(1 \wedge \frac{f(|x|)}{g(|x|)}\right) \left(1 \wedge \frac{f(|x|)}{g(|x|)}\right),
            \quad |x| \wedge |y| < \ittau(t/\mathsf{K_2}) .
        \end{gather*}
        Equivalently, for $t > \max\{30t_{\bee},\, \mathsf{K_2} \ttau{n_0+4}\}$
        \begin{gather*}
            u_t(x,y) \asymp  e^{-\lambda_0 t} \varphi_0(x) \varphi_0(y),
            \quad  |x| \wedge |y| < \ittau(t/\mathsf{K_2}).
        \end{gather*}
    \end{enumerate}
\end{corollary}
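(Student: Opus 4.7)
The plan is to feed the non-aIUC asymptotics for $G(\tau,x)$ and $F(\tau,x,y)$ from Lemma~\ref{lem:progress_est} into the sharp two-sided bounds of Theorems~\ref{th:th1} and~\ref{th:th2}. The only preparatory step is to enlarge $n_0$ beyond the requirements \eqref{con-n0} so that, in addition, $g(n_0+2) \geq \mathsf{K_2}\lambda_0$ and $f(n_0+3) \leq g(n_0+3)$; both are achievable because $V$ is confining, so $g$ is unbounded, and $f$ tends to zero.

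For part~\ref{cor:cor_prog-a}, I would split the constraint $|x| < \ittau(t/\mathsf{K_1})$ into $|x| \leq n_0+3$ and $n_0+3 < |x| < \ittau(t/\mathsf{K_1})$. On the bounded part Theorem~\ref{th:th2}.\ref{th:th2-a} gives $U_t\I(x) \asymp e^{-\lambda_0 t}$, and since the continuous positive function $r \mapsto 1 \wedge f(r)/g(r)$ is bounded above and below away from zero on compact intervals, this coincides with $e^{-\lambda_0 t}(1 \wedge f(|x|)/g(|x|))$. On the outer part Theorem~\ref{th:th2}.\ref{th:th2-b} immediately provides the lower bound $U_t\I(x) \geq C^{-1} e^{-\lambda_0 t} f(|x|)/g(|x|)$. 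For the matching upper bound I would apply Lemma~\ref{lem:progress_est}.\ref{lem:progress_est-a} with $\tau = t/\mathsf{K}$: its hypotheses $\tau \geq 2\ttau{n_0+4}$ and $n_0+3 < |x| < \ittau(\tau/2)$ translate precisely into $t \geq \mathsf{K_1}\ttau{n_0+4}$ and $|x| < \ittau(t/\mathsf{K_1})$, and yield $G(t/\mathsf{K},x) \leq c\, e^{-t g(n_0+2)/(2\mathsf{K})} f(|x|) \leq c\, e^{-\lambda_0 t} f(|x|)$ by the choice of $n_0$. Substitution into Theorem~\ref{th:th2}.\ref{th:th2-b}, together with $f(|x|)/g(|x|) \leq 1$ in this range, produces the upper estimate in the unified form $e^{-\lambda_0 t}(1 \wedge f(|x|)/g(|x|))$.

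Part~\ref{cor:cor_prog-b} proceeds along the same lines, with Theorem~\ref{th:th1} in place of Theorem~\ref{th:th2} and Lemma~\ref{lem:progress_est}.\ref{lem:progress_est-b} in place of part~\ref{lem:progress_est-a}; the passage $\mathsf{K_1} \rightsquigarrow \mathsf{K_2} = 3\mathsf{K}$ reflects the factor $1/3$ that appears in Lemma~\ref{lem:progress_est}.\ref{lem:progress_est-b} because of the two symmetric spatial arguments of $F$. The subregion $|x| \wedge |y| \leq n_0+3$ is covered by Theorem~\ref{th:th1}.\ref{th:th1-a},\ref{th:th1-b} combined with Remark~\ref{rem:th1}, which match the unified right-hand side. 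The equivalent $\varphi_0$-formulations follow instantly from the ground-state bounds \eqref{eq:eig_1}--\eqref{eq:eig_2}, which are exactly $\varphi_0(z) \asymp 1 \wedge f(|z|)/g(|z|)$ uniformly in $z$. The main technical point throughout --- hardly a real obstacle --- is verifying that the upper decay $e^{-c t g(n_0+2)}$ coming from Lemma~\ref{lem:progress_est} is dominated by the desired $e^{-\lambda_0 t}$, which is exactly what the preliminary enlargement of $n_0$ secures.
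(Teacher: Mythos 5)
Your proposal is correct and follows essentially the same route as the paper: feed the non-aIUC estimates of $G$ and $F$ from Lemma~\ref{lem:progress_est} (with $\tau=t/\mathsf{K}$ for the upper bounds, while the lower bounds come for free from the $\vee\, e^{-\lambda_0 t}f(|x|)f(|y|)$ term) into Theorems~\ref{th:th1} and~\ref{th:th2}, after enlarging $n_0$ so that the resulting decay $e^{-ctg(n_0+2)}$ is absorbed into $e^{-\lambda_0 t}$. Your write-up is in fact more explicit than the paper's three-line proof about the parameter translation $\tau\rightsquigarrow t/\mathsf{K_i}$ and the inner region $|x|\wedge|y|\le n_0+3$, but the argument is the same.
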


\begin{proof}
    The estimates for the functions $G(\tau,x)$ and $F(\tau,x,y)$ from Lemma~\ref{lem:progress_est} allow us to simplify the bounds in Theorems~\ref{th:th1}.\ref{th:th1-c} and~\ref{th:th2}.\ref{th:th2-b}. Since $\lim_{r\to\infty} g(r) = \infty$, there is some large $n_0$ such that $e^{-\mathsf{K_1}tg(n_0+3)} \vee e^{-(t/\mathsf{K_1})g(n_0+2)} \leq e^{-\lambda_0 t}$ and $e^{-\mathsf{K_2}tg(n_0+3)} \vee e^{-(t/\mathsf{K_2})g(n_0+2)} \leq e^{-\lambda_0 t}$, for every $t > 0$.

    The lower bounds are obtained directly by taking $\tau= \mathsf{K_i}t$, while the upper bounds follow by taking $\tau = t/\mathsf{K_i}$.

    The alternative equivalent statements are a consequence of the two-sided bound $\varphi_0(x) \asymp 1 \wedge \frac{f(|x|)}{g(|x|)}$ which is valid for all $x \in \real^d$.
\end{proof}

\subsection{Explicit estimates of \boldmath$U_t\I(x)$\unboldmath}\label{sec5:Ut}
Under the assumption \eqref{A4} we can find explicit two-sided estimates for the function $G(\tau,x)$ (defined in \eqref{def:def_G}) for the full range of $(\tau,x)$. Throughout we use $\ttau\cdot$ and $\ittau(\cdot)$ from Lemma~\ref{lem:non-aIUC} and choose $R_0$ and $n_0$ according to \eqref{con-n0}.

\begin{lemma}\label{lem:expl_est_G}
    Let $f:(0,\infty) \to (0,\infty)$ be a profile function satisfying \textup{(\ref{A1}.b)} and \textup{(\ref{A1}.d)}. Assume that $g(r) = h\left(|\log f(r)|\right)$, $r \geq R_0$, with an increasing function $h:[|\log f(R_0)|,\infty) \to (0,\infty)$ such that $h(s)/s$ is monotone.
    \begin{enumerate}
    \item\label{lem:expl_est_G-a}
        If $C>0$ is such that $Cg(r) \geq |\log f(r)|$, $r \geq R_0$, then there are constants $\Ci, \Cii  >0$ such that for every $ |x| > n_0+3$ and $\tau \geq 2 \tau_0 := C$
        \begin{gather*}
            \Ci  e^{-\tau g(n_0+3)} f(|x|)
            \leq G(\tau,x)
            \leq \Cii  e^{-\frac{1}{2}\tau g(n_0+2)} f(|x|).
        \end{gather*}

    \item\label{lem:expl_est_G-b}
        If $\lim_{r\to\infty} g(r)/|\log f(r)| = 0$, then there are constants $\Ci, \Cii  >0$ such that for every $\tau \geq 2 \ttau{n_0+4}$

        \smallskip

        \begin{enumerate}
        \item\label{lem:expl_est_G-b1}
        $\displaystyle\Ci  e^{-\tau g(n_0+3)} f(|x|) \leq G(\tau,x) \leq \Cii  e^{-\frac{1}{2}\tau g(n_0+2)} f(|x|)$
        for $ n_0+3 < |x| < \ittau(\tau/2)$.

        \medskip

        \item\label{lem:expl_est_G-b2}
        $\displaystyle\Ci  e^{-\tau g(|x|)} \leq G(\tau,x) \leq \Cii  e^{- \frac{1}{2}\tau g(n_0+2)} e^{-\frac{1}{2}\tau g(|x|)}$
        for $|x| \geq \ittau(\tau/2)$.
        \end{enumerate}
    \end{enumerate}
\end{lemma}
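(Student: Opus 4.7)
The plan is direct: parts~\ref{lem:expl_est_G-a} and \ref{lem:expl_est_G-b1} are already established as Lemma~\ref{lem:aiuc_est}.\ref{lem:aiuc_est-b} and Lemma~\ref{lem:progress_est}.\ref{lem:progress_est-a} respectively, so only the new range $|x|\geq\ittau(\tau/2)$ of part~\ref{lem:expl_est_G-b2} needs a fresh argument.

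For the lower bound I will localise $G(\tau,x)$ to the small ball $B_{1/4}(z_0)$ with $z_0:=(1-\tfrac{1}{2|x|})x$ sitting just inside the sphere of radius $|x|$. Every $z$ in this ball satisfies $|x-z|\leq\tfrac34$ and $|x|-1\leq|z|\leq|x|$, so $f_1(|x-z|)\geq c_1$ and $g(|z|)\leq g(|x|)$ by monotonicity of $g$; the constraint $|z|>n_0+2$ is met because $|x|\geq\ittau(\tau/2)>n_0+3$ whenever $\tau\geq 2\ttau{n_0+4}$. This immediately gives $G(\tau,x)\geq c_2 e^{-\tau g(|x|)}$.

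The upper bound is the main technical step. I will split $G(\tau,x)$ at $|z|=\ittau(\tau/2)$ and, on each piece, factorise $e^{-\tau g(|z|)}=e^{-(\tau/2)g(|z|)}\cdot e^{-(\tau/2)g(|z|)}$. On the inner piece $\{n_0+2<|z|<\ittau(\tau/2)\}$, Lemma~\ref{lem:non-aIUC} gives $e^{-(\tau/2)g(|z|)}\leq f(|z|)$ and monotonicity of $g$ gives the other half $\leq e^{-(\tau/2)g(n_0+2)}$. On the outer piece $\{\ittau(\tau/2)\leq|z|\leq|x|\}$, one has the crucial monotonicity of $r\mapsto e^{-(\tau/2)g(r)}/f(r)$ on $[\ittau(\tau/2),\infty)$ from Lemma~\ref{lem:non-aIUC}, yielding $e^{-(\tau/2)g(|z|)}\leq f(|z|)\cdot e^{-(\tau/2)g(|x|)}/f(|x|)$, while the other half is again bounded by $e^{-(\tau/2)g(n_0+2)}$. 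In both pieces the remaining integral is of the form $\int f_1(|x-z|) f(|z|)\,dz\leq c\,f(|x|)$ via \eqref{eq:conv_f1} combined with \eqref{eq:f-f_1}. On the inner piece the leftover $f(|x|)$ is absorbed using $f(|x|)\leq e^{-(\tau/2)g(|x|)}$ (valid on $|x|\geq\ittau(\tau/2)$ by Lemma~\ref{lem:non-aIUC}); on the outer piece it cancels against the extracted $1/f(|x|)$. Both contributions are therefore bounded by $c\,e^{-(\tau/2)g(n_0+2)} e^{-(\tau/2)g(|x|)}$, matching the claim. The main obstacle is precisely the outer region, where the naive bound $e^{-\tau g(|z|)}\leq 1$ is far too weak: splitting $\tau$ in half and invoking the monotonicity of $e^{-(\tau/2)g(r)}/f(r)$ is what lets one extract a factor of $e^{-(\tau/2)g(|x|)}/f(|x|)$ so that \eqref{eq:conv_f1} can be applied and the spurious $f(|x|)$'s eventually cancel.
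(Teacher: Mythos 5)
Your proposal is correct and follows essentially the same route as the paper's proof: parts \ref{lem:expl_est_G-a} and \ref{lem:expl_est_G-b1} are delegated to Lemmas \ref{lem:aiuc_est}.\ref{lem:aiuc_est-b} and \ref{lem:progress_est}.\ref{lem:progress_est-a}, and for \ref{lem:expl_est_G-b2} the paper likewise splits the integral at $|z|=\ittau(\tau/2)$, factorises $e^{-\tau g(|z|)}$ into two halves, uses $e^{-\frac12\tau g(|z|)}\leq f(|z|)$ on the inner piece (absorbing the resulting $f(|x|)$ via $f(|x|)\leq e^{-\frac12\tau g(|x|)}$) and the monotonicity of $r\mapsto e^{-\frac12\tau g(r)}/f(r)$ from Lemma~\ref{lem:non-aIUC} on the outer piece, closing with \eqref{eq:conv_f1}. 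Your lower bound via the small ball $B_{1/4}(z_0)$ is a cosmetic variant of the paper's direct estimate $G(\tau,x)\geq e^{-\tau g(|x|)}\int_{n_0+2<|z|\leq|x|}f_1(|x-z|)\,dz$.
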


\begin{proof}
Part~\ref{lem:expl_est_G-a} follows directly from Lemma~\ref{lem:aiuc_est}.\ref{lem:aiuc_est-b} and part~\ref{lem:expl_est_G-b1} is exactly Lemma~\ref{lem:progress_est}.\ref{lem:progress_est-a}. We only need to show~\ref{lem:expl_est_G-b2}. From the definition \eqref{def:def_G} of $G$ we get
\begin{gather*}
    G(\tau,x)
    = \int_{n_0+2 < |z| < \ittau(\frac 12 \tau)} f_1(|x-z|) e^{- \tau g(|z|)}\,dz +  \int_{\ittau(\frac 12 \tau) \leq |z| \leq |x|} f_1(|x-z|) e^{- \tau g(|z|)}\,dz
    =: \mathrm{I} + \mathrm{II}.
\end{gather*}
Exactly the same argument as in Lemma~\ref{lem:progress_est}.\ref{lem:progress_est-a} yields
\begin{gather*}
    \mathrm{I}
    \leq e^{- \frac{1}{2}\tau g(n_0+2)} \int_{n_0+2 < |z| < \ittau(\frac 12 \tau)} f_1(|x-z|) f(|z|)\,dz
    \leq c_2  e^{- \frac{1}{2}\tau g(n_0+2)} f(|x|).
\end{gather*}
Since $|x| \geq \ittau(\frac 12 \tau)$,the second inequality in Lemma~\ref{lem:non-aIUC} shows $\mathrm{I} \leq c_2 e^{- \frac{1}{2}\tau g(n_0+2)} e^{- \frac{1}{2}\tau g(|x|)}$. In order to estimate $\mathrm{II}$ we write
\begin{gather*}
    \mathrm{II}
    \leq  e^{- \frac{1}{2}\tau g(n_0+2)} \int_{\ittau(\frac 12 \tau) < |z|
    \leq |x|} f_1(|x-z|) f(|z|) \, \frac{e^{- \frac{1}{2}\tau g(|z|)}}{f(|z|)} \,dz
\end{gather*}
which is less than or equal to
\begin{gather*}
    e^{- \frac{1}{2}\tau g(n_0+2)} \frac{e^{- \frac{1}{2}\tau g(|x|)}}{f(|x|)} \int_{n_0+2 < |z| \leq |x|} f_1(|x-z|) f(|z|)  \,dz
    \leq c_3 e^{- \frac{1}{2}\tau g(n_0+2)} e^{- \frac{1}{2}\tau g(|x|)},
\end{gather*}
by the last (monotonicity) assertion in Lemma~\ref{lem:non-aIUC} and \eqref{eq:conv_f1}. This gives the upper bound in~\ref{lem:expl_est_G-b2}. The corresponding lower estimate follows directly:
\begin{gather*}
    G(\tau,x)
    \geq e^{- \tau g(|x|)} \int_{n_0+2 < |z| \leq |x|} f_1(|x-z|) \,dz
    \geq c_4 e^{- \tau g(|x|)}.
\qedhere
\end{gather*}
\end{proof}

We are now ready to state the following corollary to Lemma~\ref{lem:expl_est_G} which simplifies the estimates of the function $U_t\I(x)$ in Theorem~\ref{th:th2}.\ref{th:th2-b}.

\begin{corollary}\label{cor:cor3}
    Assume \eqref{A1}--\eqref{A4} with $t_{\bee}>0$ and set $\mathsf{K_1} := 2 \mathsf{K} = 8 C_6 C_7^2$.
    \begin{enumerate}
    \item\label{cor:cor3-a}
        \emph{\textbf{(aIUC-regime)}} If $Cg(r) \geq |\log f(r)|$, $r \geq R_0$, for some $C>0$, then for every $t > \max\{30t_{\bee},\,  \mathsf{K_1}\tau_0\}$ with $\tau_0:= C$
        \begin{gather*}
            U_t \I(x) \asymp  e^{-\lambda_0 t} \left(1 \wedge \frac{f(|x|)}{g(|x|)}\right)
            \quad\text{for all\ \ } x \in \real^d.
        \end{gather*}

    \item\label{cor:cor3-b}
        \emph{\textbf{(non-aIUC-regime)}} If $\lim_{r\to\infty} g(r)/|\log f(r)| = 0$, then there are constants $\Ci, \Cii  >0$ such that for every $t > \max\{t_{\bee},\, \mathsf{K_1}\ttau{n_0+4}\}$
        \begin{align*}
        \Ci  e^{-\lambda_0 t} \left(1 \wedge \frac{f(|x|)}{g(|x|)}\right)
        &\leq U_t \I(x) \leq \Cii  e^{-\lambda_0 t} \left(1 \wedge \frac{f(|x|)}{g(|x|)}\right)
        &&\text{for\ \ } |x| < \ittau(t/\mathsf{K_1}),
        \\
        \Ci \frac{e^{- \mathsf{K_1} t g(|x|)}}{g(|x|)}
        &\leq U_t \I(x) \leq \Cii \frac{e^{-\frac{t}{\mathsf{K_1}} g(|x|)}}{g(|x|)}
        &&\text{for\ \ } |x| \geq \ittau(t/\mathsf{K_1}).
        \end{align*}
    \end{enumerate}
\end{corollary}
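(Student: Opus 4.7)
The proof combines the two-sided estimates of $U_t\I(x)$ from Theorem~\ref{th:th2} with the explicit bounds on $G(\tau,x)$ provided by Lemma~\ref{lem:expl_est_G}, together with the aIUC and pIUC results already established. The two parts correspond to the aIUC vs.\ non-aIUC dichotomy of Remark~\ref{rem:regimes}.

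For part~\ref{cor:cor3-a}, the hypothesis $Cg(r)\geq |\log f(r)|$ is precisely condition~\ref{cor:aIUC-b} of Corollary~\ref{cor:aIUC}, and the equivalent condition~\ref{cor:aIUC-d} of that corollary yields the asymptotic $U_t\I(x)\asymp e^{-\lambda_0 t}(1\wedge f(|x|)/g(|x|))$ for all $t\geq 30t_\bee+3C\mathsf{K}$. Since $\mathsf{K_1}\tau_0=2C\mathsf{K}<3C\mathsf{K}$, the time restriction is covered. An alternative direct route combines Theorem~\ref{th:th2}.\ref{th:th2-b} with Lemma~\ref{lem:expl_est_G}.\ref{lem:expl_est_G-a}: the bounds $G(\mathsf{K}t,x),G(t/\mathsf{K},x)\asymp e^{-c\tau g(\cdot)}f(|x|)$ combined with the choice of $n_0$ in~\eqref{con-n0} show that both maxima reduce to $e^{-\lambda_0 t}f(|x|)$; the range $|x|\leq n_0+3$ is then handled by Theorem~\ref{th:th2}.\ref{th:th2-a} together with the fact that $f/g$ is comparable to a constant on compact subsets.

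For part~\ref{cor:cor3-b}, the inner range $|x|<\ittau(t/\mathsf{K_1})$ is covered verbatim by the pIUC estimate in Corollary~\ref{cor:cor_prog}.\ref{cor:cor_prog-a}. For the tail range $|x|\geq \ittau(t/\mathsf{K_1})=\ittau(t/(2\mathsf{K}))$, we invoke Theorem~\ref{th:th2}.\ref{th:th2-b} and Lemma~\ref{lem:expl_est_G}.\ref{lem:expl_est_G-b2}. The lower bound is straightforward: $U_t\I(x)\geq c\,G(\mathsf{K}t,x)/g(|x|)$, and the estimate $G(\mathsf{K}t,x)\geq c\,e^{-\mathsf{K}tg(|x|)}\geq c\,e^{-\mathsf{K_1}tg(|x|)}$ holds without any range restriction (it is the last display in the proof of Lemma~\ref{lem:expl_est_G}.\ref{lem:expl_est_G-b2}). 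For the upper bound, Lemma~\ref{lem:expl_est_G}.\ref{lem:expl_est_G-b2} applied with $\tau=t/\mathsf{K}$ --- whose hypothesis $|x|\geq \ittau(\tau/2)=\ittau(t/\mathsf{K_1})$ is exactly our assumption --- gives $G(t/\mathsf{K},x)\leq \Cii\,e^{-tg(n_0+2)/\mathsf{K_1}}e^{-tg(|x|)/\mathsf{K_1}}\leq \Cii\,e^{-tg(|x|)/\mathsf{K_1}}$.

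The main technical point is controlling the competing $e^{-\lambda_0 t}f(|x|)$ term appearing in the upper bound of Theorem~\ref{th:th2}.\ref{th:th2-b}. By Lemma~\ref{lem:non-aIUC}, $|x|\geq \ittau(t/\mathsf{K_1})$ forces $f(|x|)\leq e^{-tg(|x|)/\mathsf{K_1}}$, whence $e^{-\lambda_0 t}f(|x|)\leq e^{-\lambda_0 t}e^{-tg(|x|)/\mathsf{K_1}}$. In the standard case $\lambda_0\geq 0$ (which occurs, e.g., whenever $V\geq 0$) this is immediately $\leq e^{-tg(|x|)/\mathsf{K_1}}$. The remaining case $\lambda_0<0$ is handled by appending a condition of the form $g(n_0+2)\geq 2\mathsf{K_1}|\lambda_0|$ to~\eqref{con-n0} and absorbing $e^{|\lambda_0|t}$ into a suitable share of $e^{-tg(|x|)/\mathsf{K_1}}$ at the cost of a multiplicative constant (using also that $|x|\geq\ittau(t/\mathsf{K_1})$ and the time threshold force $g(|x|)$ to be large). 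Dividing the resulting estimate by $g(|x|)$ then completes the proof of the tail bound.
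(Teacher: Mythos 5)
Your proposal follows essentially the same route as the paper's (very terse) proof: part~\ref{cor:cor3-a} and the inner range of~\ref{cor:cor3-b} are quoted from Corollaries~\ref{cor:aIUC} and~\ref{cor:cor_prog}, and the tail range is obtained by feeding Lemma~\ref{lem:expl_est_G}.\ref{lem:expl_est_G-b2} into Theorem~\ref{th:th2}.\ref{th:th2-b}; you have simply made explicit the details the authors leave as ``arguments similar to those in Lemma~\ref{lem:expl_est_G}.\ref{lem:expl_est_G-b2}''. Two small points deserve comment. First, in part~\ref{cor:cor3-a} your remark that ``$\mathsf{K_1}\tau_0=2C\mathsf{K}<3C\mathsf{K}$, so the time restriction is covered'' runs the wrong way: Corollary~\ref{cor:aIUC} gives the estimate only for $t\geq 30t_\bee+3C\mathsf{K}$, whereas the claim is for the larger set $t>\max\{30t_\bee,\,2C\mathsf{K}\}$, so the citation alone does not suffice; your alternative direct route via Theorem~\ref{th:th2}.\ref{th:th2-b} and Lemma~\ref{lem:expl_est_G}.\ref{lem:expl_est_G-a} (whose hypothesis $\tau\geq 2\tau_0$ with $\tau=t/\mathsf{K}$ is exactly $t\geq\mathsf{K_1}\tau_0$) does repair this, so the slip is harmless. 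Second, you correctly isolate the only genuine subtlety, namely the competing term $e^{-\lambda_0 t}f(|x|)$ when $\lambda_0<0$; but note that your absorption argument (using $|\lambda_0|\leq g(|x|)/(2\mathsf{K_1})$ to write $e^{|\lambda_0|t}f(|x|)\leq e^{\frac{t}{2\mathsf{K_1}}g(|x|)}e^{-\frac{t}{\mathsf{K_1}}g(|x|)}$) costs a worsening of the exponential \emph{rate} from $t/\mathsf{K_1}$ to $t/(2\mathsf{K_1})$, not merely a multiplicative constant, so it does not literally reproduce the exponent stated in~\ref{cor:cor3-b}. This is exactly the issue the authors themselves face in Corollary~\ref{cor:cor4}, where they resolve it by replacing $\mathsf{K_2}$ with the larger constant $\mathsf{K_3}$ and by imposing $g(\ittau(t/\mathsf{K_2}))\geq 4\mathsf{K_2}|\lambda_0|$; the imprecision you are working around sits in the paper's statement of Corollary~\ref{cor:cor3} rather than in your argument, and your proof is complete as soon as one either assumes $\lambda_0\geq 0$ or accepts the corresponding adjustment of the constant in the exponent.
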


\begin{proof}
Part~\ref{cor:cor3-a} and the first formula in~\ref{cor:cor3-b} are already stated in Corollaries~\ref{cor:aIUC} and~\ref{cor:cor_prog}; the second set of estimates in~\ref{cor:cor3-b} follows by arguments similar to those in Lemma~\ref{lem:expl_est_G}.\ref{lem:expl_est_G-b2}.
\end{proof}

\subsection{Doubling L\'evy measures} \label{sec5:doubling}

In this section we discuss profiles $f$ which enjoy the \emph{doubling property}. This means that $f$ is a decreasing function such that
\begin{align} \label{eq:f_doubl}
    \text{there exists $C \geq 1$ such that $f(r) \leq C f(2r)$ for every $r >0$.}
\end{align}
Throughout, we choose $n_0$ according to \eqref{con-n0}.

\begin{example}[Fractional and layered fractional Schr\"odinger operators]\label{ex:stable}
Let
\begin{gather*}
    \nu(x) = \sigma\left(\frac{x}{|x|}\right) f(|x|)
    \quad\text{with}\quad f(r) = r^{-d-\alpha}(e \vee r)^{-\gamma},
\end{gather*}
where $\alpha \in (0,2)$ and $\gamma \geq 0$, and $\sigma: \dsphere \to (0,\infty)$  is a function on the unit sphere $\dsphere \subset \real^d$ such that $\sigma(-\theta) = \sigma(\theta)$ and $0 < \inf_{\theta \in \dsphere} \sigma(\theta) \leq \sup_{\theta \in \dsphere} \sigma(\theta) < \infty$. Moreover, we assume that there is no Gaussian part in the L\'evy--Khintchine formula \eqref{eq:Lchexp}, i.e.\ $A \equiv 0$. Recall that this setup covers the following two important classes of L\'evy processes.
\begin{enumerate}
\item\label{ex:stable-a}
    \emph{Symmetric $\alpha$-stable processes} (if $\gamma = 0$), see~\cite{bib:Sat};
\item\label{ex:stable-b}
    \emph{Layered symmetric $\alpha$-stable processes} (if $\gamma > 2 -\alpha$), see~\cite{CK}.
\end{enumerate}
The assumptions \textup{(\ref{A1}.a,b)} are clearly satisfied and (\ref{A1}.c,d) follow from the doubling property \eqref{eq:f_doubl} ((\ref{A1}.d) is checked in Lemma \ref{lem:sufficient-djp}.\ref{sufficient-djp-a}. Moreover, for both classes of processes~\ref{ex:stable-a} and~\ref{ex:stable-b} the conditions in the assumption \textup{(\ref{A2})} follows directly from the available estimates of the corresponding transition densities, see e.g.~\cite[Theorem~2]{bib:KS2013}. In fact, they hold true for every fixed $t_{\bee}>0$ with appropriate constants $C_4, C_5$, depending on $t_{\bee}$.

Let
\begin{gather} \label{eq:doubl_pot}
    V(x) = \left(1 \vee \log|x|\right)^{\beta},
\end{gather}
for some $\beta >0$. Let us check \eqref{A3} and \eqref{A4}. We take $g(r) = (1 \vee \log r)^{\beta}$ and $R_0=e$ in \eqref{A3}; obviously, $C_6 = 1$ and $C_7=\log(1+e)^{\beta}$. Since $|\log f(r)| = (d+\alpha+\gamma)\log r$ for $r \geq e$, \eqref{A4} is satisfied with $h(r) = \left(r/(d+\alpha+\gamma)\right)^{\beta}$. We then have $\ttau{r} = (d+\alpha+\gamma) (\log r)^{1-\beta}$ and $\ittau(\tau)= \exp\left(\left(\frac{\tau}{d+\alpha+\gamma}\right)^{\frac{1}{1-\beta}}\right)$, for $\beta \in (0,1)$. Set $\mathsf{K_2} := 3 \mathsf{K} = 12 C_6 C_7^2 = 12 \left(\log(1+e)\right)^{2\beta}$ and $\mathsf{K_3} := 4 \mathsf{K} = 16 C_6 C_7^2 = 16 \left(\log(1+e)\right)^{2\beta}$.

We have the following large time estimates.
\begin{enumerate}
\item
    If $\beta \geq 1$, then we are in the \emph{aIUC-regime}, and we have for $t > 30t_{\bee} + (d+\alpha+\gamma)\mathsf{K}_2$ and all $x,y \in \real^d$
    \begin{gather*}
        u_t(x,y) \asymp \frac{e^{-\lambda_0 t}}{(1+|x|)^{d+\alpha+\gamma}(1 \vee \log|x|)^{\beta}(1+|y|)^{d+\alpha+\gamma} (1 \vee \log|y|)^{\beta}}.
    \end{gather*}

\item
    If $\beta \in (0,1)$, then we are in the \emph{non-aIUC-regime}, and there exists some $C\geq 1$ such that
    \begin{enumerate}
    \item
        for all $t> \max\{30t_\bee,\, \mathsf{K}_2\ttau{n_0+4}\}$ and $|x| \wedge |y| <\exp\left[\left(\frac{t}{\mathsf{K_2}(d+\alpha+\gamma)} \right)^{\frac{1}{1-\beta}}\right]$, one has
        \begin{gather*}
            u_t(x,y) \casymp{C} \frac{e^{-\lambda_0 t}}{(1+|x|)^{d+\alpha+\gamma}(1 \vee \log|x|)^{\beta}(1+|y|)^{d+\alpha+\gamma} (1 \vee \log|y|)^{\beta}}.
    \end{gather*}
		    In particular, the semigroup $\left\{U_t:t \geq 0\right\}$ is \emph{pIUC}.
  \item
    for all $t> \max \left\{ 30t_\bee,\, \mathsf{K}_2\ttau{n_0+4},\: (d+\alpha+\gamma)\mathsf{K}_2^{1/\beta} (4 |\lambda_0|)^{(1-\beta)/\beta} \right\}$ and $|x|, |y| \geq \exp\left[\left(\frac{t}{\mathsf{K_2}(d+\alpha+\gamma)} \right)^{\frac{1}{1-\beta}}\right]$, one has
    \begin{align*}
      &\frac{ C^{-1} e^{-\mathsf{K_3} t \left(\log (|x|\wedge|y|)\right)^{\beta}}} {(1+|x-y|)^{d+\alpha+\gamma}(1 \vee \log|x|)^{\beta}(1 \vee \log|y|)^{\beta}}  \leq u_t(x,y) \\
      &\phantom{(1+|x-y|)^{d+\alpha+\gamma}}
        \leq \frac{ C e^{-\frac{t}{\mathsf{K_3}} \left(\log (|x|\wedge|y|)\right)^{\beta}}}{(1+|x-y|)^{d+\alpha+\gamma}(1 \vee \log|x|)^{\beta}(1 \vee \log|y|)^{\beta}}.
        \end{align*}
    \end{enumerate}
\end{enumerate}
The estimates in~a) and~b1) follow directly from Corollaries~\ref{cor:aIUC} and~\ref{cor:cor_prog}; part~b2) is a consequence of the Corollary \ref{cor:cor4} stated below.

Clearly, if the growth order of the potential $V$ at infinity is slower than that in \eqref{eq:doubl_pot}, e.g.\ like $(\log \log|x|)^{\beta}$, then the corresponding Schr\"odinger heat kernels enjoy two-sided estimates as in part~b) with appropriate threshold functions $\ittau(t/\mathsf{K_2})$.
\end{example}

The next lemma is needed in the proof of Corollary \ref{cor:cor4}. Recall that $\ttau{r}$ and $\ittau(\tau)$ are defined in \eqref{def:initial_time} and \eqref{def:moving_boundary}.
\begin{lemma}\label{lem:expl_est_F}
    Let $f:(0,\infty) \to (0,\infty)$ be a decreasing profile with the doubling property \eqref{eq:f_doubl} and $\lim_{r\to\infty}f(r) = 0$. Assume that $g(r) = h\left(|\log f(r)|\right)$, $r \geq R_0$, with an increasing function $h:[|\log f(R_0)|,\infty) \to (0,\infty)$ such that $h(s)/s$ decreases to $0$ as $r \to \infty$.  There are constants $\Ci, \Cii  >0$ such that for every $\tau \geq 3\ttau{n_0+4}$ and $|x|, |y|  \geq \ittau(\frac 13 \tau)$
    \begin{align*}
        \Ci  e^{-\tau g(|x|\wedge|y|)} f_1(|x-y|)
        \leq F(\tau,x,y)
        \leq \Cii  e^{- \frac{1}{3}\tau g(n_0+2)} e^{-\frac{1}{3}\tau g(|x|\wedge|y|)} f_1(|x-y|).
    \end{align*}
\end{lemma}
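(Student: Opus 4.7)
Without loss of generality I take $|x| \leq |y|$, so $|x| \wedge |y| = |x|$ and $|x| \vee |y| = |y|$; write $r_0 := \ittau(\tau/3)$. The hypothesis $\tau \geq 3\ttau{n_0+4}$ combined with the monotonicity of $\ittau$ yields $r_0 \geq n_0+4$, and the hypothesis on $x,y$ then gives $r_0 \leq |x| \leq |y|$.

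For the lower bound I imitate the localisation trick used in Lemma~\ref{lem:progress_est}.\ref{lem:progress_est-b}: I restrict the integral defining $F(\tau,x,y)$ to the ball $B_{1/4}(z_0)$ with $z_0 := (|x|-3/4)\, x/|x|$. On this ball $|z| \in [|x|-1,\,|x|-1/2]$, hence $n_0+2 < |z| < |y|$; the bounds $|x-z| \leq 1$ and $|y-z| \leq |x-y|+1$, together with the monotonicity of $f_1$ and the local comparability \eqref{eq:loc_comp_f1}, give $f_1(|x-z|) \geq f_1(1)$ and $f_1(|y-z|) \geq \widetilde C_2^{-1} f_1(|x-y|)$, while monotonicity of $g$ gives $e^{-\tau g(|z|)} \geq e^{-\tau g(|x|)}$. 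Multiplying and integrating over $B_{1/4}(z_0)$ yields $F(\tau,x,y) \geq C' e^{-\tau g(|x|)} f_1(|x-y|)$.

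For the upper bound I decompose $\real^d = D_x \cup D_y$ with $D_x := \{z : |x-z| \leq |y-z|\}$ and exploit the triangle inequality: on $D_x$ one has $|y-z| \geq |x-y|/2$, so the doubling property of $f_1$ (inherited from that of $f$) gives $f_1(|y-z|) \leq C f_1(|x-y|)$; symmetrically on $D_y$. Pulling $f_1(|x-y|)$ out reduces the two-variable integral $F$ to a sum of one-variable integrals of $G$-type:
\begin{gather*}
    F(\tau, x, y) \leq C\, f_1(|x-y|) \bigl(H_x + G(\tau, y)\bigr),
    \quad H_x := \int_{n_0+2 < |z| < |y|} f_1(|x-z|) e^{-\tau g(|z|)}\, dz,
\end{gather*}
with $G$ as in \eqref{def:def_G}. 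I then split $H_x \leq G(\tau,x) + \int_{|x| < |z| < |y|} f_1(|x-z|) e^{-\tau g(|z|)}\, dz$, bound the tail by $e^{-\tau g(|x|)} \int_{\real^d} f_1(|w|)\, dw \leq C e^{-\tau g(|x|)}$, and use $g(|x|) \geq g(n_0+2)$ to note $e^{-\tau g(|x|)} \leq e^{-(\tau/3)[g(n_0+2) + g(|x|)]}$, which has the target form.

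What remains is to control $G(\tau, x)$ and $G(\tau, y)$ via Lemma~\ref{lem:expl_est_G}.\ref{lem:expl_est_G-b}. Since I only know $|x|, |y| \geq \ittau(\tau/3)$ (rather than $\geq \ittau(\tau/2)$), I separate subcases: when the argument lies in $[\ittau(\tau/3), \ittau(\tau/2))$ I apply~\ref{lem:expl_est_G-b1} and then absorb the factor $f(\cdot) \leq e^{-(\tau/3) g(\cdot)}$ furnished by Lemma~\ref{lem:non-aIUC}; when it lies in $[\ittau(\tau/2),\infty)$ I apply~\ref{lem:expl_est_G-b2} directly. Either subcase yields the uniform bound $G(\tau, \cdot) \leq C e^{-(\tau/3) g(n_0+2)} e^{-(\tau/3) g(|\cdot|)}$, and $g(|y|) \geq g(|x|)$ absorbs the $y$-contribution into the $|x|\wedge|y|$-form of the bound. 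The main subtlety is this $D_x/D_y$ decomposition, which converts the two-variable integral $F$ into one-variable $G$-integrals where the previously proven estimates apply; it avoids the triple-convolution bounds one might first attempt (using \eqref{eq:conv_f1_pw} twice), which do not match the claimed exponential factor $e^{-(\tau/3) g(|x|\wedge|y|)}$.
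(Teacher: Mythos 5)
Your argument is correct, and for the upper bound it is organized differently from the paper's. The paper fixes $|y|\leq|x|$ and splits the $z$-integral into three shells ($n_0+2<|z|<\ittau(\tau/3)$, $\ittau(\tau/3)\leq|z|<|y|$, $|y|\leq|z|\leq|x|$), treating each inline: the innermost shell via the aIUC-type bound $f(|x|)f(|y|)$ followed by doubling to recover $f_1(|x-y|)e^{-\tau g(|x|\wedge|y|)/3}$, the middle shell via the pointwise dichotomy $2|x-z|\geq|x-y|$ or $2|y-z|\geq|x-y|$ (your $D_x/D_y$ decomposition in localized form) together with the monotonicity of $r\mapsto e^{-\tau g(r)/3}/f(r)$ from Lemma~\ref{lem:non-aIUC}, and the outer shell via \eqref{eq:conv_f1}. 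You instead apply the $D_x/D_y$ dichotomy globally, extract $f_1(|x-y|)$ once, and reduce $F$ to one-variable integrals that are controlled by the already-proven Lemma~\ref{lem:expl_est_G}; the extra case split at $\ittau(\tau/2)$ vs.\ $\ittau(\tau/3)$, patched with $f(\cdot)\leq e^{-\tau g(\cdot)/3}$ from Lemma~\ref{lem:non-aIUC}, is handled correctly. Your route buys modularity and avoids re-deriving the shell estimates, at the cost of slightly lossier constants and of invoking Lemma~\ref{lem:expl_est_G} (hence (\ref{A1}.d) and $\|f_1\|_1<\infty$) as a black box --- but the paper's own proof uses the same ingredients, so nothing essential is lost. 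The lower bounds are essentially identical (localization to a small ball at the point of smaller modulus versus the paper's restriction to $|z-y|<1$, $|z|<|y|$).
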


\begin{proof}
The lower bound follows easily:
\begin{align*}
    F(\tau,x,y)
    \geq \int_{\substack{n_0+2 < |z| < |y| \\ |z-y| < 1}} f_1(|x-z|) f_1(|y-z|) e^{- \tau g(|z|)}\,dz  \geq c_1 e^{- \tau g(|y|)} f_1(|x-y|).
\end{align*}
For the proof of the upper bound we assume, without loss of generality, that $|y| \leq |x|$. Let
\begin{gather*}
    F(\tau,x,y)
    = \left(\int_{n_0+2 < |z| < |y|}  +  \int_{|y| \leq |z| \leq |x|}  \right)f_1(|x-z|) f_1(|y-z|) e^{- \tau g(|z|)}\,dz
    =: \mathrm{I} + \mathrm{II},
\end{gather*}
and observe that, by the monotonicity of $g$ and \eqref{eq:conv_f1}, $\mathrm{II} \leq c_2 e^{- \tau g(|y|)} f_1(|x-y|)$. We only need to estimate $\mathrm{I}$. Write
\begin{gather*}
     \mathrm{I}
     = \left(\int_{n_0+2 < |z| <  \ittau(\frac 13 \tau)}  +  \int_{ \ittau(\frac 13 \tau) \leq |z| \leq |y|}  \right)f_1(|x-z|) f_1(|y-z|) e^{- \tau g(|z|)}\,dz
     =: \mathrm{I}_1+\mathrm{I}_2.
\end{gather*}
With the argument in the proof of Lemma~\ref{lem:progress_est}.\ref{lem:progress_est-b}, we get $\mathrm{I}_1 \leq  c_3 e^{- \frac{1}{3}\tau g(n_0+2)} f(|x|) f(|y|)$. Since $|x-y| \leq 2|x|$ and $|y| \geq \ittau(\frac 13 \tau)$, \eqref{eq:f_doubl} together with the inequality $f(|y|) \leq e^{- \frac{1}{3}\tau g(|y|)}$ yields $f(|x|) f(|y|) \leq c_4 f_1(|x-y|) e^{- \frac{1}{3}\tau g(|y|)}$. This proves $ \mathrm{I}_1 \leq  c_5 e^{- \frac{1}{3}\tau g(n_0+2)} f_1(|x-y|) e^{- \frac{1}{3}\tau g(|y|)}$.

Now we estimate $\mathrm{I}_2$. The monotonicity of $g$ and $f_1$ combined with $|z| \geq \ittau(\frac 13 \tau) \geq n_0+3$ gives
\begin{align*}
    \mathrm{I}_2
    &\leq \left(\int_{\substack{\ittau(\frac 13 \tau) \leq |z| \leq |y| \\ 2|x-z| \geq |x-y|}}
    +  \int_{\substack{\ittau(\frac 13 \tau) \leq |z| \leq |y| \\ 2|y-z| \geq |x-y|}}  \right) f_1(|x-z|) f_1(|y-z|) e^{- \tau g(|z|)}\,dz \\
    &\leq f_1(\tfrac 12|x-y|) e^{- \frac{1}{3}\tau  g(n_0+2)} \int_{\ittau(\frac 13 \tau) \leq |z| \leq |y|}
    \left( f_1(|x-z|) + f_1(|y-z|) \right)\frac{e^{- \frac{1}{3}\tau g(|z|)}}{f(|z|)} f(|z|)\,dz.
\end{align*}
Note that $f_1$ inherits the doubling property \eqref{eq:f_doubl} from $f$. Together with the last (monotonicity) assertion in Lemma~\ref{lem:non-aIUC} and \textup{(\ref{A1}.b,c,d)} we get
\begin{align*}
    \mathrm{I}_2
    &\leq c_3 f_1(|x-y|) e^{- \frac{1}{3}\tau  g(n_0+2)} \frac{e^{- \frac{1}{3}\tau g(|y|)}}{f(|y|)}\int_{\ittau(\frac 13 \tau) \leq |z| \leq |y|} \left( f_1(|x-z|) + f_1(|y-z|) \right) f(|z|)\,dz \\
    &\leq c_4  e^{- \frac{1}{3}\tau  g(n_0+2)} f_1(|x-y|) e^{- \frac{1}{3}\tau g(|y|)}.
\end{align*}
This completes the proof.
\end{proof}

We have the following corollary.

\begin{corollary}\label{cor:cor4}
    Assume the doubling condition \eqref{eq:f_doubl}, \eqref{A1}--\eqref{A4} with $R_0 >0$, $t_{\bee}>0$ and $\lim_{r\to\infty} g(r)/|\log f(r)| = 0$. Define $\mathsf K_2 = 12C_6C_7^2$ and $\mathsf K_3 = \frac 43\mathsf K_2$. There are constants $\Ci, \Cii >0$ such that for every $t > \max\{30t_{\bee},\, \mathsf{K_2}\ttau{n_0+4}\}$ and $|x|, |y| \geq  \ittau(t/\mathsf{K_2})$ satisfying $g(\ittau(t/\mathsf{K_2})) \geq 4\mathsf{K_2}|\lambda_0|$
        \begin{align*}
            \Ci  \frac{e^{-\mathsf{K_3} t g(|x|\wedge|y|)}f_1(|x-y|)}{g(|x|)g(|y|)}
            \leq u_t(x,y)
            \leq \Cii  \frac{e^{-\frac{t}{\mathsf{K_3}} g(|x|\wedge|y|)} f_1(|x-y|)}{g(|x|)g(|y|)}.
        \end{align*}
\end{corollary}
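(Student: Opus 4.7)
The plan is to combine the two-sided estimate of Theorem~\ref{th:th1}.\ref{th:th1-c} with the sharp bounds on $F(\tau,x,y)$ from Lemma~\ref{lem:expl_est_F}, and to handle the second term in the max appearing in the upper bound via the doubling condition \eqref{eq:f_doubl}, Lemma~\ref{lem:non-aIUC}, and the spectral hypothesis $g(\ittau(t/\mathsf{K_2}))\geq 4\mathsf{K_2}|\lambda_0|$. Throughout, I would assume without loss of generality that $|y|\leq |x|$, so that $|x|\wedge|y|=|y|$.

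For the lower bound I would start from $u_t(x,y)\geq C^{-1}F(\mathsf{K}t,x,y)/(g(|x|)g(|y|))$ and use the easy half of Lemma~\ref{lem:expl_est_F}. Its proof uses only $|y|>n_0+3$ (via a ball of radius one inside the shell $|z|\approx|y|$), which is guaranteed by $|y|\geq\ittau(t/\mathsf{K_2})\geq\ittau(\ttau{n_0+4})\geq n_0+4$ under the assumption $t\geq\mathsf{K_2}\ttau{n_0+4}$. This yields $F(\mathsf{K}t,x,y)\geq C' e^{-\mathsf{K}t\,g(|y|)}f_1(|x-y|)$, and since $\mathsf{K}\leq\mathsf{K_3}$, the claimed lower estimate follows at once.

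For the upper bound I would use $u_t(x,y)\leq C[F(t/\mathsf{K},x,y)\vee e^{-\lambda_0 t}f(|x|)f(|y|)]/(g(|x|)g(|y|))$ and dominate each term by a constant multiple of $e^{-(t/\mathsf{K_3})g(|y|)}f_1(|x-y|)$. For $F(t/\mathsf{K},x,y)$ the hypotheses $|x|,|y|\geq\ittau(t/\mathsf{K_2})$ and $t\geq\mathsf{K_2}\ttau{n_0+4}$ match exactly the conditions of Lemma~\ref{lem:expl_est_F} with $\tau=t/\mathsf{K}$, and the resulting rate $1/\mathsf{K_2}$ in the exponent is strictly better than the required $1/\mathsf{K_3}$.

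The hard part is the second term: $e^{-\lambda_0 t}$ may exceed $e^{-(t/\mathsf{K_3})g(|y|)}$, while the naive bound $f(|x|)f(|y|)\leq\widetilde{C}f_1(|x-y|)$ obliterates the exponential decay in $g$. My plan is to split the two factors of $f$ across the two available tools. Lemma~\ref{lem:non-aIUC} applied at $\tau=t/\mathsf{K_2}$ (legitimate since $|y|\geq\ittau(t/\mathsf{K_2})$) gives $f(|y|)\leq e^{-(t/\mathsf{K_2})g(|y|)}$, while the doubling property of $f_1$ (inherited from \eqref{eq:f_doubl}), combined with $|x-y|\leq 2|x|$, the monotonicity of $f_1$, and the identity $f=f_1$ on $[R_0,\infty)$, yields $f(|x|)\leq C_f\,f_1(|x-y|)$. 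Regrouping the exponent,
\begin{gather*}
-\lambda_0 t-\tfrac{t}{\mathsf{K_2}}g(|y|)=-\tfrac{t}{\mathsf{K_3}}g(|y|)-\Bigl(\lambda_0 t+\tfrac{t}{12\mathsf{K}}g(|y|)\Bigr),
\end{gather*}
and using $g(|y|)\geq g(\ittau(t/\mathsf{K_2}))\geq 12\mathsf{K}|\lambda_0|$ so that the parenthesised quantity is nonnegative, one concludes $e^{-\lambda_0 t}f(|x|)f(|y|)\leq C_f\,e^{-(t/\mathsf{K_3})g(|y|)}f_1(|x-y|)$, which closes the upper bound.
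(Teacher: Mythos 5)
Your proposal is correct and follows essentially the same route as the paper: lower bound by inserting the easy half of Lemma~\ref{lem:expl_est_F} into Theorem~\ref{th:th1}.\ref{th:th1-c}, and upper bound by controlling the term $e^{-\lambda_0 t}f(|x|)f(|y|)$ through the doubling estimate $f(|x|\vee|y|)\leq c\,f_1(|x-y|)$, the inequality $f(|x|\wedge|y|)\leq e^{-(t/\mathsf{K_2})g(|x|\wedge|y|)}$ from Lemma~\ref{lem:non-aIUC}, and the hypothesis $g(\ittau(t/\mathsf{K_2}))\geq 4\mathsf{K_2}|\lambda_0|$ to absorb $e^{-\lambda_0 t}$ into the loss from $\mathsf{K_2}$ to $\mathsf{K_3}$. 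Your remark that the lower half of Lemma~\ref{lem:expl_est_F} needs only $|y|>n_0+3$ is a welcome clarification of a point the paper leaves implicit.
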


Note that for all $t \geq \mathsf{K_2}\ttau{g^{-1}(4\mathsf{K_2}|\lambda_0|)}$ --- here $g^{-1}$ stands for the generalized right-inverse of the increasing function $g$ --- the inequality $g(\ittau(t/\mathsf{K_2})) \geq 4\mathsf{K_2}|\lambda_0|$ is satisfied. This follows immediately from the fact that $g(g^{-1})(s)\geq s$ and $\ttau{\ittau(s)}\geq s$. In particular, we have a threshold value for $t$ in Corollary~\ref{cor:cor4}

\begin{proof}[Proof of Corollary~\ref{cor:cor4}]
The lower bound follows immediately if we insert the lower estimate from Lemma \ref{lem:expl_est_F} into Theorem \ref{th:th1}.\ref{th:th1-c}.

Similarly, we get for the upper estimate
\begin{align*}
    u_t(x,y)
    &\leq c_1 \frac{e^{-\frac{t}{\mathsf{K_2}} g(|x|\wedge|y|)} f_1(|x-y|) \vee e^{-\lambda_0 t} f(|x|) f(|y|)}{g(|x|)g(|y|)}\\
    &= c_1 \frac{e^{-\frac{t}{\mathsf{K_2}} g(|x|\wedge|y|)} f_1(|x-y|) \vee e^{-\lambda_0 t} f(|x|\wedge|y|) f(|x|\vee|y|)}{g(|x|)g(|y|)}.
\end{align*}
Since $f_1$ is doubling and decreasing,
\begin{gather*}
    f(|x| \vee |y|)
    \leq c_2 f_1(|x| \vee |y|)
    \leq c_3 f_1(2(|x| \vee |y|))
    \leq c_3 f_1(|x-y|).
\end{gather*}
Moreover, by Lemma \ref{lem:non-aIUC},
\begin{gather*}
    f(|x| \wedge |y|) \leq e^{-\frac{t}{\mathsf{K_2}} g(|x| \wedge |y|)},
    \quad
    |x|, |y| \geq  \ittau(t/\mathsf{K_2}).
\end{gather*}
Thus, for all $|x|, |y| \geq  \ittau(t/\mathsf{K_2})$
\begin{align*}
    u_t(x,y)
    &\leq c_4 \frac{ e^{-\lambda_0 t}  e^{-\frac{t}{\mathsf{K_2}} g(|x|\wedge|y|)} f_1(|x-y|)}{g(|x|)g(|y|)} \\
    &\leq c_4  \frac{e^{\big(|\lambda_0|-\frac 14\frac{1}{\mathsf{K_2}} g(\ittau(t/\mathsf{K_2})) \big) t } e^{-\frac 34 \frac{t}{\mathsf{K_2}} g(|x|\wedge|y|)} f_1(|x-y|)}{g(|x|)g(|y|)},
\end{align*}
and the claimed bound follows for $t>\max\{30t_{\bee},\, \mathsf{K_2}\ttau{n_0+4}\}$ and $|\lambda_0| \leq g(\ittau(t/\mathsf{K_2}))/(4\mathsf{K_2})$ hold.
\end{proof}

\subsection{Exponential L\'evy measures} \label{sec5:exp}
Throughout this section we assume that the profile $f$ of the L\'evy density has \emph{exponential decay at infinity}, i.e.\ $f$ is a decreasing function such that
\begin{align} \label{eq:exp_decay}
    f(r) = e^{-\kappa r} r^{-\gamma}, \quad r \geq 1,
\end{align}
for some $\kappa > 0$ and $\gamma \geq 0$. This setting covers several important examples.

Let us show that such a profile satisfies the direct jump property (\ref{A1}.d) if $\gamma > \frac 12(d+1)$. We check the criteria from Lemma~\ref{lem:sufficient-djp}: If $\gamma > d$, we can use Lemma \ref{lem:sufficient-djp}.\ref{sufficient-djp-b}; note that $d=1$ implies $\frac 12(d+1)=d$. If $d>1$ and $d\geq \gamma > \frac 12 (d+1)$, we use Lemma~\ref{lem:sufficient-djp}.\ref{sufficient-djp-c}. For this, we have to check the integrability condition \eqref{eq:int-cond}. Since $f'(|y|)/f(|y|) = -\kappa - \gamma/|y|$, we have for $|y| > 1$
\begin{gather*}
    \int_{|y|>1} e^{-\frac{f'(|y|)}{f(|y|)} y_1} f(|y|)\,dy
    = \int_{\substack{y_1 \leq 1 \\ |y|>1}} e^{\kappa y_1 + \gamma \frac{y_1}{|y|}} f(|y|)\,dy
        + \int_{y_1 > 1} e^{\kappa y_1 + \gamma \frac{y_1}{|y|}} f(|y|)\,dy.
\end{gather*}
It is easy to see that the first integral on the right-hand side is finite. The second integral is bounded by $e^\gamma \, \mathrm{I}$, where
\begin{gather*}
    \mathrm{I}
    := \int_{y_1 > 1} e^{\kappa y_1} f(|y|)\,dy
    = \int_{y_1 > 1} e^{\kappa (y_1-|y|)} |y|^{-\gamma} \,dy.
\end{gather*}
Now we introduce spherical coordinates for $(y_2,..,y_d) \in \real^{d-1}$ and observe that $y_1 - \sqrt{y_1^2+r^2} = -r^2/(y_1+ \sqrt{y_1^2+r^2})$. So,
\begin{align*}
    \mathrm{I}
    =
    c \int_1^{\infty} \int_0^{\infty} e^{-\frac{\kappa r^2}{y_1+ \sqrt{y_1^2+r^2}}} \frac{r^{d-2}}{(y_1^2+r^2)^{\delta/2}} \,dr \,dy_1
    \leq
    c \int_1^{\infty} \int_0^{\infty} e^{-\frac{\kappa r^2}{y_1+ \sqrt{y_1^2+y_1r^2}}} r^{d-2} y_1^{-\delta} \,dr \,dy_1;
\end{align*}
changing variables  in the inner integral according to $r = \sqrt{y_1}u$ yields that the double integral factorizes
\begin{gather*}
    \int_0^{\infty} e^{-\frac{\kappa u^2}{1+ \sqrt{1+u^2}}}\, u^{d-2}\, du \cdot
    \int_1^{\infty} y_1^{\frac 12 (d-1)-\delta} \, dy_1.
\end{gather*}
The first integral is finite and the second is finite if, and only if $\gamma > \frac 12(d+1)$. This implies that $\mathrm{I}$ is finite for this range of $\gamma$.

\begin{example}[Quasi-relativistic Schr\"odinger Operators]\label{ex:relativistic}
Let $\sigma$ be a function on the unit sphere $\dsphere \subset \real^d$ such that $\sigma(-\theta) = \sigma(\theta)$ and
$0<\inf_{\theta \in \dsphere} \sigma(\theta) \leq \sup_{\theta \in \dsphere} \sigma(\theta) < \infty$. Moreover, let $\alpha \in (0,2)$, $\kappa >0$ and $\gamma > \frac 12 (d+1)$.
Consider
\begin{gather*}
\nu(x) = \sigma\left(\frac{x}{|x|}\right) g(|x|) \qquad \text{where} \qquad g(r) \asymp f(r) := e^{-\kappa r} r^{-d-\alpha}(1 \vee r)^{d+\alpha-\gamma}.
\end{gather*}
We assume that $A \equiv 0$ in \eqref{eq:Lchexp}. This setting covers two important classes of L\'evy processes whose L\'evy measure decays exponentially.
\begin{enumerate}
\item\label{ex:relativistic-a} \emph{Relativistic symmetric $\alpha$-stable processes} ($\kappa = m^{1/\alpha}$, $m >0$, $\gamma = \frac 12(d+\alpha+1)$), see~\cite{bib:R};
\item\label{ex:tempered-b} (\emph{Exponentially}) \emph{tempered symmetric $\alpha$-stable processes} ($\kappa >0$, $\gamma = d+\alpha$), see~\cite{bib:Ros}.
\end{enumerate}
Assumption \eqref{A1} holds, we have checked (\ref{A1}.d) above, and \eqref{A2} follows from~\cite[Theorem~2]{bib:KS2013}; as before, it holds true for every fixed $t_{\bee}>0$ with appropriate constants $C_4, C_5$, depending on $t_{\bee}$. Let
\begin{gather} \label{eq:exp_pot}
    V(x) = (1 \vee |x|)^{\beta},
\end{gather}
for some $\beta >0$. Since $|\log f(r)| = \kappa r + \gamma \log r$ for $r \geq 1$, we may choose $h(r):= (r/\kappa)^{\beta}$ and $g(r):=\big((r \vee 1) + (\gamma/\kappa) \log (r \vee 1)\big)^{\beta}$ so that $g(r) = h(|\log f(r)|)$ for $r\geq 1$. The assumptions \eqref{A3} and \eqref{A4} hold for this choice of $g$ and $h$ with $R_0=1$. With a straightforward calculation we can check that $C_6 = \big(\kappa e/(\gamma + \kappa e)\big)^{\beta}$ and $C_7=(2+(\gamma/\kappa)\log 2)^{\beta}$. In particular, $g(r) \casymp{C_6} (1 \vee r)^{\beta}$, for $r \geq 0$. The threshold functions defined in \eqref{def:initial_time} and \eqref{def:moving_boundary} are given by
\begin{gather*}
    \ttau{r} = \kappa^{\beta} u(r)^{1-\beta}
		\quad \text{and} \quad
    \ittau(\tau)
    =u^{-1}\left(\left(\frac{\tau}{\kappa^{\beta}}\right)^{\frac{1}{1-\beta}}\right)
    \quad\text{where\ \ } u(r)= |\log f(r)| = \kappa r + \gamma \log r.
\end{gather*}
Clearly, $\ittau(\tau) \asymp \big(\tau/\kappa^{\beta}\big)^{\frac{1}{1-\beta}}$, $\tau \geq \kappa$. Set $\mathsf{K_2} := 3 \mathsf{K} = 12 C_6 C_7^2$ and $\mathsf{K_4} := C_6 \mathsf{K_2} = 12 C_6^2 C_7^2$.

We have the following large time estimates. As before, parts~\ref{lt-a} and~\ref{lt-b1} follow immediately from Corollaries~\ref{cor:aIUC} and~\ref{cor:cor_prog} above, while part~\ref{lt-b2} will be a consequence of Corollary~\ref{cor:cor5} stated below.
\begin{enumerate}
\item\label{lt-a}
    If $\beta \geq 1$, then we are in the \emph{aIUC-regime}, and for every $t > 30t_{\bee} + \kappa \mathsf{K}_2$ we have
    \begin{gather*}
        u_t(x,y) \asymp e^{-\lambda_0 t} \frac{e^{-\kappa|x|-\kappa|y|}}{(1+|x|)^{\gamma+\beta}(1+|y|)^{\gamma+\beta}},
        \quad x,y \in \real^d.
    \end{gather*}

\item\label{lt-b}
    If $\beta \in (0,1)$, then we are in the \emph{non-aIUC-regime}, and there exists a constant $C\geq 1$ such that for every $t > \max\{30t_{\bee},\, \mathsf{K_2}\ttau{n_0+4}\}$ we have
    \begin{enumerate}
    \item\label{lt-b1}
        for $|x| \wedge |y| < \ittau(t/\mathsf{K_2})$,
        \begin{gather*}
            u_t(x,y)
            \casymp{C} e^{-\lambda_0 t} \frac{e^{-\kappa|x|-\kappa|y|}}{(1+|x|)^{\gamma+\beta}(1+|y|)^{\gamma+\beta}}.
        \end{gather*}
		In particular, the semigroup $\left\{U_t:t \geq 0\right\}$ is \emph{pIUC}.

    \item\label{lt-b2}
        for $|x|, |y| \geq \ittau(t/\mathsf{K_2})$,
        \begin{align*}
        \frac{1}{C} \frac{1}{|x|^{\beta}|y|^{\beta}}&\left(\frac{e^{-\lambda_0 t-\kappa|x|-\kappa|y|}}{|x|^{\gamma}|y|^{\gamma}}\vee  \frac{e^{-\mathsf{K_4} t (|x|\wedge|y|)^{\beta} - \kappa|x-y|}}{(1+|x-y|)^{\gamma}} \vee H(\mathsf{K_4} t,x,y)\right)
        \leq u_t(x,y) \\
        &\leq C  \frac{1}{|x|^{\beta}|y|^{\beta}}\left(\frac{e^{-\lambda_0 t-\kappa|x|-\kappa|y|}}{|x|^{\gamma}|y|^{\gamma}}\vee  \frac{e^{-\frac{t}{\mathsf{K_4}} (|x|\wedge|y|)^{\beta} - \kappa|x-y|}}{(1+|x-y|)^{\gamma}} \vee H(t/\mathsf{K_4},x,y)\right),
        \end{align*}
		where
		\begin{align}\label{eq:def_H}
            H(\tau,x,y)
            := \int_{n_0+2 \leq |z| \leq |x| \wedge |y|} \frac{e^{-\kappa(|x-z| + |z-y|)}}%
            {(1\vee|x-z|)^{\gamma}(1\vee|z-y|)^{\gamma}}\,e^{-\tau |z|^{\beta}}\,dz.
		\end{align}
		For $d=1$ these bounds can be simplified:
        \begin{align*}
            \frac{1}{C} \frac{1}{|x|^{\beta}|y|^{\beta}}&\left(\frac{e^{-\lambda_0 t-\kappa|x|-\kappa|y|}}{|x|^{\gamma}|y|^{\gamma}}\vee  \frac{e^{-\mathsf{K_4} t (|x|\wedge|y|)^{\beta} - \kappa|x-y|}}{(1+|x-y|)^{\gamma}} \right)
            \leq u_t(x,y) \\
            &\leq C  \frac{1}{|x|^{\beta}|y|^{\beta}}\left(\frac{e^{-\lambda_0 t-\kappa|x|-\kappa|y|}}{|x|^{\gamma}|y|^{\gamma}}\vee  \frac{e^{-\frac{t}{\mathsf{K_4}} (|x|\wedge|y|)^{\beta} - \kappa|x-y|}}{(1+|x-y|)^{\gamma}}\right).
        \end{align*}
    \end{enumerate}
\end{enumerate}
Also in the multidimensional case one can give (upper) estimates for $H(t/\mathsf{K_4},x,y)$ which will still lead to, say, exponential decay of $u_t(x,y)$, but we may loose the sharp two-sided estimates in \ref{lt-b2}. Since such estimates depend very much on the particular setting, we do not give further details here. See, however, Corollary~\ref{cor:cor5}.\ref{cor:cor5-a}.

\bigskip
If the growth order of the potential $V$ at infinity is slower than that in \eqref{eq:exp_pot} (e.g.\ $\log|x|^{\beta}$), then the corresponding Schr\"odinger heat kernels enjoy two-sided estimates similar to those in part~\ref{lt-b} above with appropriate threshold functions $\ittau(t/\mathsf{K_2})$.
\end{example}

In order to complete Example~\ref{ex:relativistic}, we need Corollary~\ref{cor:cor5} which is based on the following two-sided estimates for the function $F(\tau,x,y)$ defined in \eqref{def:def_F}.

\begin{lemma}\label{lem:expon_est_F}
    Let $d=1$ and let $f:(0,\infty) \to (0,\infty)$ be a decreasing profile such that \eqref{eq:exp_decay} holds with $\gamma > 1$. Assume that $g(r) = h\left(\kappa r + \gamma \log r\right)$, $r \geq R_0$, with an increasing function $h:[\kappa R_0 + \gamma \log R_0,\infty) \to (0,\infty)$ such that $h(s)/s$ decreases to $0$ as $s \to \infty$ and $h(\kappa s + \gamma \log s)/ \log s$ is monotone for $s \geq R_0  \vee e$. There are constants $\Ci,\Cii$ such that for every $\tau \geq 3 \ttau{n_0+4}$ and $|x|, |y| \geq \ittau(\frac 13\tau)$
    \begin{align*}
        \Ci  \left(e^{-\tau g(n_0+3)} \right. &\left. f(|x|)f(|y|) \vee e^{-\tau g(|x|\wedge|y|)} f_1(|x-y|)\right) \leq F(\tau,x,y) \\
        &\leq \Cii  \left(e^{- \frac{1}{3}\tau g(n_0+2)} f(|x|)f(|y|) \vee e^{- \frac{1}{3}\tau g(n_0+2)} e^{-\frac{1}{3}\tau g(|x|\wedge|y|)} f_1(|x-y|) \right).
    \end{align*}
\end{lemma}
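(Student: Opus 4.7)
The plan is to mirror the proof of Lemma~\ref{lem:expl_est_F}, decomposing $F(\tau,x,y)$ according to the position of $|z|$ relative to the critical scales $n_0+3$ and $\ittau(\tau/3)$, while using the explicit exponential profile $f(r)=e^{-\kappa r}r^{-\gamma}$ and the one-dimensional setting in place of doubling. The key new pointwise ingredient, which takes over the role of doubling in Lemma~\ref{lem:expl_est_F}, is the inequality
\begin{gather*}
f_1(|x-z|)\,f_1(|z-y|)\leq C\,f_1(|x-y|)
\end{gather*}
valid (for $|x-y|$ bounded away from $0$) for all $z\in\real$; it follows from the exponential product formula $f(a)f(b)\leq 2^{\gamma}f(a+b)$ combined with the triangle inequality $|x-z|+|z-y|\geq|x-y|$, with \eqref{eq:loc_comp_f1} handling the short-range cases $|z-y|<1$ or $|x-z|<1$.

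For the lower bound, I would produce each of the two terms in the maximum by restricting the integral to a different sub-region. The product term $e^{-\tau g(n_0+3)}f(|x|)f(|y|)$ arises from $n_0+2<|z|<n_0+3$: since $|z|$ is bounded there, the explicit exponential form gives $f_1(|x-z|)\geq c\,f(|x|)$ and $f_1(|z-y|)\geq c\,f(|y|)$. The diagonal term $e^{-\tau g(|y|)}f_1(|x-y|)$ (assuming without loss of generality $|y|\leq|x|$) is obtained by restricting to $\{z:|z|\leq|y|,\,|z-y|\leq 1\}$, where $f_1(|z-y|)\equiv 1$, $f_1(|x-z|)\geq c\,f_1(|x-y|)$ via \eqref{eq:loc_comp_f1}, and $e^{-\tau g(|z|)}\geq e^{-\tau g(|y|)}$ by monotonicity of $g$.

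For the upper bound, assume $|y|\leq|x|$ and write $F=F_1+F_2+F_3$ according to $|z|\in(n_0+2,\ittau(\tau/3))$, $[\ittau(\tau/3),|y|]$, and $(|y|,|x|]$. The piece $F_1$ is handled exactly as in the proof of Lemma~\ref{lem:progress_est}.\ref{lem:progress_est-b}: Lemma~\ref{lem:non-aIUC} gives $e^{-(\tau/3)g(|z|)}\leq f(|z|)$ on this range, hence $e^{-\tau g(|z|)}\leq e^{-(\tau/3)g(n_0+2)}f(|z|)^{2}$, and two applications of \eqref{eq:conv_f1_pw} and \eqref{eq:conv_f1} yield $F_1\leq c\,e^{-(\tau/3)g(n_0+2)}f(|x|)f(|y|)$, contributing the product term. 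For $F_3$, $g(|z|)\geq g(|y|)$ gives the factorization $e^{-\tau g(|z|)}\leq e^{-(\tau/3)g(n_0+2)}e^{-(2\tau/3)g(|y|)}$, and \eqref{eq:conv_f1} then yields $F_3\leq c\,e^{-(\tau/3)g(n_0+2)}e^{-(\tau/3)g(|y|)}f_1(|x-y|)$, contributing the diagonal term.

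The main obstacle will be $F_2$: the pointwise bound above reduces it to $C\,f_1(|x-y|)\int_{\ittau(\tau/3)\leq|z|\leq|y|}e^{-\tau g(|z|)}\,dz$, and converting this remaining one-dimensional integral into the required $\vee$-form requires the monotonicity hypothesis on $h(\kappa s+\gamma\log s)/\log s$, which governs how $g$ varies across the interval $[\ittau(\tau/3),|y|]$. A further case split according to the sign configuration of $x,y\in\real$ is likely needed: in the same-sign case ($|x-y|\leq|x|$) the resulting estimate absorbs into the diagonal term, while in the opposite-sign case ($|x-y|=|x|+|y|$) the pointwise bound on $f_1(|x-z|)f_1(|z-y|)$ is weaker and the product term $e^{-(\tau/3)g(n_0+2)}f(|x|)f(|y|)$ becomes the effective upper bound, justifying the $\vee$ in the statement.
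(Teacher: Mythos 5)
Your decomposition, your lower bound, and your treatment of the outer pieces $F_1$ (via Lemma~\ref{lem:progress_est}) and $F_3$ all match the paper's argument. The gap is precisely in the piece you flag as "the main obstacle", $F_2=\int_{\ittau(\tau/3)\leq|z|\leq|x|\wedge|y|}f_1(|x-z|)f_1(|z-y|)e^{-\tau g(|z|)}\,dz$, and the route you sketch for it does not close. The pointwise bound $f_1(|x-z|)f_1(|z-y|)\leq Cf_1(|x-y|)$ is just \eqref{eq:conv_f1_pw} (nothing specific to the exponential profile), and after applying it the best you can do with the remaining integral is
\begin{align*}
\int_{\ittau(\tau/3)\leq|z|\leq|y|}e^{-\tau g(|z|)}\,dz
\leq e^{-\frac13\tau g(n_0+2)}\,\frac{e^{-\frac13\tau g(|y|)}}{f(|y|)}\int f(|z|)\,dz ,
\end{align*}
using that $r\mapsto e^{-\frac13\tau g(r)}/f(r)$ is increasing on $[\ittau(\tau/3),\infty)$. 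The surviving factor $1/f(|y|)=e^{\kappa|y|}|y|^{\gamma}$ is not absorbed by either term of the claimed maximum (neither in the same-sign nor in the opposite-sign configuration), so your assertions that the same-sign case "absorbs into the diagonal term" and that the product term is "the effective upper bound" in the opposite-sign case are unsubstantiated and, as stated, false in general. You also never explain how the hypothesis on $h(\kappa s+\gamma\log s)/\log s$ actually enters.

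The paper closes this in two genuinely different ways depending on the sign. Same sign ($\ittau(\tau/3)<y\leq x$): do \emph{not} kill both factors at once; since $|x-z|>|x-y|$ for $|z|<|y|$, bound only $f_1(|x-z|)\leq f_1(|x-y|)$, keep $f_1(|y-z|)$ inside, write $e^{-\frac13\tau g(|z|)}=\frac{e^{-\frac13\tau g(|z|)}}{f(|z|)}f(|z|)$, pull out the increasing ratio at $|z|=|y|$, and use the convolution bound \textup{(\ref{A1}.d)} $\int f_1(|y-z|)f(|z|)\,dz\leq Cf(|y|)$ to cancel the $1/f(|y|)$. Opposite sign: use the \emph{exact} one-dimensional identity $|x-z|+|z-y|=|x-y|$ for $z$ between $y$ and $x$, which factors out $e^{-\kappa|x-y|}$ exactly and reduces $F_2$ to the analogous integral for the polynomial profile $\widetilde f(r)=r^{-\gamma}$; the monotonicity of $h(\kappa s+\gamma\log s)/\log s$ is exactly what guarantees that the pair $(\widetilde f,g)$ satisfies \eqref{A4} (with $\widetilde h(s)=h(\kappa e^{s/\gamma}+s)$), so that Lemmas~\ref{lem:aiuc_est}, \ref{lem:progress_est} and \ref{lem:expl_est_F} for doubling profiles apply to $\widetilde F$ and deliver the $\vee$-form. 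This reduction is the heart of the lemma and is missing from your proposal.
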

\begin{proof}
The lower bound is easy, cf.\ the last lines of the proof of Lemma~\ref{lem:progress_est} and the second line of the proof of Lemma~\ref{lem:expl_est_F}.

The proof of the upper bound is similar to the proof of Lemma~\ref{lem:expl_est_F}. The only difference is in the upper estimate of the integral
\begin{gather*}
    \mathrm{I}_{2}:= \int_{\ittau(\frac 13\tau) \leq |z| < |y|}f_1(|x-z|) f_1(|y-z|) e^{- \tau g(|z|)}\,dz
\end{gather*}
in the case when $|y| > \ittau(\frac 13\tau)$. By symmetry, we may assume that $|x| \geq |y|$ and $x > 0$. We have two cases: $\ittau(\frac 13\tau) < y \leq x$ and $-x \leq y < - \ittau(\frac 13\tau)$.

\medskip\noindent
\text{Case 1}: $\ittau(\frac 13\tau) < y \leq x$. We have $|x-z|>|x-y|$ and, by monotonicity, $f_1(|x-y|) \geq f_1(|x-z|)$. Together with the last assertion in Lemma~\ref{lem:non-aIUC} which says that the function $r \mapsto e^{- \frac{1}{3}\tau g(r)}/f(r)$ is increasing on $\big[\ittau(\frac 13\tau), \infty\big)$ and \textup{(\ref{A1}.d)}, this gives
\begin{align*}
    \mathrm{I}_2
    &\leq c_1 f_1(|x-y|) e^{- \frac{1}{3}\tau  g(n_0+2)} \int_{\ittau(\frac 13\tau) < |z| < |y|} f_1(|y-z|) \frac{e^{- \frac{1}{3}\tau g(|z|)}}{f(|z|)} f(|z|)\,dz \\
    &\leq c_2 f_1(|x-y|) e^{- \frac{1}{3}\tau  g(n_0+2)} \frac{e^{- \frac{1}{3}\tau g(|y|)}}{f(|y|)}\int_{\ittau(\frac 13\tau) < |z| < |y|} f_1(|y-z|)) f(|z|)\,dz \\
    &\leq c_3  e^{- \frac{1}{3}\tau  g(n_0+2)} f_1(|x-y|) e^{- \frac{1}{3}\tau g(|y|)},
\end{align*}
which is the required estimate.

\medskip\noindent
\text{Case 2}: $-x \leq y < - \ittau(\frac 13\tau)$. For $\ittau(\frac 13\tau) \leq |z| < |y|$ we have $|x-z|+|y-z|= |x| + |y| = |x-y|$, which gives
\begin{align*}
    f_1(|x-z|) f_1(|y-z|)
    &\leq c_4 e^{-\kappa(|x-z| + |z-y|)} (1 \wedge |x-z|^{-\gamma})(1 \wedge |z-y|^{-\gamma}) \\
    &= c_4 e^{-(|x| + |y|)} (1 \wedge |x-z|^{-\gamma})(1 \wedge |z-y|^{-\gamma}) \\
	&= c_4 e^{-|x-y|} (1 \wedge |x-z|^{-\gamma})(1 \wedge |z-y|^{-\gamma}).	
\end{align*}
This shows
\begin{equation}\label{aux:I2}\begin{aligned}
    \mathrm{I}_2
    &= \int_{\ittau(\frac 13\tau) \leq |z| < |y|}f_1(|x-z|) f_1(|y-z|) e^{- \tau g(|z|)}\,dz \\
    &\leq c_4 e^{-\kappa(|x|+|y|)} \widetilde F(\tau,x,y) \\
    &= c_4 e^{-\kappa|x-y|} \widetilde F(\tau,x,y),
\end{aligned}\end{equation}
where
\begin{gather*}
    \widetilde F(\tau,x,y)
    := \int_{n_0+2 \leq |z| < |x| }\widetilde f_1(|x-z|) \widetilde f_1(|y-z|) e^{- \tau g(|z|)}\,dz
    \quad \text{with\ \ } \widetilde f(r)= r^{-\gamma}.
\end{gather*}
We still have to estimate the function $\widetilde F(\tau,x,y)$ from above. Define $\widetilde h(s) := h(\kappa e^{s/\gamma} + s)$ and observe that for $r \geq R_0$ we have $g(r)= h(\kappa r + \gamma \log r) =: \widetilde h(|\log \widetilde f(r)|)$. By assumption, $h(s)$ is increasing and $h(\kappa s + \gamma \log s)/\log s$ is monotone for $s \geq R_0 \vee e$. This shows that $\widetilde h(s)$ is increasing and $\widetilde h(s)/s$ is monotone for $s \geq \gamma \log R_0 \vee \gamma$.  It implies that the pairs $\widetilde f$ and $g$ show all possible behaviours which were discussed in Remark~\ref{rem:regimes}. In particular, the assumptions of Lemma~\ref{lem:aiuc_est}.\ref{lem:aiuc_est-a}, \ref{lem:progress_est}.\ref{lem:progress_est-b} and \ref{lem:expl_est_F} are satisfied for $\widetilde f$ and $g$. If we combine these lemmas, we obtain the upper estimate of the function $\widetilde F(\tau,x,y)$ in all possible regions, and so
\begin{gather*}
    \widetilde F(\tau,x,y)
    \leq c_5 \left(e^{- \frac{1}{3}\tau g(n_0+2)} \widetilde f(|x|) \widetilde f(|y|) \vee e^{- \frac{1}{3}\tau g(n_0+2)} e^{-\frac{1}{3}\tau g(|x|\wedge|y|)} \widetilde f_1(|x-y|) \right).
\end{gather*}
Plugging this into \eqref{aux:I2}, gives, in view of \eqref{eq:exp_decay},
\begin{gather*}
    \mathrm{I}_2
    \leq c_6 \left(e^{- \frac{1}{3}\tau g(n_0+2)} f(|x|) f(|y|) \vee e^{- \frac{1}{3}\tau g(n_0+2)} e^{-\frac{1}{3}\tau g(|x|\wedge|y|)}f_1(|x-y|) \right),
\end{gather*}
and the proof is finished.
\end{proof}

We are now ready to state our last corollary. Recall that $\ttau{r}$ and $\ittau(\tau)$ are defined in \eqref{def:initial_time} and \eqref{def:moving_boundary}.
\begin{corollary}\label{cor:cor5}
    Assume \eqref{A1}--\eqref{A4} with $R_0 >0$, $t_{\bee}>0$, let $f$ be of exponential type \eqref{eq:exp_decay} and $\lim_{r\to\infty}g(r)/|\log f(r)| = 0$.
\begin{enumerate}
    \item\label{cor:cor5-a}
    There are constants $\Ci, \Cii >0$ such that for every $t >\max\{30t_{\bee},\,  \mathsf{K_2}\ttau{n_0+4}\}$ and $|x|, |y| \geq  \ittau(t/\mathsf{K_2})$
    \begin{align*}
        \frac{\Ci }{g(|x|)g(|y|)}&\left(\frac{e^{-\lambda_0 t-\kappa|x|-\kappa|y|}}{|x|^{\gamma}|y|^{\gamma}}\vee  \frac{e^{-\mathsf{K_2} t g(|x|\wedge|y|) - \kappa|x-y|}}{(1+|x-y|)^{\gamma}} \vee H(\mathsf{K_2} t,x,y)\right)
        \leq u_t(x,y) \\
        &\leq   \frac{\Cii }{g(|x|)g(|y|)}\left(\frac{e^{-\lambda_0 t-\kappa|x|-\kappa|y|}}{|x|^{\gamma}|y|^{\gamma}}\vee  \frac{e^{-\frac{t}{\mathsf{K_2}} g(|x|\wedge|y|) - \kappa|x-y|}}{(1+|x-y|)^{\gamma}} \vee H(t/\mathsf{K_2},x,y)\right),
    \end{align*}
	where
	\begin{align} \label{eq:def_H_gen}
        H(\tau,x,y)
        := \int_{n_0+2 \leq |z| \leq |x| \wedge |y|} \frac{e^{-\kappa(|x-z| + |z-y|)} }{(1\vee|x-z|)^{\gamma}(1\vee|z-y|)^{\gamma}}
        \,e^{-\tau g(|z|)}\,dz.
	\end{align}
    Moreover, there exists $\Ciii>0$ such that for every $\tau > 3\ttau{n_0+4}$ and $|x|, |y| \geq  \ittau(\frac 13\tau)$
    \begin{align}\label{eq:H-est}
        H(\tau,x,y)
        \leq \Ciii  &\left[\frac{e^{-\frac{1}{3}\tau g(n_0+2)-\kappa|x|-\kappa|y|}}{|x|^{\gamma}|y|^{\gamma}} \right. \\
        &\nonumber\mbox{} + \left.\left(\frac{e^{-\frac{1}{3}\tau g(|x| \wedge |y|)-\frac{\kappa}{2}|x-y|}}{(1\vee|x-y|)^{\gamma}}
        \wedge \frac{e^{-\kappa|x-y|}}{(1\vee|x-y|)^{\gamma}}
        \int_{\ittau(\frac 13\tau) \leq |z| \leq |x| \wedge |y|} e^{-\frac{1}{3}\tau g(|z|)}\, dz \right) \right].
    \end{align}

    \item\label{cor:cor5-b}	
    If, in addition, $\gamma > 1 = d$ and the function $r \mapsto g(r) / \log r$ is monotone on $[R_0 \vee e, \infty)$, then there exist  constants $\Ci, \Cii >0$ such that for every $t > \max\{30t_{\bee},\, \mathsf K_2 \ttau{n_0+4}\}$ and $|x|, |y| \geq  \ittau(t/\mathsf{K_2})$
    \begin{align*}
        \frac{\Ci }{g(|x|)g(|y|)}
        &\left(\frac{e^{-\lambda_0 t-\kappa|x|-\kappa|y|}}{|x|^{\gamma}|y|^{\gamma}}
        \vee  \frac{e^{-\mathsf{K_2} t g(|x|\wedge|y|) - \kappa|x-y|}}{(1+|x-y|)^{\gamma}}\right)
        \leq u_t(x,y) \\
        &\leq   \frac{\Cii }{g(|x|)g(|y|)}\left(\frac{e^{-\lambda_0 t-\kappa|x|-\kappa|y|}}{|x|^{\gamma}|y|^{\gamma}}
        \vee  \frac{e^{-\frac{t}{\mathsf{K_2}} g(|x|\wedge|y|) - \kappa|x-y|}}{(1+|x-y|)^{\gamma}}\right),
    \end{align*}
\end{enumerate}			
\end{corollary}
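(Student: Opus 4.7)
The plan is to combine the two-sided bound
\begin{gather*}
u_t(x,y) \asymp \frac{F(\tau_*\,t,x,y) \vee e^{-\lambda_0 t} f(|x|) f(|y|)}{g(|x|)g(|y|)}
\end{gather*}
from Theorem~\ref{th:th1}.\ref{th:th1-c} (with $\tau_* = \mathsf K$ for the lower bound, $\tau_* = 1/\mathsf K$ for the upper) with explicit estimates of $F(\tau,x,y)$ tailored to the exponential profile $f(r) = e^{-\kappa r} r^{-\gamma}$. The passage $\mathsf K \rightsquigarrow \mathsf K_2 = 3\mathsf K$ in the stated exponents and in $H(\cdot,x,y)$ is harmless because $H(\tau,x,y)$ and $e^{-\tau g}$ are decreasing in $\tau$. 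For part~(a), assuming WLOG $|y|\leq|x|$, I split the $z$-integration in $F(\tau,x,y) = \int_{n_0+2<|z|<|x|}$ into the inner region $\{n_0+2<|z|\leq|x|\wedge|y|\}$, which is precisely $H(\tau,x,y)$ after the identification $f_1(r)\asymp e^{-\kappa r}(1\vee r)^{-\gamma}$, and the annular region $\{|x|\wedge|y|<|z|<|x|\vee|y|\}$. On the annulus, monotonicity of $g$ and the pointwise convolution bound $f_1(|x-z|)f_1(|z-y|) \leq C f_1(|x-y|)$ (which is \eqref{eq:conv_f1_pw} after the shift $x\rightsquigarrow x-y$, $w\rightsquigarrow z-y$) give a contribution $\leq C e^{-\tau g(|x|\wedge|y|)} f_1(|x-y|)$; this produces the middle term of the bound in part~(a). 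For the lower bound the same contribution arises from integrating over a small ball at a point of modulus $|x|\wedge|y|$.

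The $H$-estimate \eqref{eq:H-est} requires splitting $H(\tau,x,y)$ at $|z|=\ittau(\tau/3)$. On the inner piece $\{|z|<\ittau(\tau/3)\}$ I write $e^{-\tau g(|z|)} \leq e^{-\tau g(n_0+2)/3} \cdot e^{-2\tau g(|z|)/3}$ and apply Lemma~\ref{lem:non-aIUC} with parameter $2\tau/3$ to get $e^{-2\tau g(|z|)/3} \leq f(|z|)^2$; combining with the pointwise bounds $f_1(|x-z|)f(|z|) \leq Cf(|x|)$ and $f_1(|z-y|) f(|z|) \leq C f(|y|)$ (consequences of \eqref{eq:conv_f1_pw} and the equivalence $f \asymp f_1$ away from the origin) yields the first term of~\eqref{eq:H-est}. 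On the outer piece $\{\ittau(\tau/3) \leq |z| \leq |x|\wedge|y|\}$ I derive two complementary bounds whose minimum appears in the statement. Bound~B comes directly from $e^{-\kappa(|x-z|+|z-y|)} \leq e^{-\kappa|x-y|}$, the elementary inequality $(1\vee|x-z|)^\gamma (1\vee|z-y|)^\gamma \geq c(1\vee|x-y|)^\gamma$ (one of the two distances exceeds $|x-y|/2$), and $e^{-\tau g(|z|)} \leq e^{-\tau g(|z|)/3}$. Bound~A uses the sharp monotonicity from Lemma~\ref{lem:non-aIUC}: since $r\mapsto e^{-\tau g(r)/3}/f(r)$ is increasing on $[\ittau(\tau/3),\infty)$, one has $\int_{\ittau(\tau/3) \leq |z| \leq |x|\wedge|y|} e^{-\tau g(|z|)/3}\,dz \leq \frac{e^{-\tau g(|x|\wedge|y|)/3}}{f(|x|\wedge|y|)}\int_{\real^d} f(|z|)\,dz$, after which $f(|x|\wedge|y|)^{-1}$ combines with $e^{-\kappa|x-y|}$ to produce $e^{-\kappa|x-y|/2}$ upon splitting the cases $|x-y| \leq 2(|x|\wedge|y|)$ vs.\ $|x-y| > 2(|x|\wedge|y|)$.

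Part~(b) follows directly from Lemma~\ref{lem:expon_est_F}: in $d=1$, the additional hypothesis that $g(r)/\log r$ be monotone translates into monotonicity of $h(\kappa s + \gamma\log s)/\log s = g(s)/\log s$, so the lemma applies and gives closed-form two-sided bounds on $F(\tau,x,y)$ without the implicit $H$-term. Inserting these into Theorem~\ref{th:th1}.\ref{th:th1-c}, absorbing the factor $e^{-\tau g(n_0+2)/3}$ into $e^{-\lambda_0 t}$ (justified by \eqref{con-n0} which permits choosing $n_0$ with $g(n_0+2) \geq \mathsf K_2 \lambda_0$), and passing to $\mathsf K_2 = 3\mathsf K$ produces the stated bounds. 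The principal technical difficulty lies in Bound~A of \eqref{eq:H-est}: one must carefully track the competition between the (potentially unbounded) factor $1/f(|x|\wedge|y|)$ and the decay $e^{-\kappa|x-y|+\kappa|x|\wedge|y|}$ when $|x-y|$ is large relative to $|x|\wedge|y|$. Everything else reduces to routine applications of the direct-jump property \eqref{eq:conv_f1_pw} and the regime analysis already carried out in Lemmas~\ref{lem:progress_est}--\ref{lem:expon_est_F}.
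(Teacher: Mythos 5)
Your overall route coincides with the paper's: Theorem~\ref{th:th1}.\ref{th:th1-c} combined with the decomposition of $F(\tau,x,y)$ into the inner region (which is $H$ up to constants) and the annulus $\{|x|\wedge|y|\leq|z|\leq|x|\vee|y|\}$ (which gives the middle term), then a split of $H$ at $|z|=\ittau(\tau/3)$, and part~(b) via Lemma~\ref{lem:expon_est_F}. One small slip in part~(a): on the annulus you invoke the \emph{pointwise} bound \eqref{eq:conv_f1_pw}, but integrating a constant over the annulus produces an unbounded volume factor; you need the \emph{integrated} convolution inequality \eqref{eq:conv_f1} there, which is what the paper uses and is a one-line fix.

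The genuine gap is in your derivation of the first member of the minimum in \eqref{eq:H-est} (your ``Bound~A''). You first replace $f_1(|x-z|)f_1(|z-y|)$ by $Cf_1(|x-y|)$ and then try to absorb the factor $1/f(|x|\wedge|y|)$, coming from the monotone quotient of Lemma~\ref{lem:non-aIUC}, into the slack between $e^{-\kappa|x-y|}$ and $e^{-\kappa|x-y|/2}$. This cannot work: $1/f(|x|\wedge|y|)=e^{\kappa(|x|\wedge|y|)}(|x|\wedge|y|)^{\gamma}$, while the available gain is only $e^{\kappa|x-y|/2}$. When $|x-y|\leq 2(|x|\wedge|y|)$ (e.g.\ $|x-y|$ of order $1$ and $|x|\wedge|y|=R\to\infty$) your intermediate bound exceeds the target by a factor of order $e^{\kappa R}R^{\gamma}$; and even in the diametral case $x=-y$, where the exponentials cancel exactly, the polynomial factor $(|x|\wedge|y|)^{\gamma}$ survives and is unbounded, so neither branch of your case split closes the argument. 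The correct mechanism --- the one used for the term $\mathrm{I}_2$ in the proof of Lemma~\ref{lem:expl_est_F}, which the corollary's proof reuses --- keeps one of the two kernels inside the integral: split the outer region according to whether $2|x-z|\geq|x-y|$ or $2|y-z|\geq|x-y|$, pull out only the large-distance factor as $f_1(\tfrac12|x-y|)\leq c\,e^{-\kappa|x-y|/2}(1\vee|x-y|)^{-\gamma}$, and let the retained factor pair with $f(|z|)$ so that the direct jump property \textup{(\ref{A1}.d)} gives $\int f_1(|\cdot-z|)f(|z|)\,dz\leq Cf(|x|\wedge|y|)$, cancelling the $1/f(|x|\wedge|y|)$ exactly. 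Without this cancellation the estimate \eqref{eq:H-est} is not reached. The remainder of your argument (Bound~B, the inner piece via $e^{-2\tau g(|z|)/3}\leq f(|z|)^2$, and part~(b)) is correct and matches the paper.
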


\begin{proof}
We begin with~\ref{cor:cor5-a} and show that for every $n_0 > R_0$ there are constants $c_1, c_2 >0$ such that for all $\tau > 0$ and $|x|, |y| \geq  n_0+3$
\begin{align} \label{eq:F-by-H}
    c_1 \bigg( e^{-\tau g(|x|\wedge|y|)} &\frac{e^{-\kappa(|x-y|)}}{(1\vee|x-y|)^{\gamma}}  \vee H(\tau,x,y) \bigg)
    \leq F(\tau,x,y) \\
    &\nonumber\leq c_2 \bigg( e^{-\tau g(|x|\wedge|y|)} \frac{e^{-\kappa(|x-y|)}}{(1\vee|x-y|)^{\gamma}} \vee H(\tau,x,y) \bigg),
\end{align}
where $F(\tau,x,y)$ is defined in \eqref{def:def_F}. By symmetry, we may assume that $|y| \leq |x|$. Clearly,
\begin{gather*}
    F(\tau,x,y)
    \asymp H(\tau,x,y)  +  \int_{|y| \leq |z| \leq |x|}  f_1(|x-z|) f_1(|y-z|) e^{- \tau g(|z|)}\,dz.
\end{gather*}
From this, it is easy to get the lower bound in \eqref{eq:F-by-H}, cf.~the second line of the proof of Lemma~\ref{lem:expl_est_F}.

For the proof of the upper bound, we bound the term $e^{- \tau g(|z|)}$ under the integral by $e^{- \tau g(|y|)}$, and then we use \eqref{eq:conv_f1}.

The estimates of the heat kernel in part~\ref{cor:cor5-a} follows from a combination of the estimates in Theorem \ref{th:th1}.\ref{th:th1-c} and \eqref{eq:F-by-H} --- note that $t > \max\{30t_{\bee},\, \mathsf{K_2}\ttau{n_0+4}\}$ so that $\ittau(t/\mathsf{K_2}) > n_0+3$, and take $\tau = t/\mathsf{K_2}$ in the upper bound and $\tau = \mathsf{K_2} t$ in the lower bound.

Now we show the estimates \eqref{eq:H-est}. We have
\begin{gather*}
    H(\tau,x,y)
    \leq c_3 \left(\int_{n_0+2 < |z| <  \ittau(\frac 13\tau)}  +  \int_{ \ittau(\frac 13\tau) \leq |z| \leq |y|}  \right)f_1(|x-z|) f_1(|y-z|) e^{- \tau g(|z|)}\,dz
    =: \mathrm{I}_1+\mathrm{I}_2.
\end{gather*}
With the argument from the proof of Lemma~\ref{lem:expl_est_F}, we get
\begin{gather*}
    \mathrm{I}_1
    \leq c_4 e^{- \frac{1}{3}\tau g(n_0+2)} f(|x|) f(|y|)
    = e^{- \frac{1}{3}\tau g(n_0+2)}e^{-\kappa|x|-\kappa|y|}|x|^{-\gamma}|y|^{-\gamma}
\intertext{and}
    \mathrm{I}_2
    \leq c_5  f_1\left(\frac{|x-y|}{2}\right) e^{- \frac{1}{3}\tau g(|x| \wedge |y|)}
    \leq c_6 e^{-\frac{1}{3}\tau g(|x| \wedge |y|)-\frac{\kappa}{2}|x-y|} (1\vee|x-y|)^{-\gamma}.
\end{gather*}
On the other hand, we can also use \eqref{eq:conv_f1_pw} to get
\begin{gather*}
    \mathrm{I}_2
    \leq c_7 \frac{e^{-\kappa|x-y|}}{(1\vee|x-y|)^{\gamma}} \int_{\ittau(\tau) \leq |z| \leq |x| \wedge |y|} e^{-\frac{1}{3}\tau g(|z|)}\,dz.
\end{gather*}
If we combine these estimates, we get the upper bound in \eqref{eq:H-est}.

Part~\ref{cor:cor5-b} follows from Theorem \ref{th:th1}.\ref{th:th1-c} and Lemma \ref{lem:expon_est_F}.
\end{proof}

\end{document}